\colorlet{linkequation}{blue}
\renewcommand{\P}{\mathbb{P}}
\newcommand{\E}{\mathbb{E}}
\newcommand{\R}{\mathbb{R}}
\newcommand{\C}{\mathbb{C}}
\renewcommand{\S}{\mathbb{S}}
\newcommand{\eps}{\varepsilon} 
\def\bI{{\mathbf I}}
\newcommand{\<}{\langle}
\renewcommand{\>}{\rangle}
\newcommand{\diag}{\text{diag}}
\newcommand{\tr}{\mathrm{tr}}
\newcommand{\ones}{\bm{1}}
\def\bzero{{\boldsymbol 0}}
\newtheorem{theorem}{Theorem}[section]
\newtheorem*{theorem*}{Theorem}
\newtheorem{lemma}[theorem]{Lemma}
\newtheorem{corollary}[theorem]{Corollary}
\theoremstyle{definition}
\newtheoremstyle{myremark} 
    {\topsep}                    
    {\topsep}                    
    {\rm}                        
    {}                           
    {\bf}                        
    {.}                          
    {.5em}                       
    {}  
\theoremstyle{remark}
\newtheorem{remark}{Remark}[section]
\DeclareSymbolFont{rsfs}{U}{rsfs}{m}{n}
\DeclareSymbolFontAlphabet{\mathscrsfs}{rsfs}
\newcommand{\ol}[1]{\overline{\mkern-1mu #1 \mkern-1mu}}
\def\bA{{\boldsymbol A}}
\def\bB{{\boldsymbol B}}
\def\bD{{\boldsymbol D}}
\def \bK{\textit{\scalebox{.98}[1]{\textbf K}}}
\def\bN{{\boldsymbol N}}
\def\bQ{{\boldsymbol Q}}
\def\bR{{\boldsymbol R}}
\def\bS{{\boldsymbol S}}
\def\bV{{\boldsymbol V}}
\def\bW{{\boldsymbol W}}
\def\bX{{\boldsymbol X}}
\def\bY{{\boldsymbol Y}}
\def\bZ{{\boldsymbol Z}}
\def\ba{{\boldsymbol a}}
\def\bb{{\boldsymbol b}}
\def\be{{\boldsymbol e}}
\def\bj{{\boldsymbol j}}
\def\bu{{\boldsymbol u}}
\def\bv{{\boldsymbol v}}
\def\bw{{\boldsymbol w}}
\def\bx{{\boldsymbol x}}
\def\by{{\boldsymbol y}}
\def\bz{{\boldsymbol z}}
\def\bxi{{\boldsymbol \xi}}
\def\bDelta{{\boldsymbol \Delta}}
\def\bLambda{{\boldsymbol \Lambda}}
\def\bSigma{{\boldsymbol \Sigma}}
\def\cR{\mathcal{R}}
\def\tr{{\rm tr}}
\def\cV{{\mathcal V}}
\def\cG{{\mathcal G}}
\def\cP{{\mathcal P}}
\def\cQ{{\mathcal Q}}
\def\cL{{\mathcal L}}
\def\cF{{\mathcal F}}
\def\cV{{\mathcal V}}
\def\cG{{\mathcal G}}
\def\cP{{\mathcal P}}
\def\normal{{\sf N}}
\def\normal{{\sf N}}
\def\normal{{\sf N}}
\def\cF{{\mathcal F}}
\def\normal{{\sf N}}
\def\bDelta{{\boldsymbol \Delta}}
\def\bA{{\boldsymbol A}}
\def\bLambda{{\boldsymbol \Lambda}}
\def\cM{{\mathcal M}}
\def\cV{{\mathcal V}}
\def\diag{{\rm diag}}
\def\bS{{\boldsymbol S}}
\def\bD{{\boldsymbol D}}
\def\bb{{\boldsymbol b}}
\def\spa{\hspace{-.14em}}
\def\spb{\hspace{-.08em}}
\def\spc{\hspace{-.05em}}
\def\bR{{\boldsymbol R}}
\def\Im{{\rm Im}}
\def\cJ{\mathcal{J}}
\def\Im{{\rm Im}}
\def\normal{{\mathcal{N}}}
\newcommand{\edit}[1]{\textcolor{olive}{#1}}
\numberwithin{equation}{section}
\title{Sparse PCA: Phase Transitions in the  Critical Regime
}
\author{Michael J.\ Feldman\footnote{Faculty of Mathematics and Computer Science, Weizmann Institute of Science}, \;Theodor Misiakiewicz\footnote{Department of Statistics and Data Science, Yale University}, \;and Elad Romanov\footnote{Department of Statistics, Stanford University}}
\date{}
\begin{document}

\maketitle

\begin{abstract}

This work studies sparse principal component analysis (PCA) in high dimensions. Given $n$ independent $p$-dimensional Gaussian samples with covariance $ \bSigma \coloneqq (\lambda -1 ) \bv \bv^\top + \bI_p,$ our goal is to estimate $\bv$ under the assumption of sparsity. On the one hand, if the sparsity level $m \coloneqq \|\bv\|_0$ satisfies $m  \lesssim \sqrt{n}$, algorithms such as  covariance thresholding (Krauthgamer et al., 2015) consistently outperform PCA. On the other hand, if $m \gg \sqrt{n}$, it is conjectured that no polynomial-time algorithm can recover $\bv$ below the detection threshold of PCA. 
We investigate the ``critical" high-dimensional  regime, where $n, p, m \rightarrow \infty$ with $m/\sqrt{n} \rightarrow \beta$ and $p/n \rightarrow \gamma$, and study estimators based on kernel PCA, generalizing covariance thresholding.

Within this framework, we achieve a fine-grained understanding of signal detection and recovery. Our main result establishes a detection phase transition, analogous to the Baik--Ben Arous--P\'ech\'e  (BBP) transition for PCA: above a  signal strength threshold---depending on the kernel function, $\gamma$, and $\beta$---kernel PCA is informative. Conversely, below the threshold, kernel principal components are asymptotically orthogonal to the signal.  
Notably, (1) above this threshold, consistent support recovery is possible with high probability, (2) for all $\beta \in (0, \infty)$, kernel PCA strictly outperforms PCA, and (3) as $\beta \rightarrow \infty$, kernel PCA and PCA coincide. We identify optimal kernel functions for detection and support recovery, and numerical calculations suggest that soft thresholding is nearly optimal. Our key technical contribution is approximation guarantees for deterministic equivalents of kernel random matrices, which enable sharp estimates of  coordinate fluctuations of kernel principal components.

\end{abstract}

\section{Introduction} \label{sec:1}

From factor analysis to covariance estimation to matrix factorization, principal component analysis (PCA) is a standard tool for dimensionality reduction and low-rank signal recovery. Given data with $n$ observations and $p$ variables, the principal components are the eigenvectors of the $p \times p$ sample covariance matrix. PCA, however, has two key drawbacks: first, the principal components are generally non-sparse linear combinations of all $p$ variables. This  limits interpretability, and fails to capture the fact that  signal matrices are often simultaneously low-rank and sparse---such matrices arise in diverse fields, including genetics, computer vision, imaging, and neuroscience \cite{Asp1, Asp2, Friedman, Candes, MingYu}. Second, in high dimensions (when $p$ is comparable to or greater than $n$), PCA is inconsistent: the principal components are inconsistent estimators of the eigenvectors of the population covariance \cite{Paul07}. 

These issues have spurred the development of numerous alternatives to classical PCA that produce sparse principal components, collectively referred to as sparse PCA methods \cite{jolliffe2003modified,d2004direct,zou2006sparse, Asp1, cai2013sparse,vu2013fantope, Krauthgamer}. In this literature, the assumption of a low-rank and sparse ground truth is often implicit in applied and methodological papers, and explicit in theoretical analyses \cite{Lei15}. Whether or not there is evidence in a particular problem that the ground truth is sparse, Friedman et al. advise ``betting on sparsity":  ``Use a procedure that does well in sparse problems, since no procedure does well in dense problems'' \cite{Friedman, FriedmanBook}.

PCA has been extensively studied in high dimensions, revealing several phenomena that are absent in classical fixed-$p$ statistics. A prototypical model in the theoretical literature is
Johnstone's spiked covariance model (henceforth referred to as the ``spiked model'') \cite{JohnstoneLu}. Under this model, $n$ independent samples are observed from a $p$-dimensional Gaussian distribution $\mathcal{N}(0,\bSigma)$, where the population covariance $\bSigma \in \mathbb{R}^{p \times p}$ is a rank-one perturbation of identity: 
\begin{align} \label{spiked_model}    \bSigma \coloneqq (\lambda -1) \bv \bv^\top  + \bI_p . \end{align} 
Here, $\lambda>1$ is the maximum eigenvalue of $\bSigma$ and our goal is to estimate the corresponding eigenvector or ``spike" $\bv\in \mathbb{S}^{p-1}$. Let $\bZ\bSigma^{1/2} \in \R^{n\times p}$ denote the data matrix with the samples as rows, where $\bZ\in \R^{n\times p}$ contains i.i.d.\ standard Gaussian elements. 

PCA estimates $\bv$ by the leading eigenvector $\bu_1$ of the sample covariance $\bY \coloneqq n^{-1} \bSigma^{1/2} \bZ^\top \spb \bZ \bSigma^{1/2}$ (which is a sufficient statistic for $\bv$).
In low dimensions, where $n \to \infty$ and $p/n \to 0$, $\bu_1$ and $\bY$ are consistent estimators of $\bv$ (in terms of cosine similarity) and $\bSigma$ (in operator norm), respectively: $\langle \bu_1, \bv\rangle^2 \xrightarrow{a.s.} 1$ and $\|\bY-\bSigma\|_2\xrightarrow{a.s.} 0$. In high dimensions, however, these consistency results no longer hold. 
  As $n,p \rightarrow \infty$ with $p/n \rightarrow \gamma \in (0,\infty)$ 
  (and $\lambda$ fixed),
the empirical spectral distribution (ESD) of $\bY$ converges weakly almost surely to the Marchenko--Pastur law, supported on $[(1-\sqrt{\gamma})^2,(1+\sqrt{\gamma})^2]$. Moreover, the maximum eigenvalue $\lambda_1$ of $\bY$ and $\bu_1$ satisfy 
\begin{gather} \label{asdfzxcv}
     \lambda_1\xrightarrow{a.s.} \begin{dcases}
        \lambda + \frac{\lambda \gamma}{\lambda-1}  & \lambda  > 1 + \sqrt{\gamma} , \\  (1+\sqrt{\gamma})^2 & \lambda \leq 1 + \sqrt{\gamma}, 
    \end{dcases} \\
    \langle \bu_1, \bv \rangle^2 \xrightarrow{a.s.} \begin{dcases}
        \Big(1 - \frac{\gamma}{(\lambda-1)^2}\Big)\Big/ \Big(1 + \frac{\gamma}{\lambda-1}\Big)  & \lambda  > 1 + \sqrt{\gamma} , \\  0 & \lambda \leq 1 + \sqrt{\gamma}. \label{asdfzxcv-1}
    \end{dcases}
\end{gather}    

Remarkably, PCA experiences a {\it phase transition}: 
if $\lambda \leq 1+\sqrt{\gamma}$, $\lambda_1$ converges to the upper edge of the support of the Marchenko--Pastur law, and $\bu_1$ is asymptotically orthogonal to $\bv$. On the other hand, if $\lambda > 1 + \sqrt{\gamma}$, $\lambda_1$ is an {\it outlier eigenvalue}, occurring outside the Marchenko--Pastur support, and the cosine similarity between $\bu_1$ and $\bv$ is non-trivial; that is, $\bu_1$ contains (partial) information  about $\bv$.  
Key papers in the development of (\ref{asdfzxcv}) and (\ref{asdfzxcv-1}) include works of Baik, Ben Arous, and P\'ech\'e \cite{BBP}, Baik and Silverstein \cite{Baik}, and Paul \cite{Paul07}, and the threshold $1+\sqrt{\gamma}$ is known as the Baik-Ben Arous-P\'ech\'e (BBP) transition.  For a full list of references, see the survey \cite{PaulPCA}.

When $\bv$ is uniformly distributed on $\mathbb{S}^{p-1}$, consistent detection (testing between $\lambda = 1$ and $\lambda > 1$) and non-trivial estimation are impossible for weak signal strengths below the BBP transition  \cite{Perry}. However, if $\bv$ is {\it sufficiently} sparse, these key objectives  become possible for such weak signals  
through alternative methods to PCA. 
In a seminal work, Johnstone and Lu \cite{JohnstoneLu} proposed the diagonal thresholding (DT) algorithm, which estimates the support of $\bv$ by the indices of the largest-magnitude diagonal elements of $\bY$.\footnote{Given the support of $\bv$, PCA applied to the corresponding submatrix of $\bY$ consistently estimates $\bv$.}
Assuming knowledge of the sparsity level $m \coloneqq \|\bv\|_0$ and that the entries of $\bv$ satisfy $v_i\in \{0,\pm 1/\sqrt{m})$, 
Amini and Wainwright \cite{amini} proved that DT recovers the support of $\bv$ with high probability if $m \leq C(\lambda) \sqrt{n/\log p}$. 

 Building upon DT,  Krauthgamer, Nadler, and Vilenchik \cite{Krauthgamer} proposed covariance thresholding (CT). Informally, CT (1) hard-thresholds the entries of $\bY$, (2) computes the principal eigenvector $\bu_1$ of the thresholded matrix, and (3) estimates the support of $\bv$ from the largest-magnitude elements of  $\bu_1$. Deshpande and Montanari \cite{deshp2016sparse} subsequently suggested using soft thresholding  and setting the diagonal of $\bY$ to zero in step (1).  They proved that this modified algorithm (also referred to as CT) 
recovers the support of $\bv$ with high probability if $m \leq C(\lambda) \sqrt{n}$, thereby confirming a conjecture of \cite{Krauthgamer}. In particular, 
below the BBP transition where PCA fails, CT succeeds if $m/\sqrt{n}$ is sufficiently small. Semidefinite programming approaches were unable to improve on this result \cite{amini,Krauthgamer} (see the discussion in \cite{deshp2016sparse}).

This body of research raises the following question: is the condition $m \lesssim \sqrt{n}$ necessary for computationally efficient estimation below the BBP transition? Ignoring computational considerations, a weaker condition suffices: by exhaustively searching over $m \times m$ principal submatrices of $\bY$, exponential-time algorithms can recover the support of $\bv$ if $m \log p \lesssim n$ (for any fixed $\lambda > 1$), which is information-theoretically optimal \cite{deshp2016sparse, Yunzi}. Conversely, work of Berthet and Rigollet \cite{Rigollet} and ensuing papers strongly suggest that no polynomial-time algorithm can non-trivially estimate $\bv$ below the BBP transition if $m \gg \sqrt{n}$.  In summary, ``the sparse PCA problem demonstrates a fascinating interplay between computational and statistical barriers" \cite{deshp2016sparse}.



This work focuses on the ``critical" regime $m  \asymp \sqrt{n}$;  as discussed above, this is the coarsest sparsity level at which $\bv$ is believed recoverable below the BBP transition in polynomial time. 
Accordingly, we assume $m/\sqrt{n}\rightarrow \beta \in (0, \infty)$ and, as in \cite{amini, Krauthgamer}, work under the spiked model (\ref{spiked_model}) 
with $v_i\in \{0,\pm1/\sqrt{m}\}$. We consider generalized covariance thresholding (GCT), which computes the maximum eigenvalue $\lambda_1$ and corresponding eigenvector $\bu_1$ of $\bK(f) \in \mathbb{R}^{p \times p}$, defined as follows:
\begin{align}    \label{0} \hspace{3cm}
(\bK(f))_{ij} \coloneqq \begin{dcases} \frac{1}{\sqrt{n}} f( \sqrt{n}  \hspace{.05em} \bY_{ij}) & i \neq j , \\ 0 & i = j     ,
\end{dcases}  & \hspace{2cm} i,j \in [p] ,
\end{align}
where $f:\mathbb{R} \rightarrow \mathbb{R}$ is a kernel function. We estimate the support of $\bv$ by hard thresholding the entries of $\bu_1$. CT corresponds to the hard-thresholding kernel $\eta_h(x,t) \coloneqq x \cdot {\bf 1}_{|x| \geq t}$ in \cite{Krauthgamer} and the soft-thresholding kernel $\eta_s(x, t) \coloneqq \mathrm{sign}(x) \cdot(|x|-t)_+$ in \cite{deshp2016sparse}.

\subsection{Summary of Contributions} \label{sec:contributions}

The objective of this paper is to develop a comprehensive theory of GCT. Specifically, we answer in the affirmative the following question: does the high-dimensional PCA theory outlined above  (namely, (\ref{asdfzxcv}) and (\ref{asdfzxcv-1})) extend to GCT?
Moreover, we resolve an open question of
Desphande and Montanari \cite{deshp2016sparse}:
under what range of  sparsity levels $\beta$ does CT (or GCT)  (1) outperform PCA by producing a non-trivial estimate of $\bv$ below the BBP transition, and (2) consistently recover the support of $\bv$?   We now summarize our contributions: 
\begin{enumerate} 
    \item 
    For a general class of kernel functions, we establish a phase transition, analogous to the BBP transition.\footnote{For brevity, we describe here a slightly simplified version of our results.} Above a signal strength threshold $\lambda_*(f,\gamma,\beta)$---depending on $f$, the limiting aspect ratio $\gamma$, and the limiting sparsity level $\beta$---GCT is informative: the spectrum of $\bK(f)$ contains an outlier eigenvalue and the cosine similarity between $\bu_1$ and $\bv$ is non-trivial. Below $\lambda_*(f,\gamma,\beta),$ $\bu_1$ is asymptotically orthogonal to $\bv$. We provide limiting formulas for eigenvalue and eigenvector inconsistency,  analogous to (\ref{asdfzxcv}) and (\ref{asdfzxcv-1}):
\begin{align}  \label{cvbn}
     \lambda_1&\xrightarrow{a.s.} \begin{dcases}
       \psi(f,\gamma,\beta)  & \lambda  > \lambda_* (f,\gamma,\beta) , \\  \lambda_+ & \lambda \leq \lambda_* (f,\gamma,\beta), 
    \end{dcases} \\ \label{cvbn2}
    \langle \bu_1, \bv \rangle^2 &\xrightarrow{a.s.} \begin{dcases}
        \theta^2 (f,\gamma,\beta)  & \lambda  > \lambda_* (f,\gamma,\beta) , \\  0 & \lambda \leq\lambda_* (f,\gamma,\beta).
    \end{dcases}
\end{align} 
Expressions for $\psi(f,\gamma,\beta)$ and $\theta^2(f,\gamma,\beta)$ are provided in Section \ref{sec:results}.  

Furthermore, above $\lambda_*(f,\gamma,\beta)$, the coordinates of $\bu_1$ concentrate tightly around those of $\theta(f,\gamma,\beta)\bv$. Consequently, appropriate hard-thresholding of $\bu_1$  recovers the support of $\bv$ {\it exactly}, with high probability. 

  \item We  prove the existence of and characterize an optimal kernel function (depending on $\beta$) with phase transition 
  \[
  \lambda_*(\gamma,\beta) \coloneqq \inf_{f \in L^2(\phi)} \lambda_*(f,\gamma,\beta) , 
  \]
  where $L^2(\phi)$ is the set of odd $L^2$ functions with respect to the Gaussian measure. With this kernel choice, GCT strictly outperforms  PCA for all values of $\beta \in (0,\infty)$, and as $\beta \rightarrow \infty$, GCT and PCA coincide (see Figure \ref{fig:enter-label}). 
  Numerical calculations suggest that CT is nearly optimal, and we propose to adaptively select the thresholding level to maximize the (normalized) spectral gap. That is, we select the thresholding level that  empirically produces the most ``clear" outlier eigenvalue.

    
    \item To prove (1), we develop sharp bounds on the coordinate fluctuations of kernel principal components. These bounds are based on novel approximation guarantees for deterministic equivalents of kernel matrices---Theorems \ref{lem:quad_forms} and \ref{lem:quad_formsX}---which achieve optimal rates of convergence and may be of independent interest. General-purpose entrywise eigenvector perturbation bounds (such as those in \cite{abbe}) are insufficient for our purposes. 
     
\end{enumerate}

\noindent 

The paper is organized as follows: Section \ref{sec:results} presents our main results, Section \ref{sec:2.4} contains numerical calculations and simulations, Section \ref{sec:iso_law} states deterministic equivalents for kernel matrices, and Section \ref{sec:proofs} and Appendices \ref{sec:appendix1}--\ref{sec:poly approx} contain proofs.


\begin{figure}[]
 \begin{center}
 \def\myLength{5}
 \begin{tikzpicture}
    \node[scale=.9] (scaled) {
\begin{tikzpicture}[scale=2]

\fill[green!20, opacity=0.5] (0,2) -- (\myLength,2) -- (\myLength,3) -- (0,3) -- cycle;

\fill[blue!20, opacity=0.5] (0,1) -- plot[domain=0:\myLength,samples=100] (\x,{0. + 2*(1 - 1/(\x+1)^1.25)})  -- (\myLength,2) -- (0,2) -- cycle;

\fill[red!20, opacity=0.5] (0,0)  -- plot[domain=0:\myLength,samples=100] (\x,{0 + 2*(1 - 1/(\x+1)^1.25)}) -- (\myLength,0) -- cycle;

\draw[->] (0,0) -- (\myLength,0) node[right] {};
\draw[->] (0,0) -- (0,3) node[above] {};

\node at (\myLength,-0.15) {$\infty$};
\node at (0,-0.15) {$0$};
\node at (\myLength/2,-0.25) {{\large $\beta$}};

\node at (-0.15,0) {$1$};
\node at (-0.15,3) {$\infty$};
\node at (-0.25,1.5) {{\large $\lambda$}};
\draw[thick, line width=1.2pt, green!60!black] (0,2) -- (\myLength,2) node[right] {};
\node[green!60!black] at (4.25,2.15) {{\large $\lambda = 1 + \sqrt{\gamma}$}};
\node[green!60!black] at (1,2.4) {{\large $({\rm I})$ PCA succeeds}};
\draw[thick, line width=1.2pt, blue!100!black] plot[domain=0:\myLength,samples=100] (\x,{0 + 2*(1 - 1/(\x+1)^1.25)}) 
node[right] {};
\node[blue!100!black, rotate = 7] at (2.85,1.77) {{\large $\lambda = \lambda_*(\gamma,\beta)$}};
\node[blue!100!black] at (1.,1.7) {{\large $({\rm II})$ GCT succeeds}};
\node[red!60!black] at (2.8,0.7) {{\large $({\rm III})$ GCT fails}};
\end{tikzpicture} };
\end{tikzpicture}
    \caption{Stylized  illustration of our results as $p/n \rightarrow \gamma$. Area (I) is the recovery region of PCA, the union of areas (I) and (II) is the recovery region of  GCT, and (III) is the GCT impossibility region. The boundary between regions (II) and (III), the curve $\lambda_*(\gamma, \beta)$, 
    is the phase transition location of the optimal kernel function, characterized in Section \ref{sec:2.3}. As $\beta \rightarrow \infty$, $\lambda_*(\gamma,\beta) \rightarrow 1+ \sqrt{\gamma}$ (the BBP transition).  
    }
    \label{fig:enter-label}
\end{center}
\end{figure}

\subsection{Related Work} \label{sec:RW}

Deshpande and Montanari \cite{deshp2016sparse} proved that CT recovers the support of $\bv$ with high probability if $m \leq C(\lambda)\sqrt{n}$, thereby confirming a conjecture of \cite{Krauthgamer}. Their analysis is non-asymptotic and applies to a more general spiked model than  we consider here. 
El Amine Seddik, Tamaazousti, and Couillet \cite{CouilletSeddik} study a variant of GCT in the sub-critical sparsity regime $m \ll \sqrt{n}$. They do not establish a phase transition or consider support recovery. 

Recently, Novikov \cite{Novikov} proposed a novel sparse PCA algorithm building on ideas from the planted clique problem \cite{alon1998finding}. Given an integer $k$, the algorithm recovers $\bv$ with high probability if 
\[\lambda \geq \frac{Cm}{\sqrt{kn}}\sqrt{\log(2+kp/m^2)} \] and runs in time $np^{O(k)}$ (GCT runs in time $O(np^2)$). In the critical regime where $p/n \rightarrow \gamma$ and $m/ \sqrt{n} \rightarrow \beta$, this condition reduces to $\lambda \geq C\beta \sqrt{(\log k)/k}$. Thus, for any fixed $\lambda > 1$ and $\beta \in (0, \infty)$, recovery is possible in polynomial time by choosing a sufficiently large constant $k$. Novikov does not establish a phase transition or derive exact limits as in (\ref{cvbn}) and (\ref{cvbn2}). We remark that the algorithm is highly tailored to the spiked model \eqref{spiked_model}, making it primarily of theoretical interest.


A related body of work, including \cite{rangan2012iterative,deshp2014sparse,lesieur2015phase,Flo1,Perry,Flo3,Venkat}, considers estimation in the spiked model \eqref{spiked_model} where 
$v_1, \ldots, v_p$ are i.i.d.\ samples from a  (fixed) prior distribution.  By assuming a prior with a positive point mass on zero, this framework covers sparse PCA in the \emph{linear} sparsity regime where $m \asymp n$. Deshpande and Montanari \cite{deshp2014sparse} and Perry et al.\ \cite{Perry} considered a Bernoulli prior: $v_1, \ldots, v_p \sim \mathrm{Ber}(\rho)$. 
While non-trivial estimation of $\bv$ is impossible below the BBP transition, PCA becomes sub-optimal above the transition when $\rho$ is sufficiently small. Specifically, there exists a critical value $\rho_c \leq .05$, such that if $\rho<\rho_c$, an efficient approximate message passing (AMP) algorithm is Bayes optimal \cite{deshp2014sparse}. For $\rho>\rho_c$, PCA is conjectured to be optimal among polynomial-time algorithms \cite{Perry}.

DT and CT are connected to the works of Bickel and Levina \cite{bickel08}, Ma \cite{zongming}, and Cai, Ma, and Wu \cite{Tony}, all of which use thresholding to induce sparsity.  In contrast to the spiked model, the first paper studies estimation of covariance matrices with $L^q$ quasi-norm sparse rows, where $q \in [0,1)$. 
The third paper considers minimax estimation of the principal subspace of spiked models of the form $\bSigma = \bV \bLambda \bV^\top + \bI_p$, where $\bV$ has $L^q$-sparse rows. They develop a data-driven algorithm based on DT which runs in polynomial time and achieves the minimax rate of convergence over a subset of the parameter space. Specializing their results to our model and $L^0$-sparsity, their  estimator is suboptimal by a logarithmic factor: it requires $m \leq C(\lambda)\sqrt{n/\log p}$ to provably outperform PCA (see Theorem 7 and Proposition 1 of \cite{Tony}). 

Rather than thresholding, many sparse PCA methods induce sparsity through $L_1$-penalized regression. We intend to study the theoretical properties of the elastic-net-based method developed by Zou, Hastie, and Tibshirani \cite{zou2006sparse} in future work. 


Key studies of high-dimensional kernel matrices include the works of El Karoui \cite{karoui2010spectrum}, Cheng and Singer, \cite{cheng2013spectrum}, and Fan and Montanari \cite{fan2019spectral}.
El Karoui considered kernel matrices with off-diagonal elements $f(\bY_{ij})$ (instead of $n^{-1/2} f(\sqrt{n} \hspace{.05em}\bY_{ij})$ as in (\ref{0})), in which case the kernel matrix acts as a spiked covariance matrix, plus (1) a non-informative low-rank term and (2) a multiple of the identity. Consequently, the leading kernel principal components are asymptotically orthogonal to  $\bv$ below the BBP transition. 
The second and third papers pertain to kernel matrices of isotropic data (that is, $\bSigma = \bI_p$). Cheng and Singer characterized the limiting spectral distribution (LSD) of $\bK(f)$ as defined in (\ref{0}), which is the additive free convolution of the (scaled) Marchenko--Pastur and semicircle laws; see Section \ref{sec:ChenSinger} for details. Subsequently, Fan and Montanari proved that the maximum eigenvalue of $\bK(f)$ converges to the supremum or ``upper edge" of this LSD's support. 


Nonlinear transformations of spiked matrices is an active area of research not limited to sparse PCA, with recent contributions from  Liao, Couillet, and Mahoney \cite{liao2020sparse}, Guionnet et al.\ \cite{Guionnet_nonlin}, Feldman \cite{feldman23}, Wang, Wu, and Fan \cite{wang2024nonlinear}, and Mergny et al.\ \cite{flo4}.
The first paper assumes delocalized (dense) spikes and investigates thresholding and quantization kernels (which reduce PCA's computational cost), finding that their application minimally impacts estimation. The second and third papers study a related model in which a transform $f$ is applied elementwise to the data $\bZ \bSigma^{1/2}$, rather than to the sample covariance $\bY$. Similarly to \cite{liao2020sparse}, spikes are delocalized.  For a more detailed discussion of the technical distinctions between our work and \cite{CouilletSeddik, liao2020sparse, Guionnet_nonlin, feldman23}, see (\ref{eq:taylor}) and the following comments.

\subsection{Model and Notations}

In this section, we summarize the definitions introduced above for ease of reference and state additional notations. 
As in \cite{amini, Krauthgamer}, let $ \bSigma \coloneqq (\lambda -1) \bv \bv^\top  + \bI_p $, where $\lambda \geq 1$ is constant, $m \coloneqq \|\bv\|_0$, and $\bv$ is deterministic (or independent of $\bZ$) and of the form 
\begin{align} \label{eq:v_form}
 &   \sqrt{m} v_1, \sqrt{m} v_2, \ldots, \sqrt{m} v_m \in \{-1, 1\},  & v_{m+1} = v_{m+2} = \cdots = v_p = 0 .
\end{align}
 Let $\bZ \in \mathbb{R}^{n \times p}$ have rows  $\bz_1, \ldots, \bz_n \stackrel{i.i.d.}{\sim} \mathcal{N}(0, \bI_p)$ and \begin{align*}
&  \bS \coloneqq n^{-1}      \bZ^\top \spb \bZ
 , &  \bY \coloneqq \bSigma^{1/2} \bS \bSigma^{1/2} , && \bX \coloneqq \bY - \bS .  \end{align*} 
This model naturally extends from the first to the second moment the sparse Gaussian sequence model with a Rademacher prior \cite{Johnstone04}. The assumptions that the non-zero entries of $\bv$ are the first $m$ and that $\bv$ is sparse in the canonical basis are without loss of generality. 

Define kernel matrices $\bK(f), \bK_0(f) \in \mathbb{R}^{p \times p}$ as follows: for $i, j \in [p]$, 
\begin{align*} & (\bK(f))_{ij} \coloneqq \begin{dcases} \frac{1}{\sqrt{n}} f( \sqrt{n}  \hspace{.05em} \bY_{ij}) & i \neq j , \\ 0 & i = j     ,
\end{dcases} & \quad  
(\bK_0(f))_{ij} \coloneqq \begin{dcases} \frac{1}{\sqrt{n}} f( \sqrt{n}  \hspace{.05em} \bS_{ij}) & i \neq j , \\ 0 & i = j     .
\end{dcases}  
\end{align*}
The kernel matrix of noise, $\bK_0(f)$, is unobserved.  We work within a standard asymptotic framework of random matrix theory, where
$n,p,m \rightarrow \infty$ and
\begin{align} \label{asdf2}
&    \frac{p}{n} \rightarrow  \gamma \in (0, \infty)  , && \frac{m}{\sqrt{n}} \rightarrow \beta \in (0, \infty). 
 \end{align}

Let $\lambda_1 \geq \lambda_2 \geq \cdots \geq \lambda_p$ denote the eigenvalues of $\bK(f)$ and $\bu_1, \bu_2, \ldots, \bu_p$ the corresponding eigenvectors. Non-bold symbols will be used to denote the elements of certain matrices and vectors. For example, $\bu_1 = (u_{11}, u_{12}, \ldots, u_{1p})$ and $\bZ = (z_{ij}:1 \leq i \leq n, 1 \leq j \leq p)$.  
The soft- and hard-thresholding operators are  $\eta_s(x, t) \coloneqq \mathrm{sign}(x) \cdot(|x|-t)_+$ and $\eta_h(x,t) \coloneqq x \cdot {\bf 1}_{|x| \geq t}$. 
Let $\phi(z) \coloneqq (2\pi)^{-1/2} e^{-z^2/2}$ denote the Gaussian density and define the inner product on the space of real-valued functions
\[
 \langle f, g \rangle_\phi \coloneqq \int_{\mathbb{R}} f(z) g(z) \phi(z) dz . 
\]
Let $\{h_\ell\}_{\ell \in \mathbb{N}}$ be the Hermite polynomials (normalized such that $\langle h_k, h_\ell \rangle_\phi = {\bf 1}_{k=\ell}$), $\|f\|_\phi^2 \coloneqq \langle f, f \rangle_\phi$, and $a_\ell \coloneqq \langle f, h_\ell \rangle_\phi$; $\{a_\ell\}_{\ell \in \mathbb{N}}$ are  the Hermite coefficients of $f$. We will use $\odot$  to denote the Hadamard (elementwise) product. 

Results for polynomial kernel functions will be stated using the notion of {\it stochastic domination} from \cite{bloe2014isotropic}:  for two sequences of nonnegative random variables $\xi_n$ and $\zeta_n$, we say $\xi_n$ is stochastically dominated by $\zeta_n$ and write $\xi_n \prec \zeta_n$ if for all $\eps, D > 0$,  there exists $n_{\eps,D} \in \mathbb{N}$ such that
\begin{align*}
  \P ( \xi_n > n^\eps \zeta_n ) \leq n^{-D}
\end{align*}
for all $n \geq n_{\eps, D}$. 
If $\xi_n$ is not assumed nonnegative and $|\xi_n| \prec \zeta_n$, we may write $\xi_n = O_\prec(\zeta_n)$. 

\subsection{The Spectrum of $\bK_0(f)$
} \label{sec:ChenSinger}

The following result, Theorem 3.4 of Cheng and Singer \cite{cheng2013spectrum}, characterizes the LSD of the kernel matrix of noise $\bK_0(f)$.\footnote{For brevity, we do not state Theorem 3.4 of \cite{cheng2013spectrum} in its full generality.} Recall that $a_\ell \coloneqq \langle f, h_\ell \rangle_\phi$ is the $\ell$-th Hermite coefficient of $f$.

\begin{theorem}\label{thrm:stj_trans} Let $a_0 = 0$, $\|f\|_\phi^2< \infty$, and $f$ be bounded on compact sets. The ESD of $\bK_0(f)$ converges weakly almost surely to a continuous probability measure $\mu$ on $\mathbb{R}$.   The Stieltjes transform $s(z)$ of $\mu$ solves the equation
\begin{align} \label{eq:stj_trans}    
-\frac{1}{s(z)} = z + a_1 \Big( 1 - \frac{1}{1+a_1 \gamma s(z)}\Big) + \gamma (\|f\|_\phi^2 - a_1^2) s(z) .
\end{align}
For $z \in \mathbb{C}^+$, equation (\ref{eq:stj_trans}) has a unique solution $s(z)$ with $\Im(s(z)) > 0$. 
\end{theorem}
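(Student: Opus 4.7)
The plan is to split $f$ via its Hermite expansion $f = \sum_{\ell \geq 1} a_\ell h_\ell$ (convergent in $L^2(\phi)$, with the $\ell = 0$ term absent because $a_0 = 0$), and analyze the linear ($\ell = 1$) and purely nonlinear ($\ell \geq 2$) contributions to $\bK_0(f)$ separately. Writing $f^\perp := f - a_1 h_1$, we have $\bK_0(f) = a_1 \bK_0(h_1) + \bK_0(f^\perp)$. Since $h_1(z) = z$, the linear part satisfies $(\bK_0(h_1))_{ij} = \bS_{ij}$ for $i \neq j$, so $a_1 \bK_0(h_1) = a_1(\bS - \diag(\bS))$. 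Because $\diag(\bS) \to \bI_p$ in operator norm, the limiting ESD of $a_1 \bK_0(h_1)$ coincides with that of $a_1(\bS - \bI_p)$---the Marchenko--Pastur law with parameter $\gamma$, scaled by $a_1$ and shifted by $-a_1$.

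Next, I would show that $\bK_0(f^\perp)$ behaves asymptotically like a symmetric Wigner-type matrix with i.i.d.\ entries of mean $0$ and variance $(\|f\|_\phi^2 - a_1^2)/n$, so that its ESD converges to the semicircle law with variance $\gamma(\|f\|_\phi^2 - a_1^2)$. The key ingredients are: (i) the joint CLT $\sqrt{n}\,\bS_{ij} \xrightarrow{d} \cN(0,1)$ for $i \neq j$ and approximate decoupling of entries across distinct $(i,j)$; (ii) asymptotic orthogonality of $\{h_\ell(\sqrt{n}\,\bS_{ij})\}$ across degrees $\ell$ under the limiting Gaussian marginals, so that cross-degree cross-terms are negligible; and (iii) a moment-method enumeration of closed walks matching the Catalan numbers, in the spirit of Wigner's theorem. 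The infinite tail in $\ell$ is controlled using $\|f\|_\phi^2 < \infty$ together with the boundedness of $f$ on compact sets.

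The central step is to establish that $\bK_0(h_1)$ and $\bK_0(f^\perp)$ are \emph{asymptotically free}, despite being built from the same Gaussian matrix $\bZ$, so that the limiting ESD $\mu$ is the free additive convolution of the shifted Marchenko--Pastur and the scaled semicircle laws. In the mixed-moment expansion $\E[p^{-1}\tr(P(\bK_0(h_1))\,Q(\bK_0(f^\perp))\cdots)]$, each edge of a closed walk on $[p]$ is labelled ``linear'' or ``nonlinear.'' Hermite orthogonality against the (approximately Gaussian) distribution of $\sqrt{n}\,\bS_{ij}$ kills every cross-pairing of a linear edge with a nonlinear edge, as well as pairings of nonlinear edges of differing degrees; what remains is a genuinely disjoint combinatorics of Wishart moments on linear edges and Wigner moments on nonlinear edges, which is exactly the moment structure of freely convolved MP and semicircle laws.

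Finally, I would translate the free additive convolution into Stieltjes-transform form via $R$-transforms. The semicircle of variance $\sigma^2 = \gamma(\|f\|_\phi^2 - a_1^2)$ has $R$-transform $\sigma^2 s$, and one checks by inverting the standard MP equation that $a_1(\bS - \bI_p)$ has $R$-transform $a_1^2 \gamma s/(1 - a_1 \gamma s)$. Summing and applying the inversion formula $z = R_\mu(-s(z)) - 1/s(z)$ (with the convention $s(z) = \int (x-z)^{-1}\, d\mu(x)$) yields (\ref{eq:stj_trans}) after rearrangement, and uniqueness of $s(z)$ with $\Im s(z) > 0$ on $\mathbb{C}^+$ is a standard consequence of analyticity properties of Stieltjes transforms of probability measures. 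The main obstacle is the asymptotic freeness step: because $\bZ$ is shared between the two matrices, off-the-shelf freeness theorems do not apply, and one must perform a careful direct moment enumeration; a further technical subtlety is handling infinitely many Hermite degrees uniformly rather than truncating $f$ to a fixed polynomial.
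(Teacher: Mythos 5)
First, a framing point: the paper does not prove this statement. Theorem \ref{thrm:stj_trans} is imported verbatim (in specialized form) from Theorem 3.4 of Cheng and Singer \cite{cheng2013spectrum}, so there is no internal proof to compare against; what follows compares your sketch to the cited argument and to the machinery this paper builds on it. Judged on its own terms, your blueprint has the right skeleton and matches the known structure of the result: the decomposition $\bK_0(f) = a_1\bK_0(h_1) + \bK_0(f-a_1h_1)$, the identification of the linear part with a scaled and shifted Marchenko--Pastur matrix, the identification of the nonlinear part with a Wigner-type matrix of entrywise variance $(\|f\|_\phi^2-a_1^2)/n$, and the conclusion that $\mu$ is the free additive convolution of the two. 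Your $R$-transform algebra is also correct: $R(w)=a_1^2\gamma w/(1-a_1\gamma w)$ for $a_1(\bS-\bI_p)$ and $R(w)=\gamma(\|f\|_\phi^2-a_1^2)w$ for the semicircle piece do sum to reproduce (\ref{eq:stj_trans}) under the inversion $z=R(-s)-1/s$.

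The genuine gap is that the step carrying essentially all of the difficulty---your asymptotic-freeness claim---is named but not executed, and the sketch understates what it requires. The entries $h_\ell(\sqrt{n}\,\bS_{ij})$ are only asymptotically Gaussian, are weakly dependent across pairs sharing a column of $\bZ$, and have means of order $n^{-1/2}$ rather than zero; ``Hermite orthogonality under the limiting Gaussian marginals'' therefore controls only the leading term of each cross-moment, and one must show that the $O(n^{-1/2})$ corrections do not accumulate over the $p^{O(k)}$ closed walks contributing to a $k$-th mixed moment. Likewise, the infinite Hermite tail cannot be dispatched by $\|f\|_\phi^2<\infty$ plus local boundedness at the level of moments alone; a truncation-plus-operator-norm argument (as in Lemma \ref{lem:polynomial_approx} here, or in \cite{fan2019spectral}) is needed. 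This is precisely why the cited proof does not invoke an off-the-shelf freeness theorem: Cheng--Singer (and Fan--Montanari after them) work with an explicit entrywise decomposition of $\bK_0(f)$ into the linear piece, an effectively independent noise array, and controllable remainders, and then obtain (\ref{eq:stj_trans}) by direct moment/resolvent computations on that decomposition. In short, your proposal is a correct roadmap with correct endpoint algebra, but the middle of the route---where the theorem actually lives---remains to be built.
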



\begin{corollary}\label{cor:stj_trans}(see Corollary 3 of \cite{liao2020sparse})
   In the setting of Theorem \ref{thrm:stj_trans}, assume $a_1 > 0$ and let $\psi(s)$ denote the functional inverse of $s(z)$:
\begin{align} \label{eq:stj_trans2}
\psi(s) \coloneqq -\frac{1}{s} - a_1 \Big( 1 - \frac{1}{1+a_1 \gamma s}\Big) - \gamma (\|f\|_\phi^2 - a_1^2) s
\end{align}
The supremum or upper edge of $\mathrm{supp}(\mu)$ is given by \[\lambda_+ \coloneqq \psi(s_0),\] where $s_0$ is the unique solution of $\psi'(s) = 0$ in the interval $(-1/(a_1 \gamma), 0)$. 
\end{corollary}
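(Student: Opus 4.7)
The plan is to exploit the classical duality between a Stieltjes transform and its functional inverse at the edge of a compactly supported spectral measure. I would first establish the uniqueness claim by direct computation: differentiating (\ref{eq:stj_trans2}) twice gives
\[
\psi''(s) = -\frac{2}{s^3} + \frac{2 a_1^3 \gamma^2}{(1+a_1 \gamma s)^3},
\]
which is strictly positive on $(-1/(a_1\gamma), 0)$, since $s<0$ makes the first term positive and $1 + a_1\gamma s \in (0,1)$ together with $a_1>0$ makes the second term positive. Hence $\psi'$ is strictly increasing on this interval. As $s \to 0^-$ the term $1/s^2$ drives $\psi'(s) \to +\infty$, while as $s \to (-1/(a_1\gamma))^+$ the term $-a_1^2\gamma/(1+a_1\gamma s)^2$ drives $\psi'(s) \to -\infty$. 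The intermediate value theorem therefore delivers a unique zero $s_0$ of $\psi'$ in $(-1/(a_1\gamma), 0)$.

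Next, I would identify $\psi(s_0)$ with $\lambda_+$ via two inequalities. For the first, observe that for $z > \lambda_+$ the Stieltjes transform $s(z) = \int (t-z)^{-1}\, d\mu(t)$ extends continuously from $\mathbb{C}^+$ to the real axis, is real-valued and strictly negative, strictly increasing in $z$, and satisfies $s(z) \to 0^-$ as $z \to \infty$. Writing $s_e := \lim_{z \to \lambda_+^+} s(z)$, the identity $\psi(s(z)) = z$ makes $\psi : (s_e, 0) \to (\lambda_+, \infty)$ a real-analytic bijection with $\psi' > 0$ throughout. Since the unique zero of $\psi'$ in $(-1/(a_1\gamma), 0)$ is $s_0$, we must have $s_e \ge s_0$, and monotonicity of $\psi$ on $(s_0, 0)$ gives $\lambda_+ = \psi(s_e) \ge \psi(s_0)$.

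For the reverse inequality I would construct a Stieltjes transform on the larger interval $(\psi(s_0),\infty)$ and appeal to uniqueness in Theorem \ref{thrm:stj_trans}. Because $\psi' > 0$ on $(s_0, 0)$ and $\psi(s) \to +\infty$ as $s \to 0^-$, the restriction $\psi|_{(s_0, 0)}$ is a real-analytic bijection onto $(\psi(s_0), \infty)$ whose inverse I denote $\widetilde{s}$. Using $\psi'(s_0) = 0$ and $\psi''(s_0) > 0$ together with the implicit function theorem, $\widetilde{s}$ admits a local analytic extension past $\psi(s_0)$ of the form $\widetilde{s}(z) = s_0 + c\sqrt{z - \psi(s_0)} + O(z - \psi(s_0))$, with the branch of the square root chosen so that $\widetilde{s}$ maps a complex neighborhood of $\psi(s_0)$ within $\mathbb{C}^+$ into $\mathbb{C}^+$. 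A monodromy argument then extends $\widetilde{s}$ to all of $\mathbb{C}^+$ with $\Im \widetilde{s} > 0$ and $\psi(\widetilde{s}(z)) = z$; uniqueness in Theorem \ref{thrm:stj_trans} forces $\widetilde{s} \equiv s$. Consequently $s$ is analytic on a neighborhood of $(\psi(s_0), \infty)$, so $\mathrm{supp}(\mu) \cap (\psi(s_0), \infty) = \emptyset$ and $\lambda_+ \le \psi(s_0)$, completing the proof.

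The main obstacle is the global extension of $\widetilde{s}$ to $\mathbb{C}^+$ while maintaining $\Im \widetilde{s} > 0$: this amounts to ruling out spurious branch points of $\psi$ off the real axis in the relevant region of the $s$-plane. This is precisely the standard square-root edge argument for self-consistent Stieltjes transform equations, and in practice I would import the result from the random matrix theory literature (e.g., Silverstein--Choi, or the detailed spectral analysis underlying \cite{cheng2013spectrum}) rather than redo it from scratch.
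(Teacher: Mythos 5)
The paper does not actually prove this corollary: it is imported verbatim as Corollary 3 of \cite{liao2020sparse} (itself an instance of the Silverstein--Choi characterization of the spectral edge), and the only place the paper engages with it is the remark in the proof of (\ref{q6j}) that $\psi(s_+)>\lambda_+$ iff $s_+\in(-1/(a_1\gamma),0)$ and $\psi'(s_+)>0$. Your sketch is therefore being compared against a citation, and it reproduces the standard argument correctly in outline. The elementary part is right and complete: $\psi''(s)=-2/s^3+2a_1^3\gamma^2/(1+a_1\gamma s)^3>0$ on $(-1/(a_1\gamma),0)$, the boundary limits $\psi'(s)\to+\infty$ as $s\to0^-$ and $\psi'(s)\to-\infty$ as $s\to(-1/(a_1\gamma))^+$ are correct, and uniqueness of $s_0$ follows. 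The direction $\lambda_+\ge\psi(s_0)$ is also essentially complete, though you silently use that $s(z)\in(-1/(a_1\gamma),0)$ for all real $z>\lambda_+$; this needs a one-line justification (e.g., $1+a_1\gamma s(z)$ cannot vanish because $\psi$ has a pole there while $\psi(s(z))=z$ is finite, and it is positive at $z=\infty$, so by continuity it stays positive).

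The substantive content of the corollary is the reverse inequality $\lambda_+\le\psi(s_0)$, i.e., that every point of $(\psi(s_0),\infty)$ lies outside $\mathrm{supp}(\mu)$. You correctly identify the mechanism (invert $\psi$ on $(s_0,0)$, extend $\widetilde s$ analytically to $\mathbb{C}^+$ with positive imaginary part, invoke uniqueness in Theorem \ref{thrm:stj_trans}, and conclude via continuity of $\mu$ that the density vanishes there), but you outsource the extension/monodromy step to the literature. That step is exactly where the work lies, so the proposal is not self-contained; however, since the paper itself discharges the entire corollary by citation, your treatment is no weaker than the paper's, and the parts you do carry out are correct. If you wanted a self-contained version, the cleanest route is the Silverstein--Choi ``inverse function'' criterion: show directly that for each $x=\psi(s)$ with $s\in(s_0,0)$ and $\psi'(s)>0$, the implicit function theorem yields an analytic local inverse of $\psi$ near $s$ taking $\mathbb{C}^+$ to $\mathbb{C}^+$, which by local uniqueness of the solution with $\Im>0$ must coincide with $s(\cdot)$ near $x$, so $s$ is real-analytic and real at $x$ and the (continuous) density vanishes there; this avoids the global monodromy argument entirely.
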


\section{Results} \label{sec:results}

Our results are organized into three sections. Sections \ref{sec:poly} and \ref{sec:Nonpoly} present our core technical findings, with Section \ref{sec:poly} covering polynomial kernels and Section \ref{sec:Nonpoly} non-polynomial kernels. Section \ref{sec:2.3} discusses the  statistical implications of these results.

Before proceeding, we define a key function:
\begin{align} \label{eq:tau_def}
    \tau(f,\beta,\lambda) \coloneqq \sum_{\ell=3}^\infty \frac{a_\ell (\lambda -1)^\ell}{\sqrt{\ell !} \beta^{\ell-1}} .
\end{align}
We shall see that the spectral properties of $\bK(f)$ are determined by $\lambda, a_1, \|f\|_\phi^2$, and $\tau$; in particular, the performance of GCT depends on $\beta$ only through $\tau(f,\beta,\lambda)$.  

\subsection{Polynomial Kernel Functions}\label{sec:poly}

Throughout this section, we assume $f$ is an odd polynomial,  $L \coloneqq \mathrm{deg}(f)$, and in addition to (\ref{asdf2}),
\begin{align} \label{asdf}
&    \frac{p}{n} = \gamma + O(n^{-1/2}) 
 \end{align} 
 Let $\beta_n \coloneqq  m/\sqrt{n} = \beta + o(1) .  $ For brevity, we will write $\tau$ in place of $\tau(f,\beta_n,\lambda)$ (note that $\tau$ contains at most $L$ non-zero terms).
 

 Our first result is that $\bK(f)$ is approximately the sum of three components: a (diagonal centered) sample covariance matrix $a_1 (\bY - \bI_p)$, a rank-one signal term proportional to $\bv \bv^\top $, and a noise matrix $\bK_0(f-a_1 h_1)$. 
 This third term acts as an (asymptotically) independent Wigner matrix with semicircular LSD. 
 
\begin{theorem}\label{thrm:poly1}
Let $f$ be an odd polynomial and (\ref{asdf}) hold. The matrix  \[ \bA(f) \coloneqq a_1 (\bY -  \bI_p) + \tau \cdot \bv \bv^\top  + \bK_0(f-a_1 h_1) \]
approximates $\bK(f)$ in operator norm:
\[
\|\bK(f) - \bA(f)\|_2 \prec n^{-1/4} . 
\]
\end{theorem}

In our view, this is a rather unexpected result. 
Consider the decomposition
\begin{align} \label{eq:taylor} 
    \bK(f) &= \sum_{\ell = 0}^L \frac{1}{\ell!} (\sqrt{n} \bX)^{\odot \ell} \odot \bK_0(f^{(\ell)}) .  
\end{align} 
In related studies such as \cite{liao2020sparse, Guionnet_nonlin, feldman23}, where the signal $\bv$ is dense (for example, if $\bv$ is generated uniformly on $\mathbb{S}^{p-1}$), Hadamard powers of  $\bX$ have vanishing  operator norm: for $\ell > 1$, $\|(\sqrt{n} \bX)^{\odot \ell}\|_2 \rightarrow 0$. Therefore, $\bK(f)$ is well approximated by $\bK_0(f) +\sqrt{n} \bX \odot \bK_0(f')$. In our setting, such powers of $\sqrt{n} \bX$ are no longer negligible.  Representing $f$ in the basis of Hermite polynomials and using the identity $h^{(\ell)}_k(z) = \sqrt{k!/(k-\ell)!} h_{k-\ell}(z)$, (\ref{eq:taylor}) becomes
\begin{equation}
\begin{aligned} 
    \bK(f) &= \sum_{\ell = 0}^L \sum_{k=\ell}^L \frac{a_k}{\ell!} \sqrt{\frac{k!}{(k-\ell)!}} \big( (\sqrt{n} \bX)^{\odot \ell} \odot   \bK_0(h_{k-\ell})\big) .
    \label{eq:taylor_f_L_2}
\end{aligned}
\end{equation} 
Theorem \ref{thrm:poly1} is a consequence of the convergence  
\[\big\|(\sqrt{n} \bX)^{\odot \ell} \odot \bK_0(h_k)\big\|_2 \xrightarrow{a.s.} 0\]
for $\ell, k > 0$---in short, this occurs  because $ \bX$ sparsifies $\bK_0(h_k)$, which is an array of weakly dependent elements with means converging to zero.\footnote{Let $i \neq j$. Since $\sqrt{n} \bS_{ij} \xrightarrow[]{d} \mathcal{N}(0,1)$ and $h_k$  orthogonal to $h_0$, the continuous mapping theorem implies  $\sqrt{n} \hspace{.08em}\E (\bK_0(h_k))_{ij} = \E \hspace{.08em} h_k(\sqrt{n}\bS_{ij}) \rightarrow \langle h_k, h_0 \rangle_\phi = 0$.}

The operator-norm convergence established in Theorem \ref{thrm:poly1} implies that the spectral properties of $\bK(f)$ and $\bA(f)$ are closely related (see Lemma 2.1 of \cite{karoui2010spectrum}). In particular, (1) the ESDs of $\bK(f)$ and $\bA(f)$ converge weakly almost surely to a common limit by Cauchy's interlacing inequality and (2) the leading eigenvalues of $\bK(f)$ and $\bA(f)$ are asymptotically equal. Leveraging this relationship, we obtain the following theorem: 

\begin{theorem} \label{thrm:poly2} In the setting of Theorem \ref{thrm:poly1}, assume $a_1 > 0$, $\tau \neq 0$, and define
\begin{align*} s_+ & \coloneqq  \frac{-a_1(\lambda + \gamma-1) - \tau + \sqrt{(a_1(\lambda + \gamma-1) + \tau)^2 - 4 a_1 \gamma \tau}}{2 a_1 \gamma \tau} , \\   \theta^2(x) & \coloneqq -\frac{1+\gamma x(a_1(2 + a_1 \gamma x)(1+a_1^2\gamma x^2) -x(1+a_1 \gamma x)^2 \|f\|_\phi^2)}{x(1+a_1\gamma x)(\tau + a_1(\lambda+ \gamma+2 \tau \gamma x -1))}  .
\end{align*}

\noindent  If $\psi'(s_+) > 0$, 
\begin{align} \label{asdfzxcv2}
 &   |\lambda_1 - \psi(s_+)| \prec n^{-1/2} , & \langle \bu_1, \bv \rangle^2  = \theta^2(s_+)  + O_\prec(n^{-1/2}) ,
\end{align}
and for $i \in [p]$,  assuming $\langle \bu_1, \bv \rangle \geq 0$ without loss of generality, 
\begin{align} \label{asdfzxcv3}
  u_{1i} = \theta(s_+) v_i + O_\prec(n^{-1/2}) .\end{align}
If $\psi'(s_+) \leq 0$, 
\begin{align}
&    \lambda_1 \xrightarrow{a.s.} \lambda_+ , 
& \langle \bu_1, \bv \rangle  \xrightarrow{a.s.} 0 . 
\end{align}
\end{theorem}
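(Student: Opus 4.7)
The plan is to leverage Theorem~\ref{thrm:poly1} to replace $\bK(f)$ by the more structured matrix $\bA(f)$, and analyze the latter as a low-rank perturbation of a ``pure-noise'' matrix. The bound $\|\bK(f)-\bA(f)\|_2 \prec n^{-1/4}$ controls $\lambda_1$ through Weyl's inequality, and, in the supercritical regime where a macroscopic spectral gap exists above the bulk, a Davis--Kahan argument transfers the eigenvector statements from $\bA(f)$ to $\bK(f)$. Write $\bA(f) = \bM + \bB$, where $\bM \coloneqq a_1(\bS - \bI_p) + \bK_0(f - a_1 h_1)$ and $\bB = \bA(f) - \bM$. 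The LSD of $\bM$ coincides (up to an $O_\prec(n^{-1/2})$ diagonal correction) with the Chen--Singer measure $\mu$ of Theorem~\ref{thrm:stj_trans}. Expanding $\bSigma^{1/2} = \bI_p + (\sqrt{\lambda}-1)\bv\bv^\top$ reveals that $\bB$ is a rank-$2$ perturbation with range $\mathrm{span}\{\bv, \bS\bv\}$:
\begin{equation*}
\bB \;=\; a_1(\sqrt{\lambda}-1)\bigl[\bv(\bS\bv)^\top + (\bS\bv)\bv^\top\bigr] + \bigl[a_1(\sqrt{\lambda}-1)^2\, \bv^\top\bS\bv + \tau\bigr]\bv\bv^\top.
\end{equation*}

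\textbf{Secular equation for the outlier.} Let $\bR_\bM(z) \coloneqq (z\bI_p - \bM)^{-1}$, set $\bP = [\bv \mid \bS\bv] \in \mathbb{R}^{p\times 2}$, and let $\bC \in \mathbb{R}^{2\times 2}$ be such that $\bB = \bP \bC \bP^\top$. For $z$ outside $\mathrm{spec}(\bM)$, the Schur complement identity reduces the eigenvalue equation to the $2 \times 2$ secular equation $\det(\bI_2 - \bC\, \bP^\top \bR_\bM(z)\bP) = 0$. I would next identify deterministic equivalents for the four entries of $\bP^\top \bR_\bM(z) \bP$: the quadratic form $\bv^\top \bR_\bM(z)\bv$ concentrates at the Stieltjes transform $s(z)$ of $\mu$, and the entries involving $\bS\bv$ are reduced to the same type of bilinear form via the resolvent identity
\begin{equation*}
a_1 \bR_\bM(z)\bS\bv \;=\; (z+a_1)\bR_\bM(z)\bv - \bv - \bR_\bM(z)\bK_0(f - a_1 h_1)\bv,
\end{equation*}
which follows from $\bM\bv = a_1(\bS - \bI_p)\bv + \bK_0(f - a_1 h_1)\bv$, together with its analogue on the left. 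The sharp $n^{-1/2}$-rate concentration of all these bilinear forms is the role of Lemmas~\ref{lem:quad_forms}--\ref{lem:quad_formsX}. After substitution and simplification using \eqref{eq:stj_trans}, the secular equation reduces to the quadratic
\begin{equation*}
a_1 \gamma \tau\, s^2 + \bigl[a_1(\lambda+\gamma-1) + \tau\bigr]s + 1 \;=\; 0
\end{equation*}
for $s = s(\lambda_1)$, whose root in $(-1/(a_1\gamma), 0)$ is precisely $s_+$; hence $\lambda_1 = \psi(s_+) + O_\prec(n^{-1/2})$. The condition $\psi'(s_+) > 0$ is equivalent to $s_+ > s_0$, i.e.\ $\psi(s_+) > \lambda_+$, so that the outlier lies strictly outside the bulk.

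\textbf{Eigenvector overlap and entrywise bound.} The projector onto $\bu_1$ admits the contour representation $\bu_1 \bu_1^\top = -\frac{1}{2\pi i}\oint_{|z-\lambda_1|=r} \bR_{\bA(f)}(z)\, dz$ for small $r > 0$. Applying Woodbury to express $\bR_{\bA(f)}$ via $\bR_\bM$ and the $2 \times 2$ block $\bP^\top \bR_\bM(z) \bP$ already computed, the residue at $z = \lambda_1$ is evaluated in closed form; algebraic simplification using \eqref{eq:stj_trans} then delivers $\langle \bu_1, \bv \rangle^2 = \theta^2(s_+) + O_\prec(n^{-1/2})$. The entrywise statement $u_{1i} = \theta(s_+) v_i + O_\prec(n^{-1/2})$ follows from the same contour representation applied to $\be_i^\top \bR_{\bA(f)}(z)\bv$, again using Lemmas~\ref{lem:quad_forms}--\ref{lem:quad_formsX} to control $\be_i^\top \bR_\bM(z)\bv$ and $\be_i^\top \bR_\bM(z)\bS\bv$ uniformly in $i \in [p]$. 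In the subcritical case $\psi'(s_+) \leq 0$, the secular equation has no root outside $[\lambda_-, \lambda_+]$; hence $\lambda_1 \to \lambda_+$ almost surely by a standard no-outlier argument, and $\langle \bu_1, \bv\rangle \to 0$ follows from edge-regularity of $\bv^\top \bR_\bM(z)\bv$ combined with the vanishing of the residue at $\lambda_+$.

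\textbf{Main obstacle.} The central technical hurdle is establishing the sharp \emph{anisotropic} concentration of bilinear forms of $\bR_\bM(z)$---and of $\bR_\bM(z)\bK_0(f - a_1 h_1)\bR_\bM(z)$---against the random, $\bM$-correlated vector $\bS\bv$, at the $O_\prec(n^{-1/2})$ precision claimed. Since $\bM$ is a \emph{sum} of a Wishart-type matrix and a zero-mean kernel matrix, standard local laws for either component alone do not directly apply; this is precisely what Lemmas~\ref{lem:quad_forms}--\ref{lem:quad_formsX} furnish, via a carefully adapted leave-one-out scheme that jointly tracks the two noise components at the required precision.
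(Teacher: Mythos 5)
Your overall architecture---a low-rank-plus-noise decomposition, a $2\times 2$ secular equation via Schur/Woodbury, and overlaps by contour integration---is essentially the paper's. The proposal fails, however, at the very first step. Transferring from $\bA(f)$ to $\bK(f)$ via Weyl's inequality and Davis--Kahan can only yield $|\lambda_1 - \psi(s_+)| \prec n^{-1/4}$ and an $\ell^2$ eigenvector bound at the same rate, because $\|\bK(f) - \bA(f)\|_2 \prec n^{-1/4}$ is all that Theorem~\ref{thrm:poly1} gives in operator norm. This falls short of the $O_\prec(n^{-1/2})$ rates claimed in \eqref{asdfzxcv2}; the paper's remark after Theorem~\ref{thrm:SR} calls this out explicitly. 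Moreover, Davis--Kahan is an $\ell^2$ perturbation bound and cannot deliver the entrywise statement \eqref{asdfzxcv3}: the $p - m$ off-support coordinates of $\bu_1$ must be controlled individually, not in aggregate, since a single misbehaving coordinate of size $\gg n^{-1/4}$ would not be seen by $\ell^2$ control yet would ruin support recovery.

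The paper never performs a hard replacement of $\bK$ by $\bA$. Writing $\bK = \bV\bLambda\bV^\top + \bK_0 + \bDelta$ with $\bDelta$ equal to $\bK - \bA$ up to a rank-one, $O_\prec(n^{-1/2})$ diagonal correction, and setting $\bR_\bDelta \coloneqq (\bK_0 + \bDelta - z\bI_p)^{-1}$, the Woodbury secular equation and the contour integrals involve $\bV^\top \bR_\bDelta \bV$, not $\bV^\top \bR_0 \bV$. The gain is in passing from $\bR_\bDelta$ to $\bR_0$ via the second-order Neumann expansion $\bR_\bDelta = \bR_0 - \bR_0\bDelta\bR_0 + \bR_0\bDelta\bR_\bDelta\bDelta\bR_0$: the remainder has operator norm $\prec \|\bDelta\|_2^2 \prec n^{-1/2}$, and the first-order correction $\bV^\top\bR_0\bDelta\bR_0\bV$---which is a priori only $\prec n^{-1/4}$ in operator norm---fluctuates at scale $O_\prec(n^{-1/2})$ when tested against the columns of $\bV$, thanks to the sparsity of $\bv$ and the localization of $\bK - \bA$ on $\mathrm{supp}(\bv)\times\mathrm{supp}(\bv)$. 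That improvement is precisely Lemma~\ref{lem:quad_formsX}; it is not a side estimate, as your sketch treats it, but the engine of the sharp rate. Your Weyl/Davis--Kahan step discards this structure outright, so the route you describe proves at best a weakened version of the theorem with $n^{-1/4}$ rates and no entrywise bound.
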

We note that  $\psi'(s_+) > 0$ implies $\psi(s_+) > \lambda_+$ and $\theta^2(s_+)  > 0$---see the proofs of (\ref{q6j}) and (\ref{q7j}) in Section \ref{sec:proofs}. Thus, if $\psi'(s_+) > 0$, GCT is informative: $\lambda_1$ is an outlier eigenvalue and the cosine similarity between $\bu_1$ and $\bv$ is non-trivial. Moreover, according to (\ref{asdfzxcv3}), the coordinates of $\bu_1$ concentrate tightly around those of $\theta(s_+)\bv$. Consequently, appropriate hard-thresholding of $\bu_1$ recovers the support of $\bv$ {\it exactly}---see Corollary \ref{thrm:SR}. On the other hand, if $\psi'(s_+) \leq 0$, GCT is non-informative: $\lambda_1$ converges to $\lambda_+$, the supremum of $\mathrm{supp}(\mu)$, and $\bu_1$ is asymptotically orthogonal to $\bv$. For comments on the  cases $a_1 \leq 0$ and $\tau = 0$, which Theorem \ref{thrm:poly2} excludes, see Remark \ref{rem1}.

\begin{remark}
    We believe the convergence rates in Theorem \ref{thrm:poly2} are optimal as they match results for the spiked model given in \cite{bloe2014isotropic, bloepca} (for example,  (\ref{asdfzxcv2}) and (\ref{asdfzxcv3}) compare to Theorems 2.3 and  2.16 of \cite{bloepca}). In contrast, relating $\lambda_1$ to the maximum eigenvalue of $\bA(f)$ using Weyl's inequality and Theorem \ref{thrm:poly1} yields 
\begin{align*}
|\lambda_1  - \psi(s_+)| & \leq \big|\lambda_1  - \lambda_{\max}(\bA(f)) \big|  + \big| \lambda_{\max}(\bA(f)) - \psi(s_+)\big| \\ &  \lesssim  \|\bK(f) - \bA(f)\|_2 \prec n^{-1/4}. \end{align*}
We will prove that $|\lambda_1 - \psi(s_+)|$ is bounded by quadratic forms such as
\begin{align*} 
\bv^\top (\bK_0(f) - \bz \bI_p)^{-1} (\bK(f) - \bA(f))(\bK_0(f) - \bz \bI_p)^{-1} \bv , 
\end{align*}
which have fluctuations of order $O_\prec(n^{-1/2})$---see Section \ref{sec:iso_law}. 
\end{remark}

\subsection{Non-polynomial Kernel Functions}\label{sec:Nonpoly}

Throughout this section, we write $\tau$ in place of $\tau(f,\beta,\lambda)$. The following results are analogs of Theorems \ref{thrm:poly1} and \ref{thrm:poly2}:
\begin{theorem} \label{thrm:A}
    Let $f(x)$ be odd, everywhere continuous, and twice differentiable except at finitely many points. Assume there is some $c > 0$ such that $|f(x)|,|f'(x)|,|f''(x)| \leq c e^{c|x|}$ whenever they exist. The matrix
\begin{align*}  \bA(f) \coloneqq a_1 (\bY -  \bI_p) + \tau \cdot \bv \bv^\top  + \bK_0(f-a_1 h_1) ,
\end{align*}
approximates $\bK(f)$ in operator norm:
\[
\|\bK(f) - \bA(f)\|_2 \xrightarrow{a.s.} 0 . 
\]
\end{theorem}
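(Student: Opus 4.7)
My approach is to reduce Theorem \ref{thrm:A} to its polynomial analogue (Theorem \ref{thrm:poly1}) via Hermite truncation. Since the exponential growth condition ensures $f \in L^2(\phi)$, I expand $f = \sum_{\ell \geq 0} a_\ell h_\ell$ in the Hermite basis. Setting $f_L := \sum_{\ell=0}^L a_\ell h_\ell$—an odd polynomial, since $f$ odd implies $a_\ell = 0$ for even $\ell$—and $r_L := f - f_L$, I use the linearity of both $\bK(\cdot)$ and $\bA(\cdot)$ in their argument (which is immediate from definitions) to write
\begin{align*}
\|\bK(f) - \bA(f)\|_2 \;\leq\; \|\bK(f_L) - \bA(f_L)\|_2 \;+\; \|\bK(r_L) - \bA(r_L)\|_2 .
\end{align*}
For each fixed $L$, Theorem \ref{thrm:poly1} applied to the odd polynomial $f_L$ yields $\|\bK(f_L) - \bA(f_L)\|_2 \prec n^{-1/4}$, which tends to $0$ almost surely. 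It remains to show the residual term becomes uniformly small in $n$ as $L \to \infty$.

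\textbf{The easy parts of the residual.} For $L \geq 1$, $a_1(r_L) = 0$, so the definition of $\bA$ reduces to $\bA(r_L) = \tau(r_L)\, \bv \bv^\top + \bK_0(r_L)$, giving
\begin{align*}
\bK(r_L) - \bA(r_L) \;=\; \bigl(\bK(r_L) - \bK_0(r_L)\bigr) \;-\; \tau(r_L)\, \bv \bv^\top .
\end{align*}
Cauchy--Schwarz applied to (\ref{eq:tau_def}), combined with $\sum_{\ell > L} a_\ell^2 \to 0$, yields $|\tau(r_L)| \to 0$ as $L \to \infty$. For $\bK_0(r_L)$, Fan--Montanari-type asymptotics give $\|\bK_0(r_L)\|_2 \xrightarrow{a.s.} \lambda_+(r_L)$; Corollary \ref{cor:stj_trans} with $a_1 = 0$ reduces to the explicit formula $\lambda_+(r_L) = 2\sqrt{\gamma}\,\|r_L\|_\phi$, which also tends to $0$.

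\textbf{The main obstacle} is bounding $\|\bK(r_L) - \bK_0(r_L)\|_2$ uniformly in $n$. A second-order Taylor expansion in the argument yields
\begin{align*}
\bK(r_L)_{ij} - \bK_0(r_L)_{ij} \;=\; \bX_{ij}\, r_L'(\sqrt{n}\,\bS_{ij}) \;+\; \tfrac{\sqrt{n}}{2}\, \bX_{ij}^2\, r_L''(\xi_{ij}), \qquad i \neq j,
\end{align*}
where $\bX := \bY - \bS$. Writing $\alpha := \sqrt{\lambda}-1$, $\bq := \bS \bv$, and $q_0 := \bv^\top \bS \bv$, a direct computation gives $\bX = \alpha(\bv \bq^\top + \bq \bv^\top) + \alpha^2 q_0\, \bv \bv^\top$, exposing $\bX$ as rank $\leq 3$; moreover, $\bX^{\odot 2}$ is a sum of six rank-one outer products of vectors in $\{\bv^{\odot 2}, \bv \odot \bq, \bq^{\odot 2}\}$. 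The Hadamard identity $(\bu \bw^\top) \odot A = \diag(\bu)\, A\, \diag(\bw)$ then turns each contribution into a diagonal compression of $r_L'(\sqrt{n}\,\bS)$ or $r_L''(\boldsymbol{\xi})$. Since $\bv$ is $m$-sparse with $\|\diag(\bv)\|_2 = m^{-1/2}$, these compressions contribute factors of $m^{-1/2}$ (or $m^{-1}$ for the Hadamard-square terms); combined with Fan--Montanari-type operator-norm bounds on $\bK_0(r_L')$ and $\bK_0(r_L'')$—which vanish as $L \to \infty$ because the growth conditions on $f, f', f''$ imply $\|r_L^{(k)}\|_\phi \to 0$ for $k = 0, 1, 2$—these combine to bounds of order $\|r_L'\|_\phi + \|r_L''\|_\phi \to 0$. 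The delicate step is making these rank-structure and operator-norm estimates rigorous, as crude Frobenius or Weyl-type bounds are too lossy to vanish (the former inflates by a factor of $n^{1/4}$ or more).
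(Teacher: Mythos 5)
Your overall reduction---approximate $f$ by an odd polynomial, invoke Theorem \ref{thrm:poly1} for the polynomial part, and show the residual contributes $o(1)$ in operator norm uniformly in $n$---is the same skeleton as the paper's proof, which bounds $\|\bK(f)-\bK(f_\ell)\|_2+\|\bK(f_\ell)-\bA(f_\ell)\|_2+\|\bA(f_\ell)-\bA(f)\|_2$. Your treatment of $|\tau(r_L)|$ and of the linearity of $\bA(\cdot)$ is also fine.

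The gap is in how you control the residual, and it is not a technicality. You take $f_L$ to be the Hermite truncation and then invoke ``Fan--Montanari-type'' operator-norm bounds for $\bK_0(r_L)$, $\bK_0(r_L')$, $\bK_0(r_L'')$, concluding they vanish because $\|r_L^{(k)}\|_\phi\to 0$. But no such bounds are available for the Hermite tail: $r_L$ is neither a polynomial (so Theorem 1.6/1.7 of \cite{fan2019spectral} does not apply) nor continuously differentiable (so Theorem 1.4 does not apply either---$r_L$, $r_L'$, $r_L''$ inherit the discontinuities of $f'$, $f''$, e.g.\ for soft thresholding). More fundamentally, $L^2(\phi)$-smallness of a kernel function does not control the operator norm of the associated random kernel matrix; establishing $\lim_{L}\limsup_n\|\bK_0(r_L)\|_2=0$ (and a fortiori the statement $\|\bK_0(r_L)\|_2\to\lambda_+(r_L)=2\sqrt\gamma\|r_L\|_\phi$, which presumes edge convergence with no outliers for a non-smooth, non-polynomial kernel) is exactly the hard content you are assuming. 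This is why the paper does \emph{not} use the Hermite truncation here: Lemma \ref{lem:polynomial_approx} builds a different polynomial approximant via Gaussian smoothing (Lemma \ref{lem:nice-poly-approx}) precisely so that the residual $\kappa$ has \emph{pointwise} bounds $|\kappa'(x)|\lesssim \eps\, e^{c_1|x|}$ off a set of measure $\eps$, and then proves the needed operator-norm bound $\limsup_n\|\bK(\kappa)\|_2\lesssim\eps^{1/4}$ by a dedicated Lata{\l}a-type net argument (Theorem \ref{thm:opnorm-eps-bound}); none of that is recoverable from $\|r_L\|_\phi\to 0$ alone. A secondary but real problem: your second-order Taylor expansion with Lagrange remainder $\tfrac{\sqrt n}{2}\bX_{ij}^2 r_L''(\xi_{ij})$ is invalid whenever the interval $[\sqrt n\,\bS_{ij},\sqrt n\,\bY_{ij}]$ straddles one of the finitely many non-differentiability points, and even where valid, $\sup_{ij}|r_L''(\xi_{ij})|$ does not tend to $0$ in $L$ (only the $L^2(\phi)$ norm does), so the remainder does not vanish by the rank-and-sparsity counting you sketch. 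To make your route work you would essentially have to reprove Lemma \ref{lem:polynomial_approx}.
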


\begin{theorem} \label{thrm:B} In the setting of Theorem \ref{thrm:A}, assume $a_1 > 0$, $\tau \neq 0$, and recall $s_+$ and $\theta^2(x)$ defined in Theorem \ref{thrm:poly2}. Then, 
\begin{align} \label{eq:main_thrm_limits}
&    \lambda_1 \xrightarrow{a.s.} \begin{dcases}
        \psi(s_+) & \psi'(s_+) > 0,  \\
        \lambda_+ & \psi'(s_+) \leq 0,  
    \end{dcases} 
& \langle \bu_1, \bv \rangle^2  \xrightarrow{a.s.} \begin{dcases}
        \theta^2(s_+) &\psi'(s_+) > 0,  \\
        0 & \psi'(s_+) \leq 0. 
    \end{dcases} 
\end{align}
\end{theorem}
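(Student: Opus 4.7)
My plan is to reduce Theorem \ref{thrm:B} to the polynomial case (Theorem \ref{thrm:poly2}) by combining the operator-norm approximation of Theorem \ref{thrm:A} with a Hermite-truncation argument.

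First, I would use Theorem \ref{thrm:A} to replace $\bK(f)$ with the tractable matrix $\bA(f)$: Weyl's inequality gives $|\lambda_1(\bK(f)) - \lambda_1(\bA(f))| \leq \|\bK(f) - \bA(f)\|_2 \xrightarrow{a.s.} 0$. When $\psi'(s_+) > 0$, we will identify $\lambda_1(\bA(f)) \to \psi(s_+) > \lambda_+$, providing an asymptotic spectral gap; the Davis--Kahan $\sin\Theta$ theorem then yields $\langle \bu_1(\bK(f)), \bv \rangle^2 = \langle \bu_1(\bA(f)), \bv \rangle^2 + o(1)$ a.s. In the subcritical regime $\psi'(s_+) \leq 0$, the overlap vanishes via a separate delocalization argument (no fixed direction significantly aligns with any single top eigenvector when there is no outlier).

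Second, I would analyze $\bA(f)$ via Hermite truncation. Set $f_L \coloneqq \sum_{\ell=0}^L a_\ell h_\ell$; then $\|f - f_L\|_\phi \to 0$, $a_1(f_L) = a_1$ for $L \geq 1$, $\|f_L\|_\phi \to \|f\|_\phi$, and $\tau(f_L, \beta, \lambda) \to \tau$. Theorem \ref{thrm:poly2} applies directly to each $\bK(f_L)$, yielding $O_\prec(n^{-1/2})$ convergence of $\lambda_1(\bK(f_L))$ and the overlap to the limits $\psi_{f_L}(s_+(f_L))$ and $\theta_{f_L}^2(s_+(f_L))$ (or $\lambda_+(f_L)$ and $0$). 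Using linearity of $f \mapsto \bK_0(f)$,
\[ \|\bA(f) - \bA(f_L)\|_2 \leq |a_1 - a_1(f_L)| \cdot \|\bY - \bI_p\|_2 + |\tau - \tau(f_L)| + \|\bK_0(f - f_L)\|_2, \]
and the final term is controlled via Theorem \ref{thrm:stj_trans} applied to the residual kernel (combined with edge-convergence results as in \cite{fan2019spectral}): the upper edge of the LSD of $\bK_0(f - f_L)$ shrinks to $0$ as $\|f - f_L\|_\phi \to 0$. Chaining via the triangle inequality, $\limsup_n \|\bK(f) - \bK(f_L)\|_2 \leq \varepsilon(L)$ a.s.\ with $\varepsilon(L) \to 0$.

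Third, a double-limit argument ($n \to \infty$ then $L \to \infty$), together with continuity of $s_+, \psi, \lambda_+, \theta^2$ in the scalar parameters $(\lambda, a_1, \|f\|_\phi^2, \tau)$, transfers the polynomial-case conclusion to $\bK(f)$. The main obstacle is the boundary case $\psi'(s_+) = 0$: the spectral gap collapses, Davis--Kahan does not apply directly to the overlap, and a delicate continuity argument across the phase transition is required to show the overlap vanishes smoothly as the threshold is approached. A secondary technical challenge is rigorously establishing $\|\bK_0(f - f_L)\|_2 \to 0$ as $L \to \infty$ in the non-polynomial regime, which may require extending or refining the edge convergence results of \cite{fan2019spectral} to the perturbation kernel.
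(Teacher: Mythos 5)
Your proposal follows essentially the same route as the paper: reduce to the polynomial case via Hermite truncation, use Theorem~\ref{thrm:A} to replace $\bK(f)$ by $\bA(f)$, and transfer the polynomial result from Theorem~\ref{thrm:poly2} using Weyl's inequality and Davis--Kahan. The paper cites exactly Theorems~\ref{thrm:poly1}, \ref{thrm:A}, Lemmas~\ref{lem:polynomial_approx} and \ref{lem:max_eig}, Weyl, and Davis--Kahan.

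However, you have the difficulties ranked backwards. What you call a ``secondary technical challenge''---showing $\|\bK_0(f-f_L)\|_2 \to 0$ for non-smooth $f$---is in fact the main technical content the paper supplies, and it does not follow from the tools you cite. Theorem~\ref{thrm:stj_trans} controls the LSD of $\bK_0(f-f_L)$, which indeed concentrates near zero since $a_1(f-f_L)=0$ and $\|f-f_L\|_\phi \to 0$; but LSD convergence says nothing about the operator norm. The edge-convergence result of \cite{fan2019spectral} (Theorem 1.4 there) requires a continuously differentiable kernel, and the residual $f-f_L$ inherits precisely the non-smoothness of $f$ (e.g., the kink of soft thresholding). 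Closing this gap is the entirety of Lemma~\ref{lem:polynomial_approx} and Appendix~\ref{sec:poly approx}, which combine Gaussian smoothing of $f'$ with a Lipschitz-concentration/net argument \`a la Lata{\l}a adapted to the spiked (rather than pure-noise) kernel matrix. Your one-line appeal to ``edge-convergence results'' would need all of this to be valid.

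Conversely, what you identify as the ``main obstacle''---the boundary case $\psi'(s_+)=0$ and the collapse of the Davis--Kahan gap---is not an obstacle in the paper's framework. The subcritical statement $\langle \bu_1,\bv\rangle \to 0$ (which includes the boundary) is not derived via Davis--Kahan at all; in the polynomial case it is proven directly by the Woodbury/secular-equation argument in (\ref{q8j}) (showing the quantity $\bv^\top \bK \bW (\lambda_1 \bI - \bW^\top \bK \bW)^{-2}\bW^\top \bK \bv$ diverges), and that argument degrades continuously as $\psi'(s_+)\to 0$ since $\theta^2(s_+)\to 0$ there. Davis--Kahan is only invoked where there is a genuine outlier, i.e.\ $\psi'(s_+)>0$. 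Your proposal would be strengthened by using the (\ref{q8j})-style argument for the subcritical branch and by recognizing that the ``delicate continuity argument across the phase transition'' you flag is automatic from the formula for $\theta^2$.
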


To  prove Theorems \ref{thrm:A} and \ref{thrm:B} from Theorems \ref{thrm:poly1} and \ref{thrm:poly2}, we construct in Lemma \ref{lem:polynomial_approx} a sequence of odd polynomials  $\{f_\ell\}_{\ell \in \mathbb{N}}$ such that $\|f - f_\ell\|_\phi \rightarrow 0$ and  
 \begin{align} \label{m,./}
     \lim_{\ell \rightarrow \infty} \limsup_{n \rightarrow \infty} \|\bK(f) - \bK(f_\ell)\|_2 = \lim_{\ell \rightarrow \infty} \limsup_{n \rightarrow \infty} \|\bK_0(f) - \bK_0(f_\ell)\|_2 \stackrel{a.s.}{=} 0.
 \end{align}
 This result builds upon \cite{fan2019spectral} in that (1)  Theorems 1.4 and 1.6 of \cite{fan2019spectral} pertain only to the kernel matrix of noise $\bK_0(f)$  and (2) Theorem 1.4 assumes $f$ is continuously differentiable. 
 We stress that the specific conditions Theorem \ref{thrm:A} places on $f$ are not the focus of this paper and are likely improvable. 
 Rather, we developed in (\ref{m,./}) a minimal extension of \cite{fan2019spectral} that accommodates soft thresholding.

\begin{remark} \label{rem1}
    Theorems \ref{thrm:poly2} and \ref{thrm:B} exclude $\tau = 0$ or $a_1 \leq 0$. The former case arises under linear kernel functions (the limits of $\lambda_1$ and $\langle \bu_1, \bv \rangle^2$ are then given by \cite{BBP}) or $\lambda = 1$ ($\bv$ is then unidentifiable). If $a_1 < 0$, our results apply to $\bK(-f)$. If $a_1 = 0$, $\bK(f)$ is approximately a spiked Wigner matrix, and the limits of interest are derived by applying \cite{benaych2011eigenvalues} to $\bA(f)$:\footnote{The argument of \cite{benaych2011eigenvalues} requires that quadratic forms such as $\bv^\top (\bK(f) - z \bI_p)^{-1} \bv$ concentrate around their expectations; we prove the necessary technical conditions in Theorem \ref{lem:quad_forms}.}
    \begin{align} 
&    \lambda_1 \xrightarrow{a.s.} \begin{dcases}
        \tau + \frac{\gamma\|f\|_\phi^2}{\tau}& \tau > \sqrt{\gamma}\|f\|_\phi,  \\
        2 \sqrt{\gamma}\|f\|_\phi & \tau \leq \sqrt{\gamma}\|f\|_\phi,  
    \end{dcases} 
& \langle \bu_1, \bv \rangle^2  \xrightarrow{a.s.} \begin{dcases}
        1- \frac{\gamma \|f\|_\phi^2}{\tau^2}& \tau > \sqrt{\gamma}\|f\|_\phi,  \\
        0 & \tau \leq \sqrt{\gamma}\|f\|_\phi. 
    \end{dcases} 
\end{align}
These are the standard formulas (appropriate scaled) for limiting eigenvalue bias and eigenvector inconsistency under the spiked Wigner model; see Example 3.1 of \cite{benaych2011eigenvalues}.
\end{remark}

\begin{remark}
    We assume $f$ is  odd as an even component 
    may introduce spurious spikes into the spectrum of $\bK(f)$. Specifically, if $f$ is not odd, $\bA(f)$ of Corollary \ref{thrm:A} becomes
    \[
    \bA (f) = a_1 (\bY - \bI_p) + \tau_{e} \cdot \bw \bw^\top + \tau_{o} \cdot \bv \bv^\top  + \bK_0(f-a_1 h_1) ,
    \]
    where $\bw := [\ones_m /\sqrt{m},\bzero_{p-m}]$ and 
    \[
    \tau_{e}  := \sum_{k =1}^{\infty} \frac{a_{2k} (\lambda - 1)^{2k}}{\sqrt{(2k)!} \beta^{2k - 1}}, \qquad  \tau_{o} := \sum_{k =1}^{\infty} \frac{a_{2k+1} (\lambda - 1)^{2k+1}}{\sqrt{(2k+1)!} \beta^{2k}} .
    \]
   The spectrum of $\bK_0 (f - a_1 h_1)$ contains an outlier eigenvalue with eigenvector $\ones_p$ if $|a_2|$ is sufficiently large (see \cite{fan2019spectral} and \cite{liao2020sparse}). 
\end{remark}

\subsection{Detection, Support Recovery, and Optimal Kernels}
\label{sec:2.3}
This section investigates the statistical implications of the results in Sections \ref{sec:poly}  and \ref{sec:Nonpoly}. 

First, we introduce a hypothesis test based on the maximum eigenvalue $\lambda_1$ of $\bK(f)$ to detect the presence of a low-rank component in $\bSigma$ (testing the hypothesis $\lambda = 1$ versus $\lambda > 1$). If a spike is detected, appropriate hard-thresholding of the corresponding eigenvector $\bu_1$ recovers the support of $\bv$ {\it exactly}, with high probability. Importantly, this test is fully powerful if $\psi'(s_+) > 0$---as shown in Theorems \ref{thrm:poly2} and \ref{thrm:B}, $\psi'(s_+) > 0$ ensures that $\lambda_1$ is an outlier eigenvalue and that the cosine similarity between $\bu_1$ and $\bv$ is non-trivial. Thus, $\lambda_1, \bu_1$, and support recovery undergo an {\it identical} phase transition.
This support recovery phase transition was empirically observed by Krauthgamer et al.\ (Figure 3 of \cite{Krauthgamer}), though it has not been rigorously established. 

Second, we prove the existence of and characterize an optimal kernel  $f^*(\hspace{.05em}\cdot\,;\gamma,\beta)$ with the broadest recovery region.
Third, we demonstrate that as $\beta \to \infty$, GCT fails below the BBP transition. This supports the conjecture that if $m \gg \sqrt{n}$, the detection threshold of PCA is optimal among polynomial-time algorithms---see the discussion in Section \ref{sec:1}. 
Proofs are deferred to Section \ref{sec:2.3proofs}.


Let $L^2_o(\phi) \coloneqq \{f: \|f\|_\phi^2 < \infty, f(x) = -f(-x)\}$. 
In light of Theorems \ref{thrm:poly2} and \ref{thrm:B}, we define a region $\cR$ in which GCT is informative, 
\begin{align} \cR \coloneqq \Big\{(f,\gamma,\beta,\lambda) :  \psi'(s_+) > 0,  f \in L_o^2(\phi), 
\gamma \in (0,\infty), \beta \in (0,\infty) , \lambda \geq 1 \Big\} , \end{align}
and  $\cR(f,\gamma,\beta, \cdot) \coloneqq  \{\lambda : (f, \gamma, \beta, \lambda) \in \cR \}$. 
Within $\cR(f,\gamma,\beta,\cdot)$, the following test consistently detects the presence of a spike:
\begin{corollary} \label{cor:detect}
   Let $f\in L_o^2(\phi)$ satisfy the assumptions of Theorem \ref{thrm:B} and $\lambda_0 \in \cR(f,\gamma,\beta,\cdot)$. 
   Consider the test of $H_0:\lambda = 1$ versus $H_1$: $\lambda \geq\lambda_0$ which rejects $H_0$ if 
    \[
    \lambda_1 > \lambda_+ + \varepsilon ,
    \]
    where $\varepsilon\in (0,\psi(s_+)-\lambda_+)$. This test is asymptotically correct and fully powerful.
\end{corollary}

As a consequence of (\ref{asdfzxcv3}), which states that the coordinates of $\bu_1$ concentrate tightly around those of $\theta(s_+)$ when $\psi'(s_+) > 0$, hard thresholding of $\bu_1$ recovers the support of $\bv$ exactly. The thresholding level is independent of $\lambda$ and $\beta$, which are generally unknown: 
\begin{corollary} \label{thrm:SR}

Let $f\in L_o^2(\phi)$ be a polynomial, $\lambda\in \cR(f,\gamma,\beta,\cdot)$, $\varepsilon \in (1/4,1/2)$, and define the estimator
\[ \widehat \bv \coloneqq \frac{\mathrm{sign}((\eta_h(\bu_1, n^{-\varepsilon}))}{\big\|\mathrm{sign}((\eta_h(\bu_1, n^{-\varepsilon}))\big\|_2} . 
\] For any $D > 0$, there exists $n_{\varepsilon, D} \in \mathbb{N}$ such that 
\[\hspace{5.4cm}
\P( \widehat \bv = \bv)\geq 1 - n^{-D},  \qquad \qquad \hspace{1.8cm} \forall n \geq n_{\varepsilon, D} .  \]

\end{corollary} 

 Corollaries \ref{cor:detect} and \ref{thrm:SR} follow directly from Theorems \ref{thrm:poly2} and \ref{thrm:B}. In Corollary \ref{thrm:SR}, we assume that $f$ is polynomial, since we only establish entrywise eigenvector bounds for polynomial kernels. While we believe this assumption is not strictly necessary, we leave this technicality for future work. Notwithstanding, recovery throughout $\cR$ is \emph{formally} possible by polynomial-kernel GCT:
\begin{lemma}\label{lem:cR-poly-approx}
    For $f\in L^2_o(\phi)$, let $f_\ell \coloneqq \sum_{k=1}^\ell a_k h_k$ denote its degree-$\ell$ Hermite approximation. If $\lambda\in \cR(f,\gamma,\beta,\cdot)$, there exists $L \in \mathbb{N}$ such that  $\lambda\in \cR(f_\ell,\gamma,\beta,\cdot)$ for all $ \ell \geq L$. 
\end{lemma}

For many kernels of interest, the recovery region admits a simple form. Specifically, if $\tau(f, \beta,\lambda)$ is non-decreasing in $\lambda$, a ``standard'' phase transition occurs: GCT is informative for signal strengths exceeding a critical value  $\lambda_*(f,\gamma,\beta)$.  
\begin{lemma} \label{prop:non_dec}
If $\tau(f, \beta,\lambda)$ is non-decreasing in $\lambda$, then there exists $\lambda_*(f,\gamma,\beta) > 1$ such that   \[ \cR(f,\gamma,\beta,\cdot) = (\lambda_*(f,\gamma,\beta),\infty) . \] 
\end{lemma}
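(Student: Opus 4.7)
The plan is to translate the condition $\psi'(s_+) > 0$ into a monotone condition in $\lambda$, and then invoke the intermediate value theorem. The argument has three steps: (i) show $\psi'(s_+) > 0$ if and only if $s_+ > s_0$; (ii) show that $\lambda \mapsto s_+(\lambda)$ is continuous and strictly increasing on $(1,\infty)$, with $s_+(\lambda) \to -1/(a_1\gamma)$ as $\lambda \downarrow 1$ and $s_+(\lambda) \to 0$ as $\lambda \to \infty$; (iii) conclude.

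For (i), I would compute
\[
\psi''(s) = -\frac{2}{s^{3}} + \frac{2 a_1^{3}\gamma^{2}}{(1 + a_1\gamma s)^{3}},
\]
which is strictly positive on $(-1/(a_1\gamma),0)$ (both summands are positive there). Hence $\psi'$ is strictly increasing on this interval, with $\psi'(s)\to-\infty$ at $-1/(a_1\gamma)^+$ and $\psi'(s)\to+\infty$ at $0^-$; its unique zero is precisely the $s_0$ of Corollary \ref{cor:stj_trans}, and $\psi'(s) > 0 \Leftrightarrow s > s_0$.

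For (ii), observe that $s_+$ is the larger (less negative) root of the upward-opening quadratic $q(s) := a_1\gamma\tau s^{2} + B(\lambda)\,s + 1$ where $B(\lambda) := a_1(\lambda+\gamma-1) + \tau(f,\beta,\lambda)$. Rationalizing numerator and denominator yields
\[
s_+(\lambda) = -\frac{2}{B(\lambda) + \sqrt{B(\lambda)^{2} - 4a_1\gamma\tau}},
\]
a form that is regular even as $\tau\to 0$. The discriminant admits the decomposition $B^{2} - 4a_1\gamma\tau = (a_1(\lambda-\gamma-1)+\tau)^{2} + 4a_1^{2}\gamma(\lambda-1)$, which is strictly positive for $\lambda>1$. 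From this closed form one directly checks $s_+\in(-1/(a_1\gamma),0)$ and computes the boundary limits stated above. For strict monotonicity, implicit differentiation of $q(s_+)=0$ gives
\[
s_+'(\lambda) = -\frac{s_+\bigl(a_1 + \tau'(\lambda)(1 + a_1\gamma s_+)\bigr)}{q'(s_+)}.
\]
Since $s_+$ is the larger root of the upward-opening $q$, $q'(s_+) > 0$; since $\tau'(\lambda)\geq 0$ by hypothesis and $1 + a_1\gamma s_+ > 0$, the bracket is at least $a_1 > 0$; and $s_+ < 0$. Therefore $s_+'(\lambda) > 0$ on $(1,\infty)$.

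Step (iii) is immediate: $s_+(\lambda)$ is continuous, strictly increasing, and sweeps across the interval $(-1/(a_1\gamma), 0)$, which contains $s_0$, so there is a unique $\lambda_*(f,\gamma,\beta)\in(1,\infty)$ with $s_+(\lambda_*)=s_0$, and combining with (i) we obtain $\cR(f,\gamma,\beta,\cdot) = (\lambda_*(f,\gamma,\beta),\infty)$. The main obstacle, in my view, is step (ii): one has to recognize the quadratic structure of $s_+$, keep careful track of which root is $s_+$ to obtain $q'(s_+) > 0$, and exploit the constraint $s_+ > -1/(a_1\gamma)$ to make the hypothesis $\tau'\geq 0$ bite in the numerator. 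The degenerate case $\tau\equiv 0$ (e.g.\ $f$ linear) reduces to the classical BBP transition with $\lambda_* = 1+\sqrt{\gamma}$ and may be handled separately.
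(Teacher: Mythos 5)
Your proof is correct and takes essentially the same approach as the paper's: both reduce the claim to strict monotonicity of $\lambda\mapsto s_+(\lambda,\tau(\lambda))$ on $(1,\infty)$ together with the characterization $\psi'(s)>0 \iff s>s_0$ on $(-1/(a_1\gamma),0)$, the paper obtaining the former from the partial derivative $\partial s_+/\partial\lambda$ in (\ref{9br7}) and the chain rule, and you from implicit differentiation of $q(s_+)=0$, which are equivalent. Your write-up usefully makes explicit several points the paper leaves implicit — the convexity $\psi''>0$ (fixing the sign of $\psi'$ on either side of $s_0$), the rationalized form $s_+=-2/(B+\sqrt{B^2-4a_1\gamma\tau})$ regular at $\tau=0$, and the boundary limits $s_+(1^+)=-1/(a_1\gamma)$ and $s_+(\infty)=0$, which certify both that $\lambda_*>1$ and that $\cR(f,\gamma,\beta,\cdot)$ is non-empty.
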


\noindent  Lemma \ref{prop:non_dec} applies to kernels with non-negative Hermite coefficients, as well as to the soft thresholding operator $\eta_s(\cdot,t)$, which has coefficients $a_0 = 0$, $a_1 = 1 - \mathrm{erf}(t/\sqrt{2})$, and 
    \[ \hspace{3.3cm}
    a_k = \begin{dcases}
        \sqrt{\frac{2}{k(k-1)\pi}}\hspace{.05em}e^{-t^2/2} \hspace{.05em} h_{k-2}(t) & k \text{ odd},\\
        0 & k \text{ even},  
    \end{dcases} \hspace{2.3cm} k > 1 .
    \]



We next prove there exists an optimal kernel  $f^*(\hspace{.05em} \cdot \, ; \gamma,\beta)$ with the broadest recovery region.
For brevity, we omit the explicit dependence of $f^*$ on $\gamma$ and $\beta$.  
\begin{corollary}\label{cor:opt_transform}
    There exists a kernel function $f^*$ satisfying the conditions of Theorem \ref{thrm:A} and a critical value $\lambda_*(\gamma,\beta)$ such that
    \[  \bigcup_{f \in L_o^2(\phi)} \cR(f,\gamma,\beta,\cdot) = \cR(f^*,\gamma,\beta,\cdot) = (\lambda_*(\gamma,\beta), \infty) . \]
For $k \geq 2$, the Hermite coefficients of $f^*$ are of the form 
\[ 
    a_k^* = \begin{dcases}
        \frac{(\lambda_*(\gamma,\beta)-1)^k}{\sqrt{k!}\beta^{k -1}} & k \text{ odd},\\
        0 & k \text{ even}. 
    \end{dcases} 
\]
\end{corollary}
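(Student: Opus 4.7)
My approach is to reduce the optimization over $f \in L_o^2(\phi)$ to a Cauchy--Schwarz problem and then construct $f^*$ by saturating it. By Theorem \ref{thrm:B}, membership $\lambda \in \cR(f,\gamma,\beta,\cdot)$ depends on $f$ only through the triple $(a_1, \tau, \sigma^2)$, where $\tau := \tau(f,\beta,\lambda)$ and $\sigma^2 := \|f\|_\phi^2 - a_1^2 = \sum_{\ell \geq 3 \text{ odd}} a_\ell^2$. Expanding the derivative of (\ref{eq:stj_trans2}), the condition $\psi'(s_+) > 0$ rewrites as
\[
g(s_+(a_1,\tau)) > \gamma \sigma^2, \qquad g(s) := \frac{1}{s^2} - \frac{\gamma a_1^2}{(1+a_1\gamma s)^2}.
\]
Writing the linear constraint as $\tau = \sum_{\ell \geq 3 \text{ odd}} a_\ell c_\ell(\lambda)$ with $c_\ell(\lambda) := (\lambda-1)^\ell / (\sqrt{\ell !}\, \beta^{\ell-1})$, Cauchy--Schwarz yields $\sigma^2 \geq \tau^2/C(\lambda,\beta)$, where $C(\lambda,\beta) := \sum_{\ell \geq 3 \text{ odd}} c_\ell(\lambda)^2$, with equality uniquely (up to a positive scalar) when $a_\ell \propto c_\ell(\lambda)$. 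Consequently,
\[
\bigcup_{f \in L_o^2(\phi)} \cR(f,\gamma,\beta,\cdot) \;=\; \Big\{\lambda > 1 \,:\, \sup_{a_1,\tau > 0} \big[g(s_+(a_1,\tau)) - \gamma \tau^2/C(\lambda,\beta)\big] > 0\Big\}.
\]
Since $C(\lambda,\beta)$ is strictly increasing in $\lambda$, any witnessing pair $(a_1,\tau)$ at $\lambda$ also works for every $\lambda' > \lambda$, so the right-hand set is upper-open; I define $\lambda_*(\gamma,\beta)$ as its infimum, with $\lambda_*(\gamma,\beta) > 1$ following from $C(\lambda,\beta) \to 0$ as $\lambda \to 1^+$.

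Now define $f^*$ by $a_\ell^* := c_\ell(\lambda_*(\gamma,\beta))$ for $\ell \geq 3$ odd, $a_\ell^* := 0$ for $\ell$ even, and $a_1^* \geq 0$ chosen so that $(a_1^*, C(\lambda_*,\beta))$ attains the supremum in the above display at $\lambda = \lambda_*$. Existence of such $a_1^*$ uses scale invariance: the map $(a_1,\tau) \mapsto (c\hspace{.05em} a_1, c\tau)$ rescales the objective by $c^2$, so the search reduces to the single parameter $a_1$ over a compact interval (behavior as $a_1 \to \infty$ is controlled by the formula for $s_+$, and $a_1 \to 0$ corresponds to the Wigner-type subcase). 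Since all Hermite coefficients of $f^*$ are non-negative, $\tau(f^*,\beta,\cdot)$ is strictly increasing in $\lambda$, and Lemma \ref{prop:non_dec} gives $\cR(f^*,\gamma,\beta,\cdot) = (\lambda_*(f^*,\gamma,\beta), \infty)$. The inclusion $\cR(f^*) \subseteq \bigcup_f \cR(f) = (\lambda_*(\gamma,\beta),\infty)$ shows $\lambda_*(f^*) \geq \lambda_*(\gamma,\beta)$; for the reverse, monotonicity of $g(s_+(a_1^*,\cdot))$ in $\tau$ (a stronger spike pushes the outlier further from the bulk) combined with strict growth of $\tau(f^*,\beta,\lambda)$ in $\lambda$ shows that the boundary equality $g(s_+(a_1^*, C(\lambda_*,\beta))) = \gamma C(\lambda_*,\beta)$ opens strictly for every $\lambda > \lambda_*$, yielding $(\lambda_*(\gamma,\beta),\infty) \subseteq \cR(f^*)$.

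Finally, to verify the regularity conditions of Theorem \ref{thrm:A}, I apply the generating identity $\sum_\ell h_\ell(x) t^\ell / \sqrt{\ell!} = e^{xt - t^2/2}$ restricted to its odd part, obtaining the closed form
\[
f^*(x) = (a_1^* - (\lambda_*-1)) x + \beta\hspace{.05em} e^{-t_*^2/2}\sinh(x t_*), \qquad t_* := (\lambda_*-1)/\beta,
\]
from which the bounds $|f^*(x)|,\, |(f^*)'(x)|,\, |(f^*)''(x)| \leq c\hspace{.05em} e^{c|x|}$ are immediate. The principal technical obstacle is proving the monotonicity of $g(s_+(a_1,\tau))$ in $\tau$ and the existence of the maximizer $(a_1^*, \tau^*)$; this is handled by implicitly differentiating the defining quadratic $a_1\gamma\tau s_+^2 + (a_1(\lambda+\gamma-1)+\tau)s_+ + 1 = 0$ to express $\partial s_+/\partial\tau$ and showing $\partial g(s_+)/\partial \tau$ has a definite sign on the relevant branch $s_+ \in (s_0, 0)$, together with a case analysis comparing $C(\lambda_*,\beta)$ to $\gamma$ (the boundary case $C(\lambda_*,\beta) = \gamma$ corresponds to $a_1^* = 0$ and the Wigner-type limit, which is covered by Remark \ref{rem1}).
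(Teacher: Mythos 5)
Your proposal is correct and rests on the same central mechanism as the paper's proof: Cauchy--Schwarz on the Hermite coefficients identifies the optimal coefficient shape $a_\ell \propto c_\ell(\lambda)$ for fixed $\lambda$, after which the problem collapses to a low-dimensional optimization, and Lemma \ref{prop:non_dec} supplies the interval structure of $\cR(f^*,\gamma,\beta,\cdot)$. Where you genuinely diverge is in the finishing steps, and in each case your version is arguably cleaner. First, the paper establishes existence of the optimal $(a_1^*, \lambda_*)$ via a minimizing sequence $\{(a_k,\lambda_k)\}$ and a compactness/subsequence extraction, invoking continuity of $[\psi'(s_+)](\cdot)$ in the Hermite coefficients; you instead observe that the objective $g(s_+(a_1,\tau)) - \gamma\tau^2/C(\lambda,\beta)$ is homogeneous of degree two under $(a_1,\tau)\mapsto(ca_1,c\tau)$ and reduce to a one-parameter search in $a_1$, which is more concrete (though the boundary case $a_1 \to 0$ does require the separate Wigner treatment of Remark \ref{rem1}, as you note). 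Second, you supply a closed form $f^*(x) = (a_1^* - (\lambda_*-1))x + \beta e^{-t_*^2/2}\sinh(xt_*)$ via the Hermite generating function, which directly verifies the regularity hypotheses of Theorem \ref{thrm:A} -- a point the paper asserts but does not check. One small gap: in justifying that the detection set is an upper interval, you cite only the monotonicity of $C(\lambda,\beta)$ in $\lambda$; you also need that $g(s_+(a_1,\tau,\lambda))$ is non-decreasing in $\lambda$ for fixed $(a_1,\tau)$, which holds because $s_+$ is increasing in $\lambda$ (the analogue of (\ref{9br7})) and $g$ is increasing on $(-1/(a_1\gamma),0)$. Both effects push the inequality $g(s_+) > \gamma\tau^2/C$ in the favorable direction, so your conclusion stands, but the argument as written is incomplete.
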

\noindent 
By design,  $f^*$ maximizes $\tau(f,\beta, \lambda_*(\gamma,\beta))$ over \[ \big\{f \in L_o^2(\phi): \langle f, h_1 \rangle_\phi = a_1^*, \|f\|_\phi = \|f^*\|_\phi \big\}. \] In Section \ref{sec:2.4}, we will numerically calculate $ \lambda_*(\gamma,\beta)$. 

Our final result demonstrates that as $\beta\to \infty$, GCT fails  below the BBP transition. This supports the conjecture that if $m \gg \sqrt{n}$, the detection threshold of PCA is optimal among polynomial-time algorithms. 
\begin{corollary} \label{prop_betainf}
    For any $\gamma\in (0,\infty)$, the function $\beta\mapsto \lambda_*(\gamma,\beta)$ is increasing and
    \begin{equation}
        \lim_{\beta\to\infty} \lambda_*(\gamma,\beta) = 1+\sqrt{\gamma} .
    \end{equation}
\end{corollary}

\section{Numerical Calculations and Simulations}\label{sec:2.4}

This section presents numerical calculations and simulations.

Figure \ref{fig:soft2} compares GCT with the kernel $\eta_s(\cdot,2)$ (soft thresholding) to standard PCA in terms of cosine similarity, illustrating the phase transition of Theorem \ref{thrm:B} and the BBP transition. As $\beta$ decreases, the performance gap between GCT and PCA increases. 
Figure \ref{fig:soft2_recovery} plots the corresponding chance of recovery using the estimator of Corollary \ref{thrm:SR}; empirically, GCT recovers $\bv$ with high probability above the detection threshold. For example, in the left plot, we recovered $\bv$ in 100\% of simulations with $\lambda \geq 1.7$.

Figure \ref{fig:opt_pts}  depicts the optimal phase transitions of GCT  (right) and soft thresholding 
(left), for $\gamma \in \{.5,1,1.5\}$ and $\beta \in [.1,2.5]$. From Corollary \ref{cor:opt_transform}, recall the optimal transition of GCT is
\[
\lambda_*(\gamma,\beta) = \inf \bigcup_{f \in L_o^2(\phi)} \cR(f,\gamma,\beta,\cdot) .
\]
We define the optimal transition of soft thresholding analogously: 
\[
\lambda_{s,*}(\gamma,\beta) \coloneqq \inf \bigcup_{t \geq 0 } \cR(\eta_s(\cdot, t),\gamma,\beta,\cdot)  . \vspace{-.2cm}
\]
Similarly to Corollary \ref{cor:opt_transform},
the infimum is achieved for a specific thresholding level $t_*(\gamma,\beta)$. 

Surprisingly, soft thresholding is close to optimal: the left and right plots of Figure \ref{fig:opt_pts} are visually nearly indistinguishable, and the discrepancy between the plotted curves is less than $.05$.\footnote{To calculate $\lambda_{s,*}(\gamma,\beta)$, we performed a grid search over the thresholding level $t$, computing for each the minimum value of  $\lambda$ such that 
$\psi'(s_+) = 0$. To calculate $\lambda_*(\gamma,\beta)$, we used the characterization of $f^*$ provided by Corollary \ref{cor:opt_transform}, and performed a grid search over possible values of $a_1^*$ and $\|f^*\|_\phi$. For $f \in L^2_o(\phi)$, we approximated $s_+$ using $\tau(f_L,\beta,\lambda)$ in place of $\tau(f,\beta,\lambda)$, where $f_L$ is the Hermite approximation of $f$ with degree $L = 21$.  
} 
Based on these calculations, we advocate soft thresholding; it is likely more robust to model misspecification than $f^*$, which is tailored to (\ref{eq:v_form}). Figure \ref{fig:opt_pts} also  illustrates  Corollary \ref{prop_betainf}: as $\beta \rightarrow \infty$, $\lambda_{*}(\gamma,\beta)$ converges to the BBP transition. 

As the optimal soft thresholding level depends on $\lambda$ and $\beta$, which are generally unknown, we propose to adaptively select the threshold to maximize the normalized spectral gap $(\lambda_1 - \lambda_2)/\lambda_2$. 
This procedure is motivated by Corollaries \ref{cor:detect} and \ref{thrm:SR}: since the presence of an outlier eigenvalue indicates that $\psi'(s_+) > 0$, it is natural to choose the threshold that produces the most distinct outlier. This procedure compares well to optimal fixed-level thresholding, GCT with the kernel $\eta_s(\cdot, t_*(\gamma,\beta))$---see  Figure \ref{fig:adapt}. 

Empirically, the phenomena we uncover are not restricted to 
$\sqrt{m} v_1, \ldots, \sqrt{m} v_m \in \{-1,+1\}$ as in (\ref{eq:v_form}). In Figure \ref{fig:unif}, we generate $\bxi \in \mathbb{R}^m$ according to \begin{align} \label{asdfqwer}
    \xi_1, \ldots, \xi_m \stackrel{i.i.d.}{\sim} \mathrm{unif}\big([-2,-1]\cup[1,2]\big) , \end{align}
    and set $\bv_{[1:m]} = \bxi/\|\bxi\|_2$.
The left plot depicts the cosine similarity of GCT with the kernel $\eta_s(\cdot,2)$, which experiences a phase transition. The right plot depicts support recovery, measured as follows: 
\begin{align} \label{asdfqwer1}
    \frac{1}{m} \Big( \big|\mathrm{supp}(\widehat \bv) \cap \mathrm{supp}(\bv)\big| - \big|\mathrm{supp}(\widehat \bv) \cap ([p] \, \backslash \, \mathrm{supp}(\bv))\big| \Big) , 
\end{align}
where $\widehat \bv$ is defined in Corollary \ref{thrm:SR}. 
That is, (\ref{asdfqwer1}) counts the number of correctly identified entries minus the number of false positives, normalized by $m$. The signal detection and recovery thresholds seem aligned, as we expect from Section \ref{sec:2.3}. 


\begin{figure}[h]
\begin{center} 
    \includegraphics[scale=.51]{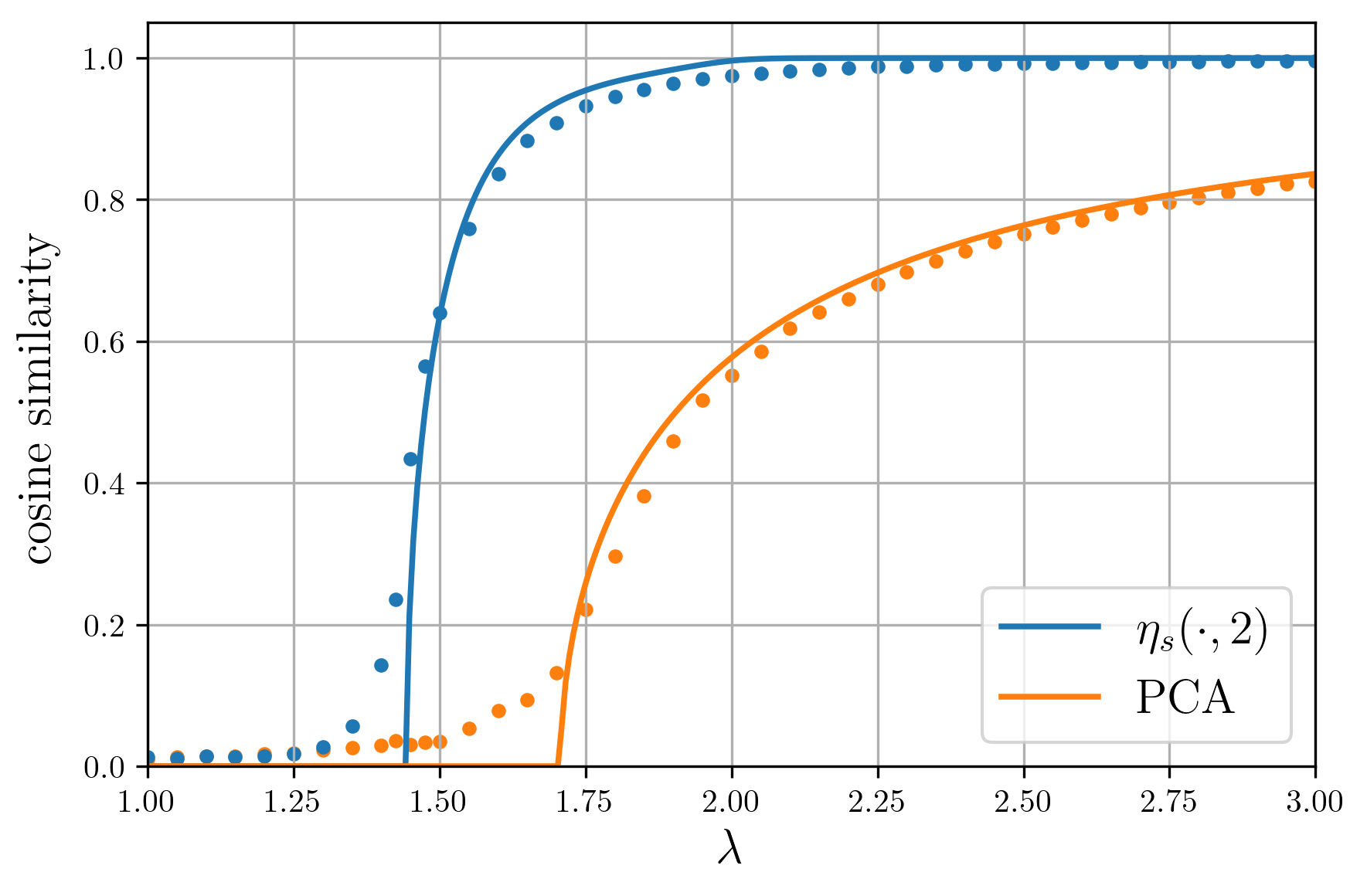} \includegraphics[scale=.51]{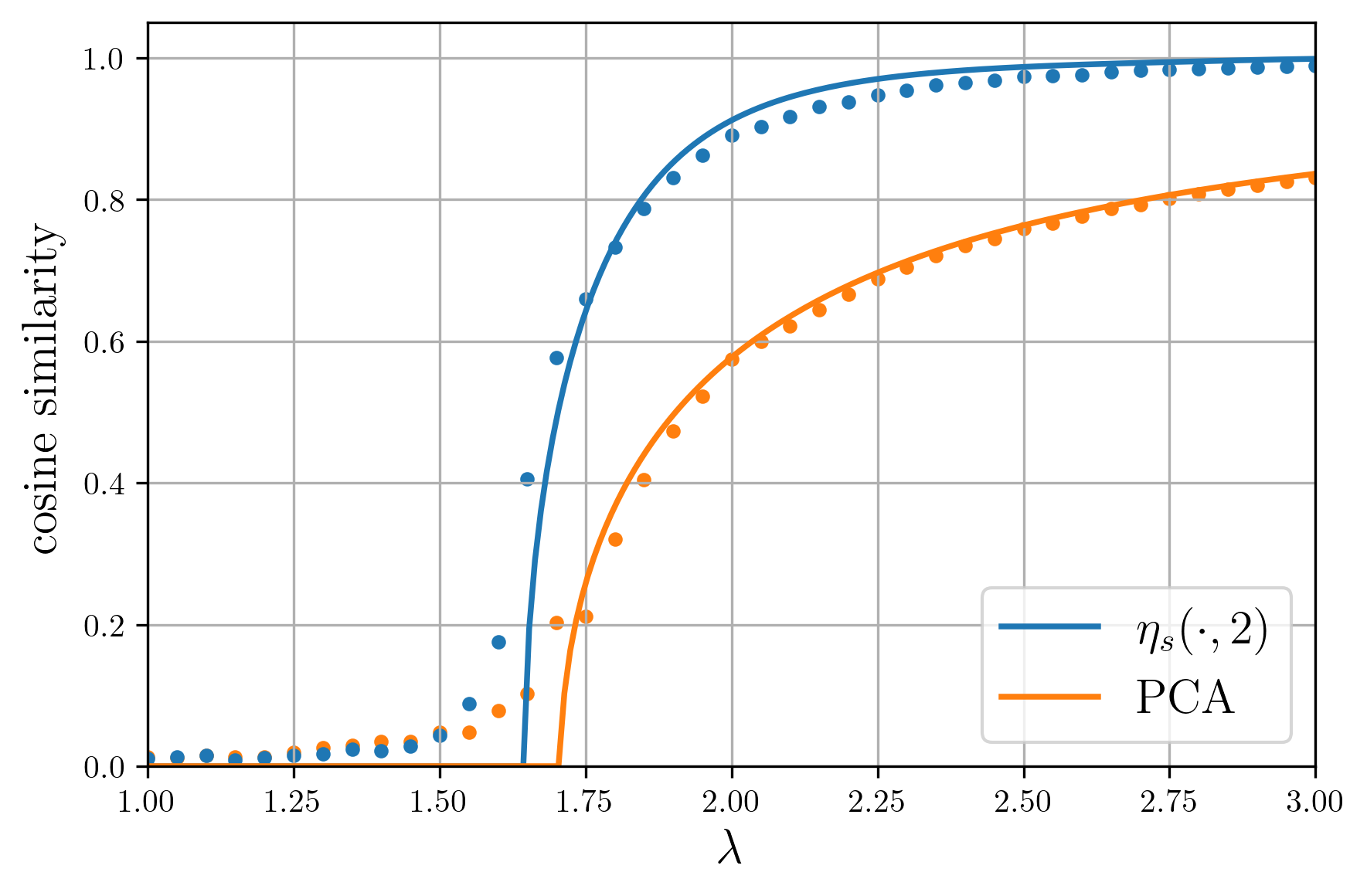}
\end{center} \vspace{-.5cm} \caption{Cosine similarities for GCT with the kernel $\eta_s(\cdot, 2)$ (blue) and standard PCA (orange). There is close agreement between Theorem \ref{thrm:B} (solid lines) and simulations (points, each representing the average over 50 simulations).  On the left, $n = $  10,000\unskip, $p = $ 5,000\unskip, and $m = 25$, so $\beta = 1/4$.  On the right, $n = $ 10,000\unskip, $p =$ 5,000\unskip, and $m = 50$, so $\beta = 1/2$. Observe that the phase transition of GCT decreases with $\beta$.} \label{fig:soft2}
\end{figure}

\begin{figure}[h]
\begin{center} 
    \includegraphics[scale=.51]{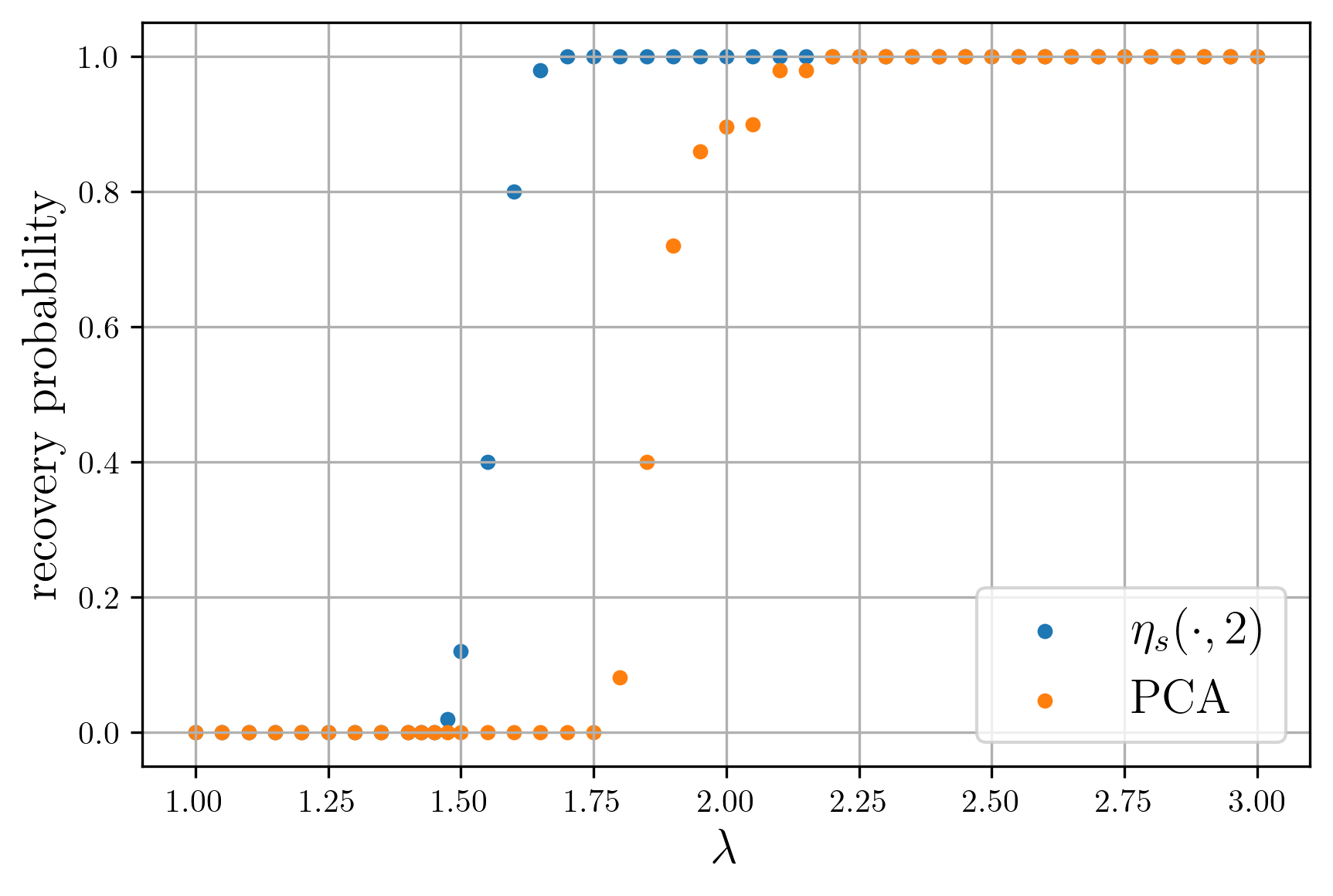} \includegraphics[scale=.51]{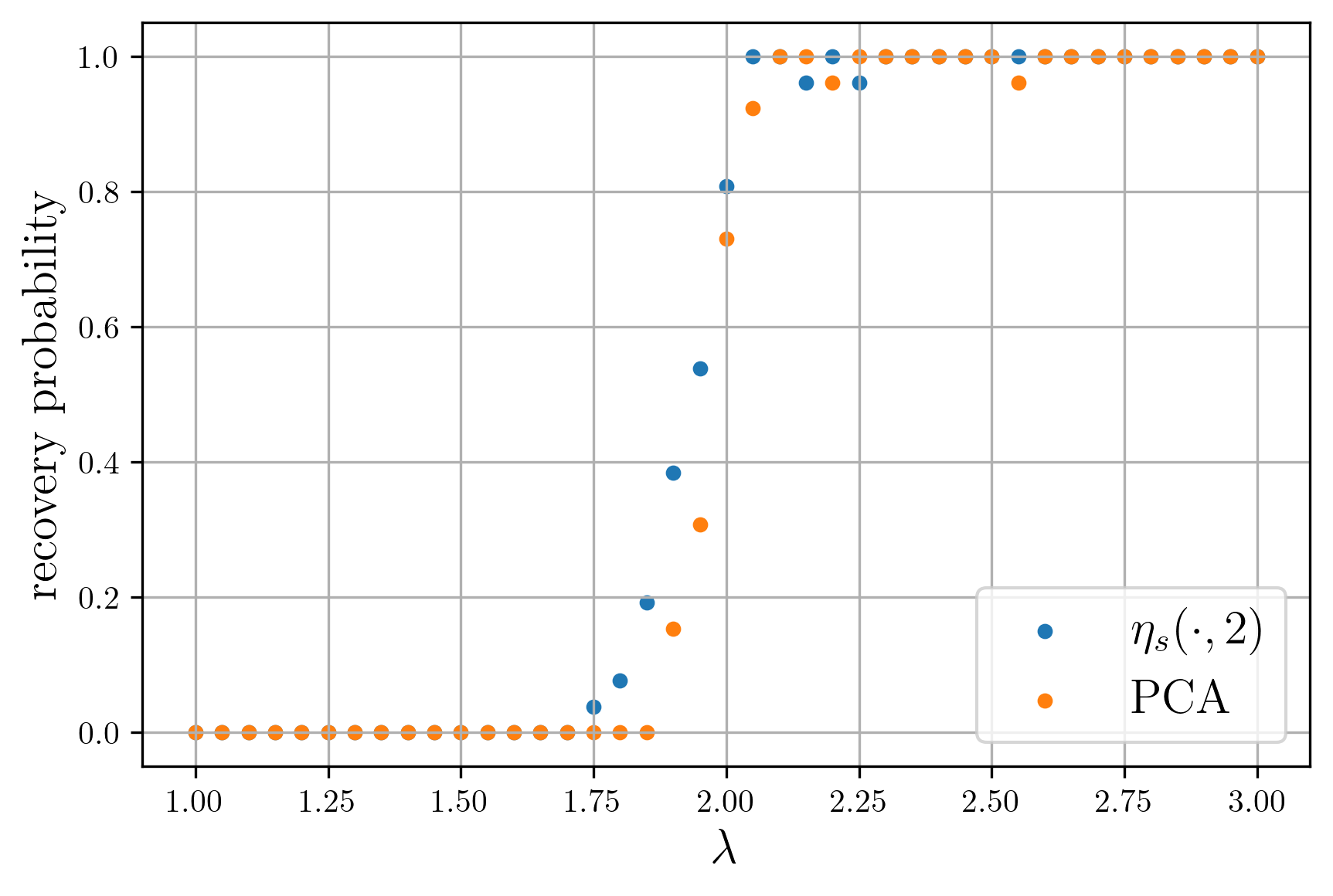}
\end{center} \vspace{-.5cm} \caption{Simulations of GCT with the kernel $\eta_s(\cdot, 2)$ (blue) and standard PCA (orange). Points represent the fraction of 50 simulations in which $\bv$ was recovered.  
On the left, $n = $ 10,000\unskip, $p = $ 5,000\unskip, and $m = 25$, so $\beta = 1/4$.  On the right, $n = $ 10,000\unskip, $p =$ 5,000\unskip, and $m = 50$, so $\beta = 1/2$. 
} \label{fig:soft2_recovery}
\end{figure}

\begin{figure}[h]
\begin{center} 
    \includegraphics[scale=.505]{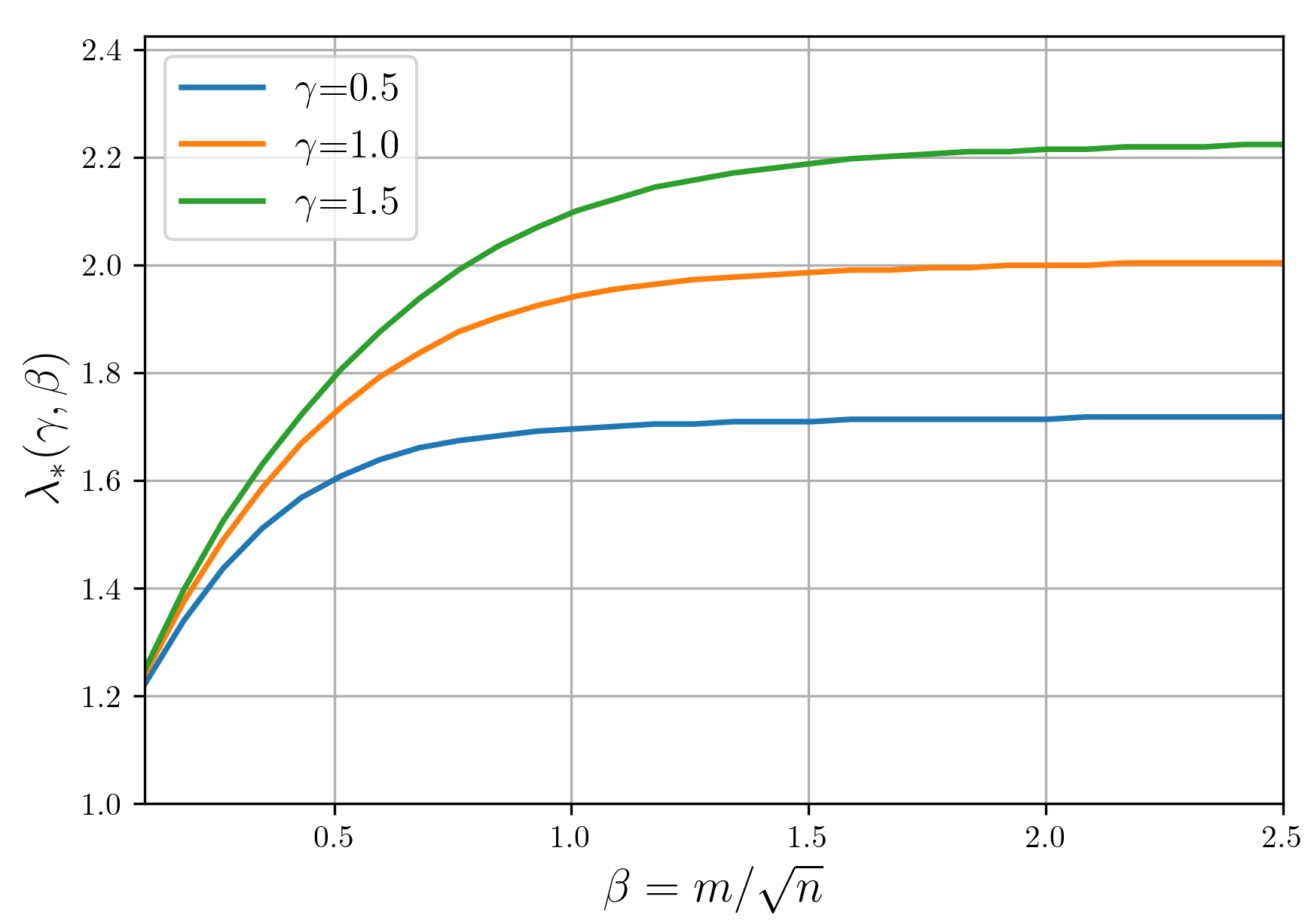} \includegraphics[scale=.505]{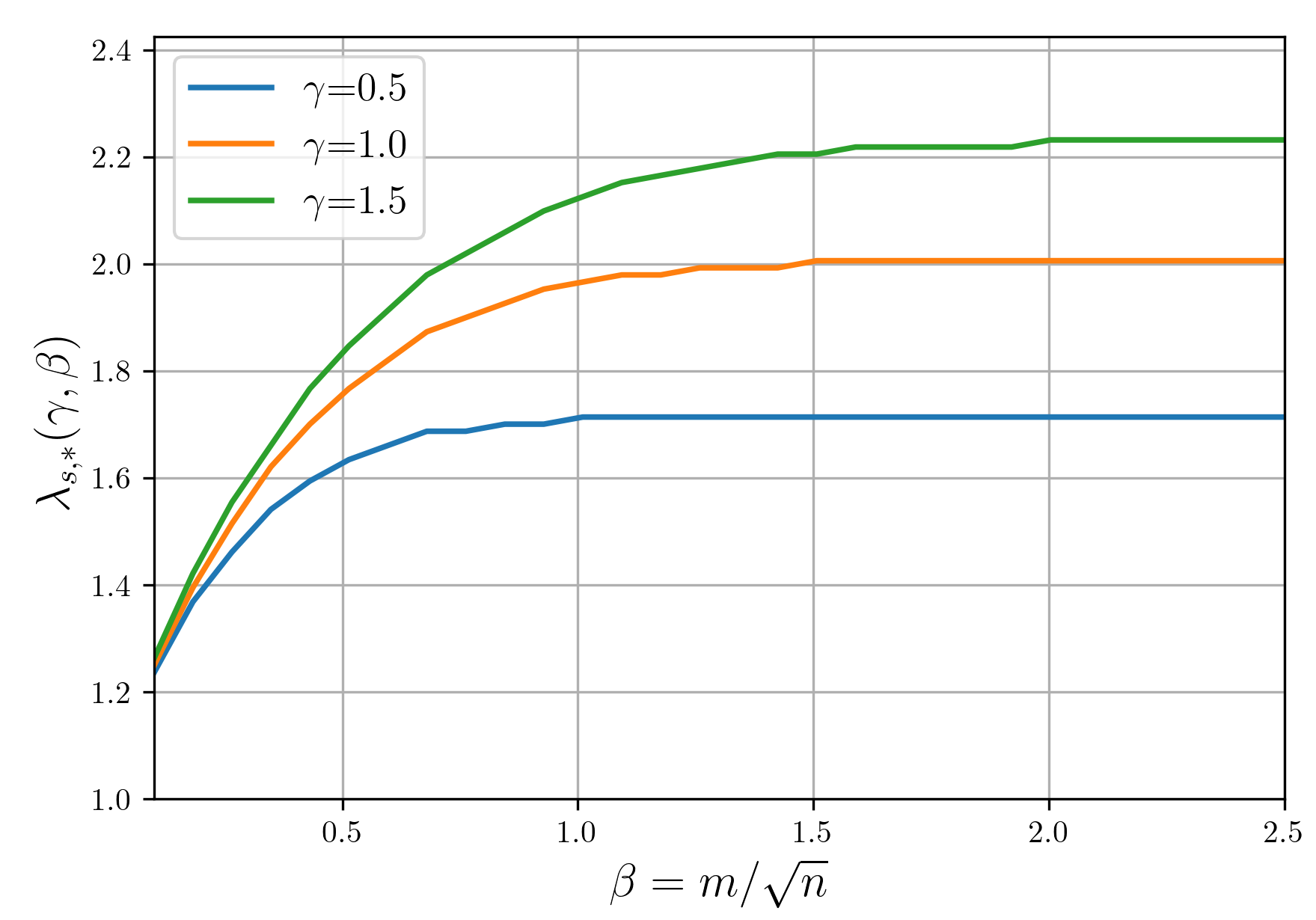}
\end{center} \vspace{-.5cm} \caption{The optimal phase transition of GCT (left) and soft thresholding (right), for $\gamma \in \{.5,1,1.5\}$ and $\beta \in [.1,2.5]$. Notice that (1) $\lambda_{s,*}(\gamma,\beta)$ and $\lambda_{*}(\gamma,\beta)$ are visually nearly indistinguishable, suggesting that soft thresholding is close to optimal, and (2)  $\lambda_{*}(\gamma,\beta) \rightarrow 1+ \sqrt{\gamma}$ as $\beta \rightarrow \infty$, supporting Lemma \ref{prop_betainf}. } \label{fig:opt_pts} 
\end{figure}

\newpage
\begin{figure}[H]
\begin{center} 
    \includegraphics[scale=.51]{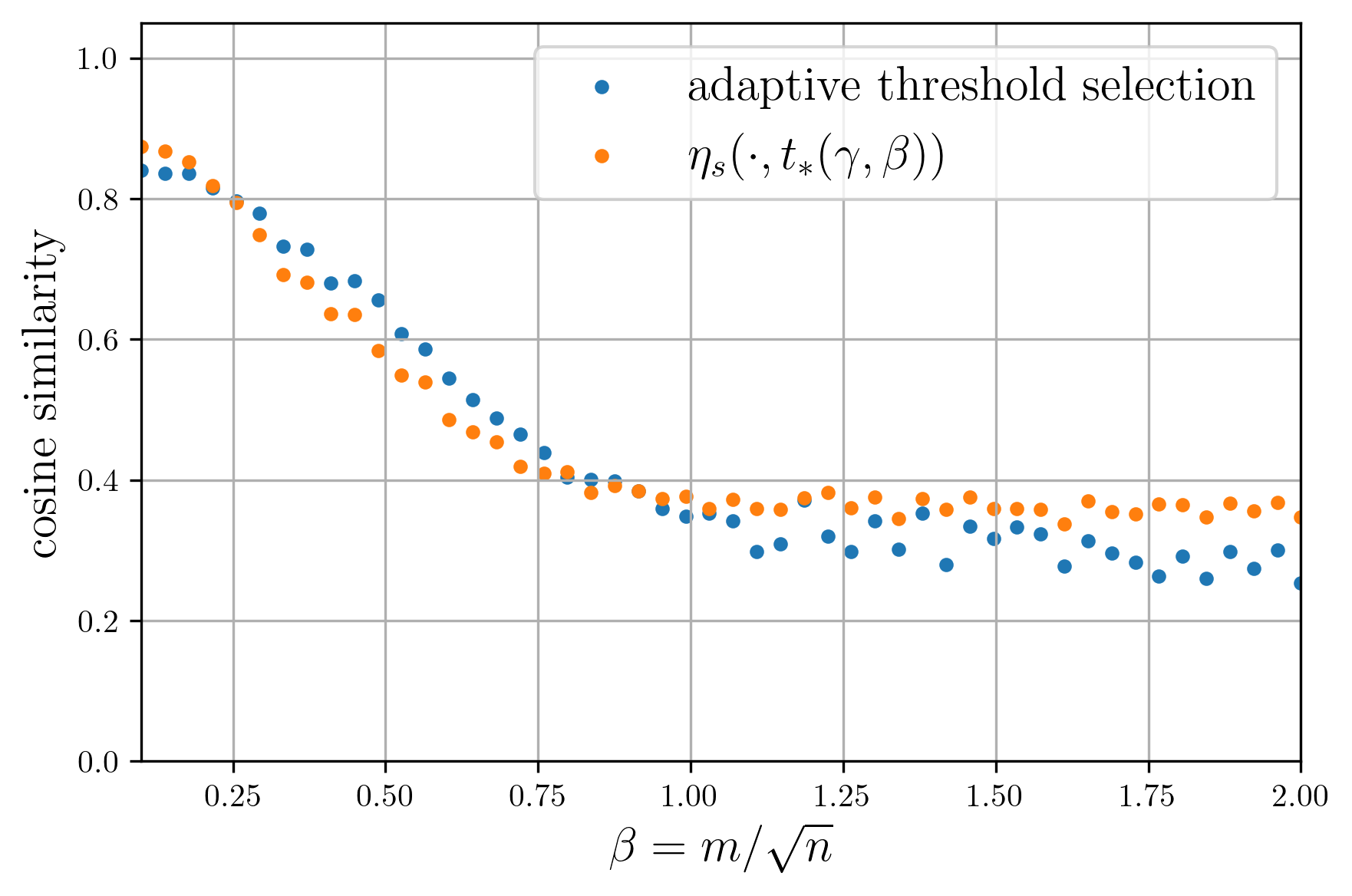} 
\end{center} \vspace{-.5cm} \caption{Cosine similarities for  soft thresholding with adaptive threshold selection (blue) and the optimal threshold $t_*(\gamma,\beta)$ (orange). Here, $n = \mathrm{ }$ \unskip10,000 
and $p = $ 5,000\unskip, so $\gamma = .5$. At each value of $\beta \in [.1,2]$, we set the signal strength to be $\lambda = \lambda_{s,*}(\gamma,\beta) + .1$. Each point represents the average of 50 simulations. Interestingly, for intermediate values of $\beta$, adaptive thresholding empirically outperforms the optimal fixed level $t_*(\gamma,\beta)$.
} \label{fig:adapt}
\end{figure}

\begin{figure}[H]
\begin{center} 
    \includegraphics[scale=.51]{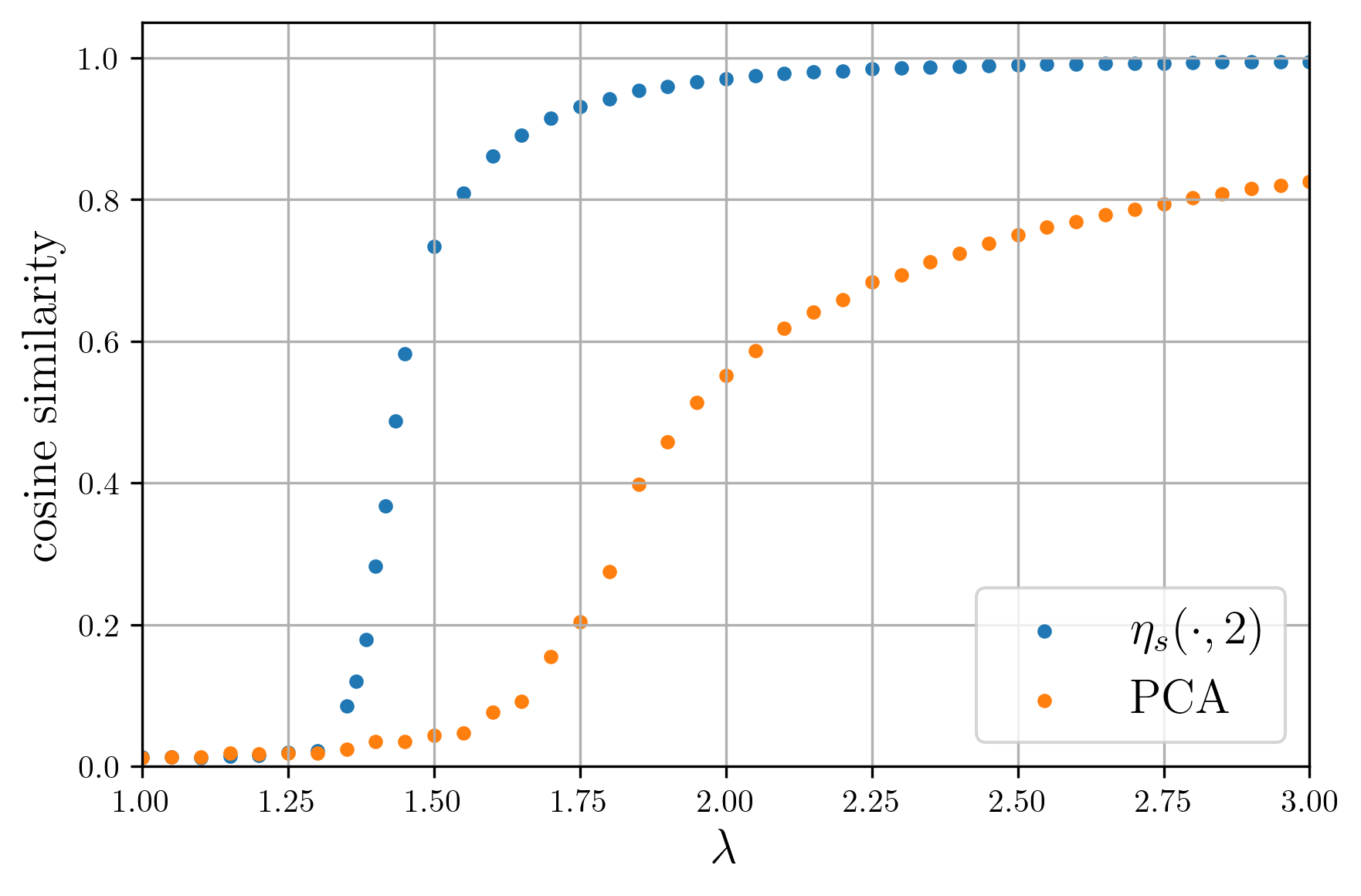} \includegraphics[scale=.51]{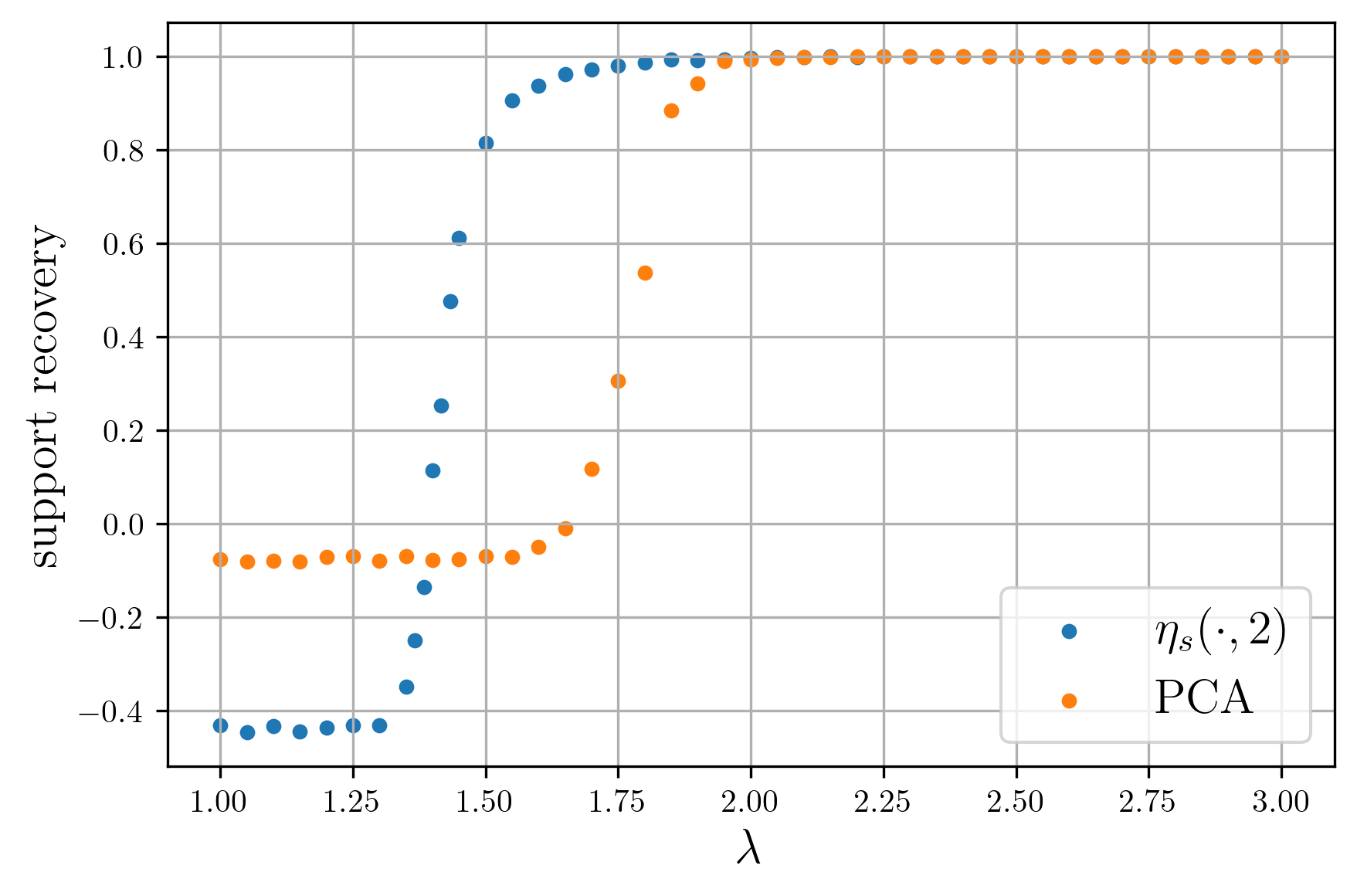}
\end{center} \vspace{-.5cm} \caption{Simulations of GCT with the kernel $\eta(\cdot, 2)$  (blue) and standard PCA (orange). The non-zero elements of $\bv$ are generated from a uniform prior (\ref{asdfqwer}).  The left plot displays cosine similarities; on the right, we plot support recovery, measured according to (\ref{asdfqwer1}). Empirically, the signal detection and recovery thresholds seem aligned. Here, $n = $ 10,000\unskip, $p = $ 5,000\unskip, and $m = 25$, so $\beta = 1/4$. Each point represents the average of 50 simulations.
} \label{fig:unif}
\end{figure}


\section{Deterministic Equivalents} \label{sec:iso_law}

In this section, we state our main technical contribution. Let $\bR_0(f,z)$ denote the resolvents of $\bK_0(f)$, defined as
\begin{align*} 
 \bR_0(f,z) \coloneqq (\bK_0(f) - z \bI_p)^{-1} .
\end{align*}
The resolvent is holomorphic at all points $z \in \mathbb{C}$ not equal to an eigenvalue of $\bK_0(f)$  (Theorem 1.5 of \cite{kato}). 
Recalling the Stieltjes transform $s(z)$ of $\mu$, we define the related quantities
\begin{align*}
& \breve s(z) \coloneqq \frac{s(z)}{1 + a_1 \gamma s(z)} , &   \accentset{\circ}{s}(z) \coloneqq     \breve s(z) ( 1 + \gamma - a_1 \gamma  \breve s(z) ).
\end{align*}

\begin{theorem} \label{lem:quad_forms} Let  $f$ be an odd polynomial and (\ref{asdf}) hold. For any $z \in \mathbb{C}^+$ and  deterministic  vectors $\bu, \bw \in \mathbb{S}^{p-1}$,
\begin{align}
   &\bu^\top \spa \bR_0(f,z)\bw - \langle \bu, \bw \rangle s(z) = O_\prec(n^{-1/2}) ,   \label{nhy1}\\
    &\bu^\top \spa \bS\bR_0(f,z) \bw - \langle \bu, \bw \rangle  \breve s(z) = O_\prec(n^{-1/2}) ,  \label{nhy2} \\ & \bu^\top \spa \bS \bR_0(f,z) \bS\bw  - \langle \bu, \bw \rangle \accentset{\circ}{s}(z) = O_\prec(n^{-1/2})  . \label{nhy3}
\end{align}
Moreover, the convergence is uniform in $z$ on compact subsets $\mathcal{C} \subset \mathbb{C}$ such that $\inf_{z \in \mathcal{C}} \mathrm{Re}(z) > \lambda_+$. 
\end{theorem}

\begin{theorem} \label{lem:quad_formsX} 
    Let  $f$ be an odd polynomial and (\ref{asdf}) hold.
     For any $z \in \mathbb{C}^+$ and  deterministic  vectors $\bu, \bw \in \mathbb{S}^{p-1}$ such that $\|\bu\|_0 \vee \|\bw\|_0 \lesssim m$,
\begin{align} 
   &\bu^\top  \spa \bR_0(f,z) (\bK(f) - \bA(f) ) \bR_0(f,z)\bw  = O_\prec(n^{-1/2}) , \\
    &\bu^\top \spa \bS\bR_0(f,z)  (\bK(f) - \bA(f) )  \bR_0(f,z) \bw = O_\prec(n^{-1/2}) ,   \\ & \bu^\top \spa \bS \bR_0(f,z)  (\bK(f) - \bA(f))  \bR_0(f,z) \bS\bw   = O_\prec(n^{-1/2})  .
\end{align}
Moreover, the convergence is uniform in $z$ on compact subsets $\mathcal{C} \subset \mathbb{C}$ such that $\inf_{z \in \mathcal{C}} \mathrm{Re}(z) > \lambda_+$. 
\end{theorem}

\noindent The proofs of Theorems \ref{lem:quad_forms} and  \ref{lem:quad_formsX} are deferred to Appendix \ref{sec:appendix1}. 

\section{Proofs} \label{sec:proofs}
\subsection{Proof of Theorems \ref{thrm:poly1} and \ref{thrm:poly2}}
Throughout this section, we assume $f$ is an odd polynomial, $L \coloneqq \mathrm{deg}(f)$, and (\ref{asdf}) holds.  

We shall show that all terms of (\ref{eq:taylor_f_L_2}) are vanishing in operator norm except those with indices $(\ell, k)$ of the form $(0, k)$ or $(\ell, \ell)$. In contrast, in the setting where $\bv$ is distributed uniformly on $\S^{p-1}$, only those terms with $(\ell, k)$ equal to $(0 ,1)$ or $(\ell, \ell)$ are non negligible.

\begin{proof}[Proof of Theorem \ref{thrm:poly1}]

Rearranging (\ref{eq:taylor_f_L_2}), we have 
the following decomposition of $\bK(f)$:
\begin{equation}\label{eq:decompo_K_f_A}
\begin{aligned}
         \bK (f) =&~ a_1 ( \bY - \diag (\bY)) + \sum_{\ell = 3}^L \frac{a_\ell}{\sqrt{\ell!}} n^{(\ell-1)/2}\big( \bX^{\odot \ell} - \diag( \bX^{\odot \ell} ) \big) + \bK_0 (f - a_1h_1) \\
    &~ + \sum_{\ell=1}^L \sum_{k= 1}^{L-\ell} \frac{a_{\ell+k} }{k!}\sqrt{\frac{(\ell +k)!}{\ell!}} \Big( \bK_0 (h_\ell) \odot (\sqrt{n} \bX)^{\odot k} \Big).
\end{aligned}
\end{equation}
Recalling that $\bA(f) \coloneqq a_1 (\bY- \bI_p) + \tau  \cdot \bv \bv^\top  + \bK_0(f - a_1 h_1)$, it suffices to demonstrate that 
\begin{enumerate}
    \item $\|\text{diag}(\bY) - \bI_p\|_2  \prec n^{-1/2}$,
    \item   $ \big\|n^{(\ell-1)/2} \big( \bX^{\odot \ell} - \diag( \bX^{\odot \ell} ) \big)  - (\lambda-1)^\ell \beta_n^{-(\ell-1)}  \bv \bv^\top  \big\|_2 \prec n^{-1/4}$ for $\ell \in \{3, \ldots, L\}$,
    \item $\big\| \bK_0(h_\ell) \odot (\sqrt{n} \bX)^{\odot k} \big\|_2 \prec n^{-1/4}$ for  $\ell \in [L]$ and $k \in [L-\ell]$.
    \end{enumerate}
We shall refer to these bounds as claims (1)--(3).
    
Expanding $\bX = \bSigma^{1/2} \bS \bSigma^{1/2} - \bS$ using $\bSigma^{1/2} = (\sqrt{\lambda} - 1) \bv \bv^\top  + \bI_p$, 
\begin{equation}
    \begin{aligned} \label{r8bc9} 
\bX = &~ (\sqrt{\lambda} - 1)^2 (\bv^\top \spa \bS \bv )\cdot \bv\bv^\top  + (\sqrt{\lambda} - 1) ( \bv \bv^\top \spa \bS + \bS \bv \bv^\top  ) \\
=&~ c \cdot \bv \bv^\top  + (\sqrt{\lambda} - 1) \big(\bv( \bS \bv - \bv)^\top + ( \bS \bv - \bv) \bv^\top  \big),
\end{aligned}
\end{equation}
where $c \coloneqq  (\sqrt{\lambda} - 1)^2 (\bv^\top \spa \bS \bv ) +2 (\sqrt{\lambda} - 1)$ satisfies $
|c - (\lambda -1)| \prec n^{-1/2} $ since $ \bv^\top \spa \bS \bv \sim n^{-1}\chi_n^2$.
Writing the coordinates of $\bw \coloneqq \bS \bv - \bv$ as 
\[
w_j = \sum_{i=1}^p  \bS_{ij} v_i  - v_j = \frac{v_j}{n}\sum_{i = 1}^n (z_{ij}^2 - 1) + \frac{1}{n} \sum_{i=1}^n z_{ij} (\bz_i^\top \bv - z_{ij} v_j) ,
\]
the independence of  $\{z_{ij}\}_{i \in [n]}$ and  $\{\bz_i^\top \bv - z_{ij} v_j\}_{i \in [n]}$ implies $w_j$ is an average of i.i.d.\ sub-exponential variables. Therefore, using a Hoeffding inequality and a union bound, 
\begin{equation}\label{gk91}
\sup_{j \in [p]}  |w_j| = O_\prec(n^{-1/2}) . 
\end{equation}
Similarly bounding $\|\text{diag}(\bS) - \bI_p\|_2$ , we obtain claim (1):
\begin{equation}
\begin{aligned} \label{rkj7}
    \|\text{diag}(\bY) - \bI_p\|_2 &\leq \|\text{diag}(\bS) - \bI_p\|_2 + \|\text{diag}(\bX)\|_2  \\ & \leq \sup_{j \in [p]} \big( |n^{-1} \bz_j^\top \bz_j - 1| + c  v_j^2 + 2(\sqrt{\lambda}-1)|v_j w_j| \big)\\ & \prec n^{-1/2}.
\end{aligned}    
\end{equation}

Now, consider 
the expansion 
\begin{align}
\bX^{\odot \ell} = \sum_{\substack{k_1, k_2, k_3 \in [\ell]  \\ k_1 + k_2 + k_3 = \ell}} {{\ell}\choose{k_1,k_2,k_3}} c^{k_1} (\sqrt{\lambda}-1)^{k_2 + k_3} \big(\bv^{\odot (k_1 + k_2)} \odot \bw^{\odot k_3} \big) \big(\bv^{\odot (k_1 + k_3)} \odot \bw^{\odot k_2} \big)^\top , \label{kj3hg}
\end{align}
the first term of which  is $c^\ell m^{-(\ell-1)} \bv \bv^\top$ (corresponding to $k_1 = \ell$, $k_2 = k_3 = 0$).
Using (\ref{gk91}), for $k_1 \in [\ell]$ and $k_2 \in \{0\} \cup [\ell]$, we have the bounds
\begin{align*}
& \| \bv^{\odot k_1} \odot \bw^{\odot k_2} \|_2 \prec  \frac{1}{m^{(k_1-1)/2} \,n^{k_2/2}} , & \| \bw^{\odot k_1} \|_2 \prec \frac{1}{ n^{(k_1-1)/2}},
\end{align*}
implying that terms of  (\ref{kj3hg}) with $k_2 + k_3 > 0$ are $O_\prec(n^{-\ell/2 + 1/4})$. 
Thus,
\begin{equation}
    \begin{aligned}
     \big\|n^{(\ell-1)/2} \bX^{\odot \ell} - c^\ell \beta_n^{-(\ell-1)}  \bv \bv^\top  \big\|_2 &  \prec n^{-1/4} . 
\end{aligned}
\end{equation}
Claim (2) now follows from $|c - (\lambda -1)| \prec n^{-1/2} $ 
and  $\|\diag(\bX)\|_2 \prec n^{-1/2}$, shown in (\ref{rkj7}).

Claim (3) is a consequence of Theorem 1.6 of \cite{fan2019spectral}, the sparsity of $\bv$, and the bound 
\begin{align*}
\big\| \bK_0(h_\ell) \odot \bX^{\odot k} \big\|_2  & \lesssim  \sum_{\substack{k_1, k_2, k_3 \in [k]  \\ k_1 + k_2 + k_3 = k}} c^{k_1} \Big\| \bK_0 (h_\ell) \odot \big( \big(\bv^{\odot (k_1 + k_2)} \odot \bw^{\odot k_3} \big) \big(\bv^{\odot (k_1 + k_3)} \odot \bw^{\odot k_2} \big)^\top\big) \Big\|_2 .
\end{align*}
Since the Hadamard product satisfies $\bA \odot (\bx\by^\top ) = \diag(\bx) \bA \, \diag (\by)$, for $k_2 + k_3 > 0$,
\begin{align} \label{bvc1}
n^{k/2} \Big\| \bK_0 (h_\ell) \odot \big( \big(\bv^{\odot (k_1 + k_2)} \odot \bw^{\odot k_3} \big) \big(\bv^{\odot (k_1 + k_3)} \odot \bw^{\odot k_2} \big)^\top\big) \Big\|_2 
\prec \frac{1}{m^{(k_2 + k_3)/2}} \| \bK_0 (h_\ell) \|_2,
\end{align}
where we used (\ref{gk91}), that the operator norm of a diagonal matrix is the maximum absolute entry, and  $n^{k_1/2} m^{-k_1} \asymp 1$. For $k_2 = k_3 = 0$, we have 
\begin{align} \label{bvc2}
    n^{k/2} \Big\| \bK_0 (h_\ell) \odot \big( \bv^{\odot k}  \big(\bv^{\odot k}  \big)^\top\big) \Big\|_2 = n^{k/2} m^{-k}  \big\| (\bK_0 (h_\ell))_{[1:m,1:m]} \big\|_2 .
\end{align}
Theorem 1.6 of \cite{fan2019spectral} yields
\begin{align} \label{qazwsx9}
    &  \| \bK_0 (h_\ell) \|_2 \prec 1 , & \big\| (\bK_0 (h_\ell))_{[1:m,1:m]} \big\|_2  \prec m^{-1/2} .  
\end{align}
Thus, $\big\| \bK_0(h_\ell) \odot (\sqrt{n} \bX)^{\odot k} \big\|_2 \prec n^{-1/4}$, completing the proof.

\end{proof}

Let $\bR(f,z)$ denote the resolvent of $\bK(f)$, defined analogously to $\bR_0(f, z)$:
\begin{align*}
& \bR(f,z) \coloneqq (\bK(f) - z \bI_p)^{-1}  .
\end{align*}

\begin{lemma}\label{lem:max_eig2}
For any $\varepsilon, D > 0$, there exists $n_{\varepsilon, D} \in \mathbb{N}$ such that
\begin{align*}
     \hspace{4.3cm}\P(\|\bK_0(f)\|_2 > (1+\varepsilon) \lambda_+) \leq n^{-D} , \qquad \qquad \hspace{1cm} \forall n \geq n_{\varepsilon, D} . \end{align*} 
\end{lemma}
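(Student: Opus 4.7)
The plan is to strengthen the almost-sure convergence $\|\bK_0(f)\|_2 \xrightarrow{a.s.} \lambda_+$ (Theorems 1.4 and 1.6 of \cite{fan2019spectral}) to the polynomial-probability tail bound via a high-moment method.

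First, Markov's inequality applied to even powers of the operator norm gives
\[
\P\bigl(\|\bK_0(f)\|_2 > (1+\varepsilon)\lambda_+\bigr) \leq \frac{\E\tr\bigl[(\bK_0(f))^{2k}\bigr]}{((1+\varepsilon)\lambda_+)^{2k}}
\]
for any positive integer $k$. The crux is then to establish a quantitative moment bound of the form
\[
\E\tr\bigl[(\bK_0(f))^{2k}\bigr] \leq p \cdot \lambda_+^{2k} \cdot (Ck)^{Ak}
\]
valid uniformly for $k \leq c_0 \log n$, with constants $A, C, c_0$ depending only on $f$ and $\gamma$. This would be obtained by a genus expansion: write $\tr[(\bK_0(f))^{2k}]$ as a sum over closed walks of length $2k$ on $[p]$, expand $f$ in the Hermite basis so that each edge factor decomposes as $\sum_\ell a_\ell h_\ell(\sqrt{n}\bS_{ij})$, and organize the resulting expectations by the combinatorial topology of the walk, exactly as in the proofs of Theorem 3.4 of \cite{cheng2013spectrum} and Theorem 1.6 of \cite{fan2019spectral}. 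The only modification relative to those proofs is to track the dependence on $k$ explicitly as $k$ grows slowly with $n$.

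Given such a moment bound, choosing $k = \lceil 2 D \log n / \log(1+\varepsilon) \rceil$ makes the Markov bound at most
\[
\exp\bigl(\log p + Ak\log(Ck) - 2k\log(1+\varepsilon)\bigr),
\]
which is $\leq n^{-D}$ for all sufficiently large $n$, since the term $2k\log(1+\varepsilon) = \Omega(D \log n)$ dominates both $\log p = O(\log n)$ and $Ak\log(Ck) = O(\log n \cdot \log\log n)$.

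The main obstacle is the quantitative moment bound itself. While analogous edge-concentration estimates are classical for Wigner and Wishart matrices, the kernel matrix $\bK_0(f)$ carries a more intricate combinatorial structure: contributions to $\E\tr[(\bK_0(f))^{2k}]$ involve joint Gaussian cumulants of the nonlinear entries $h_\ell(\sqrt{n}\bS_{ij})$ along the walk, and verifying that both the number of admissible walk topologies and the per-walk cumulant contribution grow only like $C^k$ requires careful bookkeeping of the graph expansions of \cite{cheng2013spectrum, fan2019spectral}. An alternative route would be to truncate the kernel entries at a polynomial scale and apply Gaussian concentration to the truncated operator norm; however, the polynomial growth of $f$ in $\sqrt{n}\bS_{ij}$ prevents a direct Lipschitz bound, so the moment method appears to be the most direct approach.
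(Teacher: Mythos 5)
Your proposal takes a fundamentally different route from the paper's proof, and it contains both a logical error and a substantial unfinished step.

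\paragraph{What the paper actually does.} The paper does not attack $\|\bK_0(f)\|_2$ head-on with a trace-moment method. Instead it reuses the decomposition $\bK_0(f) = \bQ(f) + \bR(f) + \bS(f)$ of \cite{fan2019spectral}, observes that Proposition~5.2 and Lemma~6.1 of that paper \emph{already} give the polynomial-probability bound $\P(\|\bQ(f)\|_2 > (1+\varepsilon)\lambda_+) \leq n^{-D}$ and $\P(\|\bS(f)\|_2 > n^{-\varepsilon}) \leq n^{-D}$, and that only the residual piece $\bR(f)$ needs upgrading from almost-sure convergence to $\|\bR(f)\|_2 \prec n^{-1/2}$. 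That upgrade is done by a moment method on a \emph{vanishing} quantity: one shows $\E\tr(\widetilde\bR_\ell^k) \lesssim n^{1-k/2}$ and applies Markov at threshold $n^{-1/2+\eps}$ with a \emph{fixed} (but large) $k$ depending only on $\eps, D$. Because the target threshold decays with $n$, the Markov bound $C_k\, n^{1-k\eps}$ gives $n^{-D}$ for $k$ fixed, large; there is no need to let $k$ grow with $n$, and therefore no need to track the combinatorial constants as functions of $k$. In other words, the hard $k$-growing moment analysis was already done by \cite{fan2019spectral} for $\bQ(f)$, and the paper is careful to isolate that and only prove the easy part.

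\paragraph{The gap in your proposal.} You propose to bound $\P(\|\bK_0(f)\|_2 > (1+\varepsilon)\lambda_+)$ directly by $\E\tr[\bK_0(f)^{2k}] / ((1+\varepsilon)\lambda_+)^{2k}$ with a moment bound of the claimed form $\E\tr[\bK_0(f)^{2k}] \leq p\,\lambda_+^{2k}(Ck)^{Ak}$ and $k \asymp \log n$. This does not close: with $k = \lceil 2D\log n / \log(1+\varepsilon)\rceil$, the term $Ak\log(Ck)$ is of order $\log n \cdot \log\log n$, whereas $2k\log(1+\varepsilon)$ is of order $\log n$. So for $n$ large the first term \emph{dominates} the second, contrary to what you assert, and the exponent in your Markov bound is eventually positive. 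No choice of $k$ rescues the argument with a moment bound of the shape $(Ck)^{Ak}$: if $k$ is bounded, $\log p$ is never canceled; if $k\to\infty$, the $k\log k$ growth beats the linear-in-$k$ gain $2k\log(1+\varepsilon)$. To make a direct moment method work at a constant threshold $(1+\varepsilon)\lambda_+$ you would need a substantially sharper moment bound, essentially $\E\tr[\bK_0(f)^{2k}] \leq p\,\lambda_+^{2k}\,C^k$ (no $k^{Ak}$ factor) uniformly up to, say, $k \lesssim n^c$ --- i.e., an edge-universality--grade estimate. That is precisely the delicate combinatorial bookkeeping done in \cite{fan2019spectral} for $\bQ(f)$; your claim that ``the only modification\ldots{} is to track the dependence on $k$'' understates the difficulty considerably, and you leave that bound unproven. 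Beyond being incomplete, this route is also wasteful: the paper's decomposition lets one piggyback on \cite{fan2019spectral} for the hard piece and reduce the remaining work to a fixed-$k$ moment computation on a quantity that shrinks in $n$, where the Markov argument is elementary.
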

\noindent The proof of Lemma \ref{lem:max_eig2}, which is a slight extension of Section 6 of \cite{fan2019spectral}, is given in Appendix \ref{sec:C}.

Drawing upon \cite{benaych2011eigenvalues, bloe2014isotropic, feldman23, liao2020sparse}, we prove Theorem \ref{thrm:poly2} in two stages: eigenvalue results,
\begin{equation} \label{q6j}
    \begin{aligned} 
        & |\lambda_1 - \psi(s_+) | \prec n^{-1/2}, \hspace{1.5cm}& \psi'(s_+) > 0 , \\
        & \lambda_1 \xrightarrow{a.s.} \lambda_+, \hspace{1.5cm}& \psi'(s_+) \leq 0 ,
    \end{aligned}
\end{equation}
and eigenvector results, 
    \begin{align}
        & \langle \bu_1, \bv \rangle^2 = \theta^2(s_+)  + O_\prec(n^{-1/2}), & \psi'(s_+) > 0 ,\label{q7j}  \\
&         u_{1i} = \theta(s_+) v_i + O_\prec(n^{-1/2}), & \psi'(s_+) > 0 ,\label{q9j}  \\
        & \langle \bu_1, \bv \rangle \xrightarrow{a.s.} 0,& \psi'(s_+) \leq 0 .\label{q8j}
    \end{align}
For brevity, we often suppress the arguments of matrices such as $\bK_0(f)$ and $\bR_0(f,z)$.  

\begin{proof}[Proof of (\ref{q6j})] 

Assume $\psi'(s_+) > 0$ and define the contours
\[ \mathcal{C}_n \coloneqq \big\{z \in \mathbb{C} : |z - \psi(s_+)| = \delta_n \big\} ,\]
where  (1) $\delta_n \rightarrow 0 $ and (2) there exists $\varepsilon > 0$ such that $\delta_n \geq n^{-1/2 + \varepsilon}$. We shall prove  $\psi(s_+) > \lambda_+$ and $\mathcal{C}_n$ encircles exactly one eigenvalue of $\bK$ with probability $1 - n^{-D}$ (for any $D > 0$).  Furthermore, considering an arbitrary, bounded contour $\mathcal{C}$ such that $\inf_{z \in \mathcal{C}}\mathrm{Re}(z) > \lambda_+$ and $\mathcal{C} \hspace{.1em} \cap \hspace{.1em} \mathcal{C}_n = \emptyset$, an identical argument yields $\mathcal{C}$ encircles no eigenvalues of $\bK$ with probability $1-n^{-D}$. This establishes $|\lambda_1 - \psi(s_+)| \prec n^{-1/2}$. Proof that $\psi'(s_+) \leq 0$ leads to $\lambda_1 \xrightarrow{a.s.}\lambda_+$ is similar and omitted.  

From Corollary \ref{cor:stj_trans} and the discussion on page 19 of \cite{liao2020sparse},
we have that $\psi(s_+) > \lambda_+$ if and only if  $s_+ \in (-1/(a_1\gamma),0)$ and $\psi'(s_+) > 0$. Noticing that $s_+$  is negative and increasing in $\lambda$ on $[1,\infty)$,\footnote{The partial derivative (\ref{9br7}) is symbolic and ignores the dependence of $\tau$ on $\lambda$.}
\begin{align} \label{9br7}
\frac{ \partial s_+}{\partial \lambda} = \frac{1}{2  \tau \gamma } \Big( -1 + \frac{a_1(\lambda+\gamma-1)+\tau}{\sqrt{(a_1(\lambda+\gamma-1) + \tau)^2 - 4 a_1 \tau \gamma}} \Big) > 0 , 
\end{align}
we obtain the lower bound
\begin{align*}
    s_+ > \frac{-a_1 \gamma - \tau + \sqrt{(a_1 \gamma + \tau)^2 - 4 a_1 \tau \gamma}}{2 a_1 \tau \gamma} = \frac{-a_1 \gamma - \tau + |a_1 \gamma - \tau|}{2 a_1 \tau \gamma} \geq - \frac{1}{a_1 \gamma}. 
\end{align*}

By Theorem \ref{thrm:poly1}, we may write $\bK(f) = \bV \spc \bLambda \bV^\top \spa + \bK_0(f) + {\bf \Delta}$, where 
\begin{align*} 
    & \bV \coloneqq \begin{bmatrix}
    \bv & \bS \bv
\end{bmatrix} , & {\bf \Lambda} \coloneqq  \begin{bmatrix}
    a_1(\sqrt{\lambda}-1)^2 + \tau & a_1(\sqrt{\lambda}-1) \\ a_1(\sqrt{\lambda}-1) & 0 
\end{bmatrix} ,
\end{align*} 
and $\bDelta \coloneqq \bK - \bA + (c-(\lambda-1)) \bv \bv^\top$. Denoting $\bR_\bDelta \coloneqq (\bK_0 + \bDelta - z \bI_p)^{-1}$, as $\psi(s_+) > \lambda_+$, Lemma \ref{lem:max_eig2} and the bound $\|\bDelta\|_2 \prec n^{-1/4}$ imply $\bR_0$ and $\bR_\bDelta$ are holomorphic within $\mathcal{C}_n$ with probability $1-n^{-D}$ and
\begin{align} \label{poiu}
    & \sup_{z \in \mathcal{C}_n} \big(\|\bR_0\|_2 \vee \|\bR_\bDelta\|_2\big) \prec 1 .  
\end{align}
From these facts and the Woodbury identity, which yields
\begin{align} \label{9br3}
       \bR =   \bR_\bDelta -    \bR_\bDelta \hspace{-.05cm}  \bV({\bf \Lambda}^{-1} + \bV^\top \hspace{-.05cm}  \bR_\bDelta  \hspace{-.04cm}  \bV)^{-1}  \bV^\top \hspace{-.05cm}   \bR_\bDelta ,
\end{align}
we have that any eigenvalue of $\bK$ (equivalently, a pole of $\bR$) encircled by $\mathcal{C}_n$ is a pole of $ ({\bf \Lambda}^{-1} + \bV^\top   \bR_\bDelta \hspace{-.04cm}  \bV)^{-1}$. Thus, 
\begin{align} \label{9br4}
\det ({\bf \Lambda}^{-1} + \hspace{-.03cm} \bV^\top \hspace{-.05cm}   \bR_\bDelta \hspace{-.04cm} \bV) = 0 .
\end{align}

Our approach is to relate solutions of (\ref{9br4}) to the roots of
\begin{align}\label{9br6}
    \xi(z) \coloneqq \det \bigg({\bf \Lambda}^{-1} +  \begin{bmatrix}
        s(z) & \breve s(z) \\ \breve s(z) & \accentset{\circ}{s}(z)
    \end{bmatrix}\bigg ) =  -\frac{1 + \tau s(z) + a_1 s(z) (\lambda  + \gamma+ \tau \gamma s(z) - 1 )}{a_1^2(1+a_1 \gamma s(z))(\sqrt{\lambda}-1)^2} 
\end{align}
(this is our {\it master equation}, in the parlance of \cite{benaych2011eigenvalues}). 
Specifically, we will prove the following claims:
\begin{enumerate}
    \item $\displaystyle \sup_{z \in \mathcal{C}_n} \big| \det ({\bf \Lambda}^{-1} + \hspace{-.03cm} \bV^\top \hspace{-.05cm}   \bR_\bDelta \hspace{-.04cm} \bV) 
 - \xi(z)  \big| \prec n^{-1/2}$,
 \item $\displaystyle \inf_{z \in \mathcal{C}_n}| \xi(z)| \gtrsim \delta_n $,
 \item Within $\mathcal{C}_n$, $\psi(s_+)$ is the unique root of $\xi$ and has multiplicity one, and  $\xi$ is holomorphic.
\end{enumerate}
Then, Rouch\'e's theorem and the bound
\[
 \sup_{z \in \mathcal{C}_n} \big| \det ({\bf \Lambda}^{-1} + \hspace{-.03cm} \bV^\top \hspace{-.05cm}   \bR_\bDelta \hspace{-.05cm} \bV) 
 - \xi(z)  \big|  <   \inf_{z \in \mathcal{C}_n}| \xi(z)|  ,
\]
which holds with probability $1-n^{-D}$, imply that $\mathcal{C}_n$ encircles equal numbers of roots of $\xi(z)$ and $\det ({\bf \Lambda}^{-1} + \bV^\top \hspace{-.05cm}   \bR_\bDelta \hspace{-.04cm} \bV)$. This completes the proof---Rouch\'e's theorem is applicable as $\bR_\bDelta$ is holomorphic within $\mathcal{C}_n$.

On $\mathcal{C}_n$,  Theorem \ref{lem:quad_formsX} implies $ \| \bV^\top \hspace{-.05cm}\bR_0 \bDelta \bR_0  \bV \|_2  \prec n^{-1/2}$, and  (\ref{poiu}), $\|\bS\|_2 \prec 1$, and $\|\bDelta\|_2 \prec n^{-1/4}$ imply $
\| \bV^\top \bR_0 \bDelta \bR_\bDelta \bDelta \bR_0 \bV \|_2 \prec \|\bDelta\|_2^2 \prec n^{-1/2}
$.
Thus, using the relations $\bR_\bDelta^{-1} - \bR_0^{-1} = \bDelta$ and
\begin{align}\label{poiu0}\bR_\bDelta = \bR_0 - \bR_\bDelta \bDelta \bR_0 =  \bR_0 - \bR_0 \bDelta \bR_0 + \bR_0 \bDelta \bR_\bDelta \bDelta \bR_0 , \end{align}
we obtain
\begin{equation}
\begin{gathered} \label{poiu1}
        \big\|\hspace{-.03cm}  \bV^\top \hspace{-.05cm}\bR_\bDelta \hspace{-.04cm} \bV  - \hspace{-.03cm}\bV^\top \hspace{-.05cm}\bR_0  \bV \hspace{.05cm}\big\|_2  \prec n^{-1/2} ,\\
        \sup_{z \in \mathcal{C}_n} \Big| \det ({\bf \Lambda}^{-1} +  \hspace{-.03cm} \bV^\top \hspace{-.05cm}   \bR_\bDelta \hspace{-.04cm} \bV)  - \det ({\bf \Lambda}^{-1} + \hspace{-.03cm}\bV^\top \hspace{-.05cm}   \bR_0  \bV)  \Big| \prec n^{-1/2} .      
\end{gathered}    
\end{equation}
Moreover, Theorem \ref{lem:quad_forms} implies
\begin{equation}
\begin{gathered}\label{poiu2}
    \bigg\|\hspace{-.03cm}\bV^\top \hspace{-.05cm} \bR_0  \bV - \begin{bmatrix}
        s(z) & \breve s(z) \\ \breve s(z) &  \accentset{\circ}{s}(z)
    \end{bmatrix} \bigg\|_2 \prec n^{-1/2} , \\
          \sup_{z \in \mathcal{C}_n} \big| \det ({\bf \Lambda}^{-1} + \hspace{-.03cm}\bV^\top \hspace{-.05cm}   \bR_0  \bV)  - \xi(z) \big| \prec n^{-1/2} .  
\end{gathered}
\end{equation}
Claim (1) follows from (\ref{poiu1}) and (\ref{poiu2}).

As the numerator of $\xi$ is quadratic in $s(z)$, the roots of $\xi$ are $\psi(s_+)$ and $\psi(s_-)$, where 
\[
s_{-} \coloneqq \frac{-a_1(\lambda+\gamma-1) - \tau -\sqrt{(a_1(\lambda+\gamma-1) + \tau)^2 - 4 a_1 \tau \gamma}}{2 a_1 \tau \gamma}  .
\]
Similarly to (\ref{9br7}), $s_-$ is decreasing in $\lambda$ on $[1, \infty)$, giving the upper bound 
\begin{align*}
    s_- < \frac{-a_1 \gamma - \tau - \sqrt{(a_1 \gamma + \tau)^2 - 4 a_1 \tau \gamma}}{2 a_1 \tau \gamma} \leq \frac{-a_1 \gamma - \tau}{2 a_1 \tau \gamma} \leq - \frac{1}{a_1 \gamma}
\end{align*}
and establishing claim (3). Now, we write $\xi$ as 
\begin{align} \label{mybp4}
\xi(z) =  -\frac{(s(z) - s_+)(s(z)-s_-)}{a_1^2(1+a_1 \gamma s(z))(\sqrt{\lambda}-1)^2} .
\end{align}
On $\mathcal{C}_n$, we have $| s(z) - s_-| \asymp |1+a_1 \gamma s(z) | \asymp 1 $. Furthermore, 
\begin{equation}
    \begin{aligned}
    s(z) - s_+ &= \int \bigg(\frac{1}{\lambda-z} - \frac{1}{\lambda - \psi(s_+)}\bigg) d\mu(z) = \int \frac{z - \psi(s_+)}{(\lambda-z)(\lambda-\psi(s_+))} d\mu(z) ,
\end{aligned}
\end{equation}
$|z - \psi(s_+)| = \delta_n$, and $|(\lambda-z)(\lambda - \psi(s_+))| \asymp 1$ for $\lambda \in \mathrm{supp}(\mu)$, implying
\begin{align} \label{mybp3}
    & \inf_{z \in \mathcal{C}_n}|s(z) - s_+| \gtrsim \delta_n . 
\end{align}
Claim (2) follows from (\ref{mybp4}) and (\ref{mybp3}).
\end{proof}

\begin{lemma} \label{lem:det_equiv}
    Define the function 
\begin{align*}
\zeta(z) & \coloneqq s(z) - \begin{bmatrix}        s(z) \\ \breve s(z) 
    \end{bmatrix}^\top \hspace{-.1cm} \bigg({\bf \Lambda}^{-1} +  \begin{bmatrix}
        s(z) & \breve s(z) \\ \breve s(z) & \accentset{\circ}{s}(z)
    \end{bmatrix}\bigg )^{-1} \begin{bmatrix}        s(z) \\ \breve s(z) 
    \end{bmatrix} \\
    &= \frac{ s(z) (1+a_1 \gamma s(z)) }{1+\tau s(z)+a_1 s(z)(\lambda + \gamma + \tau \gamma s(z) -1)} . 
\end{align*}
For any $z \in \mathbb{C}^+$ and deterministic vector $\bu \in \mathbb{S}^{p-1}$,
\[
\Big| \bu^\top \spb (\bR -  \bR_\bDelta) \bu -  \langle \bu, \bv \rangle^2 (\zeta(z)-s(z)) \Big| \prec \frac{1}{\sqrt{n}} |\langle \bu, \bv \rangle | + \frac{1}{n}.
\]
Moreover, the convergence is uniform in $z$ on compact subsets $\mathcal{C} \subset \mathbb{C}$ such that $\inf_{z \in \mathcal{C}} \mathrm{Re}(z) > \lambda_+$. 
\end{lemma}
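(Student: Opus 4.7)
The strategy parallels the proof of the eigenvalue estimate (\ref{q6j}): combine the Sherman--Morrison--Woodbury identity with the quadratic-form approximations of Lemmas \ref{lem:quad_forms} and \ref{lem:quad_formsX}. Starting from the decomposition $\bK = \bK_0 + \bDelta + \bV \bLambda \bV^\top$ used in the proof of (\ref{q6j}), Woodbury gives
\begin{equation*}
\bu^\top (\bR - \bR_\bDelta) \bu \;=\; -\,\boldsymbol{p}^\top M^{-1} \boldsymbol{p}, \qquad \boldsymbol{p} := \bV^\top \bR_\bDelta \bu, \qquad M := \bLambda^{-1} + \bV^\top \bR_\bDelta \bV.
\end{equation*}
Set $\alpha := \langle \bu, \bv \rangle$, $\boldsymbol{q} := (s(z), \breve s(z))^\top$, and $M_* := \bLambda^{-1} + \bigl[\begin{smallmatrix} s(z) & \breve s(z) \\ \breve s(z) & \accentset{\circ}{s}(z) \end{smallmatrix}\bigr]$; the explicit formula for $\zeta(z)$ in the statement amounts to $\zeta(z) - s(z) = -\boldsymbol{q}^\top M_*^{-1} \boldsymbol{q}$. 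The target thus reduces to bounding $\boldsymbol{p}^\top M^{-1} \boldsymbol{p} - \alpha^2 \boldsymbol{q}^\top M_*^{-1} \boldsymbol{q}$ by $O_\prec(n^{-1/2}|\alpha| + n^{-1})$.

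Expanding this quadratic form around the deterministic pair $(\alpha\boldsymbol{q}, M_*)$ shows that the target follows from the two claims
\begin{equation*}
\text{(i)}\ \|M - M_*\|_2 \prec n^{-1/2}, \qquad \text{(ii)}\ \|\boldsymbol{p} - \alpha\boldsymbol{q}\|_2 \prec n^{-1/2},
\end{equation*}
together with the uniform bound $\|M_*^{-1}\|_2 \prec 1$ on the relevant compact subsets of $\{z : \mathrm{Re}(z) > \lambda_+\}$ (away from the poles of $M_*^{-1}$). Claim (i) is essentially already contained in (\ref{poiu1})--(\ref{poiu2}) of the proof of (\ref{q6j}), via Lemmas \ref{lem:quad_forms} and \ref{lem:quad_formsX} applied to the sparse columns $\bv$ and $\bS\bv$ of $\bV$. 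For claim (ii), iterating the resolvent identity to $\bR_\bDelta = \bR_0 - \bR_0 \bDelta \bR_0 + \bR_0 \bDelta \bR_\bDelta \bDelta \bR_0$ yields, component-wise,
\begin{equation*}
\bv^\top \bR_\bDelta \bu \;=\; \bv^\top \bR_0 \bu \;-\; \bv^\top \bR_0 \bDelta \bR_0 \bu \;+\; O_\prec(n^{-1/2}),
\end{equation*}
and similarly for $(\bS\bv)^\top \bR_\bDelta \bu$; the trailing error uses $\|\bDelta\|_2^2 \prec n^{-1/2}$ from Theorem \ref{thrm:poly1}, and the leading terms are evaluated via Lemma \ref{lem:quad_forms} to $\alpha s(z) + O_\prec(n^{-1/2})$ and $\alpha \breve s(z) + O_\prec(n^{-1/2})$ respectively.

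The principal obstacle is therefore to show $\bv^\top \bR_0 (\bK - \bA) \bR_0 \bu = O_\prec(n^{-1/2})$ (and the analog with $\bS\bv$) for an arbitrary unit vector $\bu$, since Lemma \ref{lem:quad_formsX} formally requires both test vectors to be $O(m)$-sparse. The way out is to exploit the sparsity structure of $\bK - \bA$ uncovered in the proof of Theorem \ref{thrm:poly1}: the decomposition (\ref{eq:decompo_K_f_A}) writes $\bK - \bA$ as a sum of terms built from $\bX^{\odot \ell}$ (minus the main $\tau\bv\bv^\top$ contribution absorbed into $\bA$) and $\bK_0(h_\ell) \odot (\sqrt{n}\bX)^{\odot k}$, and by (\ref{r8bc9}) every factor of $\bX$ decomposes as a sum of rank-one matrices each of whose outer factors carries the sparse vector $\bv$. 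Expanding $\bX^{\odot \ell}$ as in (\ref{kj3hg}) and using $\bC \odot (\bx\by^\top) = \diag(\bx)\,\bC\,\diag(\by)$ to handle the mixed Hadamard terms rewrites each constituent of $\bK - \bA$ as a linear combination of rank-one pieces (or $\diag$-conjugates of $\bK_0(h_\ell)$) with at least one $m$-sparse outer factor. The induced quadratic forms then split as $(\bv^\top \bR_0 \bx)\,(\by^\top \bR_0 \bu)$, with the sparse side controlled by Lemma \ref{lem:quad_forms} and the dense side bounded trivially in $\ell^2$; the sparsity gain combines with the $n^{-1/4}$ operator-norm estimates underlying Theorem \ref{thrm:poly1} to deliver the required $O_\prec(n^{-1/2})$. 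Carrying out this bookkeeping term-by-term---essentially an adaptation of the proof of Lemma \ref{lem:quad_formsX} permitting one arbitrary test vector---is the technically most delicate step of the proof.
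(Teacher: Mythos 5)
Your approach is exactly the paper's: apply the Woodbury identity, reduce to the two approximations $\|M-M_*\|_2 \prec n^{-1/2}$ and $\|\boldsymbol{p}-\alpha\boldsymbol{q}\|_2 \prec n^{-1/2}$, and establish these via the resolvent expansion (\ref{poiu0}) together with Lemmas \ref{lem:quad_forms} and \ref{lem:quad_formsX}---this is precisely what (\ref{poiu1})--(\ref{poiu2}) and the three displayed equations in the paper's proof do. You have also flagged the one genuine subtlety the paper's terse proof does not address: $\bV^\top\bR_0\bDelta\bR_0\bu$ is a quadratic form against the arbitrary test vector $\bu$, whereas Lemma \ref{lem:quad_formsX} as stated requires $\|\bu\|_0\lesssim m$. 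Your proposed work-around---peel $\bK-\bA$ into rank-one pieces with at least one $m$-sparse outer factor via (\ref{kj3hg}), (\ref{r8bc9}), and the identity $\bC\odot(\bx\by^\top)=\diag(\bx)\bC\diag(\by)$, then split the quadratic form---is plausible in spirit, but a trivial $\ell^2$ bound on the dense side loses a factor of order $\sqrt{m}\asymp n^{1/4}$ relative to the sparse case (cf.\ the step (\ref{xsw5}) in the proof of Lemma \ref{lem:quad_forms_1000}, which uses $\|\bu\|_0\lesssim m$), so a direct estimate would deliver only $O_\prec(n^{-1/4})$; reaching $n^{-1/2}$ for arbitrary $\bu$ would require pushing the construction through the full leave-one-out and martingale apparatus of Lemmas \ref{lem:quad_forms_1000}--\ref{lem:quad_forms_1000E}, which you acknowledge and do not carry out. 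A far simpler remedy, consistent with the paper's actual usage: every invocation of Lemma \ref{lem:det_equiv} takes $\bu=\bv$ (proofs of (\ref{q7j}) and Lemma \ref{lem:uglyWoodbury}) or, via polarization, a normalized combination of $\bv$ and $\be_i$ (proof of (\ref{q9j}))---all $O(m)$-sparse. Adding the hypothesis $\|\bu\|_0\lesssim m$ to the statement of the lemma makes your claims (i) and (ii) immediate from Lemmas \ref{lem:quad_forms} and \ref{lem:quad_formsX} with no extra work, which is evidently what the authors intend.
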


\begin{proof}
The claim is a consequence of the following equations, which are derived from  Theorems \ref{lem:quad_forms} and \ref{lem:quad_formsX},  (\ref{9br3}), and (\ref{poiu0})--(\ref{poiu2}):
\begin{equation}
    \begin{aligned} \phantom{\Big|} 
& \bu^\top \spb (\bR -  \bR_\bDelta) \bu =  -\bu^\top \spa \bR_\bDelta \hspace{-.05cm} \bV ({\bf \Lambda}^{-1} + \bV^\top \hspace{-.05cm}  \bR_\bDelta  \hspace{-.05cm}  \bV)^{-1} \bV^\top \hspace{-.05cm}   \bR_\bDelta  \bu ,\\ &
    \bigg\| ({\bf \Lambda}^{-1} + \bV^\top \hspace{-.05cm}  \bR_\bDelta \hspace{-.04cm}  \bV)^{-1} - \bigg({\bf \Lambda}^{-1} +  \begin{bmatrix}
        s(z) & \breve s(z) \\ \breve s(z) & \accentset{\circ}{s}(z)
    \end{bmatrix}\bigg )^{-1} \bigg\|_2  \prec n^{-1/2} , 
\\ 
 \phantom{\Big|}    &\bV^\top \hspace{-.05cm}  \bR_\bDelta  \bu =   \bV^\top \hspace{-.05cm}  \bR_0  \bu -   \bV^\top \hspace{-.05cm}  \bR_0 \bDelta \bR_0  \bu +  \bV^\top \hspace{-.05cm}  \bR_0 \bDelta \bR_\bDelta \bDelta \bR_0  \bu \\
    & \hspace{1.45cm}= \langle \bu, \bv \rangle \begin{bmatrix}        s(z) \\ \breve s(z) 
    \end{bmatrix}  + O_\prec(n^{-1/2}).
    \end{aligned}
\end{equation}
\end{proof}

\begin{proof}[Proof of (\ref{q7j})] Let $\mathcal{C}$ denote a positively-oriented contour that encircles $\psi(s_+)$ and is bounded away from $\mathrm{supp}(\mu)$; as in the proof of (\ref{q6j}), 
with probability $1-n^{-D}$, $\mathcal{C}$ encircles only the maximum eigenvalue of $\bK$. Cauchy's integral formula  and Lemma  \ref{lem:det_equiv} therefore yield
\begin{equation} \label{vfrcde}
    \begin{aligned}
   \langle \bu_1, \bv \rangle^2 &= -\frac{1}{2 \pi i} \oint_\mathcal{C} \bv^\top \spa \bR(z) \bv \hspace{.04em}dz\\
&      = -\frac{1}{2 \pi i} \oint_\mathcal{C} \zeta(z) dz + O_\prec(n^{-1/2}) 
\end{aligned}
\end{equation}
Here, the first equality is a particular case of (1.16) in \cite{kato} and the second uses the fact that $\bR_\bDelta$ and $s(z)$ are holomorphic on and within $\mathcal{C}$, implying
\[
\oint_\mathcal{C} \bv^\top \bR_\bDelta \bv \hspace{.1em} dz = \oint_\mathcal{C} s(z) \hspace{.05em} dz  = 0 . 
\]
Since $\psi(s_+)$ is the unique pole of $\zeta(z)$ in $\mathcal{C}$ (recall that $\xi(\psi(s_+)) = 0)$,
\begin{equation}
    \begin{aligned}
   \langle \bu_1, \bv \rangle^2 &=  \lim_{z \rightarrow \psi(s_+)} (z-\psi(s_+)) \zeta(z) + O_\prec(n^{-1/2}). 
\end{aligned} 
\end{equation}

We compute the limit on the right-hand side using L'H\^{o}pital's rule, as in  Corollary 2 in \cite{liao2020sparse}:
\begin{equation}
    \begin{aligned}\label{333}
    \lim_{z \rightarrow \psi(s_+)} (z- \psi(s_+)) \zeta(z) & =  \lim_{z \rightarrow \psi(s_+)} \frac{-s(z)(1+a_1 \gamma s(z)) - (z-\psi(s_+))(1 + 2 a_1 \gamma s(z) )s'(z)}{(a_1(\lambda + \gamma - 1) + \tau)s'(z) + 2a_1 \tau \gamma s(z) s'(z)}\\ 
    & =  \lim_{z \rightarrow \psi(s_+)} \frac{-s(z)(1+a_1 \gamma s(z))}{(a_1(\lambda + \gamma - 1) + \tau)s'(z) + 2a_1 \tau \gamma s(z) s'(z)} .
\end{aligned}
\end{equation}
The second equality follows from  
the continuity of $s(z)$ on $(\lambda_+,\infty)$, implying $s(z) \rightarrow s_+$ as $z\rightarrow \psi(s_+)$, and the observation
\begin{gather*} \lim_{z \rightarrow \psi(s_+)}  a_1(\lambda+\gamma-1) + \tau + 2a_1 \tau \gamma s(z) =  a_1(\lambda+\gamma-1) + \tau + 2a_1 \tau \gamma s_+ > 0 , \\
 \lim_{z \rightarrow \psi(s_+)} \frac{(z-\psi(s_+))(1 + 2 a_1 \gamma s(z))}{a_1(\lambda + \gamma - 1) + \tau + 2a_1 \tau \gamma s(z) } = 0 . 
\end{gather*}
We complete the proof using the equation
\begin{equation}
    \begin{aligned} \label{334}
\lim_{z \rightarrow \psi(s_+)} s'(z) &= \lim_{z \rightarrow \psi(s_+)} \Big( \frac{1}{s^2(z)} - \frac{a_1^2\gamma}{(1+a_1 \gamma s(z))^2} - \gamma(\|f\|_\phi^2-a_1^2) \Big)^{-1} \\ &= \Big( \frac{1}{s_+^2} - \frac{a_1^2\gamma}{(1+a_1 \gamma s_+)^2} - \gamma(\|f\|_\phi^2-a_1^2) \Big)^{-1} , \end{aligned}
\end{equation}
which is obtained by differentiating (\ref{eq:stj_trans}). By (\ref{333}) and (\ref{334}), 
\begin{equation}
    \begin{aligned} \label{vfrcde1}
    \lim_{z \rightarrow \psi(s_+)} (z-\psi(s_+)) \zeta(z) 
     & = 
      \theta^2(s_+). 
\end{aligned}
\end{equation}
Finally, we note that (\ref{333}), $s_+ \in (-1/(a_1 \gamma), 0)$, and $s'(s_+) > 0$ imply $\theta^2(s_+) > 0$.
\end{proof}

\begin{proof}[Proof of (\ref{q9j})] Assume $\langle \bu_1, \bv \rangle \geq 0$ without loss of generality and let $\mathcal{C}$ denote a contour as in the proof of (\ref{q7j}). Similarly to (\ref{vfrcde})--(\ref{vfrcde1}), 
\begin{equation}
    \begin{aligned}
    \langle \bu_1, \bv \rangle u_{1i} &= -\frac{1}{2 \pi i} \oint_\mathcal{C} \be_i^\top \spa \bR(z) \bv \hspace{.1em}dz   = -\frac{v_i}{2 \pi i} \oint_\mathcal{C} \zeta(z) dz + O_\prec(n^{-1/2})  \\
&= \theta^2(s_+) v_i + O_\prec(n^{-1/2}) .
\end{aligned}
\end{equation}
The claim now follows from   (\ref{q7j}) and the fact that the limit of $\theta(s_+)$ is strictly positive:
\begin{equation}
    \begin{gathered}
        \langle \bu_1, \bv \rangle - \theta(s_+) = \frac{\langle \bu_1, \bv \rangle^2 - \theta^2(s_+)}{\langle \bu_1, \bv \rangle + \theta(s_+)} =  O_\prec(n^{-1/2}) , 
        \\ u_{1i} = \frac{\theta^2(s_+) v_i}{\theta(s_+) + O_\prec(n^{-1/2}) }  + O_\prec(n^{-1/2}) = \theta(s_+) v_i + O_\prec(n^{-1/2}) . 
    \end{gathered}
\end{equation}
\end{proof}

\begin{lemma} \label{lem:uglyWoodbury}  Let $\bW \in \mathbb{R}^{p \times (p-1)}$ be a semi-orthogonal matrix such that $\bW\bW^\top + \bv \bv^\top  = \bI_p$. For any $z \in \mathbb{C}^+$,
\begin{align*}
   \bv^\top  \spa \bK \hspace{.1em} \bW (z \bI_{p-1} -\bW^\top \spa \bK \hspace{.1em} \bW)^{-1} \bW^\top \spa \bK \bv
    = z + (\zeta(z))^{-1} -  a_1(\lambda-1) - \tau  + O_\prec(n^{-1/4}). 
\end{align*}    
Moreover, the convergence is uniform in $z$ on compact subsets of $\mathbb{C}^+$.
\end{lemma}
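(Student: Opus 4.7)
The approach is to apply the Schur complement formula to the block decomposition of $z\bI_p - \bK$ in the orthonormal basis $\{\bv, \bW\}$. Since $\bW\bW^\top + \bv\bv^\top = \bI_p$, writing $z\bI_p - \bK$ in this basis gives the $2\times 2$ block form whose $(1,1)$ entry of the inverse is
\[
\bv^\top (z\bI_p - \bK)^{-1} \bv = \frac{1}{(z - \bv^\top \bK \bv) - \bv^\top \bK \bW (z\bI_{p-1} - \bW^\top \bK \bW)^{-1} \bW^\top \bK \bv} .
\]
Since $(z\bI_p - \bK)^{-1} = -\bR(z)$, rearranging yields the identity
\[
\bv^\top \bK \bW (z\bI_{p-1} - \bW^\top \bK \bW)^{-1} \bW^\top \bK \bv = z - \bv^\top \bK \bv + \frac{1}{\bv^\top \bR(z) \bv} ,
\]
which reduces the problem to estimating the two scalar quantities $\bv^\top \bK \bv$ and $\bv^\top \bR(z) \bv$.

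For the first, I would use Theorem \ref{thrm:poly1} together with the relation $\bSigma^{1/2} \bv = \sqrt{\lambda} \bv$. Writing $\bA(f) = a_1(\bY - \bI_p) + \tau\, \bv \bv^\top + \bK_0(f - a_1 h_1)$, one has $\bv^\top \bY \bv = \lambda \bv^\top \bS \bv$, and since $\bv^\top \bS \bv \sim n^{-1}\chi_n^2$ this equals $\lambda + O_\prec(n^{-1/2})$. The remaining noise term satisfies
\[
|\bv^\top \bK_0(f - a_1 h_1)\bv| \leq \big\|(\bK_0(f - a_1 h_1))_{[1:m,1:m]}\big\|_2 \|\bv\|_2^2 \prec m^{-1/2} \asymp n^{-1/4}
\]
by (\ref{qazwsx9}), applied termwise to the Hermite expansion of $f - a_1 h_1$. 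Combined with Theorem \ref{thrm:poly1}, which yields $|\bv^\top (\bK - \bA)\bv| \prec n^{-1/4}$, this gives $\bv^\top \bK \bv = a_1(\lambda-1) + \tau + O_\prec(n^{-1/4})$.

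For the second quantity, I would invoke Lemma \ref{lem:det_equiv} applied to $\bu = \bv$, which gives $\bv^\top(\bR - \bR_\bDelta)\bv = \zeta(z) - s(z) + O_\prec(n^{-1/2})$, combined with $\bv^\top \bR_\bDelta \bv = s(z) + O_\prec(n^{-1/2})$ (obtained from Lemma \ref{lem:quad_forms} together with the expansion $\bR_\bDelta = \bR_0 - \bR_0 \bDelta \bR_0 + \bR_0\bDelta \bR_\bDelta \bDelta \bR_0$ used in the proof of (\ref{q6j})). This yields $\bv^\top \bR(z) \bv = \zeta(z) + O_\prec(n^{-1/2})$, uniformly on the required compact subsets.

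To conclude, I would note that for $z$ in a compact subset of $\mathbb{C}^+$, $\mathrm{Im}(\zeta(z))$ is bounded away from zero, so $1/(\zeta(z) + O_\prec(n^{-1/2})) = 1/\zeta(z) + O_\prec(n^{-1/2})$. Substituting the two estimates into the Schur complement identity produces the claimed formula with error $O_\prec(n^{-1/4})$, the dominant contribution coming from $\bv^\top \bK\bv$. The main technical point is the control of $\bv^\top \bK_0 \bv$ via the sparsity of $\bv$ and the principal submatrix bound from \cite{fan2019spectral}; the rest amounts to combining already-established estimates, and uniformity in $z$ is inherited from Lemmas \ref{lem:quad_forms}, \ref{lem:quad_formsX} and \ref{lem:det_equiv}. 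I do not foresee a serious obstacle beyond carefully tracking the uniform control of $1/\zeta(z)$ on a neighborhood of $\mathbb{C}^+$ avoiding the (isolated) pole $\psi(s_+)$, which is handled by restricting attention to the compact subsets in the statement.
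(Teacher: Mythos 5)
Your proof is correct and follows essentially the same route as the paper: both reduce the bilinear form to $z + (\bv^\top \bR \bv)^{-1} - \bv^\top \bK \bv$ via the Schur complement (Woodbury) identity, both invoke Lemma~\ref{lem:det_equiv} to get $\bv^\top \bR \bv = \zeta(z) + O_\prec(n^{-1/2})$, and both conclude by estimating $\bv^\top \bK \bv$. The only divergence is in that last estimate: you expand through $\bA(f)$ directly and bound the noise by $|\bv^\top \bK_0(f-a_1 h_1)\bv| \leq \|(\bK_0(f-a_1 h_1))_{[1:m,1:m]}\|_2 \prec m^{-1/2} \asymp n^{-1/4}$ using the principal-submatrix bound \eqref{qazwsx9}, while the paper expands through $\bV\bLambda\bV^\top + \bK_0 + \bDelta$ and uses the sharper $|\bv^\top \bK_0 \bv| \prec n^{-1/2}$ (proved along the lines of \eqref{nhy1}). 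Your bound is weaker but still sufficient, since the dominant error $O_\prec(n^{-1/4})$ already comes from $\|\bK - \bA\|_2$; the net effect on the final statement is identical.
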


\begin{proof}

As the proof is similar to that of Lemma \ref{lem:det_equiv}, we only sketch the argument. By the Woodbury identity, we have
\[
  \bv^\top \spa \bK \hspace{.1em}\bW (z \bI_{p-1} -\bW^\top \spa \bK \hspace{.1em} \bW)^{-1} \bW^\top \spa \bK \bv = z + (\bv^\top \spa \bR \bv)^{-1} - \bv^\top \spa \bK \bv .  
\]
Theorem \ref{thrm:poly1}, Lemma \ref{lem:det_equiv}, and the bound $|\bv^\top \spb \bS \bv - 1| \prec n^{-1/2}$ yield $\bv^\top \spa \bR \bv = \zeta(z) + O_\prec(n^{-1/2})$ and 
\begin{align*}
\bv^\top \spa \bK \bv &= \bv^\top \spc(\bV \spc \bLambda \bV^\top + \bK_0 + \bDelta) \bv \\
&=   \bLambda_{11} + 2\bLambda_{12} + \bv^\top \spa \bK_0 \bv + O_\prec(n^{-1/4}) .
\end{align*}
The claim  now follows from $|\bv^\top \spa \bK_0 \bv| \prec n^{-1/2}$, which is proved similarly to (\ref{nhy1}). 

\end{proof}

\begin{proof}[Proof of (\ref{q8j})]
      Let $\bW \in \mathbb{R}^{p \times (p-1)}$ be a semi-orthogonal matrix such that $\bW\bW^\top + \bv \bv^\top  = \bI_p$.
Similarly to Section 2 of \cite{JohnstoneCorrelation}, we have
\begin{align*}
\begin{bmatrix}    \bv^\top \spa \bK \bv & \bv^\top \spa \bK \hspace{.1em} \bW \\  \bW^\top  \spa \bK \bv &  \bW^\top  \spa \bK \hspace{.1em} \bW \end{bmatrix} \begin{bmatrix} \bv^\top  \bu_1 \\  \bW^\top \spb \bu_1 \end{bmatrix}
&= \lambda_1 \begin{bmatrix} \bv^\top  \bu_1 \\ \bW^\top \spb \bu_1 \end{bmatrix} , 
\end{align*}
yielding $\bW^\top \spb \bu_1 = \bu_1^\top  \bv (\lambda_1 \bI_{p-1} -\bW^\top \spa \bK \hspace{.1em} \bW)^{-1} \bW^\top \spa \bK \bv $ (the inverse exists almost surely). Furthermore, using the normalization condition $\langle \bu_1, \bv \rangle^2 + \|\bW^\top \spb \bu_1 \|_2^2 = 1$, we obtain the equation
\begin{align} \label{319nrgk}
      \langle \bu_1, \bv \rangle^2 \big(1 + \bv^\top  \bK \hspace{.1em} \bW (\lambda_1 \bI_{p-1} -\bW^\top \spa \bK \hspace{.1em} \bW)^{-2} \bW^\top \spa \bK \bv\big) = 1 .  
\end{align}
We shall prove  
\begin{align} \label{320nrgk}
    \bv^\top \spa \bK \hspace{.1em}\bW (\lambda_1 \bI_{p-1} -\bW^\top \spa \bK \hspace{.1em} \bW)^{-2} \bW^\top \spa \bK \bv\xrightarrow{a.s.} \infty ,
\end{align}
implying $\langle \bu_1, \bv \rangle^2 \xrightarrow{a.s.} 0 $ by (\ref{319nrgk}).

By Lemma \ref{lem:uglyWoodbury}, 
\begin{align*}
    \bv^\top  \spa \bK \hspace{.1em} \bW (z \bI_{p-1} -\bW^\top \spa \bK \hspace{.1em} \bW)^{-1} \bW^\top \spa \bK \bv \xrightarrow{a.s.}
     z + (\zeta(z))^{-1} -  a_1(\lambda-1) - \tau , 
\end{align*}
where the convergence is uniform in $z$ on compact subsets of $\mathbb{C}^+$.  Since  $\lambda_1 \xrightarrow{a.s.} \lambda_+$ by (\ref{q7j}) and uniform convergence of an analytic sequence implies uniform convergence of the derivative,  
\begin{align} 
      \bv^\top \spa \bK \hspace{.1em}\bW ((\lambda_1 + i \eta) \bI_{p-1} -\bW^\top \spa \bK \hspace{.1em} \bW)^{-2} \bW^\top \spa \bK \bv\xrightarrow{a.s.}  1 -  \frac{\zeta'(\lambda_+ + i \eta)}{\zeta^2(\lambda_+ + i \eta)}   , 
\end{align}
for any $\eta > 0$. 
Since $s(\lambda_+ +i \eta) \rightarrow  s_0$ and $|s'(\lambda_+ +i \eta)|  \rightarrow \infty$ as $\eta \rightarrow 0$, and
\[ 
 1 -  \frac{\zeta'(z)}{\zeta^2(z)} =  \frac{1 + a_1 \gamma s(z)(2 + a_1 (\lambda+\gamma-1)s(z))}{s^2(z)(1+a_1 \gamma s(z))^2} \cdot s'(z) , \]
we obtain the lower bound 
\begin{equation}\begin{aligned}
&~   \liminf_{n \rightarrow \infty} \Big| \bv^\top \spa \bK \hspace{.1em}\bW ((\lambda_1  + i \eta)\bI_{p-1} -\bW^\top \spa \bK \hspace{.1em} \bW)^{-2} \bW^\top \spa \bK \bv \Big| \\
  &~ \hspace{1.5cm} \geq
    \liminf_{\eta \rightarrow 0} \liminf_{n \rightarrow \infty} \Big|   \bv^\top \spa \bK \hspace{.1em}\bW ((\lambda_+ + i \eta) \bI_{p-1} -\bW^\top \spa \bK \hspace{.1em} \bW)^{-2} \bW^\top \spa \bK \bv \Big| \stackrel{a.s.}{=}  \infty . 
\end{aligned}
\end{equation}
The claim now follows from
\[
 \bv^\top \spa \bK \hspace{.1em}\bW (\lambda_1  \bI_{p-1} -\bW^\top \spa \bK \hspace{.1em} \bW)^{-2} \bW^\top \spa \bK \bv > \Big|  \bv^\top \spa \bK \hspace{.1em}\bW ((\lambda_1  + i \eta)\bI_{p-1} -\bW^\top \spa \bK \hspace{.1em} \bW)^{-2} \bW^\top \spa \bK \bv \Big|
.\]
\end{proof}

\subsection{Proof of Theorems \ref{thrm:A} and \ref{thrm:B}}


Throughout this section, we assume $f$ satisfies the conditions of Theorem \ref{thrm:A}.

\begin{lemma} \label{lem:polynomial_approx} There exist odd polynomials $\{f_\ell\}_{\ell \in \mathbb{N}}$ such that $\|f - f_\ell\|_\phi \rightarrow 0$ and  
 \[
     \lim_{\ell \rightarrow \infty} \limsup_{n \rightarrow \infty} \|\bK(f) - \bK(f_\ell)\|_2 = \lim_{\ell \rightarrow \infty} \limsup_{n \rightarrow \infty} \|\bK_0(f) - \bK_0(f_\ell)\|_2 \stackrel{a.s.}{=} 0.
 \]
\end{lemma}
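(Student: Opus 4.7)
I would take $f_\ell$ to be the Hermite truncation
$$f_\ell(x) := \sum_{k=1}^{2\ell+1} a_k h_k(x),$$
so that each $f_\ell$ is an odd polynomial (only odd $a_k$ are nonzero, since $f$ is odd) and $\|f-f_\ell\|_\phi \to 0$ by Parseval. Setting $g_\ell := f-f_\ell$, the statement reduces by linearity to showing that both $\|\bK_0(g_\ell)\|_2$ and $\|\bK(g_\ell)\|_2$ vanish in the iterated limit $\lim_{\ell\to\infty}\limsup_{n\to\infty}$. I would treat the two via the decomposition $\bK(g_\ell) = \bK_0(g_\ell) + (\bK(g_\ell)-\bK_0(g_\ell))$, bounding the summands separately.

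For the noise piece $\|\bK_0(g_\ell)\|_2$, my plan is to invoke an extension of Fan--Montanari \cite{fan2019spectral}: their Theorem~1.6 gives $\|\bK_0(g)\|_2 \to \lambda_+(g)$ almost surely, and Corollary~\ref{cor:stj_trans} shows $\lambda_+(g)$ depends continuously on $(a_1(g),\|g\|_\phi^2)$, both of which vanish along $g_\ell$. The technical obstacle is that Theorem~1.4 of \cite{fan2019spectral} is stated for $C^1$ $g$, which fails when $f$ (and hence $g_\ell$) is only continuous and twice differentiable off a finite set---as is the case for soft thresholding. I would revisit their moment/leave-one-out argument and verify that only the pointwise envelope bounds $|g_\ell^{(k)}|\le c_\ell e^{c_\ell|x|}$ (where defined) and $L^2(\phi)$ integrability enter, so the extension goes through with constants $c_\ell$ possibly blowing up in $\ell$---which is harmless since $n\to\infty$ is taken first.

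For the signal piece, the crucial structural observation is that $\bY_{ij}=\bS_{ij}$ whenever $i,j\notin\mathrm{supp}(\bv)=[m]$, so $\bT := \bK(g_\ell)-\bK_0(g_\ell)$ is supported on rows and columns indexed by $[m]$, and in particular has rank at most $2m$. I would bound
$$\|\bT\|_2^2 \le \|\bT\|_F^2 = \frac{1}{n}\sum_{\substack{i\neq j\\ i\in[m]\text{ or }j\in[m]}} \big[g_\ell(\sqrt{n}\hspace{.05em}\bY_{ij}) - g_\ell(\sqrt{n}\hspace{.05em}\bS_{ij})\big]^2$$
and split over the blocks $[m]\times[m]$, $[m]\times[m]^c$, $[m]^c\times[m]$. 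On $[m]\times[m]$, the $m^2\asymp n$ entries satisfy $\E[g_\ell(\sqrt{n}\hspace{.05em}\bY_{ij})^2]\lesssim \|g_\ell\|_\phi^2$, using that $\sqrt{n}\hspace{.05em}\bY_{ij}$ has a Gaussian-like density bounded uniformly in $n$ together with the envelope on $g_\ell$; this yields a contribution $O(\beta^2\|g_\ell\|_\phi^2)$. On the mixed blocks, (\ref{r8bc9}) and (\ref{gk91}) give $\sqrt{n}\hspace{.05em}|\bX_{ij}|=O_\prec(n^{-1/4})$ for $i\in[m]$, $j\notin[m]$, so a mean-value expansion $g_\ell(\sqrt{n}\hspace{.05em}\bY_{ij})-g_\ell(\sqrt{n}\hspace{.05em}\bS_{ij})=g_\ell'(\xi)\cdot\sqrt{n}\hspace{.05em}\bX_{ij}$ combined with the envelope on $g_\ell'$ produces a Frobenius contribution that also vanishes as $\ell\to\infty$. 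The main obstacle is the Fan--Montanari extension to merely piecewise-$C^2$ kernels with exponential growth; once that is in hand, the signal part is a careful but mostly elementary Frobenius-accounting argument exploiting the sparsity of $\bv$.
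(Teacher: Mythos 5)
Your decomposition $\bK(g_\ell)=\bK_0(g_\ell)+\bT$ with $\bT$ supported on rows/columns in $[m]$ (hence rank $\lesssim m$, Frobenius-bounded) is a genuinely different route from the paper, which never splits the spiked kernel in this way; instead, Lemma~\ref{lem:nice-poly-approx} constructs (via Gaussian heat-flow smoothing plus Fan--Montanari polynomial approximation) an odd polynomial whose residual $\kappa$ satisfies the \emph{pointwise} bound $|\kappa'(x)|\lesssim \eps\, e^{c|x|}$ off a small exceptional set, and Theorem~\ref{thm:opnorm-eps-bound} then bounds $\|\bK(\kappa)\|$ directly---spike included---by redoing the net/Lipschitz-concentration argument on the spiked matrix (with the spike's contribution absorbed into Lemma~\ref{lem:Fxy-gradient} via $\|\bv\|_\infty^2\asymp 1/m$, and $\|\E\bK\|$ controlled separately in Lemma~\ref{lem:opnorm-expt} using oddness of $\kappa$).

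There is a real gap in your $[m]\times[m]$ block estimate. You claim $\E[g_\ell(\sqrt{n}\,\bY_{ij})^2]\lesssim\|g_\ell\|_\phi^2$ because $\sqrt{n}\,\bY_{ij}$ has ``Gaussian-like density bounded uniformly in $n$.'' But for $i\ne j\in[m]$, $\sqrt{n}\,\bX_{ij}=c\sqrt{n}\,v_iv_j+o_\prec(1)\to\pm(\lambda-1)/\beta$ by (\ref{r8bc9}), so $\sqrt{n}\,\bY_{ij}$ has (approximately) a \emph{shifted} Gaussian density $\phi_a$ with $a\ne 0$, and $\phi_a/\phi = e^{ax-a^2/2}$ is unbounded in one tail. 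Thus $\int g_\ell^2\phi_a\not\lesssim\int g_\ell^2\phi$, and $\|g_\ell\|_\phi\to 0$ does not give $\|g_\ell\|_{L^2(\phi_a)}\to 0$ in general (think of mass escaping to the tail where $\phi_a\gg\phi$). A bounded-density argument would control $\int g_\ell^2\,dx$ (Lebesgue), which is yet another, larger norm. The same issue recurs in the mixed blocks: the mean-value estimate requires a pointwise (not $L^2(\phi)$) bound on $g_\ell'$, and the Hermite tail $g_\ell'=\sum_{k>2\ell+1}a_kh_k'$ does not satisfy any useful uniform pointwise bound. This is precisely why the paper does \emph{not} use Hermite truncation: the approximant from Lemma~\ref{lem:nice-poly-approx} is engineered to make $\kappa'$ pointwise small (after integrating, $|\kappa(x)|\lesssim\eps e^{c|x|}$ too), which is what every step actually needs. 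If you swap in that $p_\eps$, your low-rank/Frobenius accounting for $\bT$ does become viable---each block picks up an $\eps^2$ factor from the pointwise residual bound, plus an $O(\eps)$ contribution from the exceptional set $\Omega_\eps$---and could furnish an alternative to Theorem~\ref{thm:opnorm-eps-bound} for the signal part. You also correctly identify the Fan--Montanari $C^1$ restriction as an obstacle; the Gaussian smoothing step in Lemma~\ref{lem:nice-poly-approx} is exactly the paper's fix, and it is not avoidable.
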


\noindent Lemma \ref{lem:polynomial_approx}, the proof of which is deferred to Appendix \ref{sec:poly approx}, builds on \cite{fan2019spectral} in that (1)  Theorems 1.4 and 1.6 of \cite{fan2019spectral} pertain only to the kernel matrix of noise $\bK_0(f)$  and (2) Theorem 1.4 assumes $f$ is continuously differentiable. 

\begin{proof}[Proof of Theorem \ref{thrm:A}]
Let $\{f_\ell\}_{\ell \in \mathbb{N}}$ denote the polynomials given by Lemma \ref{lem:polynomial_approx} and $a_{\ell,k} \coloneqq \langle f_\ell, h_k\rangle_\phi$. We shall bound the right-hand side of 
\begin{align} \label{8123g}
   \|\bK(f) - \bA(f)\|_2 \leq  \|\bK(f) - \bK(f_\ell)\|_2 + \|\bK(f_\ell) - \bA(f_\ell)\|_2 +  \|\bA(f_\ell) - \bA(f)\|_2 .
\end{align}
Since Lemma \ref{lem:polynomial_approx} and Theorem \ref{thrm:poly1} state that
\begin{align*}
    & \lim_{\ell \rightarrow \infty} \limsup_{n \rightarrow \infty}  \|\bK(f) - \bK(f_\ell)\|_2  \stackrel{a.s.}{=} \lim_{n \rightarrow \infty}   \|\bK(f_\ell) - \bA(f_\ell)\|_2 \stackrel{a.s.}{=} 0, 
\end{align*}
we focus on the third term of (\ref{8123g}):
\begin{align} 
      \|\bA(f) - \bA(f_\ell)\|_2 \leq |a_1 - a_{\ell, 1}| \cdot \|\bY - \bI_p - \bK_0(h_1)\|_2 +  |\tau - \tau(f_\ell)| + \|\bK_0(f) - \bK_0(f_\ell)\|_2 ,
\end{align}
where $\tau(f_\ell) \coloneqq \sum_{k=1}^\ell a_{\ell,k} (\lambda-1)^k/(\sqrt{k!}\beta^{k-1})$.
By the Cauchy-Schwarz inequality and $\|f - f_\ell\|_\phi \rightarrow 0$, we have $|a_1 - a_{\ell,1}| \rightarrow 0$ and 
\begin{equation*}
\begin{aligned}
   |\tau - \tau(f_\ell)| & \leq  \bigg( \sum_{k=1}^\infty \Big(\frac{(\lambda-1)^{k}}{\sqrt{k!}\beta^{k-1}}\Big)^2 \cdot \sum_{k=1}^\infty (a_k - a_{\ell,k})^2 \bigg)^{1/2} \lesssim \|f - f_\ell\|_\phi \rightarrow 0 
\end{aligned}
\end{equation*}
as $\ell \rightarrow \infty$. 
Thus,  using the almost-sure bound $\|\bY - \bI_p - \bK_0(h_1)\|_2 \leq \|\bX\|_2 + \|\text{diag}(\bS) - \bI_p\|_2 \lesssim 1$, 
\[
\lim_{\ell \rightarrow \infty} \limsup_{n \rightarrow \infty} \|\bA(f) - \bA(f_\ell)\|_2 \stackrel{a.s.}{=} 0 .
\]
\end{proof}

\begin{lemma}\label{lem:max_eig}
   The operator norm of the kernel matrix of noise satisfies 
   \[
   \|\bK_0(f)\|_2 \xrightarrow{a.s.} \lambda_+ . 
   \]
\end{lemma}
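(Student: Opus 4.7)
The plan is to split the claim into matching upper and lower bounds. The upper bound, $\limsup_n\|\bK_0(f)\|_2\le\lambda_+$ almost surely, will be obtained by polynomial approximation, reducing to Lemma \ref{lem:max_eig2}; the lower bound, $\liminf_n \|\bK_0(f)\|_2\ge\lambda_+$ almost surely, will be read off directly from the weak convergence of the ESD of $\bK_0(f)$ to $\mu$ given in Theorem \ref{thrm:stj_trans}.

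For the upper bound, let $\{f_\ell\}_{\ell\in\mathbb N}$ be the odd polynomial approximants produced by Lemma \ref{lem:polynomial_approx}, and let $a_{\ell,k}\coloneqq\langle f_\ell,h_k\rangle_\phi$. Weyl's inequality gives
\begin{equation*}
\|\bK_0(f)\|_2 \;\le\; \|\bK_0(f_\ell)\|_2 + \|\bK_0(f)-\bK_0(f_\ell)\|_2.
\end{equation*}
Applying Lemma \ref{lem:max_eig2} to the polynomial $f_\ell$ together with the Borel--Cantelli lemma yields $\limsup_n\|\bK_0(f_\ell)\|_2\le \lambda_+(f_\ell)$ almost surely, where $\lambda_+(f_\ell)$ is the upper edge of the limiting ESD associated with $f_\ell$. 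Lemma \ref{lem:polynomial_approx} furnishes $\lim_{\ell\to\infty}\limsup_n \|\bK_0(f)-\bK_0(f_\ell)\|_2 = 0$ almost surely, so it remains to verify that $\lambda_+(f_\ell)\to\lambda_+(f)$ as $\ell\to\infty$. By Theorem \ref{thrm:stj_trans} and Corollary \ref{cor:stj_trans}, the equation defining $s(z)$, and hence $\psi$ and the critical point $s_0$, depends on $f$ only through the two scalars $a_1$ and $\|f\|_\phi^2$. Since $\|f-f_\ell\|_\phi\to 0$ implies both $a_{\ell,1}\to a_1$ and $\|f_\ell\|_\phi\to\|f\|_\phi$, continuity of the relevant root of $\psi'$ (on the interval $(-1/(a_1\gamma),0)$) in the parameters gives $\lambda_+(f_\ell)\to\lambda_+(f)$. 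Combining these three facts and letting first $n\to\infty$ and then $\ell\to\infty$ establishes the upper bound almost surely.

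For the lower bound, Theorem \ref{thrm:stj_trans} states that the ESD of $\bK_0(f)$ converges weakly almost surely to the continuous probability measure $\mu$. Since $\lambda_+=\sup\mathrm{supp}(\mu)$ and $\mu$ is continuous, every neighborhood of $\lambda_+$ carries positive mass under $\mu$. Thus for any $\varepsilon>0$, the fraction of eigenvalues of $\bK_0(f)$ lying in $(\lambda_+-\varepsilon,\lambda_+]$ tends to $\mu((\lambda_+-\varepsilon,\lambda_+])>0$ almost surely, which forces $\lambda_1(\bK_0(f))\ge\lambda_+-\varepsilon$ eventually. Letting $\varepsilon\downarrow 0$ along a countable sequence gives $\liminf_n \lambda_1(\bK_0(f))\ge \lambda_+$ almost surely, completing the proof.

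The main obstacle is the continuity statement $\lambda_+(f_\ell)\to\lambda_+(f)$: this amounts to showing that the unique critical point $s_0\in(-1/(a_{\ell,1}\gamma),0)$ of $\psi'(\cdot;f_\ell)$ is stable under perturbations of $(a_1,\|f\|_\phi^2)$. This should follow from an implicit function argument applied to the explicit rational expression in \eqref{eq:stj_trans2}, using that on the relevant interval the second derivative of $\psi$ is nonzero at $s_0$; the remaining steps are essentially a soft combination of Weyl's inequality, Borel--Cantelli, and weak convergence of the ESD.
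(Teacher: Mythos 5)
Your argument is correct, and for the upper bound it follows essentially the same path as the paper: polynomial approximation via Lemma~\ref{lem:polynomial_approx}, an edge bound for polynomial kernels, and continuity of $\lambda_+$ in the Hermite coefficients $(a_1,\|f\|_\phi^2)$ of equation \eqref{eq:stj_trans2}. The only cosmetic difference on that side is the source of the polynomial edge bound: you invoke Lemma~\ref{lem:max_eig2} plus Borel--Cantelli to get $\limsup_n\|\bK_0(f_\ell)\|_2\le\lambda_+(f_\ell)$ a.s., whereas the paper directly cites Theorem~1.7 of Fan--Montanari for the two-sided a.s.\ convergence $\|\bK_0(f_\ell)\|_2\to\lambda_+(f_\ell)$; these are two faces of the same underlying estimate.

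Where you genuinely diverge is in the lower bound. The paper reuses the polynomial-approximation chain via the reverse triangle inequality, leaning on the \emph{lower} half of the a.s.\ convergence $\|\bK_0(f_\ell)\|_2\to\lambda_+(f_\ell)$. You instead read off $\liminf_n\lambda_1(\bK_0(f))\ge\lambda_+$ directly from the almost-sure weak convergence of the ESD of $\bK_0(f)$ to the continuous limit $\mu$ (Theorem~\ref{thrm:stj_trans}): since $\mu\bigl((\lambda_+-\varepsilon,\lambda_+]\bigr)>0$ and this interval is a $\mu$-continuity set, a positive fraction of eigenvalues eventually lies above $\lambda_+-\varepsilon$, forcing the top eigenvalue there. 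This is a clean, slightly more elementary route that bypasses polynomial approximation entirely for the lower bound and uses only a soft fact already guaranteed by the convergence of the spectral measure. The trade-off is that the paper's argument is more uniform (both bounds via the same approximation scheme), while yours is a bit lighter on one side. Both are valid, and your observation that the lower bound needs only weak ESD convergence is a genuine, modest simplification.
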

\begin{proof}
     Let $\{f_\ell\}_{\ell \in \mathbb{N}}$ denote the polynomials given by Lemma \ref{lem:polynomial_approx} and $\lambda_+(f_\ell)$ the supremum of the support of the LSD of $\bK_0(f_\ell)$; by Theorem 1.7 of \cite{fan2019spectral}, $\|\bK_0(f_\ell)\|_2 \xrightarrow{a.s.} \lambda_+(f_\ell)$.
     As $\|f - f_\ell\|_\phi \rightarrow 0$ and the solutions to $\psi'(s) = 0$ depend continuously on the coefficients of equation (\ref{eq:stj_trans2}), we have $\lambda_+(f_\ell) \rightarrow \lambda_+$ as $\ell \rightarrow \infty$. Thus,
\begin{equation}
    \begin{aligned}
       & \limsup_{n \rightarrow \infty} \|\bK_0(f)\|_2 \leq \lim_{\ell \rightarrow \infty} \limsup_{n \rightarrow \infty} \big(\|\bK_0(f_\ell)\|_2 + \|\bK_0(f) - \bK_0(f_\ell)\|_2\big) \stackrel{a.s.}{=} \lambda_+ , \\
&\liminf_{n \rightarrow \infty}  \|\bK_0(f)\|_2 \geq \lim_{\ell \rightarrow \infty} \liminf_{n \rightarrow \infty} \big( \|\bK_0(f_\ell)\|_2 - \|\bK_0(f) - \bK_0(f_\ell)\|_2 \big) \stackrel{a.s.}{=} \lambda_+.
    \end{aligned}
\end{equation}
\end{proof}



Theorem \ref{thrm:B} follows from Theorems \ref{thrm:poly1} and \ref{thrm:A}, Lemmas \ref{lem:polynomial_approx} and \ref{lem:max_eig}, Weyl's inequality, and the Davis-Kahan theorem.

\subsection{Proofs for Section \ref{sec:2.3}}\label{sec:2.3proofs}

Recall $\psi(s)$ and $s_+$, defined in Corollary \ref{cor:stj_trans} and Theorem \ref{thrm:poly2}, respectively. In this section, for clarity, we write $[\psi'(s_+)](f,\lambda)$ to reflect that $s_+$ and $\psi'(s)$ each depend on $f$ and $\lambda$.  Lemma~\ref{lem:cR-poly-approx} is a consequence of Corollary \ref{thrm:SR} and Lemma \ref{propasdf}: 
\begin{lemma}\label{propasdf}
    If $[\psi'(s_+)](f,\lambda) > 0$, there exists $L \in \mathbb{N}$ such that $[\psi'(s_+)](f_\ell,\lambda) > 0$ for all $\ell \geq L$. 
\end{lemma}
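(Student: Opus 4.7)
The plan is to use the explicit dependence of $s_+$ and $\psi'$ on the function $f$ only through the three scalar quantities $a_1$, $\|f\|_\phi^2$, and $\tau(f,\beta,\lambda)$, and argue by continuity.

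First I would record that $f_\ell = \sum_{k=1}^\ell a_k h_k$ satisfies $\|f-f_\ell\|_\phi \to 0$ as $\ell \to \infty$, which immediately gives the three convergences we need:
\begin{align*}
 a_{\ell,1} = a_1, \qquad \|f_\ell\|_\phi^2 = \sum_{k=1}^\ell a_k^2 \longrightarrow \|f\|_\phi^2,
\end{align*}
and, by the Cauchy-Schwarz bound already used in the proof of Theorem \ref{thrm:A},
\begin{align*}
 \big|\tau(f,\beta,\lambda) - \tau(f_\ell,\beta,\lambda)\big|
 \leq \bigg(\sum_{k \geq 1} \frac{(\lambda-1)^{2k}}{k!\,\beta^{2(k-1)}}\bigg)^{1/2} \|f - f_\ell\|_\phi \longrightarrow 0.
\end{align*}

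Next I would note that the quantity $s_+$ defined in Theorem \ref{thrm:poly2} is an explicit continuous function of $(a_1,\tau,\gamma,\lambda)$ as long as $a_1>0$ and $\tau \neq 0$, and that under the hypothesis $[\psi'(s_+)](f,\lambda)>0$ these conditions hold for $f$ (since $\tau=0$ or $a_1\le 0$ falls outside the regime where $\psi'(s_+)$ is defined and positive). Therefore, for all $\ell$ sufficiently large we also have $a_{\ell,1}=a_1>0$ and $\tau(f_\ell,\beta,\lambda)\neq 0$, and
\begin{align*}
 s_+(f_\ell,\lambda) \longrightarrow s_+(f,\lambda) \quad \text{as } \ell \to \infty.
\end{align*}

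Finally, differentiating \eqref{eq:stj_trans2} yields
\begin{align*}
 \psi'(s) = \frac{1}{s^2} - \frac{a_1^2\gamma}{(1+a_1\gamma s)^2} - \gamma(\|f\|_\phi^2 - a_1^2),
\end{align*}
which is jointly continuous in $s$ and in the parameters $(a_1,\|f\|_\phi^2)$ on the open set $\{s \in (-1/(a_1\gamma),0)\}$ where $s_+(f,\lambda)$ lies (see the proof of \eqref{q6j}). Combining the convergence of the parameters with the convergence $s_+(f_\ell,\lambda)\to s_+(f,\lambda)$ and this joint continuity gives
\begin{align*}
 [\psi'(s_+)](f_\ell,\lambda) \longrightarrow [\psi'(s_+)](f,\lambda) > 0,
\end{align*}
so that $[\psi'(s_+)](f_\ell,\lambda)>0$ for all $\ell$ larger than some $L$, which completes the proof. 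The only mild subtlety is verifying that $s_+(f_\ell,\lambda)$ stays inside $(-1/(a_1\gamma),0)$ for large $\ell$, but this is automatic from the explicit formula and the monotonicity argument leading to the bound $s_+ > -1/(a_1\gamma)$ in the proof of \eqref{q6j}, applied to $f_\ell$.
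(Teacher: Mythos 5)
Your proof is correct and follows the same route as the paper, which simply asserts the two convergences $\tau(f_\ell,\beta,\lambda)\to\tau(f,\beta,\lambda)$ and $[\psi'(s_+)](f_\ell)\to[\psi'(s_+)](f)$ and concludes; you have merely filled in the details (the Cauchy--Schwarz bound on $|\tau-\tau(f_\ell)|$, the exact equality $a_{\ell,1}=a_1$, and the joint continuity of the explicit formulas for $s_+$ and $\psi'$). No gap.
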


\noindent  This follows from $\tau(f_\ell,\beta,\lambda) \rightarrow \tau(f,\beta,\lambda)$ and $[\psi'(s_+)](f_\ell) \rightarrow [\psi'(s_+)](f)$ as $\ell \rightarrow \infty$.

\begin{proof}[Proof of Lemma \ref{prop:non_dec}]
In this proof, we write $s_+(\lambda,\tau(\lambda))$ as $f$, $\gamma$, and $\beta$ are fixed.
  Since $s_+(\lambda,\tau)$ is increasing in each argument (see (\ref{9br7})),  $s_+(\lambda,\tau(\lambda))$ is increasing in $\lambda$. Recalling that $s_+ \in (-1/(a_1,\gamma), 0)$ and $\psi'(s)$ has a unique root $s_0$ in this interval, it follows that 
\[   \cR(f,\gamma,\beta,\cdot) = \big\{ \lambda \geq 1 :  \psi'(s_+(\lambda,\tau(\lambda)) > 0 \big\} = (\lambda_*(f,\gamma,\beta), \infty) ,\]
where we define 
\[
\lambda_*(f,\gamma,\beta) \coloneqq \inf \big\{ \lambda \geq 1 :  \psi'(s_+(\lambda,\tau(\lambda)) > 0 \big\} . \]
\end{proof}

\begin{proof}[Proof of Corollary \ref{cor:opt_transform}]
Let $f_\lambda $ have Hermite coefficients $\langle f_\lambda,h_1 \rangle_\phi = 0$ and 
\[ \hspace{3.3cm}
    \langle f_\lambda, h_k \rangle_\phi = \begin{dcases}
        \frac{(\lambda-1)^k}{\sqrt{k!}\beta^{k -1}} & k \text{ odd},\\
        0 & k \text{ even},  
    \end{dcases} \hspace{2.6cm} k \neq 1 .
\]
Given any $f \in L_o^2(\phi)$, consider the kernel 
\[
{\breve f }_\lambda \coloneqq \langle f, h_1 \rangle_\phi h_1 + \frac{\sqrt{\|f\|_\phi^2 - \langle f, h_1 \rangle_\phi^2}}{\| f_\lambda\|_\phi} f_\lambda .
\]
The orthogonality of the Hermite polynomials implies  $\|\breve f_\lambda\|_\phi = \|f\|_\phi$ and  $\langle \breve f_\lambda, h_1\rangle_\phi = \langle f,h_1\rangle_\phi$. Moreover, by Cauchy-Schwarz inequality,
\begin{equation}
\begin{aligned} \label{348}
\tau(f,\beta,\lambda) &= \sum_{k=3}^\infty \frac{\langle f, h_k \rangle_\phi (\lambda-1)^k}{\sqrt{k!}\beta^{k -1}} \\ 
& \leq \sqrt{\|f\|_\phi^2 - \langle f, h_1 \rangle_\phi^2} \bigg( \sum_{k=3}^\infty \frac{ (\lambda-1)^{2k}}{k!\beta^{2(k -1)}} \bigg)^{1/2}\\
&= \sqrt{\|f\|_\phi^2 - \langle f, h_1 \rangle_\phi^2}  \|f_\lambda\|_\phi . 
\\& =  \tau(\breve f_\lambda,\beta,\lambda) . 
\end{aligned}   
\end{equation}
Similarly to the proof of Lemma \ref{prop:non_dec}, these properties imply 
\[ \hspace{4.5cm} [\psi'(s_+)](f,\lambda) \leq [\psi'(s_+)](\breve f_\lambda,\lambda ) , \hspace{2cm} \forall \lambda \in \cR(f,\gamma,\beta,\cdot) .  \] 

 Thus, since $\cR(f,\gamma,\beta,\cdot)$ is invariant under rescalings of $f$,
\begin{equation}
    \begin{aligned}
        \bigcup_{f \in L_o^2(\phi)} \cR(f,\gamma,\beta,\cdot) &= \bigcup_{a \geq 0} \bigcup_{\lambda > 1} \cR(a h_1 + f_\lambda,\gamma,\beta,\cdot) .
    \end{aligned}
\end{equation}
Consider $\{(a_k, \lambda_k)\}_{k \in \mathbb{N}}$ such that 
\[
\lim_{k \rightarrow \infty} \inf \cR( a_k h_1 + f_{\lambda_k},\gamma,\beta,\cdot) = \inf_{a \geq 0, \lambda \geq 1} \cR(a h_1 + f_\lambda,\gamma,\beta,\cdot) .
\]
Assuming $\{\| a_k h_1 + f_{\lambda_k}\|_\phi\}_{k \in \mathbb{N}} $ is bounded without loss of generality,  there exists a convergent subsequence $\{(a_{k_\ell}, \lambda_{k_\ell})\}_{\ell \in \mathbb{N}}$; let  $a_{k_\ell} \rightarrow a_* \geq 0$ and $\lambda_{k_\ell} \rightarrow \lambda_* > 1$ as $\ell \rightarrow \infty$, and define $f^* \coloneqq a_* h_1 + f_{\lambda_*}$.   As $[\psi'(s_+)](f,\lambda)$ is continuous in the Hermite coefficients of $f$, we conclude that
\begin{align*}    \cR(a_{k_\ell} h_1 + f_{\lambda_{k_\ell}},\gamma,\beta,\cdot) \rightarrow \cR(f^*, \gamma, \beta, \cdot).\end{align*}
Together with Lemma \ref{prop:non_dec}, this yields
\begin{equation}
    \begin{gathered}
         \bigcup_{f \in L_o^2(\phi)} \cR(f,\gamma,\beta,\cdot) = \cR(f^*,\gamma,\beta,\cdot) .
    \end{gathered}
\end{equation}
To complete the proof, we note that  (\ref{348}) implies
\[
\lambda_* = \inf \big\{ \lambda \geq 1 :  [\psi'(s_+)](f^*,\lambda) > 0 \big\} . \]

\end{proof}

\begin{proof}[Proof of Corollary \ref{prop_betainf}]

We first establish monotonicity of $\beta \mapsto \lambda_*(\gamma,\beta)$. Let $L^2_{o,+}(\phi)\subset L_o^2(\phi)$ denote the subset of kernel functions with non-negative Hermite coefficients. By Lemma \ref{prop:non_dec}
and Corollary \ref{cor:opt_transform},  
\begin{align} \label{eq:hga7}
    (\lambda_*(\gamma,\beta),\infty) = \bigcup_{f\in L^2_o(\phi)}\cR(f,\gamma,\beta, \cdot) = \bigcup_{f\in L^2_{o,+}(\phi)}\cR(f,\gamma,\beta, \cdot) = \bigcup_{f\in L^2_{o,+}(\phi)}(\lambda_*(f,\gamma,\beta),\infty) . 
\end{align}
For all $\lambda>1$ and $f\in L_{o,+}^2(\phi)$, $\tau(f,\beta,\lambda)$ is non-increasing in $\beta$, implying (similarly to the proof of Corollary \ref{cor:opt_transform}) that $\beta\mapsto \lambda_*(f,\gamma,\beta)$ is non-decreasing.  Consequently,  $\beta\mapsto \lambda_*(\gamma,\beta)$ is non-decreasing. 
Since $\lambda_*(\gamma,\beta)$ is upper-bounded by the BBP transition  (corresponding to the identity kernel), $\lim_{\beta\to\infty}\lambda_*(\gamma,\beta)$ exists and is at most $1+\sqrt{\gamma}$.

Fix $\lambda>\lim_{\beta\to\infty}\lambda_*(\gamma,\beta)$; it suffices to prove that $\lambda\ge 1+\sqrt{\gamma}$. By 
 (\ref{eq:hga7}), there exists a sequence of sparsity levels and kernels $\{(\beta_k,f_k)\}_{k\in \mathbb{N}}$ satisfying $\beta_k\to \infty$ and, for all $k \in \mathbb{N}$, $\lambda \in \cR(f_k,\gamma,\beta_k,\cdot)$, $f_k \in L^2_{o,+}(\phi)$, and $\|f_k\|_\phi=1$. For brevity, denote
\[
a_{1,k} \coloneqq \langle f_k, h_1 \rangle_\phi,\quad\quad\quad \tau_k \coloneqq \tau(f_k,\beta_k,\lambda),\quad\quad\quad s_{k} \coloneqq s_+(f_k,\beta_k,\lambda) .
\]
Note that (1) if there exists $k \in \mathbb{N}$ such that $\tau_k = 0$, the claim follows from $\cR(f_k,\gamma,\beta_k,\cdot) = (1+\sqrt{\gamma},\infty)$, (2) $\tau_k\to 0$ as $k \rightarrow \infty$ since $\|f_k\|_{\phi}=1$,  and (3) $a_{1,k} > 0 $ for all sufficiently large $k$. The third fact follows from 
Remark \ref{rem1}, which implies that if $a_{1,k} = 0$, $\lambda\in \cR(f_k,\gamma,\beta_k,\cdot)$ if and only if $\tau_k > \sqrt{\gamma}$ (which contradicts $\tau_k \rightarrow 0$). 
Henceforth, we assume without loss of generality that $a_{1,k}, \tau_k > 0$. 
From Corollary \ref{cor:stj_trans} and Theorem~\ref{thrm:poly2}, recall that $\psi'(s_k)>0$, $s_k \in (-1/(a_{1,k} \gamma),0)$, and
\begin{align*}
    s_{k} 
    &= \frac{1}{2a_{1,k}\gamma\tau_k}\left(  - a_{1,k}(\lambda+\gamma-1)-\tau_k + \sqrt{(a_{1,k}(\lambda+\gamma-1)+\tau_k)^2-4a_{1,k}\gamma\tau_k}\right), \\
    \psi'(s_{k}) &= \frac{1}{s_{k}^2} - \frac{a_{1,k}^2 \gamma}{(1+a_{1,k}\gamma s_{k})^2}-\gamma(1-a_{1,k}^2) . 
    \end{align*}

We claim that $a_{1,k}$ is bounded away from zero. Taking a subsequence if necessary, let $a_{1,k} \rightarrow a_1 \in (0,1]$ as $k \rightarrow \infty$. Then,
\begin{align*}
\lim_{k \to \infty} s_{k} &= - \frac{1}{a_1(\lambda+\gamma-1)},     \\
\lim_{k \to \infty} s_{k}^2 \psi'(s_{k}) &= 1 - \frac{\gamma}{(\lambda-1)^2} - \frac{\gamma(1-a_1^2)}{a_1^2(\lambda+\gamma-1)^2} \le  1 -\frac{\gamma}{(\lambda-1)^2} .
\end{align*}
Since $s_k^2 \psi(s_k) \geq 0$, this implies $\lambda \geq 1 + \sqrt{\gamma}$. In fact, as the inequality is strict unless $a_1 = 1$ (corresponding to the identity kernel), we conclude that PCA is the unique optimal kernel in the limit $\beta \rightarrow \infty$.

It remains to prove $a_{1,k} \asymp 1$. 
Assume towards a contradiction that $a_{1,k} \to 0$. There are three cases to consider: (1) $a_{1,k}/\tau_k\to 0$, (2) $a_{1,k}/\tau_k\rightarrow \infty$, and (3) $a_{1,k}/\tau_k \asymp 1$. In each case, we will argue that $s_k \rightarrow -\infty$, implying 
\[
\liminf_{k \rightarrow \infty} \psi'(s_k) \leq \lim_{k \rightarrow \infty} \frac{1}{s_k^2} - \gamma(1-a_{1,k}^2) = -\gamma 
\]
and contradicting $\psi'(s_k) > 0$. 

\begin{enumerate}
    \item If $a_{1,k}/\tau_k \rightarrow 0$,  we have
\begin{align*}
   s_k 
    &= \frac{1}{2 a_{1,k}\gamma} \Bigg(  - (\lambda+\gamma-1)\frac{a_{1,k}}{\tau_k}  - 1 + \sqrt{1+2(\lambda+\gamma-1)\frac{a_{1,k}}{\tau_k} + (\lambda+\gamma-1)^2 \frac{a_{1,k}^2}{\tau_k^2} - \frac{4\gamma a_{1,k}}{\tau_k}} 
    \Bigg) \\
    &= -\frac{1}{\tau_k} + O\bigg( \frac{a_{1,k}}{\tau_k^2}\bigg).
\end{align*}
This yields $ s_k \rightarrow -\infty$ since $\tau_k > 0$ tends to zero.

\item If $a_{1,k}/\tau_k \to \infty$, we have
\begin{align*}
    a_{1,k} s_k &= \frac{a_{1,k} (\lambda+\gamma-1)+ \tau_k}{2\gamma \tau_k} \left( 
\sqrt{1 - \frac{4 a_{1,k}\gamma\tau_k}{(a_{1,k}(\lambda+\gamma-1)+\tau_k)^2}} - 1
    \right) \\
    &= -\frac{a_{1,k} (\lambda+\gamma-1)+ \tau_k}{2\gamma \tau_k} \bigg(\frac{2\gamma \tau_k}{a_{1,k}(\lambda+\gamma-1)^2} + O \bigg( \frac{\tau_k^2}{a_{1,k}^2}\bigg) \bigg) . 
\end{align*}
This yields $a_{1,k} s_k \to -1/(\lambda+\gamma-1)$ and $s_k \rightarrow -\infty$.

\item If $a_{1,k}/\tau_k \asymp 1$, similarly to the previous case, we have $a_{1,k}s_k \asymp 1$ and $s_k \rightarrow -\infty$. 

\end{enumerate}


\end{proof}

\section*{Acknowledgements}

The authors are grateful to David Donoho and Boaz Nadler for helpful discussions and comments. 

\nocite{*} 
\bibliographystyle{abbrv} 
\bibliography{bibliography.bib}

\appendix

\clearpage

\section{Proof of Theorems \ref{lem:quad_forms} and  \ref{lem:quad_formsX}}\label{sec:appendix1}

Throughout this section, $f$ and $g$ are odd polynomials and $L \coloneqq \mathrm{deg}(f) \vee \mathrm{deg}(g)$. For simplicity, the proofs given below assume $\bu = \bw$ and $z \in \mathbb{C}^+$, implying $\|\bR_0(f,z)\|_2 \leq \Im(z)^{-1} < \infty$. 
Given 
a compact set $\mathcal{C}\subset \mathbb{C}$ such that $\inf_{z \in \mathcal{C}} \mathrm{Re}(z) > \lambda_+$, the extension from pointwise to uniform convergence is made by restricting calculations to the event $\{ \|\bK_0(f)\|_2 \leq (1+\varepsilon) \lambda_+\}$, which occurs with probability at least $1-n^{-D}$ by Lemma \ref{lem:max_eig2}.  As $\varepsilon, D > 0 $ are arbitrary, this implies   
\[
\sup_{z \in \mathcal{C}}\|\bR_0(f, z)\|_2  \prec 1 .
\]
The generalizations of (\ref{nhy1}) and (\ref{nhy3}) to the asymmetric case $\bu \neq \bw$ follows by applying symmetric results to $\bu+\bw$. For example,
\begin{align*}
    (\bu+\bw)^\top\bR_0(f,z)(\bu+\bw) &= \|\bu+\bw\|_2^2 s(z) + O_\prec(n^{-1/2}) \\
    &= 2 s(z) + 2 \bu^\top \bR_0(f,z)\bw + O_\prec(n^{-1/2}) , 
\end{align*}
yielding $\bu^\top \bR_0(f,z) \bw = (\|\bu+\bw\|_2^2/2-1)s(z) + O_\prec(n^{-1/2}) = \langle \bu, \bw \rangle s(z) + O_\prec(n^{-1/2})$. The generalization of (\ref{nhy2}) follows from slight modifications of arguments in Sections \ref{sec:quadforms1} and \ref{sec:quadforms2}. 

Theorem \ref{lem:quad_forms} is an immediate consequence of Lemmas \ref{lem:quad_forms_E} and \ref{lem:quad_forms_E2}:

\begin{lemma}\label{lem:quad_forms_E}
 For any  $z \in \C^+$ and deterministic  vectors $\bu, \bw \in \S^{p-1}$,
\begin{align} 
   &\bu^\top  \bR_0(f,z) \bw - \E [\bu^\top  \bR_0(f, z) \bw]  = O_{\prec} (n^{-1/2}) , \label{ghjk1}\\ 
   &\bu^\top  \bS \bR_0(f,z) \bw - \E [ \bu^\top \bS  \bR_0(f, z) \bw] = O_{\prec} (n^{-1/2}), \label{ghjk2}\\ 
  & \bu^\top \bS\bR_0(f,z) \bS  \bw - \E  [\bu^\top \bS  \bR_0(f, z) \bS \bw] = O_{\prec} (n^{-1/2}) .  \label{ghjk3} 
\end{align}
\end{lemma}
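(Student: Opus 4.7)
The plan is to prove all three identities \eqref{ghjk1}--\eqref{ghjk3} by a single martingale-difference argument over the rows of $\bZ$, combined with a ``leave-one-out'' resolvent expansion. Let $X$ denote any of the three quadratic forms. Let $\E_a[\cdot] \coloneqq \E[\cdot \,|\, \bz_1,\dots,\bz_a]$ and write the telescoping sum
\[
X - \E X \;=\; \sum_{a=1}^n d_a, \qquad d_a \coloneqq (\E_a - \E_{a-1}) X.
\]
Define $\bS^{(a)} \coloneqq \bS - n^{-1}\bz_a \bz_a^\top$, let $\bK_0^{(a)} \coloneqq \bK_0(f)$ but computed using $\bS^{(a)}$ in place of $\bS$, and set $\bR_0^{(a)} \coloneqq (\bK_0^{(a)} - z\bI_p)^{-1}$. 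Since $\bR_0^{(a)}$ and $\bS^{(a)}$ are independent of $\bz_a$, replacing $\bR_0$ by $\bR_0^{(a)}$ (and $\bS$ by $\bS^{(a)}$ for \eqref{ghjk2}--\eqref{ghjk3}) contributes zero to $d_a$, yielding $d_a = (\E_a - \E_{a-1})(X - X^{(a)})$, where $X^{(a)}$ is the corresponding leave-one-out analogue.

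The core technical step is to establish the per-index bound $|X - X^{(a)}| = O_\prec(n^{-1})$, uniformly in $a$. Focusing on $X = \bu^\top \bR_0 \bu$, the resolvent identity gives $X - X^{(a)} = -\bu^\top \bR_0 (\bK_0 - \bK_0^{(a)}) \bR_0^{(a)} \bu$. Since $\sqrt{n}\bS_{ij} = \sqrt{n}\bS^{(a)}_{ij} + n^{-1/2} z_{ai} z_{aj}$ for $i \neq j$, Taylor expanding $f$ (which is polynomial, hence controllable on events where $|z_{ai}|, \sqrt{n}|\bS^{(a)}_{ij}| \prec 1$) produces
\[
(\bK_0 - \bK_0^{(a)})_{ij} = n^{-1} f'(\sqrt{n}\bS^{(a)}_{ij})\, z_{ai} z_{aj} \;+\; O_\prec(n^{-3/2}), \qquad i \neq j.
\]
Substituting, the leading term equals $-n^{-1}\bz_a^\top \bN_a \bz_a$ where $\bN_a$ has off-diagonal entries $(\bN_a)_{ij} = (\bR_0\bu)_i\, f'(\sqrt{n}\bS^{(a)}_{ij}) (\bR_0^{(a)}\bu)_j$ and zeros on the diagonal. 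After one more iteration of the resolvent identity to replace $\bR_0\bu$ by $\bR_0^{(a)}\bu$ up to an $O_\prec(n^{-1})$ correction, $\bN_a$ becomes independent of $\bz_a$. The Hanson--Wright inequality then yields $\bz_a^\top \bN_a \bz_a - \tr(\bN_a) = O_\prec(\|\bN_a\|_F)$, and $\tr(\bN_a)=0$ together with $\|\bN_a\|_F \leq \|\bR_0^{(a)}\|_2^2 \max_{ij} |f'(\sqrt{n}\bS^{(a)}_{ij})| \cdot \|\bu\|_2^2 \prec 1$---using Lemma~\ref{lem:max_eig2} to bound the resolvent and polynomial growth of $f'$ composed with a near-Gaussian entry---gives $X - X^{(a)} = O_\prec(n^{-1})$. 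For \eqref{ghjk2}--\eqref{ghjk3}, the analysis is analogous, with the extra rank-one perturbation $\bS - \bS^{(a)} = n^{-1}\bz_a\bz_a^\top$ contributing terms of the same $O_\prec(n^{-1})$ order thanks to $|\langle \bu, \bz_a \rangle| \prec 1$.

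Given the per-$a$ estimate, Burkholder's inequality for martingale difference sequences yields, for every integer $q \geq 1$,
\[
\E\Big| \sum_{a=1}^n d_a \Big|^{2q} \;\leq\; C_q\, \E\Big( \sum_{a=1}^n |d_a|^2 \Big)^q \;\leq\; C_q\, n^q \cdot O(n^{-2q}) \;=\; O(n^{-q}),
\]
so by Markov's inequality $X - \E X = O_\prec(n^{-1/2})$. The main obstacle is the circularity in the key step: $\bN_a$ a priori depends on $\bz_a$ through $\bR_0$, forbidding direct application of Hanson--Wright. This is resolved by iterating the resolvent identity one further step, so that the residual dependence on $\bz_a$ carries an additional factor of $\|\bK_0 - \bK_0^{(a)}\|$ multiplied by another resolvent; a crude bound $\|\bK_0 - \bK_0^{(a)}\|_2 \prec n^{-1/2}$ (polylog) keeps the residual within the $O_\prec(n^{-1})$ budget. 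The remaining technicality is passing from pointwise concentration in $z$ to uniformity over a compact set $\mathcal{C}$ with $\inf_{z \in \mathcal{C}} \mathrm{Re}(z) > \lambda_+$, which follows by a union bound on an $n^{-K}$-net of $\mathcal{C}$ together with the Lipschitz bound $\|\partial_z \bR_0\|_2 = \|\bR_0^2\|_2 \prec 1$ on the event of Lemma~\ref{lem:max_eig2}.
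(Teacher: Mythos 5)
Your overall architecture is the same as the paper's: a martingale-difference decomposition over the rows $\bz_1,\dots,\bz_n$, a leave-one-out resolvent expansion, concentration of Gaussian quadratic/linear forms in $\bz_a$, and Burkholder's inequality. However, there is a genuine gap at the heart of your per-index estimate.

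The problem is your claim that $\|\bK_0 - \bK_0^{(a)}\|_2 \prec n^{-1/2}$, which you invoke to break the circular dependence of $\bN_a$ on $\bz_a$ via a second iteration of the resolvent identity. This bound is false. Writing $f' = a_1 + (f'-a_1)$ (with $a_1 = \langle f,h_1\rangle_\phi = \langle f',h_0\rangle_\phi$), the leading Taylor term of $\bK_0 - \bK_0^{(a)}$ contains the rank-one matrix $\tfrac{a_1}{n}\bz_a\bz_a^\top$, whose operator norm is $\tfrac{a_1}{n}\|\bz_a\|_2^2 \to a_1\gamma = \Theta(1)$. Consequently the second-order term $\bR_0(\bK_0-\bK_0^{(a)})\bR_0^{(a)}(\bK_0-\bK_0^{(a)})\bR_0^{(a)}$ cannot be dismissed by operator-norm estimates: it is $O(1)$, not $O_\prec(n^{-1})$. (Even granting your bound, $n^{-1}\|\bz_a\|_2^2\cdot n^{-1/2} = O_\prec(n^{-1/2})$ would still exceed the $O_\prec(n^{-1})$ budget.) The fix, which is what the paper's Lemma \ref{lem:decomp_K_j_martingale} encodes, is to decompose the perturbation into three pieces and treat each by its structure rather than its norm: the rank-one piece $\bDelta_{1,a}=\tfrac{a_1}{n}\bz_a\bz_a^\top$ (large norm, but every occurrence produces linear forms $\bz_a^\top\bR_0^{(a)}\bS_{-a}\bu$, which are $O_\prec(1)$ by independence); the Hadamard piece $\bDelta_{2,a}=n^{-1/2}\bK_0^{(a)}(f'-a_1)\odot(\bz_a\bz_a^\top)$, which genuinely has norm $O_\prec(n^{-1/2})$ and whose quadratic forms are controlled by Hanson--Wright together with $\|\bA\odot(\bx\bx^\top)\|_F\le\|\bA\|_\infty\|\bx\|_2^2$; and the remainder $\bDelta_{3,a}$ of norm $O_\prec(n^{-1})$. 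Your Hanson--Wright step for the main term is fine, but the reduction that makes $\bN_a$ independent of $\bz_a$ must be carried out separately for each of these pieces.

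A secondary, more routine issue: Burkholder requires actual moment bounds $\E|d_a|^{2q}$, and a stochastic-domination statement $|d_a|\prec n^{-1}$ does not directly supply them, because the exceptional event must be controlled against a crude a priori moment bound. This is exactly what the paper's $\Gamma_j$ bookkeeping and Lemma \ref{lem:bound_Gamma_j} (truncation plus polynomial a priori moments) accomplish; your write-up skips this conversion.
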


 \begin{lemma}\label{lem:quad_forms_E2} Let $p/n = \gamma + O(n^{-1/2})$. For any  $z \in \C^+$, $\eps >0$, and deterministic vectors $\bu, \bw \in \S^{p-1}$,
\begin{align} 
  & \E[\bu^\top  \bR_0(f,z) \bw ] - \langle \bu, \bw \rangle s(z)  = O (n^{-1/2+\eps}), \label{eq:expectation_quad_form_m} \\ 
  & \E [ \bu^\top  \bS \bR_0(f,z) \bw ] - \langle \bu, \bw \rangle \breve s(z)  = O (n^{-1/2+\eps}), \label{eq:expectation_quad_form_s1}\\ 
  & \E[\bu^\top \bS\bR_0(f,z) \bS  \bw ]  - \langle \bu, \bw \rangle \accentset{\circ}{s}(z) = O (n^{-1/2+\eps}).   \label{eq:expectation_quad_form_s2}
\end{align}
 \end{lemma}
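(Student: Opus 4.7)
The plan is to derive a self-consistent equation for $\bu^\top \E[\bR_0(f,z)] \bw$ via Gaussian integration by parts on the entries of $\bZ$, and to identify its solution with (\ref{eq:stj_trans}).

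Since $f$ is an odd polynomial, decompose $f = a_1 h_1 + g$ with $g \in \mathrm{span}\{h_3, h_5, \ldots, h_L\}$ and $\|g\|_\phi^2 = \|f\|_\phi^2 - a_1^2$. Observe that $\bK_0(h_1) = \bS - \diag(\bS)$, so
\[
\bK_0(f) = a_1(\bS - \bI_p) + \bK_0(g) + \bE, \qquad \bE \coloneqq a_1(\bI_p - \diag(\bS)),
\]
and $\|\bE\|_2 \prec n^{-1/2}$ by standard $\chi^2_n$-concentration. Introducing $\tilde{\bR}(z) \coloneqq \bigl(a_1(\bS - \bI_p) + \bK_0(g) - z\bI_p\bigr)^{-1}$ and using the perturbation identity $\bR_0 - \tilde{\bR} = -\tilde{\bR}\bE\bR_0$ together with $\|\bR_0\|_2 \vee \|\tilde{\bR}\|_2 \prec 1$ (Lemma \ref{lem:max_eig2}), we may replace $\bR_0$ by $\tilde{\bR}$ throughout at the cost of an $O(n^{-1/2+\varepsilon})$ error in expectation.

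Next derive the self-consistent equation. Multiply the resolvent identity $z\tilde{\bR} = -\bI_p + a_1(\bS - \bI_p)\tilde{\bR} + \bK_0(g)\tilde{\bR}$ on the left by $\bu^\top$ and the right by $\bw$, take expectations, and apply Gaussian IBP to the entries of $\bZ$. The term $a_1\bS\tilde{\bR}$ produces the Marchenko--Pastur self-energy $-a_1^2\gamma s(z)/(1+a_1\gamma s(z))$ at leading order, exactly as in the derivation of the MP law. The term $\bK_0(g)\tilde{\bR}$ is the key contribution: since $\sqrt{n}\bS_{ij}$ is approximately standard Gaussian for $i\neq j$ and the Hermite orthogonality $\E[h_\ell(\sqrt{n}\bS_{ij}) h_k(\sqrt{n}\bS_{ij})] = \delta_{\ell k} + O(n^{-1})$ holds, $\bK_0(g)$ behaves at leading order as an independent Wigner matrix of variance $\|g\|_\phi^2/n$, contributing a semicircle-type self-energy $\gamma \|g\|_\phi^2 s(z)$. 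Bookkeeping all leading contributions and matching to (\ref{eq:stj_trans}), the stability of the fixed point of (\ref{eq:stj_trans}) off $\mathrm{supp}(\mu)$ yields (\ref{eq:expectation_quad_form_m}).

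For (\ref{eq:expectation_quad_form_s1}) and (\ref{eq:expectation_quad_form_s2}), exploit the algebraic identity
\[
a_1 \bS \tilde{\bR} = \bI_p + (z+a_1)\tilde{\bR} - \bK_0(g)\tilde{\bR},
\]
obtained by rearranging $(a_1(\bS - \bI_p) + \bK_0(g) - z\bI_p)\tilde{\bR} = \bI_p$. Taking the $\bu$-$\bw$ quadratic form and expectation, then applying (\ref{eq:expectation_quad_form_m}) together with the analogous IBP bound $\bu^\top \E[\bK_0(g)\tilde{\bR}]\bw = \langle \bu, \bw\rangle \gamma\|g\|_\phi^2 s(z) \breve s(z) + O(n^{-1/2+\varepsilon})$, yields (\ref{eq:expectation_quad_form_s1}) with $\breve s(z) = s(z)/(1+a_1\gamma s(z))$. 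Iterating the identity once more produces (\ref{eq:expectation_quad_form_s2}), with the explicit form of $\accentset{\circ}{s}(z)$ emerging from the algebra.

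The main obstacle is the Gaussian IBP step in the derivation of the self-consistent equation in the presence of $\bK_0(g)$. The entries of $\bK_0(g)$ are nonlinear polynomials of $\bZ$ with quadratic arguments, so IBP generates many cumulant terms, and separating the leading-order free-convolution structure (MP plus semicircle) from subleading contributions---while ensuring that the $O(n^{-1})$ corrections to Hermite orthogonality do not degrade the final $n^{-1/2+\varepsilon}$ rate---requires careful bookkeeping. Theorem 3.4 of \cite{cheng2013spectrum}, which derives (\ref{eq:stj_trans}) in the weak-limit via a moment method, is a guide but must be promoted here to a quantitative resolvent statement.
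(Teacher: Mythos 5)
Your route is genuinely different from the paper's, and it has a gap that would cause it to fail as written.

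The paper's proof does not derive a self-consistent equation for the quadratic form. Instead it observes that, by exchangeability, $\E[\bu^\top\bR_0\bw]$ reduces to a combination of just two scalars, $\E(\bR_0)_{11}$ and $\E(\bR_0)_{12}$, and then exploits the oddness of $f$: since each off-diagonal entry $(\bR_0)_{12}$ is an odd function of $\bz_{(1)}$ (see (\ref{eq:block_inverse_R_0})), the Gaussian symmetry forces $\E(\bR_0)_{12}=0$ \emph{exactly}. The diagonal estimate $\E(\bR_0)_{11}-s(z)=O(n^{-1/2+\eps})$ is then imported from \cite{cheng2013spectrum, fan2019spectral}. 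This parity shortcut is the structural backbone of the proof, and it is entirely absent from your argument. Without it, you must control the full off-diagonal mass of the resolvent for arbitrary $\bu,\bw$, which is an isotropic local law for a nonlinear kernel matrix --- a considerably harder statement than the diagonal rate you are implicitly leaning on. This is the main obstacle you flagged, but it is not a bookkeeping issue: it is the whole theorem.

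There is also a concrete error in the algebraic bootstrap. Using the identity $a_1\bS\tilde\bR = \bI_p + (z+a_1)\tilde\bR - \bK_0(g)\tilde\bR$ together with $\E[\bu^\top\tilde\bR\bw]=\langle\bu,\bw\rangle s(z)+O(n^{-1/2+\eps})$ and the target $\E[\bu^\top\bS\tilde\bR\bw]=\langle\bu,\bw\rangle\breve s(z)+O(n^{-1/2+\eps})$, equation (\ref{eq:stj_trans}) (with $\|g\|_\phi^2=\|f\|_\phi^2-a_1^2$) forces
\[
\E[\bu^\top\bK_0(g)\tilde\bR\bw]=-\gamma\|g\|_\phi^2\, s(z)^2\,\langle\bu,\bw\rangle+O(n^{-1/2+\eps}),
\]
not $+\gamma\|g\|_\phi^2\, s(z)\breve s(z)\,\langle\bu,\bw\rangle$ as you wrote; both the sign and the $s^2$-vs-$s\breve s$ structure are off, so as stated the bootstrap does not close. (The factor $-\gamma\sigma^2 s(z)$ is the usual Wigner self-energy, as your "independent Wigner matrix of variance $\|g\|_\phi^2/n$" heuristic correctly predicts, but applied to the resolvent factor it multiplies $s(z)$, not $\breve s(z)$.) Finally, for (\ref{eq:expectation_quad_form_s2}) the paper does not just iterate an algebraic identity; it performs a genuine leave-one-out expansion with Wick-type moment computations and further parity arguments (see (\ref{dfgh3})--(\ref{q1w4})), precisely because $\bu^\top\bS\bR_0\bS\bw$ does not reduce to $(\ref{eq:expectation_quad_form_s1})$ by a formal manipulation. "Iterating once more" is not a proof sketch there.
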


Theorem \ref{lem:quad_formsX} is a corollary of Lemmas \ref{lem:quad_forms_1000}--\ref{lem:ygvuhb} and the proof of Theorem \ref{thrm:poly1}. Recalling \[
\begin{aligned}
\bX = \bSigma^{1/2} \bS \bSigma^{1/2} - \bS = c \cdot \bv \bv^\top + (\sqrt{\lambda} - 1) (\bv (\bS \bv - \bv)^\top + (\bS \bv - \bv) \bv^\top),
\end{aligned}
\]
with $c \coloneqq (\sqrt{\lambda} - 1)^2 (\bv^\top \bS \bv) + 2 (\sqrt{\lambda} - 1)$, we  define $\bK_{0}^{(m)}(g) \coloneqq  \bK_0(g) \odot (\sqrt{n} \bX)$;  the superscript of $\bK_{0}^{(m)}(g)$ signifies that only the first $m$ rows and columns of this matrix are non-zero. 

\begin{lemma} \label{lem:quad_forms_1000} 
For any $z \in \mathbb{C}^+$ and  deterministic  vectors $\bu, \bw \in \mathbb{S}^{p-1}$ such that $\|\bu\|_0 \vee \|\bw\|_0 \lesssim m$,
   \begin{align} 
   &\bu^\top  \bR_0(f,z) \bK_0^{(m)}(g) \bR_0(f,z)\bw - \E \Big[\bu^\top  \bR_0(f, z)  \bK_0^{(m)}(g) \bR_0(f,z) \bw\Big]  = O_{\prec} (n^{-1/2}) , \label{qwerty1}\\ 
   &\bu^\top  \bS \bR_0(f,z)  \bK_0^{(m)}(g) \bR_0(f,z) \bw - \E \Big[ \bu^\top \bS  \bR_0(f, z)  \bK_0^{(m)}(g) \bR_0(f,z) \bw\Big] = O_{\prec} (n^{-1/2}), \label{qwerty2}\\ 
  & \bu^\top \bS\bR_0(f,z)  \bK_0^{(m)}(g) \bR_0(f,z) \bS  \bw - \E  \Big[\bu^\top \bS  \bR_0(f, z)  \bK_0^{(m)}(g) \bR_0(f,z) \bS \bw\Big] = O_{\prec} (n^{-1/2}) .  \label{qwerty3} 
\end{align}
 \end{lemma}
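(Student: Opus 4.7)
The plan is to prove concentration via a martingale decomposition over the variables (columns of $\bZ$), together with a variable-wise leave-one-out argument that exploits the special Hadamard structure of $\bN := \bK_0^{(m)}(g) = \sqrt n\,\bK_0(g)\odot\bX$.

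First, I would use the identity $\bA\odot(\bx\by^\top)=\diag(\bx)\bA\diag(\by)$ and the decomposition \eqref{r8bc9} of $\bX$ to rewrite
\[
\bN = c\sqrt n\,\diag(\bv)\bK_0(g)\diag(\bv) + (\sqrt\lambda-1)\sqrt n\,\bigl(\diag(\bv)\bK_0(g)\diag(\bw)+\diag(\bw)\bK_0(g)\diag(\bv)\bigr),
\]
where $\bw = \bS\bv - \bv$ and $c$ is as in \eqref{r8bc9}. This reduces all three claims \eqref{qwerty1}--\eqref{qwerty3} to concentration of quadratic forms of the type $\bx^\top\bR_0(f,z)\diag(\ba)\bK_0(g)\diag(\bb)\bR_0(f,z)\by$ (and their $\bS$-sandwiched versions), in which at least one of $\ba,\bb$ equals $\bv$, so that one side of $\bK_0(g)$ is projected onto the first $m$ coordinates. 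Combined with the hypothesis $\|\bu\|_0\vee\|\bw\|_0\lesssim m$, this extra sparsity will provide the crucial $\sqrt{m/p}$ gain required to reach the sharp $n^{-1/2}$ fluctuation rate rather than a trivial $O(1)$ bound.

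Next, I would fix such a quadratic form $Q$ and run a martingale decomposition $Q-\E Q=\sum_{k=1}^p D_k$ with $D_k=(\E_k-\E_{k-1})Q$, along the filtration $\mathcal F_k$ generated by the first $k$ columns of $\bZ$. Introduce the leave-one-out objects $\bK_0^{[k]}(f),\bK_0^{[k]}(g),\bS^{[k]},\bR_0^{[k]},\bN^{[k]},Q^{[k]}$ obtained by removing the $k$-th variable---equivalently, conditioning the $k$-th column of $\bZ$ to be zero---so that $Q^{[k]}$ is independent of that column and $D_k=(\E_k-\E_{k-1})(Q-Q^{[k]})$. I would then expand $Q-Q^{[k]}$ via the resolvent identity $\bR_0-\bR_0^{[k]}=\bR_0\bigl(\bK_0^{[k]}(f)-\bK_0(f)\bigr)\bR_0^{[k]}$, the row/column splitting of $\bN-\bN^{[k]}$, and the first-order change of $\bw$ under removal of the $k$-th variable. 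Each resulting piece is a product of $k$-th column vectors of $\bK_0(f),\bK_0(g),\bS$ contracted against the ($k$-independent) resolvent $\bR_0^{[k]}$; Gaussian concentration of $\bS_{\cdot,k}$, Lemma \ref{lem:max_eig2}, the entry-wise bound \eqref{gk91}, and $\|\bK_0(g)\|_2\prec 1$ from Theorem 1.6 of \cite{fan2019spectral} should give $|D_k|\prec n^{-1}$, with an additional sparsity factor whenever the non-$\bv$ side hits a generic coordinate. A Burkholder-type martingale inequality then yields $|Q-\E Q|=O_\prec(n^{-1/2})$; uniformity in $z$ over the compact sets described in the statement follows by restricting to the event $\{\|\bK_0(f)\|_2\le(1+\varepsilon)\lambda_+\}$, as explained in the opening remark of Section \ref{sec:appendix1}.

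The hard part will be bounding the individual increments $|D_k|$ after the variable is left out. Unlike in a Wigner or sample-covariance resolvent, where removing a variable induces a clean rank-one perturbation, here $\bK_0(f)-\bK_0^{[k]}(f)$ is a nonlinear functional of the $k$-th column of $\bZ$ through $f(\sqrt n\,\bS_{\cdot,k})$, and $\bN$ is simultaneously perturbed via both $\bK_0(g)$ and $\bX$ (the latter through $\bw=\bS\bv-\bv$). Isolating the first-order dependence and showing that all higher-order cross-terms average to $o_\prec(n^{-1})$ after applying $(\E_k-\E_{k-1})$ will require a careful Hermite-type expansion of $f(\sqrt n\,\bS_{\cdot,k})$, in the same spirit as the polynomial expansion underlying Theorem \ref{thrm:poly1}, while also tracking the $\bv$-sparsity needed to upgrade operator-norm bounds into the tight $n^{-1/2}$ rate on sparse quadratic forms.
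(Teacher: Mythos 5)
Your high-level template (martingale decomposition plus leave-one-out plus Burkholder, after rewriting $\bK_0^{(m)}(g)=\sqrt{n}\,\bK_0(g)\odot\bX$ via $\bA\odot(\bx\by^\top)=\diag(\bx)\bA\diag(\by)$ and the rank-two structure of $\bX$) matches the paper's, but you run the martingale in the opposite direction. The paper filters over the \emph{samples} $j\in[n]$ (rows $\bz_j$ of $\bZ$), so that $\bS-\bS_{-j}=n^{-1}\bz_j\bz_j^\top$ is rank one and the kernel perturbation $\bK_0(f)-\bK_{0,-j}(f)$, though dense, decomposes via a Hermite/Taylor expansion into a rank-one piece, a Hadamard piece controlled by $\|\bA\odot(\bz_j\bz_j^\top)\|_F\le\|\bA\|_\infty\|\bz_j\|_2^2$ together with Gaussian quadratic-form concentration, and a negligible remainder; the sparsity gain enters through $\|(\bR_{0,-j}\bu)_{[1:m]}\|_1\lesssim\sqrt{m}$. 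You instead filter over the \emph{variables} $k\in[p]$ (columns $\bz_{(k)}$). That is a legitimate alternative—the paper itself uses column removal, via the Schur complement and parity arguments, to compute the \emph{expectations} in Lemma \ref{lem:quad_forms_E2}—and it makes the kernel perturbation structurally low rank (supported on row and column $k$). But it changes which estimates are needed, and that is where your proposal has a real gap.

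The gap is that the increment bounds, which are the entire content of the lemma, are asserted rather than derived, and for the column-removal route the dangerous increments are not of the "standard" type you invoke. For $k\in\supp(\bv)$ (there are $m\asymp\sqrt{n}$ such indices), removing variable $k$ deletes an entire row and column of the active $m\times m$ block of $\sqrt{n}\,c\,\diag(\bv)\bK_0(g)\diag(\bv)$, a perturbation of Euclidean norm $\asymp n^{-1/4}$ per column after the $\sqrt{n}\,\diag(\bv)$ scaling; pairing this with $|(\bR_0\bu)_k|\asymp m^{-1/2}\asymp n^{-1/4}$ gives increments of order $n^{-1/2}$ \emph{before} centering, so the conclusion hinges entirely on showing that $(\E_k-\E_{k-1})$ cancels the leading term and leaves a fluctuation of order at most $n^{-3/4}$. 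That cancellation requires entrywise resolvent fluctuation estimates of the form $|(\bR_0^{[k]}\bu)_k-s(z)u_k|\prec n^{-1/2}$, uniformly in $k$, which are essentially the content of Lemma \ref{lem:quad_forms} itself and are not available at the point where you would need them—your argument is circular, or at least requires a bootstrapping step you do not describe. Relatedly, you defer the treatment of the dense perturbation of $\diag(\bw)$, $\bw=\bS\bv-\bv$, and of the cross terms between the resolvent and $\bK_0^{(m)}(g)$ perturbations to "a careful Hermite-type expansion" without giving it; in the paper these estimates (Lemmas \ref{lem:gamma2}, \ref{lem:gamma3}, and the bounds on $\ol\alpha_{j,1},\dots,\ol\alpha_{j,4}$) constitute the bulk of the proof, and there is no reason to expect the column-removal analogues to be shorter or more routine. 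As written, the proposal identifies the correct difficulties but does not resolve them.
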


 \begin{lemma} \label{lem:quad_forms_1000E}    For any $z \in \mathbb{C}^+$ and  deterministic  vectors $\bu, \bw \in \mathbb{S}^{p-1}$ such that $\|\bu\|_0 \vee \|\bw\|_0 \lesssim m$,
    \begin{align}
   & \E \Big[\bu^\top  \bR_0(f, z)  \bK_0^{(m)}(g) \bR_0(f,z) \bw\Big]  = O(n^{-1/2+\varepsilon}) , \label{qwerty4}\\ 
   & \E \Big[ \bu^\top \bS  \bR_0(f, z)  \bK_0^{(m)}(g) \bR_0(f,z) \bw\Big] =O(n^{-1/2+\varepsilon}), \label{qwerty5}\\ 
  &  \E  \Big[\bu^\top \bS  \bR_0(f, z)  \bK_0^{(m)}(g) \bR_0(f,z) \bS \bw\Big] = O(n^{-1/2+\varepsilon}).  \label{qwerty6} 
    \end{align}
 \end{lemma}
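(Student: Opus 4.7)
\textbf{Proof proposal for Lemma \ref{lem:quad_forms_1000E}.}

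The natural plan is to reduce each expectation to quadratic forms of the ``base'' type $\tilde\bu^\top \bK_0(g) \tilde\bw$ by exploiting the explicit factorized structure of $\bX$. Recall
\[
\sqrt n\,\bX \;=\; \sqrt n\, c\,\bv\bv^\top + \sqrt n\,(\sqrt\lambda-1)\big(\bv\bw^\top+\bw\bv^\top\big), \qquad \bw=\bS\bv-\bv,
\]
and the Hadamard identity $\bA\odot(\bx\by^\top)=\diag(\bx)\bA\diag(\by)$. Applying this identity inside $\bK_0^{(m)}(g)=\bK_0(g)\odot(\sqrt n\,\bX)$ splits $\bK_0^{(m)}(g)$ into three pieces, each of the form $C\cdot\diag(\bx)\bK_0(g)\diag(\by)$ with $(\bx,\by)\in\{(\bv,\bv),(\bv,\bw),(\bw,\bv)\}$. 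For example, the ``pure signal'' piece produces
\[
\bu^\top\bR_0(f,z)\,\big[\sqrt n\, c\,\diag(\bv)\bK_0(g)\diag(\bv)\big]\,\bR_0(f,z)\bw \;=\; \sqrt n\, c\;\tilde\bu^\top\bK_0(g)\tilde\bw,
\]
with $\tilde\bu=\diag(\bv)\bR_0(f,\bar z)\bu$ and $\tilde\bw=\diag(\bv)\bR_0(f,z)\bw$. The sparsity hypothesis $\|\bu\|_0\vee\|\bw\|_0\lesssim m$, together with $\|\bv\|_\infty=m^{-1/2}$ and $\sup_{z\in\mathcal C}\|\bR_0(f,z)\|_2\prec 1$, forces $\|\tilde\bu\|_2,\|\tilde\bw\|_2\lesssim m^{-1/2}$. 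Analogous expressions arise for the $\bS$-bordered quadratic forms in \eqref{qwerty5} and \eqref{qwerty6} after absorbing $\bS$ into the outer factor and observing $\|\bS\|_2\prec 1$; the ``signal $\times$ $\bw$'' pieces likewise yield factors localized through $\bv$ in one slot and small by \eqref{gk91} in the other.

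With this reduction, the core problem is to bound $\E[\sqrt n\,\tilde\bu^\top \bK_0(g)\tilde\bw]$ (and its variants) by $O(n^{-1/2+\varepsilon})$. I would attack this by Gaussian integration by parts on the rows $\bz_i$, parallelling the treatment behind Lemma \ref{lem:quad_forms_E2}. Writing $(\bK_0(g))_{ij}=n^{-1/2}g(n^{-1/2}\bz_i^\top \bz_j)$ and differentiating with respect to the $\bz_i$'s generates (i) ``leading'' terms in which each derivative of $g$ lands at $\sqrt n\,\bS_{ij}$ and, by oddness of $g$ and the exact symmetry $\bz_j\mapsto-\bz_j$, vanishes in expectation; and (ii) ``correction'' terms that couple $g'(\sqrt n\,\bS_{ij})$ with the derivative of the resolvent $\bR_0(f,z)$ entering $\tilde\bu,\tilde\bw$. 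The correction terms can be expressed via the same class of resolvent quadratic forms already controlled in Lemmas \ref{lem:quad_forms_E}--\ref{lem:quad_forms_E2} and Lemma \ref{lem:quad_forms_1000}, plus negligible remainders from a Hermite truncation of $g$ (the higher Hermite modes contribute $O_\prec(n^{-1/2})$ by Theorem~1.6 of \cite{fan2019spectral}). Collecting everything and using the $m^{-1/2}$ gains from each copy of $\diag(\bv)$, the net size is at most $\sqrt n\cdot m^{-1}\cdot n^{-1/2+\varepsilon}\asymp n^{-1/2+\varepsilon}/\beta^{2}$, which is the claimed bound.

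The main obstacle, in my view, is the ``mixed'' Hadamard piece $\diag(\bv)\bK_0(g)\diag(\bw)$, because $\bw=\bS\bv-\bv$ is random, depends on the same $\bZ$ as $\bK_0(g)$ and $\bR_0(f,z)$, and is not supported on $[m]$. One must either (a) condition on $\bw$ and exploit $\sup_j|w_j|\prec n^{-1/2}$ from \eqref{gk91} together with an $\ell^2$ bound on the resulting quadratic form through Lemmas \ref{lem:quad_forms}--\ref{lem:quad_forms_1000}, or (b) perform the Gaussian integration by parts directly, keeping track of the additional derivatives of $\bw$ with respect to $\bz_i$. Either route is bookkeeping-heavy but does not require new ideas beyond those already developed for Lemmas \ref{lem:quad_forms_E2} and \ref{lem:quad_forms_1000}; the $m^{-1/2}$ sparsity factor from $\diag(\bv)$ together with the $n^{-1/2}$ entrywise bound on $\bw$ yields the desired $n^{-1/2+\varepsilon}$ rate.
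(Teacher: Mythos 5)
Your first step---exploiting the factorized form $\sqrt{n}\bX = \sqrt{n}\,c\,\bv\bv^\top + \sqrt{n}(\sqrt{\lambda}-1)(\bv\bw^\top + \bw\bv^\top)$ with $\bw=\bS\bv-\bv$ and the identity $\bA\odot(\bx\by^\top)=\diag(\bx)\bA\diag(\by)$---is essentially the paper's reduction to proving \eqref{sdfg1} and \eqref{sdfg3}, and the per-slot accounting ($m^{-1/2}$ from each $\diag(\bv)$, $n^{-1/2}$ from each $w_j$) is right, up to a minor arithmetic slip ($\beta^{-1}$ rather than $\beta^{-2}$). The gap is in the core cancellation step. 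After the reduction you arrive at $\sqrt{n}\,\tilde\bu^\top\bK_0(g)\tilde\bw$ with $\|\tilde\bu\|_2,\|\tilde\bw\|_2\lesssim m^{-1/2}$, but a crude operator-norm bound gives only $O(\sqrt{n}/m)=O(1)$; the extra $n^{-1/2}$ must come from cancellation in expectation, and this is exactly where the randomness of $\tilde\bu,\tilde\bw$ (which depend on the same $\bZ$ as $\bK_0(g)$ through $\bR_0$) becomes the real issue. You propose to settle it by ``Gaussian integration by parts on the rows $\bz_i$'' and ``oddness of $g$''; neither is what the paper does, and the parity claim as stated is too coarse---after a derivative $g$ becomes $g'$, which is even, so oddness of $g$ alone is not what makes leading terms vanish.

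What the paper actually does (Section~\ref{sec:appendix1}) is expand the expectation entrywise as $\frac{\sqrt{n}}{m}\sum_{i,j,k,\ell\in[m]}u_iu_j\,\E\big[(\bR_0)_{ik}(\bK_0(g))_{k\ell}(\bR_0)_{\ell j}\big]$ and exploit parity in the \emph{columns} $\bz_{(j)}$ of $\bZ$: $(\bR_0)_{ij}$ and $(\bK_0(g))_{ij}$ ($i\ne j$) are odd in $\bz_{(i)},\bz_{(j)}$, while $(\bR_0)_{ii}$ is even in $\bz_{(i)}$, so the expectation is nonzero only for a handful of index configurations which are then reduced by exchangeability and bounded entrywise ($(\bR_0)_{12}\prec n^{-1/2}$, $(\bK_0(g))_{23}\prec n^{-1/2}$). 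You also flag the mixed piece $\diag(\bv)\bK_0(g)\diag(\bw)$ as ``bookkeeping-heavy'' but leave it unresolved; this is the proof of \eqref{sdfg3}, which is the delicate part, involving an additional $\bz_{(\ell)}^\top\bz_{(q)}$ factor that must again be tamed by the same column-parity trick together with $|\bz_{(i)}^\top\bz_{(j)}|\prec\sqrt{n}$. Until the cancellation mechanism is stated correctly (combined column-parity of resolvent and kernel entries, not oddness of $g$ alone) and the mixed piece is carried through, the proof is incomplete.
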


 \begin{lemma} \label{lem:ygvuhb}    For any $z \in \mathbb{C}^+$ and  deterministic  vectors $\bu \in \mathbb{S}^{p-1}$,
    \begin{align}
   & \bu^\top \bR_0(f,z) (\bv \odot (\bS \bv - \bv))    = O_\prec(n^{-1/2}) ,\\
   & \bu^\top \bS\bR_0(f,z) (\bv \odot (\bS \bv - \bv))    = O_\prec(n^{-1/2}) .\end{align}
 \end{lemma}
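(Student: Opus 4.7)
The plan is to reduce both bounds to an $\ell^2$ norm estimate on the vector $\bv \odot (\bS\bv - \bv)$, exploiting the sparsity of $\bv$ together with entrywise control on $\bS\bv - \bv$ that was already established earlier in the paper.

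Set $\bw := \bS\bv - \bv$ and note that $\bv \odot \bw$ is supported on $[1:m]$. From (\ref{gk91}) in the proof of Theorem \ref{thrm:poly1}, I have the uniform entrywise bound $\sup_{j \in [p]} |w_j| \prec n^{-1/2}$. Combined with $v_j^2 = 1/m$ for $j \in [m]$, this yields
\[
\|\bv \odot \bw\|_2^2 \;=\; \sum_{j=1}^m v_j^2 \, w_j^2 \;\leq\; \sup_{j \in [m]} w_j^2 \;\prec\; n^{-1},
\]
so $\|\bv \odot \bw\|_2 \prec n^{-1/2}$.

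The two bounds then follow by Cauchy-Schwarz combined with the operator-norm controls $\|\bR_0(f,z)\|_2 \prec 1$ (holding deterministically for fixed $z \in \mathbb{C}^+$ via $\|\bR_0(f,z)\|_2 \leq \Im(z)^{-1}$, and on compact subsets of $\{\mathrm{Re}(z) > \lambda_+\}$ via Lemma \ref{lem:max_eig2}, as in the preamble) and $\|\bS\|_2 \prec 1$ (since the top eigenvalue of $\bS$ converges a.s.\ to the constant $(1+\sqrt{\gamma})^2$). Concretely,
\[
|\bu^\top \bR_0(f,z) (\bv \odot \bw)| \;\leq\; \|\bR_0(f,z)\|_2 \, \|\bv \odot \bw\|_2 \;\prec\; n^{-1/2},
\]
and the second bound picks up an additional factor of $\|\bS\|_2 \prec 1$.

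The only real subtlety is the interaction between the sparsity of $\bv$ and the entrywise bound on $\bw$: the factor $|v_j| = 1/\sqrt{m}$ at each nonzero coordinate exactly cancels the $\sqrt{m}$ one would otherwise accrue from summing $w_j^2$ over $j \in [m]$, which is why the output rate is $n^{-1/2}$ rather than the weaker $n^{-1/4}$ that the naive estimate $\|\bv\|_\infty \, \|\bw\|_2$ would produce. Since the sup-norm control on $\bw$ is already in hand, no further probabilistic argument (concentration, leave-one-out, or otherwise) is needed, and I do not expect any substantive obstacle.
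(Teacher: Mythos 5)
Your proof is correct, and it is genuinely simpler than what the paper sketches. The paper states that the proof of Lemma~\ref{lem:ygvuhb} ``is similar to that of Lemma~\ref{lem:quad_forms_E},'' which indicates a martingale-difference decomposition over the rows $\bz_1,\dots,\bz_n$ of $\bZ$ followed by the Burkholder inequality, with the additional complication that the vector $\bv\odot(\bS\bv-\bv)$ is itself random and depends on $\bS$. Your observation is that this heavy machinery is unnecessary here: the inequality $\|\bv\odot\bw\|_2^2 = \sum_j v_j^2 w_j^2 \le \|\bw\|_\infty^2\,\|\bv\|_2^2 = \|\bw\|_\infty^2$ reduces everything to the sup-norm bound $\|\bw\|_\infty \prec n^{-1/2}$ already proved in~(\ref{gk91}), and then deterministic operator-norm control on $\bR_0(f,z)$ (from $\Im(z)^{-1}$ or Lemma~\ref{lem:max_eig2}) and on $\bS$ finishes the argument. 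This is a cleaner route that also gives the uniform-in-$z$ extension for free. The trade-off is that the Cauchy--Schwarz step is lossy in general (it only worked here because $\bv\odot\bw$ has exceptionally small $\ell^2$ norm), whereas the martingale approach produces sharp rates across a broader family of quadratic forms; so the paper's uniform treatment buys generality at the cost of length.

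One minor conceptual correction to your closing remark: the rate is not really driven by a delicate ``cancellation'' tied to the sparsity pattern $m\asymp\sqrt{n}$. The computation $\sum_j v_j^2 w_j^2 \le \|\bw\|_\infty^2$ uses only the normalization $\|\bv\|_2=1$, and the sup-norm bound~(\ref{gk91}) holds for arbitrary unit vectors $\bv$ (it is a sub-exponential concentration and union bound over $p$ coordinates, not a sparsity argument). So the $n^{-1/2}$ rate would come out the same for a dense unit spike. Sparsity enters nowhere essential in this particular lemma; it is the passage from $\|\bv\|_\infty\|\bw\|_2$ to $\|\bv\|_2\|\bw\|_\infty$ that gives the improvement from $n^{-1/4}$ to $n^{-1/2}$, and that step only needs $\|\bv\|_2=1$.
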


\noindent The proof of Lemma \ref{lem:ygvuhb} is similar to that of Lemma \ref{lem:quad_forms_E} and is omitted.

Henceforth, for brevity, we shall often suppress the arguments of matrices such as $\bK_0(f)$ and $\bR_0(f,z)$.

\subsection{Proof of Lemma \ref{lem:quad_forms_E}} \label{sec:quadforms1}

We shall prove (\ref{ghjk3}); the proofs of (\ref{ghjk1}) and (\ref{ghjk2}) are similar and omitted. 

Our approach is to express $\bu^\top \bS \bR_0 \bS \bu - \E [\bu^\top \bS \bR_0 \bS \bu]$ as the sum of a martingale difference sequence and then apply the Burkholder inequality (Lemma \ref{lem:burkholder}); this method, applied to sample covariance matrices, is standard in random matrix theory (see Sections 8 and 10 of \cite{bai2007asymptotics}), although the extension to kernel matrices is involved.  

For $j \in [n]$, let $\bS_{-j} \coloneqq \bS - n^{-1} \bz_j \bz_j^\top$, $\bK_{0,-j}$ denote the kernel matrix of $\bS_{-j}$ (defined analogously to $\bK_0$), and  $\bR_{0,-j} \coloneqq ( \bK_{0,-j}  - z \bI_p)^{-1}$.  Define  $\mathcal{F}_0 \coloneqq \emptyset$, $\mathcal{F}_j \coloneqq \sigma(\{\bz_{1}, \ldots, \bz_j\})$, the conditional expectation $\E_j(\cdot) \coloneqq \E(\cdot | \mathcal{F}_j)$, and $\alpha_j \coloneqq  (\E_j - \E_{j-1} )  \bu^\top \bS \bR_0 \bS \bu$. Then, $\{\alpha_j\}_{j \in [n]}$ is a martingale difference sequence with respect to $\{\mathcal{F}_j \}_{j \in [n]}$ and we have the decomposition
\begin{equation} \label{eq:MDS}
\begin{aligned}
    \bu^\top \bS \bR_0 \bS \bu  - \E[\bu^\top \bS \bR_0 \bS \bu] = \sum_{j = 1}^n (\E_j - \E_{j-1} )  \bu^\top \bS \bR_0 \bS \bu = \sum_{j =1}^n \alpha_j.
    \end{aligned}    
\end{equation}
We further expand $\alpha_j$ into four terms:
\[
\begin{aligned}
\alpha_j =&~  (\E_j - \E_{j-1} )  \Big[ \bu^\top \bS \bR_0 \bS \bu - \bu^\top \bS_{-j} \bR_{0,-j} \bS_{-j} \bu \Big]\\
=&~ \alpha_{j,1} + 2\alpha_{j,2} +2\alpha_{j,3}+\alpha_{j,4},
\end{aligned}
\]
where 
\begin{equation}\label{eq:definition_alpha_1_4}
\begin{aligned}
    \alpha_{j,1} \coloneqq &~ \frac{1}{n^2} (\E_j - \E_{j-1} ) \big[  (\bu^\top \bz_j)^2 \cdot \bz_j^\top \bR_0 \bz_j \big], \\
    \alpha_{j,2} \coloneqq&~ \frac{1}{n} (\E_j - \E_{j-1} ) \big[ \bu^\top \bz_j  \cdot  \bz_j^\top \bR_{0,-j} \bS_{-j} \bu \big], \\
    \alpha_{j,3} \coloneqq&~ \frac{1}{n} (\E_j - \E_{j-1} ) \big[\bu^\top \bz_j  \cdot  \bz_j^\top (\bR_0 - \bR_{0,-j}) \bS_{-j} \bu  \big] , \\
    \alpha_{j,4} \coloneqq&~ (\E_j - \E_{j-1} ) \big[ \bu^\top \bS_{-j} (\bR_0 - \bR_{0,-j}) \bS_{-j} \bu \big] .
\end{aligned}
\end{equation}

The proof of (\ref{ghjk3}) is a consequence of the Burkholder inequality applied to $\{\alpha_j\}_{j \in [n]}$ and the following bound on $\E_{j-1}|\alpha_j|^k$ (which we derive by bounding $\alpha_{j1}, \ldots, \alpha_{j4}$):
\begin{lemma}\label{lem:bound_alpha_j_martingale}
    For each $j \in [n]$, define the random variable 
        \begin{equation*}\label{eq:def_Gamma_2_infty}
   \Gamma_{j} \coloneqq 1 \vee \| \bS_{-j} \|_2 \vee \bigg( \sup_{\ell \in [L]} \| \bK_{0,-j} (h_\ell) \|_2 \bigg) \vee \bigg(\sup_{ \ell  \in [L]}  \sqrt{n} \| \bK_{0,-j} (h_\ell) \|_{\infty} \bigg).
    \end{equation*}
    Then, 
    \[     \E_{j-1} | \alpha_j|^k \lesssim \frac{(\log n)^{4Lk}}{n^k} \E_{j-1}\Gamma_{j}^{4k} .    \]
\end{lemma}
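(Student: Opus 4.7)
The plan is to bound $\E_{j-1}|\alpha_{j,i}|^{k}$ for each $i \in \{1,2,3,4\}$ separately by the right-hand side of the claimed inequality; since $\alpha_j$ is a fixed linear combination of $\alpha_{j,1},\ldots,\alpha_{j,4}$, the triangle inequality then completes the proof. Three ingredients will recur throughout. \emph{(a) Resolvent identity:} iterating once gives $\bR_0 - \bR_{0,-j} = -\bR_{0,-j}(\bK_0 - \bK_{0,-j})\bR_{0,-j} + \bR_{0,-j}(\bK_0 - \bK_{0,-j})\bR_0(\bK_0 - \bK_{0,-j})\bR_{0,-j}$, which isolates the dependence of $\bR_0$ on $\bz_j$. \emph{(b) Hermite--Taylor expansion:} since $\sqrt{n}(\bS_{ab} - (\bS_{-j})_{ab}) = n^{-1/2} z_{ja} z_{jb}$, for $a \neq b$,
\[
(\bK_0 - \bK_{0,-j})_{ab} \;=\; \sum_{\ell=1}^{L} \frac{(z_{ja} z_{jb})^{\ell}}{\ell!\, n^{(\ell+1)/2}}\, f^{(\ell)}\!\bigl(\sqrt{n}\, (\bS_{-j})_{ab}\bigr),
\]
and substituting $f^{(\ell)} = \sum_{k \geq \ell} a_k \sqrt{k!/(k-\ell)!}\, h_{k-\ell}$ together with $h_m(\sqrt{n}(\bS_{-j})_{ab}) = \sqrt{n}\,(\bK_{0,-j}(h_m))_{ab}$ rewrites $\bK_0 - \bK_{0,-j}$ as a finite sum of Hadamard products of the form $c_{\ell,k}\, n^{-\ell/2}\,(\bz_j^{\odot \ell})(\bz_j^{\odot \ell})^{\top} \odot \bK_{0,-j}(h_{k-\ell})$. \emph{(c) Norm control:} the identity $\bA \odot (\bx\bx^{\top}) = \diag(\bx)\bA\diag(\bx)$ together with the high-probability bound $\|\bz_j\|_\infty \lesssim \sqrt{\log n}$ gives
\[
\bigl\|\,(\bz_j^{\odot \ell})(\bz_j^{\odot \ell})^{\top} \odot \bK_{0,-j}(h_{k-\ell})\,\bigr\|_2 \;\leq\; \|\bz_j\|_{\infty}^{2\ell}\, \|\bK_{0,-j}(h_{k-\ell})\|_2 \;\lesssim\; (\log n)^{\ell}\, \Gamma_j,
\]
while Gaussian moments and the Hanson--Wright inequality handle linear and quadratic forms in $\bz_j$ against matrices measurable with respect to $\sigma(\{\bz_i : i \neq j\})$.

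With these tools, the cleanest term $\alpha_{j,2}$ is controlled directly by Hanson--Wright applied to the quadratic form $\bz_j^{\top}\bigl(\bu(\bR_{0,-j}\bS_{-j}\bu)^{\top}\bigr)\bz_j$, since $\bR_{0,-j}\bS_{-j}$ is independent of $\bz_j$ with operator norm $\lesssim \Gamma_j^{2}$. For $\alpha_{j,1}$, splitting $\bR_0 = \bR_{0,-j} + (\bR_0 - \bR_{0,-j})$ yields an unperturbed piece treated exactly as in $\alpha_{j,2}$, plus a perturbation piece handled by combining (a)--(c) with the high-probability bound $(\bu^{\top}\bz_j)^{2} \lesssim \log n$. $\alpha_{j,3}$ is analogous, combining one resolvent-difference with the bilinear form $(\bu^{\top}\bz_j)\bz_j^{\top}(\cdot)\bS_{-j}\bu$ and producing up to three factors of $\Gamma_j$. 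At each step the martingale-difference operator $\E_j - \E_{j-1}$ is dispatched by the Jensen-type bound $\E_{j-1}|(\E_j - \E_{j-1})X|^{k} \leq 2^{k}\,\E_{j-1}|X|^{k}$, applied after centering in $\bz_j$.

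The principal obstacle is $\alpha_{j,4}$. After one application of (a) it takes the form
\[
-(\E_j - \E_{j-1})\bigl[\bu^{\top}\bS_{-j}\bR_{0,-j}(\bK_0 - \bK_{0,-j})\bR_{0,-j}\bS_{-j}\bu\bigr] + (\text{higher order in } \bK_0 - \bK_{0,-j}),
\]
and inserting the Hadamard decomposition from (b) produces quadratic and bilinear forms in $\bz_j$ sandwiched between $\bS_{-j}\bR_{0,-j}\diag(\bz_j^{\odot \ell})$-type factors. The four operator-norm factors $\|\bS_{-j}\|_{2}^{2}\cdot \|\bR_{0,-j}\|_{2}^{2}\cdot \|\bK_0 - \bK_{0,-j}\|_{2}$ are each absorbed into $\Gamma_j$ (the resolvents via Lemma~\ref{lem:max_eig2} on a high-probability event, the kernel-difference via (b) and (c) with the $n^{-\ell/2}$ decay compensating the $\sqrt{n}$ prefactor in $f^{(\ell)}$), producing the worst-case $\Gamma_j^{4}$ dependence. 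The $(\log n)^{4Lk}$ factor then arises by tracking each of the $O(L)$ Hermite--Taylor orders, which contribute up to $\|\bz_j\|_\infty^{2\ell} \lesssim (\log n)^{L}$, and taking $k$-th moments.
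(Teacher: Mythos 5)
Your proposal follows essentially the same strategy as the paper's proof: bound each $\alpha_{j,r}$ separately using a leave-one-out resolvent expansion, express $\bK_0 - \bK_{0,-j}$ via a Hermite/Taylor decomposition in $\bz_j$, control Hadamard products via $\bA \odot (\bx\bx^\top) = \diag(\bx)\bA\diag(\bx)$, and use Gaussian quadratic-form concentration on the hardest ($\bDelta_{2,j}$-type) terms. The paper packages your ingredient (b) as a standalone decomposition lemma (Lemma~\ref{lem:decomp_K_j_martingale}, defining $\bDelta_{1,j}, \bDelta_{2,j}, \bDelta_{3,j}$) before bounding the $\alpha_{j,r}$, but this is organizational rather than substantive.

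One point worth flagging: you repeatedly invoke \emph{high-probability} bounds such as $\|\bz_j\|_\infty \lesssim \sqrt{\log n}$ and $(\bu^\top\bz_j)^2 \lesssim \log n$, but the target is the conditional \emph{moment} $\E_{j-1}|\alpha_j|^k$. A high-probability bound inside a conditional expectation requires a truncation argument to control the contribution of the complementary event, and for the linear form $\bu^\top\bz_j \sim \normal(0,1)$ no $\log n$ factor is needed at all ($\E|\bu^\top\bz_j|^{2k} \lesssim_k 1$). The clean way is what the paper does: bound moments directly, e.g.\ $\E_{\bz_j}\|\bz_j\|_\infty^{2\ell k} \lesssim (\log n)^{\ell k}$ via Gaussian tail integration, and apply Cauchy--Schwarz when the offending factor is coupled to $\Gamma_j$. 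Similarly, your split of $\bR_0 = \bR_{0,-j} + (\bR_0 - \bR_{0,-j})$ for $\alpha_{j,1}$ is unnecessary---the paper bounds $\alpha_{j,1}$ directly via $|\bz_j^\top\bR_0\bz_j| \le \|\bR_0\|_2\|\bz_j\|_2^2$ and Gaussian moments, giving $\E_{j-1}|\alpha_{j,1}|^k \lesssim n^{-k}$ with no resolvent expansion and no log factor. These are not fatal gaps---the structure of the argument is sound---but they would need tightening for a rigorous write-up.
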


\begin{lemma}\label{lem:bound_Gamma_j}
    For all $j \in [n]$,  $\eps>0$, and $k \in \mathbb{N}$, 
    \begin{align}\label{eq:Gammas_bound}
    & \Gamma_{j} \prec 1, &&  \E \hspace{.09em} \Gamma_{j}^k \lesssim n^\eps.
    \end{align}
\end{lemma}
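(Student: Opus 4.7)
The plan is to control each of the four quantities appearing in the definition of $\Gamma_j$ separately. Since $\Gamma_j$ is a maximum of finitely many terms (with $L$ fixed), both claims reduce to establishing, for each constituent $Q$, that $Q \prec 1$ and $\E Q^k \lesssim n^{\varepsilon}$, and then combining via $\E(A\vee B)^k \leq \E A^k + \E B^k$.

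For $\|\bS_{-j}\|_2$, note that $\bS_{-j}$ is a Wishart matrix built from $n-1$ i.i.d.\ $\cN(0,\bI_p)$ vectors with $p/n\to\gamma$. Bai--Yin together with Gaussian concentration of the operator norm (e.g., Theorem~4.4.5 of Vershynin) give $\P(\|\bS_{-j}\|_2 \geq (1+\sqrt{\gamma})^2 + t) \leq e^{-cnt^2}$ for small $t>0$, which delivers both $\|\bS_{-j}\|_2 \prec 1$ and $\E\|\bS_{-j}\|_2^k = O(1)$.

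For $\|\bK_{0,-j}(h_\ell)\|_2$, the matrix $\bK_{0,-j}(h_\ell)$ is a pure-noise kernel matrix on $n-1$ samples, so Theorem~1.6 of \cite{fan2019spectral} applies to yield $\|\bK_{0,-j}(h_\ell)\|_2 \prec 1$---this is the same fact invoked in (\ref{qazwsx9})---together with the exponentially decaying tail above the limiting upper edge that is inherent to their proof, giving $\E\|\bK_{0,-j}(h_\ell)\|_2^k = O(1)$ uniformly in $j$. A union bound over $\ell \in [L]$ (a fixed number of terms) preserves these bounds.

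For $\sqrt{n}\,\|\bK_{0,-j}(h_\ell)\|_\infty = \sup_{i\neq k}|h_\ell(\sqrt{n}\,\bS_{-j,ik})|$, for any $i\neq k$ the quantity $\sqrt{n}\,\bS_{-j,ik} = n^{-1/2}\sum_{r\neq j} z_{ri}z_{rk}$ is a centered sum of $n-1$ i.i.d.\ mean-zero sub-exponential products with parameters of order one, so Bernstein's inequality gives $\P(|\sqrt{n}\,\bS_{-j,ik}|\geq t) \leq 2\exp(-c(t\wedge t^2))$. Using $|h_\ell(x)| \lesssim 1+|x|^\ell$ together with a union bound over the $O(p^2)=O(n^2)$ off-diagonal pairs yields
\[
\sup_{i\neq k}|h_\ell(\sqrt{n}\,\bS_{-j,ik})| \lesssim (\log n)^{\ell}
\]
with probability at least $1-n^{-D}$, for any $D>0$. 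Integrating this tail gives $\E\big[(\sqrt{n}\|\bK_{0,-j}(h_\ell)\|_\infty)^k\big] \lesssim (\log n)^{k\ell}$, which is $O(n^{\varepsilon})$ for any $\varepsilon > 0$. Combining the four bounds concludes the proof. The one point requiring mild care, and therefore the only real obstacle, is checking that each rare-event tail is sub-exponential, so that the exceptional events do not inflate the $k$-th moments; this is supplied by Gaussian/Bernstein concentration for $\bS_{-j}$ and the entries $\sqrt{n}\bS_{-j,ik}$, and by the Fan--Montanari concentration estimates for $\bK_{0,-j}(h_\ell)$.
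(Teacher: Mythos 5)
Your treatment of the first claim, $\Gamma_j \prec 1$, matches the paper's in essence: cite Fan--Montanari Theorem~1.6 for $\|\bK_{0,-j}(h_\ell)\|_2 \prec 1$, standard Gaussian-matrix concentration for $\|\bS_{-j}\|_2 \prec 1$, and control the $\ell^\infty$ piece entrywise. For that last piece, the paper quotes Lemma~4.1 of \cite{cheng2013spectrum} rather than rerunning Bernstein's inequality from scratch, but your direct argument via sub-exponential concentration of $\sqrt{n}\,\bS_{-j,ik}$, the growth bound $|h_\ell(x)|\lesssim 1+|x|^\ell$, and a union bound is a perfectly valid substitute and gives the same conclusion.

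For the second claim, $\E\,\Gamma_j^k \lesssim n^\eps$, there is a gap. You assert that the moment bound $\E\,\|\bK_{0,-j}(h_\ell)\|_2^k = O(1)$ follows from ``the exponentially decaying tail above the limiting upper edge that is inherent to their proof.'' No such exponential tail is stated in Theorem~1.6 of \cite{fan2019spectral} (nor in Lemma~\ref{lem:max_eig2} of this paper), which deliver only stochastic domination, i.e., $\P(\|\bK_{0,-j}(h_\ell)\|_2 > n^{\eps}) \le n^{-D}$. A polynomial tail alone does not yield a bounded $k$-th moment without some companion estimate on the size of the norm on the exceptional event: the contribution from the bad set could in principle blow up. The paper closes exactly this loophole by pairing the stochastic-domination tail with a crude deterministic-in-distribution moment bound $\E\,\Gamma_j^{2k} \lesssim n^{2k}$ obtained from $\|\cdot\|_2 \le \|\cdot\|_F$, and then applying Cauchy--Schwarz:
\begin{align*}
\E\,\Gamma_j^k &\le n^{\eps} + \big(\E\,\Gamma_j^{2k}\big)^{1/2}\,\P(\Gamma_j > n^{\eps})^{1/2} \lesssim n^{\eps} + n^{k-D/2},
\end{align*}
which becomes $O(n^{\eps})$ upon choosing $D \ge 2k$. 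Your $\ell^\infty$ piece does have genuinely exponential tails (via Bernstein), so the integration there is fine, but the operator-norm piece requires the cruder argument. To repair your proof, replace the unproved exponential-tail claim for $\|\bK_{0,-j}(h_\ell)\|_2$ with the paper's Frobenius-plus-Cauchy--Schwarz truncation; this also has the merit of treating all four constituents of $\Gamma_j$ uniformly without needing to verify tail decay rates separately for each.
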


\begin{lemma}[Burkholder inequality, Lemma 2.13 of \cite{bai2010spectral}]\label{lem:burkholder} Let $\{ \alpha_j\}_{j = 1}^\infty$ be a martingale difference sequence with respect to the filtration $\{\cF_j\}_{j=1}^\infty$, and let $\E_j(\cdot) = \E(\cdot | \cF_j)$. Then, for  $k \geq 2$, 
\[
\E  \bigg| \sum_{j=1}^n \alpha_j \bigg|^k  \lesssim \E \bigg[ \bigg( \sum_{j=1}^n \E_{j-1} | \alpha_j|^2\bigg)^{k/2} \bigg] + \sum_{j=1}^n \E|\alpha_j|^k 
\]    
(the implied constant depends only on $k$).
\end{lemma}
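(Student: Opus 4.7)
The plan is to establish this Rosenthal--Burkholder-type inequality by combining a Taylor expansion of $|S_n|^k$ (where $S_n \coloneqq \sum_{j=1}^n \alpha_j$) with Doob's maximal $L^k$-inequality. For $k \geq 2$, the function $x \mapsto |x|^k$ is twice differentiable with second derivative bounded by $k(k-1)|x|^{k-2}$, and the martingale difference property $\E_{j-1}\alpha_j = 0$ will annihilate the first-order Taylor term after conditioning, exposing the predictable quadratic variation as the leading contribution.

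First I would apply Taylor's theorem to each increment:
\begin{align*}
|S_j|^k - |S_{j-1}|^k = k\, \mathrm{sgn}(S_{j-1})\, |S_{j-1}|^{k-1} \alpha_j + R_j,
\end{align*}
where the remainder satisfies $|R_j| \leq C_k \bigl(|S_{j-1}|^{k-2} \alpha_j^2 + |\alpha_j|^k\bigr)$ (derived from the integral form of the remainder and the elementary bound $|x+sh|^{k-2} \leq 2^{k-2}(|x|^{k-2} + |h|^{k-2})$). Taking conditional expectation with respect to $\cF_{j-1}$ kills the first-order term, so summing over $j$ and taking total expectation yields
\begin{align*}
\E |S_n|^k \leq C_k\, \E \sum_{j=1}^n |S_{j-1}|^{k-2} \E_{j-1}\alpha_j^2 + C_k \sum_{j=1}^n \E |\alpha_j|^k .
\end{align*}

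To bound the first term on the right, I would introduce $S_n^* \coloneqq \max_{j \leq n} |S_j|$, use $|S_{j-1}|^{k-2} \leq (S_n^*)^{k-2}$, and apply H\"older's inequality with conjugate exponents $k/(k-2)$ and $k/2$:
\begin{align*}
\E \sum_{j=1}^n |S_{j-1}|^{k-2} \E_{j-1}\alpha_j^2 \leq \|S_n^*\|_k^{k-2}\, \Bigl\| \sum_{j=1}^n \E_{j-1}\alpha_j^2 \Bigr\|_{k/2}.
\end{align*}
Doob's maximal inequality gives $\|S_n^*\|_k \leq (k/(k-1)) \|S_n\|_k$, so the combined estimate reads
\begin{align*}
\|S_n\|_k^k \leq C_k \|S_n\|_k^{k-2} \Bigl\|\sum_{j=1}^n \E_{j-1}\alpha_j^2\Bigr\|_{k/2} + C_k \sum_{j=1}^n \E|\alpha_j|^k .
\end{align*}
Young's inequality $ab \leq \varepsilon a^{k/(k-2)} + C_\varepsilon b^{k/2}$, with $\varepsilon$ chosen small enough to absorb the $\|S_n\|_k^k$ factor on the left-hand side, yields the stated conclusion.

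The main obstacle is that the case $k = 2$ is degenerate for both the Taylor remainder and the H\"older exponent $k/(k-2) = \infty$; it must be handled separately, but is in fact trivial: by orthogonality of martingale differences, $\E S_n^2 = \sum_j \E \alpha_j^2 = \E \sum_j \E_{j-1}\alpha_j^2$, giving the conclusion with constant $1$. A secondary technicality is that to divide by $\|S_n\|_k^{k-2}$ one needs $\E |S_n|^k < \infty$ a priori; this is ensured by first applying the argument to truncated differences $\alpha_j \ind_{|\alpha_j| \leq M}$ and then passing $M \to \infty$ via monotone convergence.
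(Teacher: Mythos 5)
The paper states this lemma as a citation (Lemma~2.13 of Bai--Silverstein) and offers no proof, so there is nothing to compare your approach against; what you have produced is a self-contained proof that the paper delegates to a reference. Your argument is correct and is the standard derivation of the Rosenthal--Burkholder inequality: Taylor-expand $|S_j|^k-|S_{j-1}|^k$ to second order (valid since $|x|^k\in C^2$ for $k\geq 2$, with $\phi''(x)=k(k-1)|x|^{k-2}$), note that the first-order term vanishes under $\E_{j-1}$ by the martingale-difference property, bound the (nonnegative) integral remainder by $C_k(|S_{j-1}|^{k-2}\alpha_j^2+|\alpha_j|^k)$, then telescope, apply H\"older with conjugate exponents $k/(k-2)$ and $k/2$, use Doob's $L^k$-maximal inequality to replace $S_n^*$ by $S_n$, and absorb the $\|S_n\|_k^k$ term via Young's inequality. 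The $k=2$ boundary case is trivial by orthogonality of martingale differences, as you note.

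One small imprecision worth flagging: your proposed truncation $\alpha_j\mapsto\alpha_j\ind_{|\alpha_j|\leq M}$ does \emph{not} preserve the martingale-difference property, since $\E_{j-1}[\alpha_j\ind_{|\alpha_j|\leq M}]$ need not vanish; a correct truncation would be $\alpha_j\ind_{|\alpha_j|\leq M}-\E_{j-1}[\alpha_j\ind_{|\alpha_j|\leq M}]$. But in fact no truncation is needed at all: one may assume $\sum_{j=1}^n\E|\alpha_j|^k<\infty$ (otherwise the right-hand side is infinite and the inequality is vacuous), whereupon $\E|S_n|^k<\infty$ follows directly from Minkowski's inequality in $L^k$, since the sum is finite. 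This dispatches the a~priori finiteness needed for the absorption step more cleanly than the truncation argument you sketched.
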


\begin{proof}[Proof of (\ref{ghjk3})] 
By  Lemma \ref{lem:bound_alpha_j_martingale} and Jensen's inequality, 
\begin{equation}
    \begin{aligned} \label{eq:vfr4}
 \sum_{j = 1}^n \E|\alpha_j|^k & \lesssim \frac{(\log n)^{4Lk}}{n^{k-1}} \E \hspace{.09em} \Gamma_{1}^{4k},   \\
  \E  \bigg[ \bigg( \sum_{j = 1}^n \E_{j-1} |\alpha_j|^2 \bigg)^{k/2} \bigg] & \lesssim  \frac{(\log n)^{4Lk}}{n^k} \E \bigg[ \bigg( \sum_{j = 1}^n \E_{j-1} \Gamma_{j}^{8} \bigg)^{k/2} \bigg] \\ 
&  \lesssim   \frac{(\log n)^{4Lk}}{n^{k/2}} \E \bigg[ \bigg(\sup_{j \in [n]}   \E_{j-1} \Gamma_{j}^{8}\bigg)^{k/2} \bigg].
    \end{aligned}
\end{equation}
Now, for $j \in [n]$, let  $\widetilde \Gamma_{j}$ denote $\Gamma_{j}$ with columns $\bz_{1}, \ldots, \bz_{j-1}$ of $\bZ$ replaced by i.i.d.\ copies  $\tilde \bz_{1}, \ldots, \tilde \bz_{j-1}$, enabling us to write $\E_{j-1}\Gamma_j^8 = \E\big[\widetilde \Gamma_j^8 \big| \tilde \bz_1, \ldots, \tilde \bz_n\big] $ and yielding the bound
\begin{equation}
    \begin{aligned}
  \E  \bigg[ \bigg( \sum_{j = 1}^n \E_{j-1} |\alpha_j|^2 \bigg)^{k/2} \bigg] & \lesssim   \frac{(\log n)^{4Lk}}{n^{k/2}} \E \bigg[ \bigg(\sup_{j \in [n]}   \E\big[ \widetilde \Gamma_{j}^{8} \big| \tilde \bz_1, \ldots, \tilde \bz_n \big] \bigg)^{k/2} \bigg] \\
  & \leq \frac{(\log n)^{4Lk}}{n^{k/2}} \E \bigg[  \sup_{j \in [n]}   \widetilde \Gamma_{j}^{4k} \bigg] . 
    \end{aligned} \end{equation}
Using Lemma \ref{lem:bound_Gamma_j} and a union bound, for $\varepsilon, D > 0$, we have  
\begin{equation}
    \begin{aligned}
         \E \bigg[  \sup_{j \in [n]}   \widetilde \Gamma_{j}^{4k} \bigg] & \lesssim n^\varepsilon + \E \bigg[  \sup_{j \in [n]}   \widetilde \Gamma_{j}^{4k} \cdot {\bf 1}\bigg( \sup_{j \in [n]}   \widetilde \Gamma_{j}^{4k} > n^\varepsilon \bigg) \bigg] \\
         &\lesssim  n^\varepsilon + \bigg( \E \bigg[  \sup_{j \in [n]}   \widetilde \Gamma_{j}^{8k} \bigg] \cdot \P\bigg( \sup_{j \in [n]}   \widetilde \Gamma_{j}^{4k} > n^\varepsilon \bigg) \bigg)^{1/2} \\
         & \lesssim n^\varepsilon + n^{1+\varepsilon -D/2} .
    \end{aligned} \label{eq:vfr5}
\end{equation}

Thus, applying the Burkholder inequality (Lemma \ref{lem:burkholder}) to (\ref{eq:MDS}) with (\ref{eq:vfr4})--(\ref{eq:vfr5}), we obtain 
\[\E \big| \bu^\top \bS \bR_0 \bS \bu  - \E[\bu^\top \bS \bR_0 \bS \bu] \big|^k \lesssim n^{1 + \varepsilon - k} + n^{\varepsilon - k/2} ,\] 
implying (\ref{ghjk3}) by Markov's inequality as $k \geq 2$ is arbitrary.
\end{proof}


\begin{lemma}\label{lem:decomp_K_j_martingale}
    The kernel matrix has the decomposition  
    \[
    \bK_0 (f) = \bK_{0,-j} (f) + \bDelta_{1,j} + \bDelta_{2,j} + \bDelta_{3,j},
    \]
    where
    \begin{align*}&
        \bDelta_{1,j} \coloneqq \frac{a_1}{n}\bz_j \bz_j^\top,  & \bDelta_{2,j} \coloneqq \frac{1}{\sqrt{n}} \bK_{0,-j} (f' - a_1) \odot (\bz_j \bz_j^\top) ,
    \end{align*}
    and the operator norm of  $\bDelta_3$ satisfies 
    \begin{align*}
        \| \bDelta_{3,j} \|_2 \lesssim  \frac{1}{n} ( 1 \vee \| \bz_{j} \|_\infty^{2L} )  \cdot  \Gamma_{j}.
    \end{align*}
    \end{lemma}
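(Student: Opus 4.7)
The plan is to Taylor-expand $f$ exactly (it is a polynomial of degree $L$) on each off-diagonal entry of $\bK_0(f)$. Writing $\bS = \bS_{-j} + n^{-1}\bz_j\bz_j^\top$, for $i \neq k$ one has $\sqrt{n}\bS_{ik} = \sqrt{n}(\bS_{-j})_{ik} + n^{-1/2} z_{ji} z_{jk}$, so
\begin{align*}
(\bK_0(f))_{ik} = \frac{1}{\sqrt{n}}\, f\!\left(\sqrt{n}(\bS_{-j})_{ik} + n^{-1/2}z_{ji}z_{jk}\right) = \sum_{\ell=0}^L \frac{z_{ji}^\ell z_{jk}^\ell}{\ell!\, n^{(\ell+1)/2}}\, f^{(\ell)}\!\left(\sqrt{n}(\bS_{-j})_{ik}\right).
\end{align*}
The $\ell = 0$ term coincides with $(\bK_{0,-j}(f))_{ik}$. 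The $\ell = 1$ term equals $n^{-1} f'(\sqrt{n}(\bS_{-j})_{ik})\, z_{ji} z_{jk}$, which I split as $a_1 + (f' - a_1)$ using that $a_1 = \langle f', h_0\rangle_\phi$ (since $h_k'=\sqrt{k}\,h_{k-1}$); the two pieces are precisely the off-diagonal entries of $\bDelta_{1,j}$ and $\bDelta_{2,j}$ defined in the lemma.

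I would then define $\bDelta_{3,j}$ as the residual of the claimed decomposition. Its off-diagonal part is
\begin{align*}
\bDelta_{3,j}^{\mathrm{off}} = \sum_{\ell=2}^L \frac{1}{\ell!\, n^{\ell/2}}\, \bz_j^{\odot \ell}(\bz_j^{\odot \ell})^\top \odot \bK_{0,-j}(f^{(\ell)}),
\end{align*}
while its diagonal is $-\diag(\bDelta_{1,j}) = -\frac{a_1}{n}\diag(\bz_j^{\odot 2})$, since $\bK_0(f)$, $\bK_{0,-j}(f)$, and $\bDelta_{2,j}$ all have zero diagonal. The diagonal contributes at most $\frac{|a_1|}{n}\|\bz_j\|_\infty^2$ to $\|\bDelta_{3,j}\|_2$. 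For the off-diagonal part I would use the identity $\bz^{\odot \ell}(\bz^{\odot \ell})^\top \odot \bA = \diag(\bz^{\odot \ell})\,\bA\,\diag(\bz^{\odot \ell})$ to obtain $\|\bz_j^{\odot \ell}(\bz_j^{\odot \ell})^\top \odot \bA\|_2 \leq \|\bz_j\|_\infty^{2\ell} \|\bA\|_2$, then expand $f^{(\ell)} = \sum_{k=0}^{L-\ell} b_k^{(\ell)} h_k$ in the Hermite basis and apply linearity $\bK_{0,-j}(f^{(\ell)}) = \sum_k b_k^{(\ell)} \bK_{0,-j}(h_k)$, controlling $\|\bK_{0,-j}(h_k)\|_2 \leq \Gamma_j$ for each $k \in [L]$.

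The main obstacle is the $h_0$-component, for which $\bK_{0,-j}(h_0) = n^{-1/2}(\ones\ones^\top - \bI_p)$ has operator norm of order $\sqrt{n}$; at $\ell = 2$ only the prefactor $n^{-1}$ is available, which by itself would be insufficient. This is rescued by the assumption that $f$ is odd: since each differentiation flips parity, $f^{(2)}$ is odd, so $b_0^{(2)} = \langle f^{(2)}, h_0\rangle_\phi = 0$ and the dangerous contribution vanishes, giving an $\ell = 2$ bound of order $n^{-1}\|\bz_j\|_\infty^{4}\,\Gamma_j$. For $\ell \geq 3$, even a nonzero $b_0^{(\ell)}$ only costs a factor $\sqrt{n}$ against the prefactor $n^{-\ell/2}$, yielding a term of order $n^{-(\ell-1)/2} \leq n^{-1}$. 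Summing the bounds over $\ell = 2, \dots, L$ and combining with the diagonal piece produces $\|\bDelta_{3,j}\|_2 \lesssim n^{-1}(1 \vee \|\bz_j\|_\infty^{2L})\,\Gamma_j$, as stated.
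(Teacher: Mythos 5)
Your proposal is correct and follows essentially the same strategy as the paper's own proof. The paper expands $f=\sum_k a_kh_k$ first and then Taylor-expands each $h_k$ (using $h_k^{(\ell)}=\sqrt{k!/(k-\ell)!}\,h_{k-\ell}$), arriving at the double sum $\sum_{\ell\ge 0}\sum_{k\ge 2}\frac{a_{\ell+k}}{k!\,n^{k/2}}\sqrt{\tfrac{(\ell+k)!}{\ell!}}\,\bK_{0,-j}(h_\ell)\odot(\bz_j\bz_j^\top)^{\odot k}$; you Taylor-expand $f$ first and then Hermite-expand each $f^{(\ell)}$, which after reindexing gives the identical double sum. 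You also correctly isolated the one genuine subtlety: the $\ell=2$, $h_0$-component, which is only $O(n^{-1/2})$ in operator norm and would wreck the claimed rate. Your fix (oddness of $f$ forces $f''$ to be odd, so $\langle f'',h_0\rangle_\phi=0$) is exactly equivalent to the paper's remark that $a_2=0$, since $\langle f'',h_0\rangle_\phi=\sqrt{2}\,a_2$ by Gaussian integration by parts. The rest of the operator-norm accounting (the identity $\bA\odot(\bx\by^\top)=\diag(\bx)\bA\,\diag(\by)$, the extra $\sqrt{n}$ from $\|\bK_{0,-j}(h_0)\|_2$ absorbed by the prefactor $n^{-\ell/2}$ for $\ell\ge 3$, and the diagonal correction $-\tfrac{a_1}{n}\diag(\bz_j^{\odot 2})$) all match the paper.
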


The following concentration inequality for Gaussian quadratic forms, which is a special case of Lemma B.26 from \cite{bai2007asymptotics}, is key to the proof of  Lemma \ref{lem:bound_alpha_j_martingale}:
\begin{lemma}\label{lem:HW}
Let  $\bA \in \mathbb{R}^{p \times p}$ be independent of $\bz_j$. For $k \geq 2$,
\begin{equation*}
\begin{aligned}\big|(\E_j - \E_{j-1}) \bz_j^\top \bA \bz_j\big|^k  & = \big | \E_{j-1} (\bz_j^\top \bA \bz_j - \tr\bA)\big|^k  \leq  \E_{j-1}  \big | \E_{\bz_j} (\bz_j^\top \bA \bz_j - \tr\bA)\big|^k  \\ & \lesssim \E_{j-1} \|\bA\|_F^{k}. \end{aligned}    
\end{equation*}
\end{lemma}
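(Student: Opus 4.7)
My plan is to recognize Lemma~\ref{lem:HW} as a Hanson--Wright style moment inequality for Gaussian quadratic forms. The substantive bound to prove is
\[
\E_{\bz_j} \big|\bz_j^\top \bA \bz_j - \tr \bA\big|^k \lesssim_k \|\bA\|_F^k,
\]
where $\E_{\bz_j}$ denotes integration over $\bz_j$ alone with $\bA$ held fixed. The preceding equalities/inequalities in the displayed chain are bookkeeping: since $\bA$ is independent of $\bz_j$, we have $\E_{j-1} \bz_j^\top \bA \bz_j = \tr \bA$, which identifies $(\E_j - \E_{j-1})\bz_j^\top \bA \bz_j$ with the centered quadratic form, and the middle step is conditional Jensen applied to $|\cdot|^k$.

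The key steps, in order, are:
\begin{enumerate}
    \item \emph{Symmetrize.} Since $\bz^\top \bA \bz = \bz^\top \bA_s \bz$ with $\bA_s \coloneqq (\bA+\bA^\top)/2$, and $\|\bA_s\|_F \le \|\bA\|_F$, I may assume $\bA = \bA_s$ is symmetric.
    \item \emph{Diagonalize and invoke rotational invariance.} Write $\bA = \bU \bD \bU^\top$ with $\bD = \diag(d_1, \ldots, d_p)$ the eigenvalues of $\bA$. By orthogonal invariance of the standard Gaussian, $\bU^\top \bz_j \eqnd \bz_j$, so $\bz_j^\top \bA \bz_j - \tr \bA$ has the same distribution as $\sum_{i=1}^p d_i (g_i^2 - 1)$ with $g_i \simiid \cN(0,1)$.
    \item \emph{Apply Rosenthal's inequality.} The variables $\{d_i(g_i^2 - 1)\}_{i \in [p]}$ are independent, centered, and sub-exponential with $\E|g_i^2 - 1|^r \lesssim_r 1$. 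Rosenthal's inequality for sums of centered independent random variables yields
    \[
    \E\Big|\sum_{i=1}^p d_i(g_i^2 - 1)\Big|^k \lesssim_k \Big(\sum_i d_i^2\Big)^{k/2} + \sum_i |d_i|^k.
    \]
    By monotonicity of $\ell^p$ norms, $\sum_i |d_i|^k \le \bigl(\sum_i d_i^2\bigr)^{k/2} = \|\bA\|_F^k$ for $k \ge 2$; both terms are therefore bounded by $\|\bA\|_F^k$.
\end{enumerate}

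The main obstacle, such as it is, is ensuring the implicit constant depends only on $k$ and not on the ambient dimension $p$---precisely what Rosenthal's inequality delivers (a naive Minkowski/triangle bound would not suffice). A self-contained alternative is Gaussian hypercontractivity applied to the degree-two Hermite polynomial $\bz^\top \bA \bz - \tr \bA$, yielding $(\E|\cdot|^k)^{1/k} \lesssim \sqrt{k}\,\|\bA\|_F$ directly. Either route gives the bound claimed in Lemma B.26 of \cite{bai2007asymptotics}, which can also simply be cited.
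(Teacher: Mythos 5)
Your proof is correct, and it actually supplies more than the paper does: the paper gives no argument for this lemma, simply labeling it ``a special case of Lemma B.26 from \cite{bai2007asymptotics}.'' Your closing remark that one can just cite that reference is therefore exactly what the paper does, and your self-contained argument (symmetrize, diagonalize and use rotational invariance, then apply Rosenthal's inequality to $\sum_i d_i(g_i^2-1)$) is a clean alternative that would also serve. The second route you sketch, via hypercontractivity for second-order Wiener chaos, is likewise sound.

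Two small imprecisions, neither of which affects the conclusion. First, in your bookkeeping paragraph you write $\E_{j-1}\bz_j^\top\bA\bz_j = \tr\bA$; since in the applications $\bA$ is $\sigma(\{\bz_i : i\neq j\})$-measurable (hence depends on $\bz_{j+1},\ldots,\bz_n$), the correct identity is $\E_{j-1}\bz_j^\top\bA\bz_j = \E_{j-1}\tr\bA = \tr(\E_{j-1}\bA)$, and the martingale increment is $\bz_j^\top(\E_{j-1}\bA)\bz_j - \tr(\E_{j-1}\bA)$ rather than the raw centered quadratic form; one then needs a final Jensen step $\|\E_{j-1}\bA\|_F^k \le \E_{j-1}\|\bA\|_F^k$ (convexity of $\|\cdot\|_F^k$). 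This is worth getting straight because the chain as printed in the paper is itself garbled: taken literally, $\E_{j-1}(\bz_j^\top\bA\bz_j - \tr\bA) = 0$ and $\E_{\bz_j}(\bz_j^\top\bA\bz_j - \tr\bA) = 0$, so the equalities/inequalities there should be read modulo the obvious typos (the $\E_{j-1}$ should be $\E_j$ or the absolute values need an outer $\E_{j-1}$, and the $\E_{\bz_j}$ belongs outside $|\cdot|^k$, not inside). Second, the hypercontractivity constant for degree-two chaos is $(\E|X|^k)^{1/k} \lesssim k\,\|X\|_2$, not $\sqrt{k}\,\|X\|_2$; the $\sqrt{k}$ rate is the degree-one (subgaussian) scaling. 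Since the lemma allows a $k$-dependent implicit constant, this does not affect validity, but the exponent is worth stating correctly.
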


\begin{proof}[Proof of Lemma \ref{lem:bound_alpha_j_martingale}]
The claim follows immediately  from the bounds
\begin{align*}
      & \hspace{2.5cm} \E_{j-1} | \alpha_{j,r}|^k \lesssim \frac{(\log n)^{4Lk}}{n^k} \E_{j-1} \Gamma_{j}^{4k} , & r \in [4] ,  
\end{align*}
which we prove below: 
{\setlist[enumerate,1]{left=0pt}
\begin{enumerate}
    \item By Jensen's inequality and  $\| \bR_0 \|_2 \leq \Im(z)^{-1} \lesssim 1$, 
\begin{equation} \label{dfgh23}
    \begin{aligned}
    \E_{j-1}  |\alpha_{j,1}|^k \lesssim \frac{1}{n^{2k}} \E_{j-1} \big[ (\bz_j^\top \bR_0  \bz_j  )^k | \bu^\top \bz_j|^{2k} \big] \lesssim \frac{1}{n^{2k}} \E_{j-1} \big[ \|\bz_j\|_2^{2k} | \bu^\top \bz_j|^{2k} \big] \lesssim \frac{1}{n^k}.
\end{aligned}\end{equation}
\item Let $\E_{\bz_j}(\cdot)$ denote expectation with respect to $\bz_j$. Since $\E_{j-1}(\cdot) = \E_j\E_{\bz_j}(\cdot)$, 
\begin{equation}
\begin{aligned}
    \E_{j-1} | \alpha_{j,2}|^k \lesssim&~ \frac{1}{n^k} \E_{j} \Big[ \E_{\bz_j}\big| \bz_j^\top \bu \cdot \bz_j^\top \bR_{0,-j} \bS_{-j} \bu \big|^k  \Big]
    \lesssim \frac{1}{n^k} \E_j  \| \bS_{-j} \|_2^k \leq \E_j  \Gamma_{j}^k .
\end{aligned}
\end{equation}
\item  By Lemma \ref{lem:decomp_K_j_martingale} and  $\|\bz_j\|_2 \leq \sqrt{n} \|\bz_j\|_\infty$, 
\begin{equation}
    \begin{aligned} \label{eq:alpha_j3}
    \bz_j^\top (\bR_0 - \bR_{0,-j}) \bS_{-j} \bu =&~ -\bz_j^\top \bR_0 (\bDelta_{1,j} + \bDelta_{2,j} + \bDelta_{3,j})\bR_{0,-j} \bS_{-j} \bu, \\
          |\bz_j^\top \bR_0 \bDelta_{1,j} \bR_{0,-j} \bS_{-j} \bu | \lesssim &~  \frac{1}{n} \| \bz_j \|_2^2 | \bz_j^\top\bR_{0,-j} \bS_{-j} \bu | \leq  \|\bz_j\|_\infty^2 | \bz_j^\top\bR_{0,-j} \bS_{-j} \bu | , \\
          |\bz_j^\top \bR_0 \bDelta_{2,j} \bR_{0,-j} \bS_{-j} \bu | = &~   \frac{1}{\sqrt{n}} |\bz_j^\top \bR_0 \diag (\bz_j) \bK_{0,-j} (f' - a_1) \diag (\bz_j) \bR_{0,-j} \bS_{-j} \bu|\\
    \lesssim &~ \frac{1}{\sqrt{n}} \| \bz_j\|_2 \| \bz_j \|_\infty^2 \big\|\bK_{0,-j} (f' - a_1)\big\|_2  \| \bS_{-j} \|_2    \lesssim  \|\bz_j\|_\infty^3 \Gamma_j^2, \\
    | \bz_j^\top \bR_0 \bDelta_{3,j} \bR_{0,-j} \bS_{-j} \bu | \lesssim &~  \|\bz_j\|_2 \| \bDelta_{3,j} \|_2  \| \bS_{-j} \|_2  \lesssim \frac{1}{\sqrt{n}} ( 1 \vee \| \bz_{j} \|_\infty^{2L+1} )  \Gamma_{j}^2 .
    \end{aligned}
\end{equation}
Moreover, as $\bR_{0,-j}$ and $\bS_{-j}$ are independent of $\bz_j$,
$\E_{\bz_j} |\bz_j^\top\bR_{0,-j} \bS_{-j} \bu|^k \lesssim \|\bR_{0,-j} \bS_{-j} \bu\|_2^k \lesssim \|\bS_{-j}\|_2^k$. Together, these bounds imply 
    \[
    \begin{aligned}
        \E_{j-1}| \alpha_{j,3}|^k \lesssim&~ \frac{1}{n^k} \E_{j-1} \Big| \bu^\top \bz_j \cdot \bz_j^\top (\bR_0 - \bR_{0,-j}) \bS_{-j} \bu \Big|^k \lesssim \frac{(\log n)^{2Lk} }{n^k} \E_{j}  \Gamma_{j}^{2k} .
    \end{aligned}
    \]
\item Since $\bR_0 - \bR_{0,-j} =  -\bR_{0}(\bK_0-\bK_{0,-j})\bR_{0,-j}$, 
\begin{equation}\label{eq:decompo_alpha_4j}
\begin{aligned}
   \hspace{-.5cm} \bu^\top \bS_{-j} (\bR_0 - \bR_{0,-j}) \bS_{-j} \bu = &~ -\bu^\top \bS_{-j} \bR_0 (\bK_{0}-\bK_{0,-j}) \bR_{0,-j} \bS_{-j} \bu \\
    =&~  -\bu^\top \bS_{-j} \bR_{0,-j} (\bK_{0}-\bK_{0,-j} ) \bR_{0,-j} \bS_{-j} \bu \\
    &~ + \bu^\top \bS_{-j} \bR_{0,-j} (\bK_{0} - \bK_{0,-j}) \bR_{0}  (\bK_{0} - \bK_{0,-j}) \bR_{0,-j} \bS_{-j} \bu.
\end{aligned}
\end{equation}
We bound $\bu^\top \bS_{-j} \bR_{0,-j} (\bK_{0}-\bK_{0,-j} ) \bR_{0,-j} \bS_{-j} \bu$ similarly to (\ref{eq:alpha_j3}):
\begin{equation} \label{eq:alpha_j3_2}
    \begin{aligned}
\E_{\bz_j} \big| \bu^\top \bS_{-j} \bR_{0,-j} \bDelta_{1,j}\bR_{0,-j} \bS_{-j} \bu\big|^k  & \lesssim \frac{1}{n^k} \E_{\bz_j} \big| \bz_j^\top \bR_{0,-j} \bS_{-j} \bu\big|^{2k}  \lesssim\frac{1}{n^k} \Gamma_{j}^{2k} , \\
\E_{\bz_j} \big|\bu^\top \bS_{-j} \bR_{0,-j} \bDelta_{3,j}\bR_{0,-j} \bS_{-j} \bu\big|^k & \lesssim  \E_{\bz_j} \| \bS_{-j}\|_2^{2k}  \| \bDelta_{3,j}\|_2^k \lesssim \frac{(\log n)^{2Lk}}{n^k} \Gamma_{j}^{3k}. 
    \end{aligned}
\end{equation}
To analyze the corresponding term involving $\bDelta_{2,j}$, we use the identity
\begin{equation*}
\begin{aligned}
     \bu^\top \bS_{-j} \bR_{0,-j} \bDelta_{2,j}\bR_{0,-j} \bS_{-j} \bu  = &~ \frac{1}{\sqrt{n}}\bu^\top \bS_{-j} \bR_{0,-j} \big( \bK_{0,-j} (f'- a_1) \odot (\bz_j \bz_j^\top) \big)\bR_{0,-j} \bS_{-j} \bu \\
    = &~ \frac{1}{\sqrt{n}} \bz_j^\top \Big( \bK_{0,-j} (f'- a_1) \odot  \big( \bR_{0,-j} \bS_{-j} \bu \bu^\top   \bS_{-j}\bR_{0,-j}\big) \Big)  \bz_j 
\end{aligned}
\end{equation*}
(this follows from $\text{diag}(\bz_j) \bR_{0,-j} \bS_{-j} \bu = \text{diag}(\bR_{0,-j} \bS_{-j} \bu) \bz_j$) and concentration of Gaussian quadratic forms (Lemma \ref{lem:HW}):
\begin{equation}
\begin{aligned} \label{dfgh11}
    &~ \Big|(\E_j - \E_{j-1})  \bu^\top \bS_{-j} \bR_{0,-j} \bDelta_{2,j}\bR_{0,-j} \bS_{-j} \bu \Big|^k \\  
    = &~ \frac{1}{n^{k/2}}  \Big| (\E_j - \E_{j-1}) \Big[ \bz_j^\top \Big( \bK_{0,-j} (f'- a_1) \odot  \big( \bR_{0,-j} \bS_{-j} \bu \bu^\top   \bS_{-j}\bR_{0,-j}\big) \Big)  \bz_j \Big] \Big|^k \\
    \lesssim&~ \frac{1}{n^{k/2}} \E_{j-1} \Big\|  \bK_{0,-j} (f'- a_1) \odot  \big( \bR_{0,-j} \bS_{-j} \bu \bu^\top   \bS_{-j}\bR_{0,-j}\big) \Big\|_F^{k} \\ 
    \lesssim &~ \frac{1}{n^k}  \E_{j-1}  \Big[ \|\bS_{-j}\|_2^{2k} \big\| \sqrt{n}  \bK_{0,-j}(f' - a_1) \big\|_\infty^{k}  \Big] \\ 
    \lesssim&~ \frac{1}{n^{k}} \E_{j-1} \Gamma_{j}^{3k} .
\end{aligned}
\end{equation}
 To obtain the second inequality, we used $\| \bA \odot (\bx \bx^\top) \|_F \leq \| \bA\|_\infty \| \bx \bx^\top  \|_F = \|\bA\|_\infty \|\bx\|_2^2$, where $\bA \in \mathbb{R}^{p \times p}$, $\bx \in \mathbb{R}^p$, and $\|\bA\|_\infty \coloneqq \max_{i,j \in [p]} |\bA_{ij}|$.

\hspace{1.4em}  The second term on the right-hand side of \eqref{eq:decompo_alpha_4j} satisfies
\begin{equation}
    \begin{aligned}
    |\bu^\top \bS_{-j} \bR_{0,-j} (\bK_{0} - \bK_{0,-j}) \bR_{0}  (\bK_{0} - \bK_{0,-j}) \bR_{0,-j} \bS_{-j} \bu | 
    & \lesssim 
    \delta_{1,j} + \delta_{2,j} + \delta_{3,j} , \label{dfgh21}
\end{aligned}
\end{equation}
where
\begin{align*}
    \delta_{1,j} & \coloneqq  \frac{1}{n^2} \| \bz_j \|_2^2 \, \big|\bz_j^\top \bR_{-j} \bS_{-j} \bu \big|^2 ,  \\
    \delta_{2,j} & \coloneqq \frac{1}{n}  \| \bz_j\|_2 \, \big|\bz_j^\top \bR_{-j}\bS_{-j} \bu \big| \,   \|  \bS_{-j}\|_2 \, \big( \| \bDelta_{2,j} \|_2  + \| \bDelta_{3,j} \|_2 \big), \\
    \delta_{3,j} & \coloneqq       \| \bS_{-j}\|_2^2 \big(  \| \bDelta_{2,j}\|_2^2+ \| \bDelta_{3,j}\|_2^2 \big) . 
\end{align*}

By arguments similar to (\ref{eq:alpha_j3})--(\ref{eq:alpha_j3_2}),  we have 
\begin{align} &  \label{dfgh22}
    \E_{\bz_{j}} |\delta_{1,j}|^k  \lesssim \frac{1}{n^k} \Gamma_{j}^{2k}, & \E_{\bz_{j}}  |\delta_{2,j}|^k  \lesssim \frac{(\log n)^{2Lk}}{n^k} \Gamma_{j}^{3k}, && \E_{\bz_{j}} |\delta_{3,j}|^k \lesssim \frac{(\log n)^{4Lk}}{n^k} \Gamma_{j}^{4k}.
\end{align}
Combining the above bounds completes the proof:
\[
\E_{j-1}  | \alpha_{j,4}|^k \lesssim \frac{(\log n)^{4Lk}}{n^k} \E_{j-1} \Gamma_{j}^{4k} .
\]
\end{enumerate} }
\end{proof}

\begin{proof}[Proof of Lemma \ref{lem:bound_Gamma_j}]
 By Theorem 1.6 of \cite{fan2019spectral} and standard results on Gaussian matrices (such as Corollary 5.35 of \cite{vershynin2010introduction}), we have $\| \bK_{0,-j} (h_\ell) \|_2 \prec 1$ for $\ell \in [L]$ 
 and $\| \bS_{-j} \|_2 \prec 1$. Moreover, 
    \[
  \sup_{\ell \in [L]} \sqrt{n} \| \bK_{0,-j} (h_\ell) \|_\infty = \sup_{\ell \in [L]} \sup_{k_1 \neq k_2 \in [n] \setminus \{j\}} \big| h_\ell \big(n^{-1/2} \bz_{k_1}^\top \bz_{k_2}\big) \big| ,
    \]
    and each term satisfies $| h_\ell ( n^{-1/2}\bz_{k_1}^\top \bz_{k_2}  ) | \prec 1$ 
    by Lemma 4.1 of \cite{cheng2013spectrum} and Markov's inequality. Thus, using a union bound, we conclude that $ \Gamma_{j} \prec 1$. 

    To prove the second point of the lemma, we use the bound
    \begin{equation}
    \begin{aligned} \label{dfgh2-1}
        \E  \Gamma_{j}^{2k}  & \lesssim 1 + \E \| \bS_{-j} \|_2^{2k}  + \sum_{\ell =1}^L \E \Big[ \| \bK_{0,-j} (h_\ell) \|_2^{2k} + \| \sqrt{n} \bK_{0,-j} (h_\ell) \|_\infty^{2k} \Big] \lesssim n^{2k}
    \end{aligned}
    \end{equation}
    (which follows from $\| \bS_{-j} \|_2 \leq \| \bS_{-j} \|_F$ and $ \| \bK_{0,-j} (h_\ell) \|_2 \leq  \| \bK_{0,-j} (h_\ell) \|_F$) and the fact that for any $\varepsilon, D > 0$, $\P(|\Gamma_j| > n^{\varepsilon}) \leq n^{-D}$ for sufficiently large $n$:
    \begin{equation}
    \begin{aligned} \label{dfgh2}
        \E  \hspace{.09em} \Gamma_j^k & \leq \E   \big[ \Gamma_j^k  \hspace{.09em} {\bf 1}(\Gamma_j \leq n^\varepsilon) \big] + \E\big[ \Gamma_j^k  \hspace{.09em} {\bf 1}(\Gamma_j > n^\varepsilon) \big] \\
        & \leq n^\varepsilon + \big( \E  \hspace{.09em} \Gamma_j^{2k}  \cdot \P(\Gamma_j > n^{\varepsilon}) \big)^{1/2} \\
        & \lesssim n^\varepsilon + n^{k-D/2}.
    \end{aligned}   
    \end{equation}
    Taking $D \geq 2k$ completes the proof.
    
\end{proof}

\begin{proof}[Proof of Lemma \ref{lem:decomp_K_j_martingale}]
Similarly to (\ref{eq:decompo_K_f_A}), 
\begin{equation}
    \begin{aligned}
    \bK_0 (f) =&~ \bK_{0,-j} (f) + \frac{1}{\sqrt{n}}\bK_{0,-j} (f') \odot (\bz_j\bz_j^\top)  \\
    &~+ \sum_{\ell = 0}^L \sum_{k=2}^{L-\ell} \frac{a_{\ell+k}}{k!\, n^{k/2}} \sqrt{\frac{(\ell +k)!}{\ell!}} \Big[ \bK_{0,-j} (h_\ell) \odot (\bz_j \bz_j^\top)^{\odot k} \Big], \\
    \frac{1}{\sqrt{n}} \bK_{0,-j} (f' ) \odot (\bz_j\bz_j^\top) =&~ \frac{a_1}{n} \bz_j \bz_j^\top + \frac{1}{\sqrt{n}} \bK_{0,-j} (f' -a_1) \odot (\bz_j\bz_j^\top) - \frac{a_1}{n} \diag ( \bz_j^{\odot 2}) \\
=&~ \bDelta_{1,j} + \bDelta_{2,j} - \frac{a_1}{n} \diag ( \bz_j^{\odot 2}) .
\end{aligned}
\end{equation}
Thus, 
 \begin{align*}
\bDelta_{3,j}  &  \coloneqq \bK_0(f) - \bK_{0,-j}(f) - \bDelta_{1,j} - \bDelta_{2,j} \\
& = - \frac{a_1}{n} \diag ( \bz_j^{\odot 2}) + \sum_{\ell = 0}^L \sum_{k=2}^{L-\ell} \frac{a_{\ell+k}}{k!\, n^{k/2}} \sqrt{\frac{(\ell +k)!}{\ell!}} \Big[ \bK_{0,-j} (h_\ell) \odot (\bz_j \bz_j^\top)^{\odot k} \Big] . 
\end{align*}
The lemma now follows from  $\| \diag(\bz_j^{\odot 2})\|_2 = \|\bz_j\|_\infty^2$, our assumption that  $f$ is an odd function (implying $a_2 = 0$), and the bounds
\begin{equation}
    \begin{aligned}
  \frac{1}{n^{k/2}} \big\| \bK_{0,-j}(h_0) \odot (\bz_j \bz_j^\top)^{\odot k} \big\|_2  \leq & \frac{2}{n^{(k-1)/2}} \|\bz_j^{\odot k} \|_\infty^2 \leq \frac{2}{n^{(k-1)/2}} \| \bz_{j}\|_\infty^{2k},  \\
  \frac{1}{n^{k/2}}  \big\| \bK_{0,-j} (h_\ell) \odot (\bz_j \bz_j^\top)^{\odot k} \big\|_2 \leq& \frac{1}{n} \big\| \diag (\bz_j^{\odot k}) \bK_{0,-j} (h_\ell) \diag (\bz_j^{\odot k}) \big\|_2 \\
\leq& \frac{1}{n} \| \bK_{0,-j} (h_\ell)\|_2 \cdot \| \bz_{j} \|_\infty^{2k}.
\end{aligned} \label{eq:b12} 
\end{equation}
%
\end{proof}

\subsection{Proof of Lemma \ref{lem:quad_forms_E2}} \label{sec:quadforms2}

\begin{lemma} \label{lem:quad_forms_E3}
    Under the assumptions of Lemma \ref{lem:quad_forms_E2}, 
    \begin{align*}
 &       \E(\bR_0)_{11} - s(z) = O(n^{-1/2 + \varepsilon}) , & \E(\bS\bR_0)_{11} - \breve s(z) = O(n^{-1/2 + \varepsilon})  .  
    \end{align*}
    Moreover, the convergence is uniform in $z$ on compact subsets disjoint from $\mathrm{supp}(\mu)$.
\end{lemma}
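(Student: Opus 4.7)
The plan is to derive a self-consistent equation for $\E(\bR_0)_{11}$ matching (\ref{eq:stj_trans}) up to $O(n^{-1/2+\varepsilon})$ error, and then invoke stability of that equation to identify $\E(\bR_0)_{11}$ with $s(z)$. By exchangeability of the columns of $\bZ$ (a symmetry of the law of $\bK_0$), $\E(\bR_0)_{ii} = n^{-1}\E\tr\bR_0$ for every $i$, so the first claim is really an averaged Stieltjes transform estimate. The starting point is the Schur complement formula
\[ (\bR_0)_{11} = -\frac{1}{z + \bk^\top \bR_0^{(1)} \bk}, \]
where $\bR_0^{(1)}\coloneqq (\bK_0^{(1)} - z \bI_{p-1})^{-1}$ is the resolvent of the principal submatrix $\bK_0^{(1)}$ obtained by deleting the first row and column of $\bK_0$, and $\bk \in \R^{p-1}$ collects the off-diagonal first column of $\bK_0$. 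Conditional on the entries $\bZ_{-1}$ of $\bZ$ outside its first column, $\bR_0^{(1)}$ is deterministic while $\bk$ depends only on the first column of $\bZ$.

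Next I compute $\E[\bk^\top \bR_0^{(1)}\bk \mid \bZ_{-1}]$ by Hermite-expanding $f = \sum_k a_k h_k$ and using the identity $\E h_k(X)h_\ell(Y) = \delta_{k\ell}\rho^k$ for jointly standard Gaussians $X,Y$ with correlation $\rho$. Conditionally on $\bZ_{-1}$, the inputs $\{\sqrt{n}\bS_{1j}\}_{j \ge 2}$ are centred jointly Gaussian with variances $\bS_{jj} = 1 + O_\prec(n^{-1/2})$ and covariances $\bS_{jj'} = O_\prec(n^{-1/2})$. The diagonal contribution sums to $\|f\|_\phi^2 \cdot n^{-1}\tr\bR_0^{(1)}$, the $k = 1$ off-diagonal contribution equals $a_1^2 \cdot n^{-1}\tr(\bK_0^{(1)}(h_1)\bR_0^{(1)})$ (via $(\bK_0^{(1)}(h_1))_{jj'} = \bS_{jj'}$ off the diagonal), and the $k \ge 2$ off-diagonal terms contribute only $O(n^{-1/2+\varepsilon})$ in total: by Cauchy-Schwarz each is bounded by $n^{-1}\|\bR_0^{(1)}\|_F(\sum_{j\neq j'}\bS_{jj'}^{2k})^{1/2} = O(n^{1/2 - k/2})$. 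Concentration of $\bk^\top \bR_0^{(1)}\bk$ around its conditional mean at rate $O_\prec(n^{-1/2})$ follows by bounding $\E|\bk^\top \bR_0^{(1)}\bk - \E[\bk^\top \bR_0^{(1)}\bk \mid \bZ_{-1}]|^{2q}$ via a Hermite-chaos decomposition and Gaussian hypercontractivity, in the spirit of the martingale argument used to prove Lemma \ref{lem:quad_forms_E}.

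Using $a_1 \bK_0^{(1)}(h_1) = \bK_0^{(1)}(f) - \bK_0^{(1)}(f-a_1h_1)$ combined with $\bK_0^{(1)}\bR_0^{(1)} = \bI + z\bR_0^{(1)}$, the first trace rearranges to $a_1^{-1}\bigl[(p-1)/n + z\, n^{-1}\tr\bR_0^{(1)} - n^{-1}\tr(\bK_0^{(1)}(f-a_1h_1)\bR_0^{(1)})\bigr]$, and a second round of Schur/Hermite analysis handles the residual trace. Taking expectation and combining yields
\[ z + \frac{1}{\E(\bR_0)_{11}} + a_1\Bigl(1 - \frac{1}{1 + a_1\gamma\,\E(\bR_0)_{11}}\Bigr) + \gamma(\|f\|_\phi^2 - a_1^2)\E(\bR_0)_{11} = O(n^{-1/2+\varepsilon}), \]
which is precisely (\ref{eq:stj_trans}) for $\E(\bR_0)_{11}$, up to $O(n^{-1/2+\varepsilon})$. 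Away from $\mathrm{supp}(\mu)$ the inverse map $\psi$ has $\psi'(s(z)) \neq 0$, so a standard stability/implicit-function argument upgrades the perturbation to $\E(\bR_0)_{11} - s(z) = O(n^{-1/2+\varepsilon})$. For the second estimate I would use the decomposition $a_1\bS = \bK_0(f) - \bK_0(f-a_1h_1) + a_1\,\diag(\bS)$, giving
\[ a_1(\bS\bR_0)_{11} = 1 + z(\bR_0)_{11} + a_1 \bS_{11}(\bR_0)_{11} - (\bK_0(f-a_1h_1)\bR_0)_{11}. \]
An analogous Schur-complement/Hermite computation yields $\E(\bK_0(f-a_1h_1)\bR_0)_{11} = -\gamma(\|f\|_\phi^2-a_1^2)s(z)^2 + O(n^{-1/2+\varepsilon})$; combining this with $\bS_{11} = 1 + O_\prec(n^{-1/2})$, the first estimate, and the equation (\ref{eq:stj_trans}) for $s(z)$ algebraically recovers $\E(\bS\bR_0)_{11} = \breve s(z) + O(n^{-1/2+\varepsilon})$.

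The main obstacle is computing $\E[\bk^\top \bR_0^{(1)}\bk \mid \bZ_{-1}]$ with the required $O(n^{-1/2+\varepsilon})$ precision. The Hermite orthogonality identity assumes exact unit-variance inputs, whereas $\bS_{jj}$ differs from $1$ by $O_\prec(n^{-1/2})$; propagating these sub-leading corrections cleanly through the double sum against $\bR_0^{(1)}$, while simultaneously verifying that the higher Hermite levels ($k \geq 2$) contribute only to the $\|f\|_\phi^2$ diagonal piece and not to an additional signal-like trace, is the main bookkeeping challenge. Uniformity in $z$ on compact sets $\mathcal{C}$ with $\inf_{z \in \mathcal{C}}\mathrm{Re}(z) > \lambda_+$ is ensured by the a priori operator-norm bound $\sup_{z \in \mathcal{C}}\|\bR_0\|_2 \prec 1$ supplied by Lemma \ref{lem:max_eig2}.
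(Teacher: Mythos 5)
The paper does not actually prove this lemma: it defers entirely to the literature, citing the proof of Theorem~3.4 of \cite{cheng2013spectrum} (together with Theorem~1.6 of \cite{fan2019spectral}) for the first bound and Lemma~1 of \cite{liao2020sparse} for the second. Your proposal is, in substance, a reconstruction of the Schur-complement/self-consistent-equation argument underlying those citations, and at the level of strategy it is the right one: expand $(\bR_0)_{11}$ by Schur complement, Hermite-expand the off-diagonal column, isolate the diagonal and level-one contributions to obtain an approximate fixed point of (\ref{eq:stj_trans}), and close via the stability of $\psi$ away from $\mathrm{supp}(\mu)$.

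That said, the sketch hides the crux of the argument in the phrase ``a second round of Schur/Hermite analysis handles the residual trace.'' The quantity $n^{-1}\tr\bigl(\bK_0^{(1)}(f-a_1 h_1)\,\bR_0^{(1)}\bigr)$ couples the Wigner-type component (Hermite levels $\geq 2$) to the full resolvent $\bR_0$, which also contains the sample-covariance component $a_1\bK_0(h_1)$; evaluating it to the required precision is precisely where one needs the asymptotic freeness of these two Hermite-orthogonal pieces, which is the heart of Cheng--Singer's proof. Indeed, as your own computation for the second estimate makes visible, identifying $\E(\bK_0(f-a_1h_1)\bR_0)_{11} \approx -\gamma(\|f\|_\phi^2-a_1^2)\,s(z)^2$ is equivalent in difficulty to the two target statements, so it cannot be invoked as a black box: one must set up a coupled system of self-consistent equations for $\E(\bR_0)_{11}$, $\E(\bS\bR_0)_{11}$, and the residual trace jointly, show that the error terms in this system are uniformly $O(n^{-1/2+\varepsilon})$ on compact sets bounded away from $\mathrm{supp}(\mu)$ (using $p/n=\gamma+O(n^{-1/2})$ and the a priori resolvent bound from Lemma~\ref{lem:max_eig2}), and then solve it. Filling that in would give a genuinely self-contained alternative to the paper's citation; as written, the hardest part of the proof is still outstanding.
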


\begin{proof}
    The first claim follows from the proof of Theorem 3.4 of \cite{cheng2013spectrum} and Theorem 1.6 of \cite{fan2019spectral}. The proof of the second claim is similar to that of Lemma 1 of \cite{liao2020sparse}.
\end{proof}

For $j \in [p]$, let $\bz_{(j)}$ denote the $j$-th column of $\bZ$ and $\bZ_{(-j)}$ contain the remaining columns. 
Then, 
\begin{equation}
    \begin{aligned}\label{eq:block_inverse_R_0}
(\bR_{0})_{11} &=  -\Big( z + n^{-1} f ( n^{-1/2} \bz_{(1)}^\top \bZ_{(-1)}^\top ) \big((\bK_{0})_{[2:p, 2:p]} - z\bI_{p-1}\big)^{-1}  f ( n^{-1/2} \bZ_{(-1)} \bz_{(1)}) \Big)^{-1}  ,\\ 
   (\bR_{0})_{12} &=  - \frac{1}{\sqrt{n}} (\bR_0)_{11} \cdot    f ( n^{-1/2} \bz_{(1)}^\top \bZ_{(-1)}^\top ) \big((\bK_{0})_{[2:p, 2:p]} - z\bI_{p-1}\big)^{-1}  \be_2 ,
   \end{aligned}
\end{equation}
where $f$ is applied elementwise.

\begin{proof}[Proof of (\ref{eq:expectation_quad_form_m})]
       By exchangeability, \begin{align} \label{dfgh}
    \E [ \bu^\top \bR_0 \bu] &=  \E(\bR_0)_{11} + ( \< \bu, \ones_p\>^2 - 1) \E (\bR_0)_{12} .
   \end{align} 
   Since $f$ is an odd function,  (\ref{eq:block_inverse_R_0}) implies $(\bR_0)_{12}$ is odd with respect to $\bz_{(1)}$; therefore, by the symmetry of the Gaussian distribution,
    \[ \E (\bR_0)_{12} = \E \big[ \E\big[(\bR_0)_{12} | \bZ_{(-1)}\big] \big] = 0 . \]
    The claim now follows Lemma  \ref{lem:quad_forms_E3}.
\end{proof}

\begin{proof}[Proof of (\ref{eq:expectation_quad_form_s1})]
The proof is similar to that of (\ref{eq:expectation_quad_form_m}):
\begin{equation}
    \begin{gathered}
\E[ \bu^\top \bS \bR_0 \bu ] = \E  (\bS \bR_0 )_{11} + ( \< \bu, \ones_p\>^2 - 1) \E (\bS \bR_0)_{12} , \\
(\bS \bR_0)_{12} = \frac{1}{n}\sum_{j =1}^p \langle \bz_{(1)}, \bz_{(j)} \rangle (\bR_0)_{j2} = \frac{1}{n} \langle \bz_{(1)}, \bz_{(2)} \rangle 
 (\bR_0)_{22} + \frac{1}{n} \sum_{j \neq 2} \langle \bz_{(1)}, \bz_{(j)} \rangle  (\bR_0)_{j2}.
\end{gathered} 
\end{equation}
Since  $(\bR_0)_{22}$ and $(\bR_0)_{j2}$ are even and odd functions of $\bz_{(2)}$, respectively, 
\[ \E (\bS \bR_0)_{12}  = \E \big[ \E \big[  (\bS \bR_0)_{12} | \bZ_{(-2)}\big]  \big] = 0 . \]
\end{proof}

\begin{proof}[Proof of (\ref{eq:expectation_quad_form_s2})]
 By exchangeability,
 \begin{equation}
\begin{aligned} \label{dfgh3}
    \E[ \bu^\top \bS \bR_0 \bS \bu]  = &~ \frac{1}{n^2}\sum_{i=1}^n \E[ ( \bu^\top  \bz_i)^2 \cdot \bz_i^\top \bR_0 \bz_i  ] + \frac{1}{n}\sum_{i=1}^n \E[\bu^\top \bz_i  \cdot \bz_i^\top  \bR_0  \bS_{-i} \bu ]  \\
   = &~  \frac{1}{n} \E[ (\bu^\top  \bz_1)^2 \cdot \bz_1^\top \bR_0 \bz_1 ] +  \E[  \bu^\top \bz_1 \cdot \bz_1^\top \bR_0  \bS_{-1} \bu] \\ = &~ \frac{1}{n} \E[\bz_1^\top \bR_0 \bz_1] +
    \frac{1}{n} \E\Big[ (\bu^\top  \bz_1)^2  \big( \bz_1^\top \bR_0 \bz_1 -  \E[\bz_1^\top \bR_0 \bz_1] \big) \Big]  \\
    &~ + \E[  \bu^\top \bR_{0,-1} \bS_{-1} \bu] +  \E\Big[  \bu^\top \bz_1 \cdot \bz_1^\top (\bR_0 - \bR_{0,-1})  \bS_{-1} \bu \Big] ,
\end{aligned}
\end{equation}
where the final equality holds as $\E (\bu^\top \bz_1)^2 = 1$ and $\E [ \bu^\top \bz_1 \cdot \bz_1^\top \bR_{0,-1} \bS_{-1} \bu] = \E[  \bu^\top \bR_{0,-1} \bS_{-1} \bu] $. 
We will consider each of the terms on the right-hand side (\ref{dfgh3}), beginning with the first and third: using \eqref{eq:expectation_quad_form_s1}, 
\begin{equation}\label{eq:exp_zRz}
\begin{aligned}
&\frac{1}{n} \E[\bz_1^\top \bR_0 \bz_1] = \frac{p}{n} \cdot \frac{1}{p} \E \hspace{.05cm} \tr(\bS\bR_0) =  \gamma \breve s(z)  + O(n^{-1/2+\eps}) , \\
& \E[  \bu^\top \bR_{0,-1} \bS_{-1} \bu]  =   \breve s(z)  + O(n^{-1/2+\eps}) . 
 \end{aligned}
\end{equation}
The second term is negligible: an argument similar to the proof of Lemma \ref{lem:quad_forms_E} yields 
    \begin{align}\label{eq:stoch_dom_zRz}
& \frac{1}{n}\bz_1^\top \bR_0 \bz_1 - \frac{1}{n}\E[\bz_1^\top \bR_0 \bz_1]  = O_\prec (n^{-1/2}) ,  && \frac{1}{n^2} \E \big| \bz_1^\top \bR_0 \bz_1 -\E[\bz_1^\top \bR_0 \bz_1] \big|^2  = O (n^{-1+ \varepsilon})   , 
\end{align}
for any $\varepsilon > 0$, implying
\begin{equation}\label{eq:s2_exp_term1}
\begin{aligned}
     \frac{1}{n} \E\Big[ (\bu^\top  \bz_1)^2 \big( \bz_1^\top \bR_0 \bz_1 -  \E[\bz_1^\top \bR_0 \bz_1] \big) \Big] \lesssim &~ \frac{1}{n} \Big(\E \big| \bz_1^\top \bR_0 \bz_1 - \E[\bz_1^\top \bR_0 \bz_1] \big|^2\Big)^{1/2}  \\
     = &~  O (n^{-1/2+\eps}).
\end{aligned}
\end{equation} 

We expand the fourth term of (\ref{dfgh3}) using  Lemma \ref{lem:decomp_K_j_martingale}:
\begin{equation}
    \begin{aligned} \label{dfgh4} \hspace{-.2em}
\E \Big[ \bz_1^\top (\bR_0 - \bR_{0,-1}) \bS_{-1} \bu \bu^\top \bz_1 \Big] =&~   -\E \Big[   \bz_1^\top \bR_0 (\bDelta_{1,1}+ \bDelta_{2,1} + \bDelta_{3,1}) \bR_{0,-1} \bS_{-1} \bu \bu^\top \bz_1\Big].
\end{aligned}
\end{equation}
Denoting $\eta_1 \coloneqq \bz_1^\top \bR_{0,-1} \bS_{-1} \bu \bu^\top \bz_1 -  \bu^\top \bR_{0,-1} \bS_{-1} \bu$ , we have $\E|\eta_1|^2 = O(n^\varepsilon)$ by Lemma \ref{lem:HW} and $\|\bS_{-1}\|_2 \prec 1$.  Using this bound, the Cauchy-Schwarz inequality,  (\ref{eq:exp_zRz}), and (\ref{eq:stoch_dom_zRz}), we find that the  component of (\ref{dfgh4}) involving $\bDelta_{1,1}$ is approximately $-a_1 \gamma \breve s^2(z)$: 
\begin{equation*}
\begin{aligned}
     \E \Big[\bz_1^\top \bR_0 \bDelta_{1,1}\bR_{0,-1} \bS_{-1} \bu \bu^\top \bz_1\Big]  =  &~ \frac{a_1}{n} \E \Big[  \bz_1^\top \bR_0 \bz_1 \cdot \bz_1^\top \bR_{0,-1} \bS_{-1} \bu \bu^\top \bz_1  \Big] \\
      = &~  a_1\E \Big[\big(n^{-1} \bz_1^\top \bR_0 \bz_1 - \gamma \breve s(z) \big) \bu^\top \bR_{0,-1} \bS_{-1} \bu  \Big] + a_1 \gamma \breve s(z) \E \hspace{.09em}\eta_1 \\ 
      &~ +a_1\E \big[\big(n^{-1} \bz_1^\top \bR_0 \bz_1 - \gamma \breve s(z) \big) \eta_1   \big] + a_1 \gamma \breve s(z) \E \big[ \bu^\top \bR_{0,-1} \bS_{-1}\bu\big] \\
      =&~ a_1 \gamma \breve s^2(z) + O(n^{-1/2 + \varepsilon}).
\end{aligned}
\end{equation*}
Furthermore, the final line of (\ref{eq:alpha_j3}) implies 
\begin{equation}
\begin{aligned}
  \E \Big[\bz_1^\top \bR_0 \bDelta_{3,1}\bR_{0,-1} \bS_{-1} \bu \bu^\top \bz_1\Big] &  \lesssim  \Big(\E \big|\bz_1^\top \bR_0 \bDelta_{3,1}\bR_{0,-1} \bS_{-1} \bu\big|^2 \Big)^{1/2} \\
  &\lesssim  \frac{1}{\sqrt{n}} \big( (\log n)^{4L+2} \hspace{.09em}\E \hspace{.09em} \Gamma_1^4  \big)^{1/2}
  \\  &= O(n^{-1/2 + \varepsilon}) .    
\end{aligned}    
\end{equation}

It remains to prove that the component of (\ref{dfgh4}) involving $\bDelta_{2,1}$ is negligible,
\begin{align} \label{qazwsx}
    \E \Big[ \bz_1^\top \bR_{0} \bDelta_{2,1}\bR_{0,-1} \bS_{-1} \bu \bu^\top \bz_1\Big] = O(n^{-1/2 + \varepsilon}) , 
\end{align} which we address below. 
From (\ref{dfgh4})--(\ref{qazwsx}), we obtain 
\[
  \E \Big[   \bz_1^\top (\bR_0 - \bR_{0,-1})  \bS_{-1} \bu \bu^\top \bz_1\Big]  = -a_1 \gamma \breve s^2(z) + O(n^{-1/2 + \varepsilon}) . 
\]
Together with (\ref{dfgh3})--(\ref{eq:s2_exp_term1}), this equation yields
\[
\E[\bu^\top \bS \bR_0 \bS \bu ] =\gamma \breve s(z)  + \breve s(z) - a_1 \gamma \breve s^2(z)  + O(n^{-1/2+\eps}) , 
\]
completing the proof of the lemma. 
\end{proof}
  
\begin{proof}[Proof of (\ref{qazwsx})]
By Lemma \ref{lem:decomp_K_j_martingale},
\begin{equation*}
    \begin{aligned}
        \E \Big[\bz_1^\top \bR_{0} \bDelta_{2,1}\bR_{0,-1} \bS_{-1} \bu \bu^\top \bz_1 \Big]
        = &~  \E \Big[\bz_1^\top \bR_{0,-1} \bDelta_{2,1}\bR_{0,-1} \bS_{-1} \bu \bu^\top \bz_1 \Big] \\
        &~ -  \E \Big[\bz_1^\top \bR_{0,-1}(\bDelta_{1,1}+\bDelta_{2,1}+\bDelta_{3,1}) \bR_{0} \bDelta_{2,1}\bR_{0,-1} \bS_{-1} \bu \bu^\top \bz_1 \Big] . 
    \end{aligned}
\end{equation*}
Using the bounds  $\|\bDelta_{2,1}\|_2 \leq n^{-1/2} \|\bz_1\|_\infty^2 \Gamma_1$ and  $\|\bDelta_{3,1}\|_2 \leq n^{-1}(1\vee \|\bz_1\|_\infty^{2L}) \Gamma_1$ and the Cauchy-Schwarz inequality, we find that the terms of this expansion involving $\bDelta_{2,1}$ and $\bDelta_{3,1}$ are negligible: 
\begin{equation}
    \begin{aligned}
        \E \Big[\bz_1^\top \bR_{0,-1}\bDelta_{2,1} \bR_{0} \bDelta_{2,1}\bR_{0,-1} \bS_{-1} \bu \bu^\top \bz_1 \Big] & \lesssim \Big(\E \Big|\bz_1^\top \bR_{0,-1}\bDelta_{2,1} \bR_{0} \bDelta_{2,1}\bR_{0,-1} \bS_{-1} \bu \Big|^2 \Big)^{1/2}\\
        & \lesssim \frac{1}{\sqrt{n}}\big(\E \big[\|\bz_1\|_\infty^{10} \hspace{.09em}\Gamma_1^6 \big]\big)^{1/2}  = O(n^{-1/2+\varepsilon}) , \\
           \E \Big[\bz_1^\top \bR_{0,-1}\bDelta_{3,1} \bR_{0} \bDelta_{2,1}\bR_{0,-1} \bS_{-1} \bu \bu^\top \bz_1 \Big] & \lesssim \Big(\E \Big|\bz_1^\top \bR_{0,-1}\bDelta_{3,1} \bR_{0} \bDelta_{2,1}\bR_{0,-1} \bS_{-1} \bu \Big|^2 \Big)^{1/2}\\
        & \lesssim \frac{1}{n}\big(\E \big[\|\bz_1\|_\infty^{4L+6} \hspace{.09em}\Gamma_1^6 \big]\big)^{1/2}  = O(n^{-1/2+\varepsilon}) .
    \end{aligned}
\end{equation}
However, the corresponding term containing $\bDelta_{1,1}$ is significant: by (\ref{eq:exp_zRz}) and (\ref{eq:stoch_dom_zRz}),
\begin{align*}
   \E \Big[ \bz_1^\top \bR_{0,-1} \bDelta_{1,1} \bR_0 \bDelta_{2,1}\bR_{0,-1} \bS_{-1} \bu \bu^\top \bz_1  \Big] = a_1 \gamma s(z) \E \Big[ \bz_1^\top  \bR_0 \bDelta_{2,1}\bR_{0,-1} \bS_{-1} \bu \bu^\top \bz_1 \Big] + O(n^{-1/2+\varepsilon}). 
\end{align*}
Thus,
\begin{equation}
    \begin{aligned} \label{q1w5}
        &~ \E \Big[\bz_1^\top \bR_{0} \bDelta_{2,1}\bR_{0,-1} \bS_{-1} \bu \bu^\top \bz_1 \Big] \\
        = &~ \frac{1}{1+a_1 \gamma s(z)}  \E \Big[\bz_1^\top \bR_{0,-1} \bDelta_{2,1}\bR_{0,-1} \bS_{-1} \bu \bu^\top \bz_1 \Big] + O(n^{-1/2+\varepsilon}). 
    \end{aligned}
\end{equation}

We proceed by expanding the expectation on the right-hand side as
\begin{equation}
    \begin{aligned} \label{dfgh5}
        &~ \E \Big[\bz_1^\top \bR_{0,-1} \bDelta_{2,1}\bR_{0,-1} \bS_{-1} \bu \bu^\top \bz_1 \Big] \\ = &~ \frac{1}{\sqrt{n}}\sum_{i,j,k,\ell =1}^p \E \Big[z_{1i} z_{1j} z_{1k}z_{1\ell} (\bR_{0,-1})_{ik} (\bK_{0,-1}(f'-a_1))_{k\ell} (\bR_{0,-1}\bS_{-1} \bu \bu^\top)_{\ell j} \Big] .
    \end{aligned}
\end{equation}
Now, for $\bz_1 \sim \mathcal{N}(0, \bI_p)$ and an array 
 $(a_{ijk\ell} :  i,j,k,\ell \in [p] )$,
\begin{align*}
\sum_{i,j,k,\ell=1}^p  a_{ijk\ell} \E [z_{1i} z_{1j} z_{1k}z_{1\ell} ] &= \sum_{i,j=1}^p (a_{iijj} + a_{ijij} + a_{ijji} ) .
\end{align*} Applying this identity to 
(\ref{dfgh5}),  we obtain
\begin{equation*}
    \begin{aligned} 
        \sqrt{n} \E \Big[ \bz_1^\top \bR_{0,-1} \bDelta_{2,1}\bR_{0,-1} \bS_{-1} \bu\bu^\top \bz_1  \Big] = &~ \sum_{i,j = 1}^p \E \Big[ (\bR_{0,-1})_{ij}\big(\bK_{0,-1}(f'-a_1)\big)_{jj} (\bR_{0,-1}\bS_{-1} \bu \bu^\top)_{ii}\Big]\\ 
        &~ + \sum_{i,j=1}^p\E \Big[  (\bR_{0,-1})_{ii} \big(\bK_{0,-1}(f'-a_1)\big)_{ij} (\bR_{0,-1}\bS_{-1} \bu \bu^\top )_{jj}\Big] \\
        &~ + \sum_{i,j=1}^p \E \Big[ (\bR_{0,-1})_{ij} \big(\bK_{0,-1}(f'-a_1)\big)_{ij} (\bR_{0,-1}\bS_{-1} \bu \bu^\top )_{ij} \Big],
    \end{aligned}
\end{equation*}
with the first summation equal to zero as $\text{diag}( \bK_{0,-1}(f'-a_1)) = 0$.

Since $\bK_{0,-1} (f' -a_1)_{ij}$ and $(\bR_{0,-1})_{ii}$ are even functions of $\bz_{(i)}$ and $\bz_{(j)}$  (recall that $f'-a_1$ is even), and  $(\bR_{0,-1}\bS_{-1})_{j\ell}$ is an odd function of $\bz_{(\ell)}$ unless $j = \ell$,
\begin{equation}
    \begin{aligned} \label{q1w2}
          &~ \sum_{i,j=1}^p\E \Big[  (\bR_{0,-1})_{ii} \big(\bK_{0,-1}(f'-a_1)\big)_{ij} (\bR_{0,-1}\bS_{-1} \bu \bu^\top )_{jj}\Big] \\
         = &~ \sum_{i,j,\ell=1}^p u_j u_\ell  \E \Big[ (\bR_{0,-1})_{ii} \big(\bK_{0,-1}(f'-a_1)\big)_{ij} (\bR_{0,-1}\bS_{-1})_{j\ell}  \Big] \\
         = &~  \sum_{i,j=1}^p u_j^2 \E \Big[ (\bR_{0,-1})_{ii} \big(\bK_{0,-1}(f'-a_1)\big)_{ij} (\bR_{0,-1} \bS_{-1})_{jj}  \Big] \\
         = &~ (p-1) \hspace{.09em}\E \Big[ (\bR_{0,-1})_{11} \big(\bK_{0,-1}(f'-a_1)\big)_{12} (\bR_{0,-1} \bS_{-1})_{22}  \Big] .
    \end{aligned}
\end{equation}
Recall that $|(\bR_{0,-1})_{ii}  - s(z)| \prec n^{-1/2}$ and $|(\bR_{0,-1} \bS_{-1})_{ii}  - \gamma \breve s(z)| \prec n^{-1/2}$ by Lemma \ref{lem:quad_forms_E}, (\ref{eq:expectation_quad_form_m}), and (\ref{eq:expectation_quad_form_s1}); the corresponding bounds 
\begin{align}
    &~ \E |(\bR_{0,-1})_{ii}  - s(z)|^k \lesssim n^{-k/2 + \varepsilon} , && \E \big|(\bR_{0,-1} \bS_{-1})_{ii}  - \gamma \breve s(z)\big|^k \lesssim n^{-k/2 + \varepsilon} ,
\end{align}
are established through an analogous argument to the proof of Lemma \ref{lem:bound_Gamma_j}.
Thus,
\begin{equation}
    \begin{aligned}
      &~ \E \Big| \big( (\bR_{0,-1})_{11} - s(z) \big) \big(\bK_{0,-1}(f'-a_1)\big)_{12} (\bR_{0,-1} \bS_{-1})_{22}  \Big| \\
      \lesssim &~ \frac{1}{n^{1/2-\varepsilon}} \Big( \E  \Big| \big(\bK_{0,-1}(f'-a_1)\big)_{12} (\bR_{0,-1} \bS_{-1})_{22}  \Big|^2 \Big)^{1/2} \\
       \lesssim &~ \frac{1}{n^{1-\varepsilon}} \big( \E \hspace{.09em} \Gamma_1^4 \big)^{1/2} \lesssim  \frac{1}{n^{1-\varepsilon}} , \\
       &~   \E \Big| \big(\bK_{0,-1}(f'-a_1)\big)_{12} \big((\bR_{0,-1} \bS_{-1})_{22} - \gamma \breve s(z) \big)\Big|  \\
         \lesssim &~ \frac{1}{n^{1/2-\varepsilon}} \Big( \E  \big| \big(\bK_{0,-1}(f'-a_1)\big)_{12}  \big|^2 \Big)^{1/2}  \lesssim  \frac{1}{n^{1-\varepsilon}} .
    \end{aligned}
\end{equation}

Applying these bounds to (\ref{q1w2}),  we obtain 
\begin{equation}
\begin{aligned} \label{q1w3}
     &~ \sum_{i,j=1}^p\E \Big[  (\bR_{0,-1})_{ii} \big(\bK_{0,-1}(f'-a_1)\big)_{ij} (\bR_{0,-1}\bS_{-1} \bu \bu^\top )_{jj}\Big] \\ = &~ \gamma s(z) \breve s(z) (p-1) \cdot \E \big( \bK_{0,-1}(f'-a_1)\big)_{12} + O(n^\varepsilon) = O(n^\varepsilon) , 
\end{aligned}
\end{equation}
where the second equality follows from Section 4.1 of \cite{cheng2013spectrum}: \[
\sqrt{n} \hspace{.09em} \E \big( \bK_{0,-1}(f'-a_1)\big)_{12} =  \langle f'-a_1 , h_0 \rangle_\phi + O(p^{-1}) =  O(p^{-1}) . \]
Similarly, 
\begin{equation}
    \begin{aligned}  \label{q1w4}
    &~     \bigg| \sum_{i,j=1}^p \E \Big[ (\bR_{0,-1})_{ij} \big(\bK_{0,-1}(f'-a_1)\big)_{ij} (\bR_{0,-1}\bS_{-1} \bu \bu^\top )_{ij} \Big] \bigg| \\
    \leq &~  (p-1) \hspace{.09em} \E \Big| (\bR_{0,-1})_{12} \big(\bK_{0,-1}(f'-a_1)\big)_{12} (\bR_{0,-1} \bS_{-1})_{12}  \Big| \leq O(n^{-1/2 + \varepsilon}) .
    \end{aligned}
\end{equation}
The claim follows from (\ref{q1w5}), (\ref{q1w3}), and (\ref{q1w4}).
\end{proof}

\subsection{Proof of Lemma \ref{lem:quad_forms_1000}}

We shall prove (\ref{qwerty1}); the proofs of (\ref{qwerty2}) and (\ref{qwerty3}) are similar and omitted.

\begin{lemma}\label{lem:gamma2}
 For each $j \in [n]$, define the random variables
 \begin{align*}
&      \bX_{-j}  \coloneqq \bSigma^{1/2} \bS_{-j} \bSigma^{1/2} - \bS_{-j} , &&
    \bK_{0,-j}^{(m)}  \coloneqq  \bK_{0,-j} \odot (\sqrt{n} \bX_{-j}) , \vspace{-.1in}
 \end{align*}
        \begin{align*}
  \ol \Gamma_{j}  \coloneqq \Gamma_j & \vee  \bigg(\sup_{\ell \in [L]} \sqrt{m} \big\| \bK_{0,-j}^{(m)}(h_\ell) \big\|_2 \bigg)  \vee \bigg(\sup_{\ell \in [L]} \sqrt{n}  \big\| \bK_{0,-j}^{(m)}(h_\ell) \big\|_\infty \bigg) \\ & \vee 
  \Big(\sup_{i \in [p]} \sqrt{n}(\bS_{-j}\bv - \bv)_i \Big) \vee \Big(\sup_{i \in [p]} (\bR_{0,-j})_{ii}\Big) \vee \Big(\sup_{i, k \in [p], i \neq k} \sqrt{n}(\bR_{0,-j})_{ik}\Big). 
    \end{align*}
    Then, for all
    $\eps>0$ and $k \in \mathbb{N}$,
    \begin{align*}
    & \ol \Gamma_{j} \prec 1, &&  \E \hspace{.09em}  \ol \Gamma_{j} \hspace{-.92em}\phantom{\Gamma}^k \lesssim n^\eps.
    \end{align*}
\end{lemma}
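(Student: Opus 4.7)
The plan is to establish each of the six quantities defining $\ol\Gamma_j$ is stochastically dominated by $1$; since $\prec$ is closed under taking maxima of finitely many terms, this yields $\ol\Gamma_j \prec 1$. The moment bound $\E\ol\Gamma_j^k \lesssim n^\varepsilon$ then follows from the same two-regime argument used in the proof of Lemma \ref{lem:bound_Gamma_j}: for each quantity we combine the $\prec$-bound with a deterministic (polynomial-in-$n$) upper bound, splitting the expectation based on whether the quantity exceeds $n^{\varepsilon/k}$. We already know $\Gamma_j \prec 1$ by Lemma \ref{lem:bound_Gamma_j}, so we focus on the five additional terms.

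Three of these terms reduce almost immediately to earlier results. Applying Lemma \ref{lem:quad_forms} (which extends verbatim to $\bK_{0,-j}$, built from $n-1$ samples instead of $n$) with $\bu = \be_i$ and $\bw = \be_k$ yields $(\bR_{0,-j})_{ii} = s(z) + O_\prec(n^{-1/2})$ and $(\bR_{0,-j})_{ik} = O_\prec(n^{-1/2})$ for $i \neq k$, which gives $|(\bR_{0,-j})_{ii}| \prec 1$ and $\sqrt{n}\,|(\bR_{0,-j})_{ik}| \prec 1$. A union bound over $i,k \in [p]$ costs a factor of at most $p^2 = O(n^2)$, absorbed by the $n^\varepsilon$ slack in the definition of $\prec$. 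Likewise, $\sqrt{n}\,\sup_i |(\bS_{-j} \bv - \bv)_i| \prec 1$ is precisely \eqref{gk91} applied with $\bS$ replaced by $\bS_{-j}$.

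It remains to bound the sparsified kernel matrices $\bK_{0,-j}^{(m)}(h_\ell) = \bK_{0,-j}(h_\ell) \odot (\sqrt{n}\bX_{-j})$. The key input is the rank-three decomposition
\[
\bX_{-j} = c_j \bv\bv^\top + (\sqrt{\lambda}-1)\big(\bv\bw^\top + \bw\bv^\top\big), \qquad \bw \coloneqq \bS_{-j}\bv - \bv,
\]
with $|c_j - (\lambda-1)| \prec n^{-1/2}$ (derived exactly as in \eqref{r8bc9}), together with the identity $\bA \odot (\bx\by^\top) = \diag(\bx)\bA\diag(\by)$. This lets us bound each rank-one piece's contribution to the operator norm: the $\bv\bv^\top$ piece contributes $\sqrt{n}\,|c_j|\,\|\bv\|_\infty^2 \|(\bK_{0,-j}(h_\ell))_{[1:m,1:m]}\|_2 \prec \sqrt{n}/m^{3/2}$, using $\|(\bK_0(h_\ell))_{[1:m,1:m]}\|_2 \prec m^{-1/2}$ from Theorem 1.6 of \cite{fan2019spectral}; each $\bv\bw^\top$ piece contributes $\sqrt{n}\,\|\bv\|_\infty\|\bw\|_\infty\,\|\bK_{0,-j}(h_\ell)\|_2 \prec \sqrt{n}\cdot m^{-1/2}\cdot n^{-1/2} = m^{-1/2}$, using $\|\bK_0(h_\ell)\|_2 \prec 1$ together with $\|\bw\|_\infty \prec n^{-1/2}$. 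Since $m \asymp \sqrt{n}$, both bounds reduce to $\prec m^{-1/2}$, yielding $\sqrt{m}\,\|\bK_{0,-j}^{(m)}(h_\ell)\|_2 \prec 1$. The entrywise bound is simpler: by Lemma 4.1 of \cite{cheng2013spectrum} and a union bound $\sup_{i \neq k} |h_\ell(\sqrt{n}\bS_{ik})| \prec 1$, while the same decomposition gives $\sup_{i \neq k}\sqrt{n}\,|(\bX_{-j})_{ik}| \prec 1$; multiplying entrywise yields $\sqrt{n}\,\|\bK_{0,-j}^{(m)}(h_\ell)\|_\infty \prec 1$.

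The main (minor) obstacle is aggregating pointwise $\prec$-statements into uniform ones, but since $\prec$ is defined with slack $n^\varepsilon$ for every $\varepsilon > 0$, the polynomial number of terms in the suprema is absorbed. For the moment bound, we pair each $\prec$-conclusion with a deterministic polynomial upper bound---$|(\bR_{0,-j})_{ii}| \leq \Im(z)^{-1}$ for $z \in \mathbb{C}^+$, and on the high-probability event $\{\|\bK_{0,-j}\|_2 \leq (1+\varepsilon)\lambda_+\}$ provided by Lemma \ref{lem:max_eig2} for $z$ real and outside $\mathrm{supp}(\mu)$; crude Frobenius-norm bounds for $\|\bK_{0,-j}^{(m)}(h_\ell)\|_2$---and then run the Cauchy--Schwarz truncation argument of \eqref{dfgh2-1}--\eqref{dfgh2} verbatim.
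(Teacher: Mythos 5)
Your proposal is correct and takes essentially the same route as the paper: the same rank-three decomposition of $\bX_{-j}$ combined with the Hadamard/diagonal identity $\bA\odot(\bx\by^\top)=\diag(\bx)\bA\diag(\by)$, the same inputs \eqref{gk91}, \eqref{qazwsx9}, and Lemma \ref{lem:quad_forms}, and the same truncation argument of \eqref{dfgh2-1}--\eqref{dfgh2} for the moment bound. You are slightly more explicit about the $\sqrt{n}\big\|\bK_{0,-j}^{(m)}(h_\ell)\big\|_\infty$ piece, which the paper folds into its display without a separate line, and you correctly state $|c_{-j}-(\lambda-1)|\prec n^{-1/2}$ where the paper has a small typo.
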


\begin{lemma} \label{lem:gamma3}
    For each $j \in [n]$,
    \[
    \big\|\bK_0^{(m)} - \bK_{0,-j}^{(m)} \big\|_2 \lesssim \frac{1}{\sqrt{n}} (1 + |\bv^\top \bz_j|) \cdot (1 \vee \|\bz_j\|_\infty^{2L+1}) \cdot \ol \Gamma_j \hspace{-.92em}\phantom{\Gamma}^2
    \]
\end{lemma}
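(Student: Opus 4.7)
The plan is to expand $\bK_0^{(m)} - \bK_{0,-j}^{(m)}$ via the Leibniz-type identity
\[
\bK_0^{(m)} - \bK_{0,-j}^{(m)} = (\bK_0 - \bK_{0,-j}) \odot (\sqrt{n}\bX) + \bK_{0,-j} \odot \sqrt{n}(\bX - \bX_{-j}),
\]
and bound each piece separately, exploiting rank-$\le 2$ structure together with the Hadamard identity $\bA \odot (\bu\bu'^\top) = \diag(\bu)\bA\diag(\bu')$.

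For the first piece, apply Lemma~\ref{lem:decomp_K_j_martingale} to write $\bK_0 - \bK_{0,-j} = \bDelta_{1,j} + \bDelta_{2,j} + \bDelta_{3,j}$. Using the decomposition $\bX = c \cdot \bv\bv^\top + (\sqrt{\lambda}-1)(\bv(\bS\bv-\bv)^\top + (\bS\bv-\bv)\bv^\top)$ together with $\bS\bv - \bv = \bS_{-j}\bv - \bv + n^{-1}(\bv^\top\bz_j)\bz_j$, expand $\sqrt{n}\bX$ as a sum of rank-$\le 2$ pieces built from $\bv$, $\bS_{-j}\bv-\bv$, and $\bz_j$. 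For each $\bDelta_{r,j}$, bound the Hadamard product using the identity above combined with $\|\bv\|_\infty = 1/\sqrt m$, the bound $\sqrt{n}\|\bS_{-j}\bv - \bv\|_\infty \le \ol\Gamma_j$, the estimate $|c| \lesssim (1 \vee \|\bz_j\|_\infty^2) \Gamma_j$, and the structural bounds from Lemma~\ref{lem:decomp_K_j_martingale}; the highest power of $\|\bz_j\|_\infty$ arises from $\bDelta_{3,j}$, whose norm carries $(1\vee\|\bz_j\|_\infty^{2L})$, and one additional $\|\bz_j\|_\infty$ factor is absorbed from the $\bz_j$-valued factors of $\bX$, producing the claimed $(1\vee\|\bz_j\|_\infty^{2L+1})$ exponent.

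For the second piece, compute explicitly
\[
\sqrt{n}(\bX - \bX_{-j}) = \tfrac{1}{\sqrt{n}}\bigl[(\sqrt{\lambda}-1)^2 (\bv^\top\bz_j)^2\, \bv\bv^\top + (\sqrt{\lambda}-1)(\bv^\top\bz_j)(\bv\bz_j^\top + \bz_j\bv^\top)\bigr],
\]
a rank-$\le 2$ matrix whose prefactors are powers of $|\bv^\top\bz_j|$ and yielding the $(1 + |\bv^\top\bz_j|)$ dependence in the final bound. For each rank-one summand $\bu\bu'^\top$ appearing here, write $\bK_{0,-j} \odot (\bu\bu'^\top) = \diag(\bu)\bK_{0,-j}\diag(\bu')$ and estimate the norm using $\|\bu\|_\infty,\|\bu'\|_\infty$ together with operator norm estimates. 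The terms containing $\bv\bv^\top$ confine the product to the $[m]\times[m]$ block, so one invokes the sharper estimate $\sqrt{m}\|\bK_{0,-j}^{(m)}(h_\ell)\|_2 \le \ol\Gamma_j$ (recall that after extraction of diagonal factors the relevant kernel block is essentially $\bK_{0,-j}^{(m)}$ restricted to $[m]^2$); the terms with a single $\bv$ factor paired with $\bz_j$ use $\|\bv\|_\infty \|\bz_j\|_\infty \|\bK_{0,-j}\|_2$ together with $\|\bK_{0,-j}\|_2 \le \Gamma_j$.

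The main obstacle is the combinatorial bookkeeping across all these Hadamard products: for every rank-one factor in $\bX$ (or $\delta \bX$), one must decide whether both sides of the outer product are supported in $[m]$—so that the sharper $\sqrt{m}$-improved bounds on $\bK_{0,-j}^{(m)}$ apply—or only one side is, in which case one pays a factor of $\|\bv\|_\infty = 1/\sqrt{m}$ but only has access to $\|\bK_{0,-j}\|_2$. The relation $m \asymp \sqrt{n}$ must be used to convert these scalings into a uniform $n^{-1/2}$ rate. The dominant contribution comes from the second piece (the $\delta\bX$ term), which directly produces the $n^{-1/2}(1 + |\bv^\top\bz_j|)$ behavior, while the first piece (the $\bDelta_{r,j}$ terms) is shown to be of strictly smaller order and absorbed into the stated bound.
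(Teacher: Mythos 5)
Your proposal is correct and takes essentially the same route as the paper: expand $\bK_0^{(m)} - \bK_{0,-j}^{(m)}$ by the Leibniz rule for Hadamard products, decompose $\bK_0 - \bK_{0,-j}$ via Lemma~\ref{lem:decomp_K_j_martingale}, decompose $\bX$ (or $\bX-\bX_{-j}$) into rank-$\le 2$ pieces built from $\bv$, $\bS_{-j}\bv - \bv$, and $\bz_j$, and bound each summand with the identity $\bA \odot (\bx\by^\top) = \diag(\bx)\bA\diag(\by)$ together with the quantities controlled by $\ol\Gamma_j$. The only cosmetic difference is the particular Leibniz split: you write the difference as $(\bK_0-\bK_{0,-j})\odot(\sqrt{n}\bX) + \bK_{0,-j}\odot\sqrt{n}(\bX-\bX_{-j})$ (intermediate factor $\bX$), whereas the paper uses $\bK_0\odot\sqrt{n}(\bX-\bX_{-j}) + (\bK_0-\bK_{0,-j})\odot(\sqrt{n}\bX_{-j})$ (intermediate factor $\bX_{-j}$); both lead to the same sum of terms up to reorganization, and the bookkeeping is equivalent.

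One small imprecision in your narrative: the contribution from the $\bDelta_{1,j}$-type term is not of strictly smaller order than the $\delta\bX$ piece. After Hadamard-multiplying $\bDelta_{1,j} = \tfrac{a_1}{n}\bz_j\bz_j^\top$ against the rank-$2$ part of $\sqrt{n}\bX$ (or $\sqrt{n}\bX_{-j}$), one obtains a contribution of size $\tfrac{1}{\sqrt{n}}\|\bz_j\|_\infty^2\,\ol\Gamma_j$, which is of the same $n^{-1/2}$ magnitude as the $\delta\bX$ piece—only the $(1+|\bv^\top\bz_j|)$ factor is peculiar to the latter. Both contributions sit within the stated bound, so this affects only the heuristic description of which term is ``dominant,'' not the validity of the argument. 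Similarly, the sharp estimate you want for the $[m]\times[m]$ block is $\|(\bK_{0,-j}(h_\ell))_{[1:m,1:m]}\|_2 \prec m^{-1/2}$ from Theorem~1.6 of \cite{fan2019spectral} (as used in the proof of Theorem~\ref{thrm:poly1}) rather than the $\bK_{0,-j}^{(m)}$-operator-norm quantity in $\ol\Gamma_j$; these are not the same object, though both are controlled.
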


\noindent Proofs of Lemmas \ref{lem:gamma2} and \ref{lem:gamma3} are provided at the end of this section. 

\begin{proof}[Proof of (\ref{qwerty1})]
    Analogous to Section \ref{sec:quadforms1}, we  express the left-hand side of (\ref{qwerty1}) as the sum of a martingale difference sequence: introducing
  \begin{equation}  \begin{aligned} \nonumber
 \ol \alpha_j & \coloneqq  (\E_j - \E_{j-1}) \Big( \bu^\top \bR_0 \bK_{0}^{(m)} \bR_0 \bu - \bu^\top \bR_{0,-j} \bK_{0,-j}^{(m)} \bR_{0,-j} \bu \Big) ,\\
\ol \alpha_{j,1} & \coloneqq  (\E_j - \E_{j-1}) \bu^\top (\bR_0 - \bR_{0,-j}) \bK_{0}^{(m)}(\bR_0 - \bR_{0,-j}) \bu,\\
    \ol \alpha_{j,2} & \coloneqq (\E_j - \E_{j-1}) \bu^\top \bR_{0,-j} \bK_{0}^{(m)} (\bR_0 - \bR_{0,-j}) \bu ,\\ 
    \ol \alpha_{j,3} & \coloneqq (\E_j - \E_{j-1})  \bu^\top \bR_{0,-j} \big(\bK_{0}^{(m)} - \bK_0 \odot(\sqrt{n} \bX_{-j})\big)\bR_{0,-j} \bu 
    \\ \ol  \alpha_{j,4} & \coloneqq (\E_j - \E_{j-1})     \bu^\top \bR_{0,-j} \big(\bK_0 \odot(\sqrt{n} \bX_{-j})- \bK_{0,-j}^{(m)} \big)\bR_{0,-j} \bu  ,
\end{aligned}
\end{equation}
we have the decomposition
\begin{align}
     \bu^\top \bR_0 \bK^{(m)} \bR_0 \bu  - \E \big[  \bu^\top \bR_0 \bK^{(m)} \bR_0 \bu \big] = \sum_{j=1}^n \ol \alpha_j = \sum_{j=1}^n (\ol \alpha_{j,1} + 2 \ol \alpha_{j,2} + \ol \alpha_{j,3} + \ol \alpha_{j,4}) . 
\end{align}
Given the proof of Lemma \ref{lem:quad_forms_E}, it suffices to establish
\begin{align} \label{dfgh24}
      & \hspace{2.5cm} \E_{j-1} | \ol \alpha_{j,r}|^k \lesssim \frac{(\log n)^{4k}}{n^k} \E_{j-1} \ol\Gamma_j\hspace{-.92em}\phantom{\Gamma}^{4k} , & r \in [4] .   
\end{align}
 For brevity, we bound only $\E_{j-1}|\ol \alpha_{j,2}|^k$ and $\E_{j-1}|\ol \alpha_{j,3}|^k$; the analysis of $\ol \alpha_{j,1}$ and $\ol \alpha_{j,4}$ does not differ substantively from the arguments presented below and in Section \ref{sec:quadforms1}.

{\setlist[enumerate,1]{left=0pt}
\begin{enumerate} \setcounter{enumi}{1}
\item We expand $\ol \alpha_{j,2}$ using the identity $\bR_0 = \bR_{0,-j} - \bR_{0} (\bK_0 - \bK_{0,-j}) \bR_{0,-j}$: 
\begin{equation}
\begin{aligned} \label{dfgh12}
   \ol \alpha_{j,2}   = &~ -(\E_j - \E_{j-1}) \bu^\top \bR_{0,-j} \bK_0^{(m)} \bR_0(\bK_0 - \bK_{0,-j}) \bR_{0,-j} \bu \\ = &~ -(\E_j - \E_{j-1}) (\ol \delta_{j,1} + \ol \delta_{j,2} - \ol \delta_{j,3}), \\
     \ol \delta_{j,1} \coloneqq &~  \bu^\top \bR_{0,-j} \bK_{0,-j}^{(m)} \bR_{0,-j}(\bK_0 - \bK_{0,-j}) \bR_{0,-j} \bu  ,\\
     \ol \delta_{j,2} \coloneqq &~   \bu^\top \bR_{0,-j} \big(\bK_0^{(m)} - \bK_{0,-j}^{(m)}\big) \bR_0(\bK_0 - \bK_{0,-j}) \bR_{0,-j} \bu, \\ 
  \ol \delta_{j,3} \coloneqq  &~   \bu^\top \bR_{0,-j} \bK_{0,-j}^{(m)} \bR_{0,-j}(\bK_0 - \bK_{0,-j}) \bR_0 (\bK_0 - \bK_{0,-j}) \bR_{0,-j} \bu . 
\end{aligned}     \end{equation}
The component of $\ol \alpha_{j,2}$ involving $\ol \delta_{j,1}$ is bounded similarly to 
(\ref{dfgh11}): denoting $\widetilde \bu_j \coloneqq \bR_{0,-j} \bu$,
\begin{equation} 
    \begin{aligned}
        \Big|(\E_j - \E_{j-1}) \widetilde \bu_j^\top \bK_{0,-j}^{(m)} \bR_{0,-j} \bDelta_{j,1} \widetilde \bu_j \Big|^k & =  \frac{1}{n^k} \Big|(\E_j - \E_{j-1}) \bz_j^\top \widetilde \bu_j \widetilde \bu_j^\top \bK_{0,-j}^{(m)} \bR_{0,-j} \bz_j \Big|^k \\
         & \lesssim\frac{1}{n^k} \E_{j-1} \big\| \widetilde \bu_j\widetilde \bu_j^\top \bK_{0,-j}^{(m)} \bR_{0,-j}  \big\|_F^k \\
       & \lesssim   \frac{1}{n^{5k/4}} \E_{j-1}   \ol \Gamma_{j} \hspace{-.92em}\phantom{\Gamma}^k , \\
       \Big|(\E_j - \E_{j-1}) \widetilde \bu_j^\top \bK_{0,-j}^{(m)} \bR_{0,-j} \bDelta_{j,2} \widetilde \bu_j \Big|^k & \lesssim\frac{1}{n^{k/2}} \E_{j-1} \Big\| \bK_{0,-j}(f'-a_1) \odot \big( \widetilde \bu_j \widetilde \bu_j^\top  \bK_{0,-j}^{(m)} \big)  \Big\|_F^k \\
       & \lesssim   \frac{1}{n^{5k/4}} \E_{j-1}   \ol \Gamma_{j} \hspace{-.92em}\phantom{\Gamma}^{2k} , \\
        \E_{j-1} \Big| \widetilde \bu_j^\top \bK_{0,-j}^{(m)} \bR_{0,-j} \bDelta_{j,3} \widetilde \bu_j \Big|^k & \lesssim \frac{(\log n)^{2Lk}}{n^{5k/4}} \E_{j-1}  \ol \Gamma_{j} \hspace{-.92em}\phantom{\Gamma}^{2k} .  
    \end{aligned}
\end{equation}
Together with Lemma \ref{lem:decomp_K_j_martingale}, these bounds yield
\begin{align} \label{xsw1}
\E_{j-1} |\ol \delta_{j,1}|^k = \E_{j-1} \Big|\widetilde \bu_j^\top \bK_{0,-j}^{(m)} \bR_{0,-j}(\bDelta_{j,1} + \bDelta_{j,2} + \bDelta_{j,3}) \widetilde \bu_j\Big|^k \lesssim \frac{(\log n)^{2Lk}}{n^{5k/4}} \E_{j-1}  \ol \Gamma_{j} \hspace{-.92em}\phantom{\Gamma}^{2k} .    
\end{align}

\hspace{1.4em} Now, we consider $\ol \delta_{j,2}$. By Lemma \ref{lem:gamma3} and the independence of $\bz_j$ and $(\hspace{.1em}\ol \Gamma_j, \widetilde \bu_j)$, 
\begin{equation*}
    \begin{aligned}
      \E_{\bz_j}  \Big| \widetilde \bu_j^\top \big(\bK_0^{(m)} - \bK_{0,-j}^{(m)}\big) \bR_0 \bDelta_{j,1}  \widetilde \bu_j\Big|^k & \lesssim \frac{1}{n^{k/2}} \E_{\bz_j} \Big|   \big\|\bK_0^{(m)} - \bK_{0,-j}^{(m)} \big\|_2 \|\bz_j\|_\infty \cdot \bz_j^\top \widetilde \bu_j\Big|^k  \\
      & \lesssim \frac{1}{n^{k}} \E_{\bz_j}\Big| (1+\|\bz_j\|_\infty^{2(L+1)}) \ol\Gamma_j\hspace{-.92em}\phantom{\Gamma}^{2}\cdot (1 + |\bz_j^\top \bv|) \cdot \bz_j^\top \widetilde \bu_j \Big|^k \\
      & \lesssim \frac{(\log n)^{2(L+1)k}}{n^{k}}  \ol\Gamma_j\hspace{-.92em}\phantom{\Gamma}^{2k} , \\
       \E_{\bz_j}  \Big| \widetilde \bu_j^\top \big(\bK_0^{(m)} - \bK_{0,-j}^{(m)}\big) \bR_0 \bDelta_{j,2}  \widetilde \bu_j\Big|^k & \lesssim \frac{1}{n^{k/2}} \E_{\bz_j} \Big|   \big\|\bK_0^{(m)} - \bK_{0,-j}^{(m)} \big\|_2 \big\| \bK_{0,-j}(f'-a_1) \big\|_2 \|\bz_j\|_\infty^2\Big|^k  \\
       & \lesssim \frac{(\log n)^{(2L+3)k}}{n^k}  \ol\Gamma_j\hspace{-.92em}\phantom{\Gamma}^{3k} . 
    \end{aligned}
\end{equation*}
Similarly bounding $|\widetilde \bu_j^\top \big(\bK_0^{(m)} - \bK_{0,-j}^{(m)}\big) \bR_0 \bDelta_{j,3}  \widetilde \bu_j| \lesssim  \big\|\bK_0^{(m)} - \bK_{0,-j}^{(m)} \big\|_2 \|\bDelta_{j,3}\|_2$, we obtain
\begin{align} \label{xsw2}
    \E_{j-1}|\ol \delta_{j,2}|^k \lesssim  \frac{(\log n)^{(4L+1)k}}{n^k}  \E_{j-1}\ol\Gamma_j\hspace{-.92em}\phantom{\Gamma}^{3k} .
\end{align}
\hspace{1.4em} The analysis of $\ol \delta_{j,3}$ is analogous to (\ref{dfgh21})--(\ref{dfgh22}). For example, 
\begin{align*}
    \big| \widetilde \bu_j^\top  \bK_{0,-j}^{(m)} \bR_{0,-j} \bDelta_{j,1} \bR_0 \bDelta_{j,1} \widetilde \bu_j \big| &= \frac{1}{n^2} \big| \widetilde \bu_j^\top  \bK_{0,-j}^{(m)} \bR_{0,-j} \bz_j\big|  |\bz_j^\top \widetilde \bu_j | |\bz_j^\top \bR_0 \bz_j| \\
     & \lesssim  \frac{1}{n} \big\| \bK_{0,-j}^{(m)} \bR_{0,-j} \bz_j\big\|_2  |\bz_j^\top \widetilde \bu_j |   \|\bz_j\|_\infty^2 , \\
     \E_{\bz_j} \big| \widetilde \bu_j^\top  \bK_{0,-j}^{(m)} \bR_{0,-j} \bDelta_{j,1} \bR_0 \bDelta_{j,1} \widetilde \bu_j \big|^k & \lesssim \frac{(\log n)^{2k}}{n^k}\big\|\bK_{0,-j}^{(m)} \bR_{0,-j}\big\|_2^k \lesssim \frac{(\log n)^{2k}}{n^k} \ol\Gamma_j\hspace{-.92em}\phantom{\Gamma}^{k} , \\
     \big| \widetilde \bu_j^\top  \bK_{0,-j}^{(m)} \bR_{0,-j} \bDelta_{j,1} \bR_0 \bDelta_{j,2} \widetilde \bu_j \big| &= \frac{1}{n} \big| \widetilde \bu_j^\top  \bK_{0,-j}^{(m)} \bR_{0,-j} \bz_j\big|  \big|\bz_j^\top \bR_0 \bDelta_{j,2} \widetilde \bu_j\big| \\
     & \lesssim  \frac{1}{n} \big\| \bK_{0,-j}^{(m)} \bR_{0,-j} \bz_j\big\|_2  \big\| \bK_{0,-j}(f'-a_1)  \big\|_2   \|\bz_j\|_\infty^3 , \\
     \E_{\bz_j} \big| \widetilde \bu_j^\top  \bK_{0,-j}^{(m)} \bR_{0,-j} \bDelta_{j,1} \bR_0 \bDelta_{j,2} \widetilde \bu_j \big|^k &  \lesssim \frac{(\log n)^{3k}}{n^k} \ol\Gamma_j\hspace{-.92em}\phantom{\Gamma}^{2k} .
\end{align*}
From these and related bounds, we have 
\begin{align} \label{xsw3} \E_{j-1} |\ol \delta_{j,3}|^k \lesssim \frac{(\log n)^{4Lk}}{n^k} \ol\Gamma_j\hspace{-.92em}\phantom{\Gamma}^{3k} .  \end{align}
Thus, by (\ref{xsw1}), (\ref{xsw2}), and (\ref{xsw3}), 
\begin{align} \E_{j-1} |\ol \alpha_{j,2}|^k \lesssim \frac{(\log n)^{4(L+1)k}}{n^k} \ol\Gamma_j\hspace{-.92em}\phantom{\Gamma}^{3k} .  \end{align}

\item  Since
$\bX - \bX_{-j} = (\sqrt{\lambda}-1)( \bv \bv^\top( \bS - \bS_{-j}) + ( \bS - \bS_{-j}) \bv \bv^\top),$ it suffices to consider
\begin{equation*}
\begin{aligned} 
         \widetilde \bu_j^\top \big( \bK_0 \odot (\sqrt{n} \bv \bv^\top (\bS - \bS_{-j})) \big)\widetilde \bu_j  
     = &~  \widetilde \bu_j^\top \big( \bK_{0,-j} \odot (\sqrt{n} \bv \bv^\top (\bS - \bS_{-j})) \big)\widetilde \bu_j \\
     &~ + \widetilde \bu_j^\top \big( (\bK_0 - \bK_{0,-j}) \odot (\sqrt{n} \bv \bv^\top (\bS - \bS_{-j})) \big)\widetilde \bu_j. 
\end{aligned}
\end{equation*}
We prove below a bound on the first term on the right-hand side; the second is handled in the standard manner by applying Lemma \ref{lem:decomp_K_j_martingale} to $\bK_0 - \bK_{0,-j}$, which yields
\begin{align}\label{xsw6}
    \E_{\bz_j} \Big| \widetilde \bu_j^\top \big( (\bK_0 - \bK_{0,-j}) \odot (\sqrt{n} \bv \bv^\top (\bS - \bS_{-j})) \big)\widetilde \bu_j \Big|^k \lesssim \frac{(\log n)^{2k}}{n^k} + \frac{(\log n)^{2Lk}}{n^{5k/4}} \ol\Gamma_j\hspace{-.92em}\phantom{\Gamma}^{k} .  
\end{align}

\hspace{1.4em} Similarly to (\ref{dfgh11}),
\begin{equation}
\begin{aligned} \label{xsw4}
&~ \Big|    (\E_j - \E_{j-1}) \widetilde \bu_j^\top \big(\bK_{0,-j} \odot (\sqrt{n} \bv \bv^\top (\bS - \bS_{-j})) \big) \widetilde \bu_j  \Big|^k \\
= &~  \frac{1}{n^{k/2}} \Big| (\E_j - \E_{j-1}) \bz_j^\top \bv \bv^\top \big(\bK_{0,-j} \odot (\widetilde \bu_j \widetilde \bu_j^\top) \big) \bz_j   \Big|^k \\ 
\lesssim &~ \frac{1}{n^{k/2}}\E_{j-1} \big\| \bv \bv^\top \big(\bK_{0,-j} \odot (\widetilde \bu_j \widetilde \bu_j^\top) \big) \big\|_F^k  \\
 \lesssim  &~\frac{1}{m^{k/2}n^{k}} \E_{j-1}  \|(\widetilde \bu_j)_{[1:m]} \|_1^k \hspace{.1em} \hspace{.1em} \Gamma_j^{k}  , 
\end{aligned}
\end{equation}
where the final inequality follows from 
\begin{equation*}
    \begin{aligned}
         \big\| \bv \bv^\top \big(\bK_{0,-j} \odot (\widetilde \bu_j \widetilde \bu_j^\top) \big) \big\|_F^2 &= \big|\bv^\top \big(\bK_{0,-j} \odot (\widetilde \bu_j \widetilde \bu_j^\top) \big)^2 \bv \big|\\
         & \leq  \sum_{i,k =1}^m \sum_{\ell=1}^p \Big|v_i v_k (\bK_{0,-j})_{i\ell} (\bK_{0,-j})_{\ell k} (\widetilde \bu_j)_i (\widetilde \bu_j)_k (\widetilde \bu_j)_\ell^2\Big| \\
         & \leq \frac{1}{mn}\sum_{i,k =1}^m \sum_{\ell=1}^p \big|(\widetilde \bu_j)_i (\widetilde \bu_j)_k (\widetilde \bu_j)_\ell^2\big| \cdot \Gamma_j^2 . 
    \end{aligned}
\end{equation*}
Now, to bound $\|(\widetilde \bu_j)_{[1:m]} \|_1$, we use $\|\bu\|_0 \lesssim m$ and Lemma \ref{lem:gamma2}: 
\begin{equation} \label{xsw5}
  \begin{aligned}
     \|(\widetilde \bu_j)_{[1:m]} \|_1 &= \sum_{i=1}^m |(\bR_{0,-j} \bu)_i| = \sum_{i=1}^m \bigg| (\bR_{0,-j})_{ii} u_i +  \sum_{k \in [p], k \neq i} (\bR_{0,-j})_{ik} u_k \bigg| \\
      & \leq \bigg(\sum_{i=1}^m  (\bR_{0,-j})_{ii}^2 \bigg)^{1/2} + \sum_{i=1}^m\bigg(\sum_{k \in \mathrm{supp}(\bu), \hspace{.1em} k \neq i} (\bR_{0,-j})_{ik}^2\bigg)^{1/2} \\ 
      & \lesssim  \sqrt{m} \,\ol \Gamma_j . 
\end{aligned}  
\end{equation}
From (\ref{xsw6})--(\ref{xsw5}), we conclude
\begin{align} \E_{j-1} |\ol \alpha_{j,3}|^k \lesssim \frac{(\log n)^{2Lk}}{n^k} \ol\Gamma_j\hspace{-.92em}\phantom{\Gamma}^{k} . 
\end{align}
\end{enumerate}}

\end{proof}
\begin{proof}[Proof of Lemma \ref{lem:gamma2}]
Recall the identity $\bA \odot (\bx\by^\top ) = \diag(\bx) \bA \, \diag (\by)$ and from the proof of Theorem \ref{thrm:poly1}   
\begin{align} \label{lk9jh} 
\bX_{-j} = c_{-j} \cdot \bv \bv^\top  + (\sqrt{\lambda} - 1) \big(\bv( \bS_{-j} \bv - \bv)^\top + ( \bS_{-j} \bv - \bv) \bv^\top  \big),
\end{align}
where $c_{-j}$ (defined analogously to $c$ in (\ref{r8bc9})) satisfies $|c_{-j}-1| \prec n^{-1/2}$. Used together with (\ref{gk91}) and (\ref{qazwsx9}), these expressions lead to
\begin{align*}
    \big\|\bK_{0,-j}(h_\ell) \odot (\sqrt{n} \bX)\big\|_2 &\lesssim \big\|(\bK_{0,-j}(h_\ell))_{[1:m,1:m]}\big\|_2 +   \sqrt{m} \Big(\sup_{i \in [p]} |(\bS_{-j}\bv-\bv)_i| \Big) \big\| \bK_{0,-j}(h_\ell)\big\|_2  \prec \frac{1}{n^{1/4}}.
\end{align*}
Thus, 
\[
\Big(\sup_{\ell \in [L]} \sqrt{m} \big\| \bK_{0,-j}^{(m)}(h_\ell) \big\|_2 \Big) \vee  \Big(\sup_{i \in [p]} \sqrt{n}(\bS_{-j}\bv - \bv)_i \Big) \prec 1 .
\]
Furthermore, by Theorem \ref{lem:quad_forms}, 
\[  \Big(\sup_{i \in [p]} (\bR_{0,-j})_{ii}\Big) \vee \Big(\sup_{i, k \in [p], i \neq k} \sqrt{n}(\bR_{0,-j})_{ik}\Big) \prec 1 .\]
The claim now follows from Lemma \ref{lem:bound_alpha_j_martingale} and an argument analogous to  (\ref{dfgh2-1})--(\ref{dfgh2}).
\end{proof}

\begin{proof}[Proof of Lemma \ref{lem:gamma3}]
   Using (\ref{lk9jh}), 
\begin{equation}
\begin{aligned}
    \bK_0^{(m)} -  \bK_{0,-j}^{(m)} & = \bK_0 \odot (\sqrt{n} \bX - \sqrt{n} \bX_{-j}) + (\bK_0 - \bK_{0,-j}) \odot (\sqrt{n} \bX_{-j}) \\
    & = \frac{1}{\sqrt{n}}(\sqrt{\lambda}-1) \bv^\top \bz_j \cdot \bK_0 \odot (\bv \bz_j^\top +  \bz_j \bv^\top) +  \bDelta_{j,1}^{(m)} + \bDelta_{j,2}^{(m)} + \bDelta_{j,3}^{(m)} , 
\end{aligned}
\end{equation}
where  $\bDelta_{j,3}^{(m)} = \bDelta_{j,3} \odot (\sqrt{n} \bX_{-j}) + (c-c_{-j})\bK_0 \odot (\sqrt{n} \bv \bv^\top)$ and
 \begin{align*}
      &        \bDelta_{j,1}^{(m)}  \coloneqq \frac{ \langle g, h_1 \rangle_\phi}{\sqrt{n}} (\bz_j \bz_j^\top) \odot  \bX_{-j} , &&
        \bDelta_{j,2}^{(m)}   \coloneqq \bK_{0,-j}(g' - \langle g, h_1 \rangle_\phi)  \odot( \bz_j \bz_j^\top) \odot \bX_{-j} .\end{align*}
Using  (\ref{gk91}) and (\ref{qazwsx9}) as in the proof of Lemma \ref{lem:gamma2},
        \begin{align*}
        & \big\| \bDelta_{j,2}^{(m)}\big\|_2 \lesssim \frac{1}{n^{3/4}} (1 \vee \|\bz_j\|_\infty^2) \cdot \ol\Gamma_j\hspace{-.92em}\phantom{\Gamma}^{2} , && \big\|\bDelta_{j,3}^{(m)}\big\|_2 \lesssim  \frac{1}{n^{5/4}} (1 \vee \|\bz_j\|_\infty^{2L}) \cdot\ol\Gamma_j\hspace{-.92em}\phantom{\Gamma}^{2} ,\end{align*}
\vspace{-.9em}
\begin{equation}
\begin{aligned}
    \big\|\bDelta_{j,1}^{(m)}\big\|_2 &\lesssim \frac{1}{\sqrt{n}} \Big(\big\|(\bz_j \bz_j^\top) \odot (\bv \bv^\top) \big\|_2 +  \big\|(\bz_j \bz_j^\top) \odot (\bv (\bS_{-j}\bv-\bv)^\top) \big\|_2 \Big)   
      \\ & \lesssim \frac{1}{\sqrt{n}}\|\bz_j\|_\infty^2 +\|\bz_j\|_\infty^2 \|\bS_{-j} \bv - \bv \|_\infty \prec \frac{1}{\sqrt{n}} \|\bz_j\|_\infty^2 \ol\Gamma_j .   
\end{aligned}  
\end{equation}
Similarly, by Lemma \ref{lem:decomp_K_j_martingale},
\begin{align*}
    \big\| \bK_0 \odot (\bv \bz_j^\top +  \bz_j \bv^\top)\big\|_2 \leq &~   2 \big\| \bK_{0,-j} \odot (\bv \bz_j^\top)\big\|_2 + 2 \big\| (\bK_{0} - \bK_{0,-j}) \odot (\bv \bz_j^\top)\big\|_2 \\
    \lesssim &~  \frac{\|\bz_j\|_\infty}{\sqrt{m}}  \Big( \ol \Gamma_j + \|\bDelta_{j,1}\|_2 + \frac{\|\bz_j\|_\infty^2}{\sqrt{n}} \ol \Gamma_j + \frac{(1 \vee \|\bz_j\|_\infty^{2L})}{n} \ol \Gamma_j \Big)  \\
    \lesssim &~ \frac{1 \vee \| \bz_j\|_\infty^{2L+1}}{\sqrt{m}} \ol \Gamma_j . 
\end{align*}

\end{proof}

\subsection{Proof of Lemma \ref{lem:quad_forms_1000E}}

We shall prove the bounds
\begin{align}
    \label{sdfg1} &
\E \Big[ \bu^\top \bR_0 \big(\bK_0(g) \odot (\sqrt{n} \bv \bv^\top) \big) \bR_0 \bu \Big]= O(n^{-1/2 + \varepsilon}) , \\& 
\label{sdfg3}
\E \Big[ \bu^\top \bR_0 \big(\bK_0(g) \odot (\sqrt{n} \bv \bv^\top \bS) \big) \bR_0 \bu \Big] = O(n^{-1/2 + \varepsilon}) , 
\end{align}
from which
(\ref{qwerty4}) immediately follows; the proofs of (\ref{qwerty5}) and (\ref{qwerty6}) are similar and omitted. For simplicity, we assume $\mathrm{supp}(\bu) \subseteq \mathrm{supp}(\bv) = [m]$.

\begin{proof}[Proof of (\ref{sdfg1})]

Expanding the expectation,
\begin{align} \label{sdfg4}
    \E \Big[ \bu^\top \bR_0 \big(\bK_0(g) \odot (\sqrt{n} \bv \bv^\top) \big) \bR_0 \bu \Big] = \frac{\sqrt{n}}{m} \sum_{i,j,k,\ell = 1}^m u_i u_j \E \big [(\bR_0)_{ik}(\bK_0(g))_{k\ell} (\bR_0)_{\ell j} \big] , 
\end{align}
 we claim only those terms on the right-hand side with $i = j$ have non-zero mean. To see this, suppose $i \neq j$, and recall that (1) $(\bR_0)_{ij}$ and $(\bK_0(g))_{ij}$ are odd functions of $\bz_{(i)}$ and $\bz_{(j)}$  and (2) $(\bR_0)_{ii}$ is an even function of $\bz_{(i)}$ by (\ref{eq:block_inverse_R_0}). Therefore,  unless $(i,j)= (k, \ell)$ or $(i,j) = (\ell,k)$,
 \begin{align*}   \E \big[ (\bR_0)_{ik}(\bK_0(g))_{k\ell} (\bR_0)_{\ell j} \big] = 0 .
 \end{align*}
In the former case, 
\begin{align*}
    \E \big [(\bR_0)_{ik}(\bK_0(g))_{k\ell} (\bR_0)_{\ell j} \big] = 
 \E \big[\E_{\bz_{(i)}} \big[ (\bR_0)_{ii}(\bK_0(g))_{ij} (\bR_0)_{jj} \big] \big]= 0,
\end{align*}
and in the latter,
\begin{align*}
    \E \big [(\bR_0)_{ik}(\bK_0(g))_{k\ell} (\bR_0)_{\ell j} \big] = 
 \E \big[\E_{\bz_{(i)}} \big[ (\bR_0)_{ij}^2(\bK_0(g))_{ij}\big] \big]= 0 . 
\end{align*}

Thus, by exchangeability and the fact that the diagonal of $\bK_0(g)$ is zero,
\begin{equation}
\begin{aligned}
    \E \Big[ \bu^\top \bR_0 \big(\bK_0(g) \odot (\sqrt{n} \bv \bv^\top) \big) \bR_0 \bu \Big] = &~  \frac{\sqrt{n}}{m} \sum_{\substack{i,k,\ell \in [m] \\ k \neq l}} u_i^2 \E \big [(\bR_0)_{ik}(\bK_0(g))_{k\ell} (\bR_0)_{\ell i} \big]  \\
     = &~  \frac{\sqrt{n} (m-1)(m-2)}{m}  \E \big [(\bR_0)_{12}(\bK_0(g))_{23} (\bR_0)_{31} \big] \\
     &~ + \frac{\sqrt{n} (m-1)}{m}\E \big [(\bR_0)_{11}(\bK_0(g))_{12} (\bR_0)_{21} \big]  . 
\end{aligned}    
\end{equation}
To complete the proof, we use the bounds $|(\bR_0)_{12} \vee (\bR_0)_{31} | \prec n^{-1/2}$ and $|(\bR_0)_{11}| \prec 1$ (Theorem \ref{lem:quad_forms}) as well as $|(\bK_0)_{23}| \prec n^{-1/2}$ (Lemma  \ref{lem:bound_Gamma_j}): by a similar argument to (\ref{dfgh2-1})--(\ref{dfgh2}), this implies $\E \big [(\bR_0)_{12}(\bK_0(g))_{23} (\bR_0)_{31} \big] = O( n^{-3/2+ \varepsilon}) $ and $\E \big [(\bR_0)_{11}(\bK_0(g))_{12} (\bR_0)_{21} \big] = O( n^{-1+ \varepsilon})$. Thus, 
\[  \E \Big[ \bu^\top \bR_0 \big(\bK_0(g) \odot (\sqrt{n} \bv \bv^\top) \big) \bR_0 \bu \Big] = O(n^{-1/2 + \varepsilon}) . \]
\end{proof}

\begin{proof}[Proof of (\ref{sdfg3})] Similarly to (\ref{sdfg4}),
\begin{equation} 
\begin{aligned}\label{sdfg2}
 \big( \bR_0 \big[\bK_0(g) \odot (\sqrt{n} \bv \bv^\top \bS) \big] \bR_0\big)_{ij} 
  &= \frac{1}{ \sqrt{n}} \sum_{k,q=1}^m\sum_{\ell=1}^p  v_k v_q  
 (\bR_0)_{ik} ( \bK_0 (g))_{k \ell} (\bR_0)_{\ell j}  \bz_{(\ell)}^\top \bz_{(q)}  .
\end{aligned}
\end{equation}  
If $i \neq j$, the expectation of $ (\bR_0)_{ik} ( \bK_0 (g))_{k \ell} (\bR_0)_{\ell j}  \bz_{(\ell)}^\top \bz_{(q)}$ is zero unless $(k,\ell, q) = (i,j,i)$ or $(k,\ell,q) = (j,i,j)$, which leads to
\begin{equation*} 
\begin{aligned}
\E \big( \bR_0 \big[\bK_0(g) \odot (\sqrt{n} \bv \bv^\top \bS) \big] \bR_0\big)_{ij} =  &~  \frac{1}{m \sqrt{n}}  
\E \Big[ (\bR_0)_{ii} (\bK_0(g))_{ij} (\bR_0)_{jj} \bz_{(i)}^\top \bz_{(j)}\Big] \\
&~ +   \frac{1}{m \sqrt{n}} \E \Big[ (\bR_0)_{ij}^2 (\bK_0(g))_{ij} \bz_{(i)}^\top \bz_{(j)}\Big] .
\end{aligned} 
\end{equation*} 
Since $|\bz_{(i)}^\top \bz_{(j)}| \prec \sqrt{n}$ and $\|\bu\|_1 \leq \sqrt{m}$, 
\begin{equation} 
\begin{aligned} \label{sdfg7}
& \E \big( \bR_0 \big[\bK_0(g) \odot (\sqrt{n} \bv \bv^\top \bS) \big] \bR_0\big)_{ij} = O(n^{-1+\varepsilon}) , \\
& \sum_{\substack{i,j \in [m] \\ i \neq j}} u_i u_j \E \big( \bR_0 \big[\bK_0(g) \odot (\sqrt{n} \bv \bv^\top \bS) \big] \bR_0\big)_{ij} = O(n^{-1/2+\varepsilon})  .
\end{aligned}
\end{equation} 

We now consider  $i = j$. Using the identity \[\E_{\bz_{(\ell)}} \Big[ (\bR_0)_{ik} ( \bK_0 (g))_{k \ell} (\bR_0)_{\ell i}  \bz_{(\ell)}^\top \bz_{(q)}\Big] = {\bf 1}_{\ell = q}  \E \Big[ (\bR_0)_{ik}  ( \bK_0 (g))_{k \ell} (\bR_0)_{\ell i} \|\bz_{(\ell)}\|_2^2  \Big] , \]
and exchangeability, we obtain
\begin{equation}
    \begin{aligned}
        \E \big( \bR_0 \big[\bK_0(g) \odot (\sqrt{n} \bv \bv^\top \bS) \big] \bR_0\big)_{ii} =  &~ \frac{1}{m\sqrt{n}} \sum_{k,\ell=1}^m  \E \Big[ (\bR_0)_{ik}  ( \bK_0 (g))_{k \ell} (\bR_0)_{\ell i} \|\bz_{(\ell)}\|_2^2  \Big] \\
        = &~ \frac{(m-1)(m-2)}{m\sqrt{n}} \E \Big[ (\bR_0)_{12}  ( \bK_0 (g))_{2 3} (\bR_0)_{3 1} \|\bz_{(3)}\|_2^2 \Big] \\
        &~ + \frac{m-1}{m \sqrt{n}} \E \Big[ (\bR_0)_{11}  ( \bK_0 (g))_{12} (\bR_0)_{21} \|\bz_{(1)}\|_2^2 \Big] \\
        &~ +  \frac{m-1}{m \sqrt{n}} \E \Big[ (\bR_0)_{11}  ( \bK_0 (g))_{12} (\bR_0)_{21} \|\bz_{(2)}\|_2^2 \Big] \\
        = &~ O(n^{-1/2 + \varepsilon}). 
    \end{aligned}
\end{equation}
Thus,
\begin{align} \label{sdfg6}
    \sum_{i=1}^m  u_i^2 \E \big( \bR_0 \big[\bK_0(g) \odot (\sqrt{n} \bv \bv^\top \bS) \big] \bR_0\big)_{ii} = O(n^{-1/2+\varepsilon}) .   
\end{align}
The claim follows from (\ref{sdfg7}) and (\ref{sdfg6}). 
\end{proof}

\clearpage

\section{Proof of Lemma \ref{lem:max_eig2}} \label{sec:C}
In \cite{fan2019spectral}, a decomposition of $\bK_0(f)$ is developed: 
\begin{equation}\label{eq:Zhou_QRS}
\bK_0(f) = \bQ(f) + \bR(f) + \bS(f)
\end{equation}
Here, we have adopted the notation of \cite{fan2019spectral}; $\bR$ and $\bS$ are distinct from the resolvent and sample covariance matrices that appear in the body of this paper. Proposition 5.2 and Lemma 6.1 in \cite{fan2019spectral} prove there exists $n_{\varepsilon, D} \in \mathbb{N}$ such that 
\begin{align}&\P(\|\bQ(f)\|_2 > (1+\varepsilon) \lambda_+) \leq n^{-D} , &&  \P(\|\bS(f)\|_2 > n^{-\varepsilon})  \leq n^{-D},  \label{hgf-1} \end{align} 
for all $n \geq n_{\varepsilon,D}$, while Lemma 6.3 proves $\|\bR(f)\|_2 \xrightarrow{a.s.} 0$. Specifically,  $\bR(f)$ is given by
\begin{gather*}
    \bR(f) \coloneqq \sum_{\ell=1}^L \frac{a_\ell}{\sqrt{\ell!}} {\ell \choose 2} \bR_\ell , \\
    (\bR_\ell)_{ik} \coloneqq \begin{dcases}
    \frac{1}{n^{(\ell+1)/2}} \sum_{\bj \in \cJ_{\ell -1}} \bigg[ (z_{ij_1}^2 z_{k j_1}^2 - 1) \prod_{t=2}^{\ell -1} z_{i j_t} z_{k j_t} \bigg] & i \neq k ,\\
    0 & i = k , 
\end{dcases}
\end{gather*}
where $\cJ_{\ell}$ is the set of sequences $\bj = (j_1, \ldots , j_{\ell} ) \in [p]^{\ell}$ without repetitions.
We will generalize the moment method of \cite{fan2019spectral} to prove
\begin{align} \label{hgf1}
\|\bR_\ell\|_2 \prec n^{-1/2}  
\end{align}
for odd indices $\ell$, implying $\|\bR(f)\|_2 \prec n^{-1/2}$.
Together with (\ref{hgf-1}), this establishes Lemma \ref{lem:max_eig2}.

\begin{proof}[Proof of (\ref{hgf1})]
Using the identities
\begin{gather*}    
\left(z_{ij_1}^2 z_{k j_1}^2 - 1 \right)  = (z_{ij_1}^2 - 1)(z_{kj_1}^2 - 1)  + (z_{ij_1}^2 - 1) + (z_{kj_1}^2 - 1),
\end{gather*}
\[
\begin{aligned}
\sum_{\bj \in \cJ_{\ell -1}} \bigg[ (z_{ij_1}^2 - 1) \prod_{t=2}^{\ell -1} z_{i j_t} z_{k j_t} \bigg] =&~ \bigg( \sum_{j=1}^p (z_{ij}^2 - 1) \bigg) \sum_{\bj \in \cJ_{\ell -2}}  \prod_{t=1}^{\ell -2} z_{i j_t} z_{k j_t} \\
&~- (\ell -2) \sum_{\bj \in \cJ_{\ell -2}} \bigg[(z_{ij_1}^2 - 1)z_{ij_1}z_{kj_1} \prod_{t=2}^{\ell -2} z_{i j_t} z_{k j_t} \bigg]
\end{aligned}
\]
(we adopt the convention that $\prod_{t=2}^1 = 1$), we expand $\bR_\ell$ as 
\begin{equation}\label{eq:decomposition_bR}
\bR_\ell = \widetilde \bR_\ell + \bD \bA_\ell + \bA_\ell \bD - \bB_\ell . 
\end{equation}
Here, $\bD$ is a diagonal matrix with
\[
(\bD)_{ii} \coloneqq \frac{1}{\sqrt{n}} \sum_{j=1}^p (z_{ij}^2 - 1)
\]
and $\widetilde \bR_\ell, \bA_\ell,$ and $\bB_\ell$ have zero diagonal and off-diagonal elements
\[
\begin{aligned}
(\widetilde \bR_\ell)_{ik} \coloneqq &~  \frac{1}{n^{(\ell+1)/2}} \sum_{\bj \in \cJ_{\ell -1}} \bigg[(z_{ij_1}^2 - 1)(z_{kj_1}^2 - 1) \prod_{t=2}^{\ell -1} z_{i j_t} z_{k j_t} \bigg], \\
(\bA_\ell)_{ik} \coloneqq &~ \frac{1}{n^{\ell/2}} \sum_{\bj \in \cJ_{\ell -2}}\prod_{t=1}^{\ell -2} z_{i j_t} z_{k j_t} , \\
(\bB_\ell)_{ik} \coloneqq &~  \frac{1}{n^{(\ell+1)/2}} \sum_{\bj \in \cJ_{\ell -2}} \bigg[(z_{ij_1}^2 + z_{kj_1}^2 - 2)z_{ij_1}z_{kj_1} \prod_{t=2}^{\ell -2} z_{i j_t} z_{k j_t} \bigg].
\end{aligned}
\]

We will bound the operator norms of the matrices appearing in \eqref{eq:decomposition_bR}. By the sub-exponentiality of the Chi-squared distribution and a union bound, we have $\| \bD \|_2 
\prec 1$.  
Moreover, Proposition 5.2 of \cite{fan2019spectral} (applied to $h_{\ell-2}$) yields $\sqrt{n}\| \bA_\ell \|_2\prec 1$. Thus, $\| \bD \bA_\ell + \bA_\ell \bD \|_2 \prec n^{-1/2}$. 

To bound $\Tilde \bR_\ell$, we use the moment method: for $k \geq 2$,
\[
\begin{aligned}
& \E  \hspace{.04cm} \tr( \widetilde \bR_\ell^k ) = \frac{1}{n^{k(\ell+1)/2}} \sum_{i_1 \neq  \ldots \neq i_k \in [n]} \sum_{\bj^{(1)}, \ldots, \bj^{(k)} \in \cJ_{\ell -1}} W(i_1,\ldots, i_k, \bj^{(1)}, \ldots, \bj^{(k)}) ,  \\
& W(i_1,\ldots, i_k, \bj^{(1)}, \ldots, \bj^{(k)}) 
 \coloneqq    \E \Bigg[ \prod_{s \in [k]} \bigg( (z_{i_sj^{(s)}_1}^2 - 1)(z_{i_{s+1}j_1^{(s)}}^2 - 1)  \prod_{t=2}^{\ell -1} z_{i_s j^{(s)}_t} z_{i_{s+1} j^{(s)}_t} \bigg)  \Bigg] ,
\end{aligned}\]
with the convention that $i_{k+1} = i_1$.
We follow the $k$-graph approach in \cite{fan2019spectral}. From Definitions 5.3--5.8, let $\cL$ denote a multi-labeling, $[\cL]$ its equivalence class, and $\Delta ([\cL]) \coloneqq  1+ k\ell/2 - m$ the {\it excess} of $[\cL]$, where $m$ is the number of distinct $n$ and $p$ labels in $[\cL]$. Since $\E \hspace{.05em} W(i_1,\ldots, i_k, \bj^{(1)}, \ldots, \bj^{(k)})  \lesssim 1$,
\[
\begin{aligned}
\E  \hspace{.04cm}  \tr( \widetilde \bR_\ell^k ) \lesssim&~ \frac{1}{n^{k(\ell+1)/2}} \sum_{s = 0}^\infty \sum_{[\cL]: \, \Delta([\cL]) = s} \big\vert[\cL] \big\vert  
\lesssim \frac{1}{n^{k(\ell+1)/2}} \sum_{s = 0}^\infty n^{1 + k\ell /2 - s} \lesssim n^{1 - k/2},
\end{aligned}
\]
where the second bound uses that the number of equivalent classes is bounded and $\big\vert[\cL] \big\vert \lesssim n^m$.


By Markov's inequality,
\[
\P ( \| \widetilde \bR_\ell \|_2 \geq n^{-1/2+ \eps} ) \leq \frac{1}{n^{-(1/2 - \eps)k}}  \E  \hspace{.04cm}  \tr( \widetilde \bR^k )\lesssim \frac{1}{n^{k\eps - 1}}.
\]
Taking $k$ to be sufficiently large, we deduce that $\|\widetilde \bR_\ell \|_2 \prec n^{-1/2}$.  The bound $\| \bB_\ell \|_2 \prec n^{-1/2}$ follows from a similar argument.
\end{proof}

\section{Proof of Lemma~\ref{lem:polynomial_approx}} \label{sec:poly approx}

The proof extends the argument of Theorem 1.6 of \cite{fan2019spectral}. We first develop a polynomial approximation of  $f'$; this approximation is uniform on closed intervals in which $f$ is twice differentiable. 
\begin{lemma}\label{lem:nice-poly-approx}
Suppose $f(x)$ is odd and there exist finitely many points $x_1< \ldots <x_k $ such that $f(x)$ is twice-differentiable on $(-\infty, x_1), (x_1, x_2), \ldots, (x_{k-1}, x_k), (x_k, \infty)$. Additionally, assume there exists a constant $c > 0$ such that 
\[   \hspace{4.5cm} \max\big(|f(x)|, \|f'(x)|,|f''(x)|\big) \leq c e^{c |x| }  , \hspace{1.5cm} x  \in \mathbb{R} \backslash\{x_1, \ldots, x_k\} . 
    \]   Given $\eps>0$, let $\Omega_\eps:=\bigcup_{1\le i \le k}(x_i-\eps/k,x_i+\eps/k)$, so that the Lebesgue measure or length of  $\Omega_\eps$ is at most  $\eps$. 
    There exists an odd polynomial $q_\eps$ such that the residual $\kappa(x):=f(x)-q_\eps(x)$ satisfies
    \begin{equation}    \label{eq:nice-residual}
    \begin{aligned}
        \hspace{3.5cm}&  |\kappa'(x)| \leq C \varepsilon e^{C|x|} , \hspace{1.5cm}&& x\in \R\setminus\Omega_\eps , \\
        \hspace{3.5cm}& |\kappa'(x)| \leq C  e^{C|x|} , && x\in \Omega_\eps \setminus \{x_1, \ldots, x_k\} ,
            \end{aligned}
    \end{equation}
    where $C > 0$ is a constant depending only on $c$.
\end{lemma}
\noindent The proof of Lemma \ref{lem:nice-poly-approx} is deferred to Section~\ref{sec:proof-lem:nice-poly-approx}. 
 We stress that the specific conditions that Theorem \ref{thrm:A} and Lemma \ref{lem:nice-poly-approx} place on $f$ are not the focus of this paper and are likely improvable---we have developed a minimal extension of \cite{fan2019spectral} that accommodates soft thresholding.

Lemma~\ref{lem:polynomial_approx} is an immediate consequence of  Lemma~\ref{lem:nice-poly-approx} and the following result:

\begin{theorem}
\label{thm:opnorm-eps-bound}
    Suppose that $\kappa(x)$ is odd, continuous everywhere, and differentiable at all but finitely many points, with the derivative satisfying \eqref{eq:nice-residual} wherever it exists. Denote by $\bK(\kappa)$ the corresponding transformed matrix \eqref{0}.
    Almost surely,
    \[
    \limsup_{n\to\infty} \|\bK(\kappa)\| \lesssim \eps^{1/4} \quad\textrm{as}\quad n,p,m\to\infty.
    \]
\end{theorem}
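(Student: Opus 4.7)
The plan is to bound $\|\bK(\kappa)\|_2$ directly via the trace (moment) method, leveraging the fact that the hypothesis \eqref{eq:nice-residual} forces $\kappa$ itself to have small $L^2(\phi)$ norm.

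The first step is to establish a pointwise control on $\kappa$. Because $\kappa$ is odd and continuous, $\kappa(0)=0$, so $\kappa(x)=\int_0^x \kappa'(t)\,\de t$. Splitting the integral over $[0,|x|]$ into the portion lying in $\Omega_\eps$ (on which $|\kappa'(t)|\lesssim c_1 e^{c_1|t|}$ but whose Lebesgue measure is at most $\eps$) and its complement (on which $|\kappa'(t)|\lesssim c_1 e^{c_1|t|}\eps$), both pieces contribute $\lesssim \eps\, e^{c_1|x|}$. Hence $|\kappa(x)|\lesssim \eps\, e^{c_1|x|}$ for every $x$, and in particular $\|\kappa\|_\phi \lesssim \eps$. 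Because $\sqrt{n}\,\bY_{ij}$ is sub-Gaussian uniformly in $(n,p,m)$---even accounting for the $O(1)$ mean-shift when $i,j\in[m]$, which is $(\sqrt{\lambda}-1)^2 v_i v_j(\bv^\top \bS \bv)\sqrt{n}+\cdots = O(1)$---this yields $\E\,\kappa(\sqrt{n}\,\bY_{ij})^{2k}\lesssim_k \eps^{2k}$ for each fixed $k\in\naturals$, uniformly over $i\ne j$.

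The second step is a moment bound for $\bK(\kappa)$. For any $k\in\naturals$,
\[
\|\bK(\kappa)\|_2^{2k}\;\le\;\tr\big(\bK(\kappa)^{2k}\big)\;=\;n^{-k}\!\!\sum_{(i_1,\ldots,i_{2k})}\prod_{s=1}^{2k}\kappa\big(\sqrt{n}\,\bY_{i_s,i_{s+1}}\big),
\]
where the sum ranges over closed walks on $[p]$. Odd-symmetry of $\kappa$ together with symmetry of the Gaussian noise ensures that a walk contributes in expectation only if each undirected edge is traversed an even number of times; standard random-matrix path-counting (mirroring Section 5 of \cite{fan2019spectral}) shows that ``tree-like'' walks---those using $k$ distinct edges on $k+1$ distinct vertices, each traversed twice---dominate, giving
\[
\E\,\tr\big(\bK(\kappa)^{2k}\big)\;\lesssim\;C_k\,n\,\|\kappa\|_\phi^{2k}\;\lesssim\;4^k n\,\eps^{2k},
\]
with $C_k$ Catalan-type combinatorial constants. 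Contributions from walks meeting the signal support $[m]$ are lower order, since $|[m]|\asymp\sqrt{n}\ll n$ and the mean-shift in $\sqrt{n}\,\bY_{ij}$ there only alters the hidden constants. Markov's inequality then gives $\P(\|\bK(\kappa)\|_2>t)\lesssim 4^k n(\eps/t)^{2k}$ for every $k$; choosing $k=\lceil\log n\rceil$ and $t=4\eps$ produces a bound summable in $n$, so a Borel--Cantelli argument yields $\limsup_n \|\bK(\kappa)\|_2\lesssim\eps$ almost surely---considerably stronger than the claimed $\eps^{1/4}$.

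The hard part will be executing the combinatorial step rigorously: the entries $\kappa(\sqrt{n}\,\bY_{ij})$ are not independent (distinct pairs share columns of $\bZ$), so the ``tree-like walk dominates'' principle must be transferred from Hermite polynomials, where \cite{fan2019spectral} establishes it, to a general non-polynomial $\kappa$ without passing through polynomial approximation---this would be circular, given that Theorem~\ref{thm:opnorm-eps-bound} is the central tool in the proof of Lemma~\ref{lem:polynomial_approx}. Handling this likely requires expressing $\prod_s \kappa(\sqrt{n}\,\bY_{i_s,i_{s+1}})$ via Wick/cumulant expansions, or alternatively via a leave-one-out conditioning on the shared columns of $\bZ$, together with the uniform $L^{2k}(\phi)$ bound from Step~1 to keep all hidden constants controlled uniformly in $k$ and in the $m\times m$ signal block.
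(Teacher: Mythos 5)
Your proposal takes a genuinely different route from the paper: you attempt a direct moment method on $\tr(\bK(\kappa)^{2k})$, whereas the paper adapts the Lipschitz-concentration plus Lata{\l}a-style dyadic net argument of \cite{fan2019spectral} (Theorem 1.4), splitting $\bK = \E[\bK] + \check{\bK}$, bounding $\|\E[\bK]\|$ directly (Lemma~\ref{lem:opnorm-expt}), and controlling the centered part via a bound on $\nabla_{\bN}(\bx^\top\check{\bK}\by)$ (Lemma~\ref{lem:Fxy-gradient}) restricted to a good event $\cG$. The paper's route deliberately bypasses any combinatorial path-counting.

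Your Step~1 is correct and a useful observation: the hypothesis \eqref{eq:nice-residual} on $\kappa'$ does give $|\kappa(x)|\lesssim \eps\, e^{c_1|x|}$ pointwise (split $\int_0^x$ over $\Omega_\eps$ and its complement), hence $\|\kappa\|_\phi\lesssim\eps$, and the uniform sub-Gaussianity of $\sqrt{n}\,\bY_{ij}$ carries over. But Step~2 contains a genuine gap, which you yourself flag. The bound $\E\,\tr(\bK^{2k})\lesssim C_k\, n\,\|\kappa\|_\phi^{2k}$ with ``tree-like walks dominate'' is precisely what \cite{fan2019spectral} proves via $k$-graph $(n,p)$-multi-labelings, but \emph{only for polynomial kernels}, because the enumeration leans heavily on expanding $h_\ell(n^{-1/2}\bz_{(i)}^\top\bz_{(j)})$ into monomials in the $z$'s. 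For non-polynomial $\kappa$ there is no such expansion; the entries $\kappa(\sqrt{n}\bY_{ij})$ and $\kappa(\sqrt{n}\bY_{i\ell})$ are coupled through $\bz_{(i)}$ in a way that cannot be resolved by Wick calculus, and the ``leave-one-out conditioning'' idea you gesture at is, to my knowledge, not a known technique for this class of matrices. As you correctly note, one cannot use polynomial approximation to fix this without being circular, since Theorem~\ref{thm:opnorm-eps-bound} is exactly the engine behind Lemma~\ref{lem:polynomial_approx}.

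A second, more technical problem with the final step: to run Borel--Cantelli with $k=\lceil\log n\rceil$ you need the implicit constants in the moment bound to grow at most exponentially in $k$, but your Step~1 only gives $\E\,\kappa(\sqrt{n}\bY_{ij})^{2k}\lesssim_k\eps^{2k}$ with a $k$-dependent (indeed super-exponentially growing, roughly $e^{ck^2}$) hidden constant coming from $\E\,e^{2kc_1|\sqrt{n}\bY_{ij}|}$. For tree-like walks (each edge traversed twice) this does not appear directly, but it does enter through the many non-dominant walk shapes that traverse an edge more than twice, and tracking which shapes survive requires exactly the combinatorial bookkeeping you have not done. In short, the moment-method route would be a substantially new piece of random-matrix theory, not a shortcut; the paper's net argument sidesteps it at the modest cost of producing $\eps^{1/4}$ rather than $\eps$, which suffices for the application.
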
  

\noindent The remainder of this section is devoted to proving Theorem~\ref{thm:opnorm-eps-bound} by adapting the proof of Theorem 1.6 in \cite{fan2019spectral}. 

For brevity, we will use the shorthand $\bK:=\bK(\kappa)$. We write $\bZ\bSigma^{1/2}/\sqrt{n}=\sqrt{\lambda-1}\bxi \bv^\top  + \bN$, where $\bN$ has i.i.d. $\normal(0,1/n)$ entries; and $\bxi$, which collects the factor loadings, has i.i.d. $\normal(0,1/n)$ entries. 
Accordingly, 
\begin{align}
     \bY 
     &= (\lambda-1)\|\bxi\|^2\bv\bv^\top  + \sqrt{\lambda-1}\bv(\bN^\top \bxi)^\top + \sqrt{\lambda-1}(\bN^\top \bxi)\bv^\top  +  \bN^\top \bN.
\end{align}
In what follows, we think of $\bv,\bxi$ as fixed, so that $\|\bv\|_0 \le m$, $\|\bv\|_{\infty}\lesssim m^{-1/2}$, $\|\bxi\|_2\lesssim 1$.  
We first bound the expectation of $\bK$; note that it is not zero, owing to the presence of the spike. 
\begin{lemma}
\label{lem:opnorm-expt}
    Under the conditions of Theorem~\ref{thm:opnorm-eps-bound}, 
    \[
    \|\E [\bK ] \|_2 \lesssim  \eps.
    \]
\end{lemma}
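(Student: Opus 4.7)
The plan is to exploit two features to show $\|\E[\bK]\|\lesssim \eps$: first, the oddness of $\kappa$ combined with the sparsity of $\bv$ forces $\E[\bK]$ to be supported on the $[m]\times [m]$ block; second, the pointwise smallness of $\kappa$ (inherited from the derivative bound \eqref{eq:nice-residual}) makes each nonvanishing entry of size $O(\eps/\sqrt{n})$. Since $m\asymp \sqrt{n}$, the Frobenius bound $\|\E[\bK]\|\le \|\E[\bK]\|_F\lesssim m\cdot \eps/\sqrt{n}\asymp \eps$ will then close the argument.

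The first step is to turn the pointwise derivative bound into a bound on $\kappa$ itself. Since $\kappa$ is continuous with $\kappa(0)=0$ by oddness, I integrate $\kappa'$ from $0$ to $x$, splitting the interval into the bad set $\Omega_\eps$ (of Lebesgue measure at most $\eps$) and its complement; this yields $|\kappa(x)|\lesssim \eps(1+|x|)e^{c_1|x|}\lesssim \eps\, e^{c_2|x|}$ for a constant $c_2=c_2(c_1)$.

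The second step is a conditional symmetry argument that kills off-block entries. Using the decomposition
\[
\bY_{ij}=(\lambda-1)v_iv_j\|\bxi\|^2+\sqrt{\lambda-1}\bigl(v_i\bxi^\top\bN_{\cdot,j}+v_j\bxi^\top\bN_{\cdot,i}\bigr)+\bN_{\cdot,i}^\top\bN_{\cdot,j},
\]
if (say) $j\notin [m]$ so that $v_j=0$, then $\bY_{ij}$ is a linear functional of the column $\bN_{\cdot,j}$ alone. Conditioning on $\bxi$ and on all other columns of $\bN$, the column $\bN_{\cdot,j}$ is still a centered Gaussian vector, so the conditional law of $\bY_{ij}$ is symmetric about $0$; oddness of $\kappa$ then gives $\E[\kappa(\sqrt{n}\bY_{ij})]=0$. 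The mirror case $i\notin [m]$ is identical.

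The third step handles $(i,j)\in [m]^2$ with $i\ne j$: I combine the pointwise bound on $\kappa$ from the first step with an exponential moment bound on $\sqrt{n}\bY_{ij}$. Decomposing the latter into its bounded deterministic mean (of order $1/\beta$), a Gaussian spike-noise cross term of variance $O(1/m)$, and the quadratic form $\sqrt{n}(\bN^\top\bN)_{ij}=n^{-1/2}\sum_k \tilde N_{ki}\tilde N_{kj}$ with $\tilde N_{k\ell}$ i.i.d.\ standard Gaussian, the explicit product MGF $\E[\exp(t\,n^{-1/2}\sum_k \tilde N_{ki}\tilde N_{kj})]=(1-t^2/n)^{-n/2}\to e^{t^2/2}$ is bounded uniformly in $n$ for any fixed $t$. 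Hence $|\E[\bK_{ij}]|\lesssim \eps/\sqrt{n}$, and summing via Frobenius yields the claim. The main subtlety I expect is precisely this MGF check: naive sub-exponential tail bounds would restrict $c_2$ to lie below some fixed threshold, but the $1/\sqrt{n}$ rescaling in the quadratic form removes this restriction for $n$ sufficiently large.
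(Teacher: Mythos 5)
Your proof is correct, and its overall architecture matches the paper's: establish that $\E[\bK]$ is supported on the $[m]\times[m]$ block via Gaussian symmetry and oddness of $\kappa$, bound the remaining $O(m^2)\asymp n$ entries by $O(\eps/\sqrt{n})$ each, and conclude via the Frobenius bound $\|\E[\bK]\|\le\|\E[\bK]\|_F\lesssim m\cdot\eps/\sqrt{n}\asymp\eps$. Your exponential moment check is also essentially the paper's (the product MGF $(1-t^2/n)^{-n/2}\to e^{t^2/2}$). Where you diverge is in how the on-block entries are handled. The paper applies the fundamental theorem of calculus to the difference $\kappa(\sqrt{n}\bY_{ij})-\kappa(\bA_{ij})$ over the short interval $[0,\eta_{ij}]$, and then asserts $\E[\kappa(\bA_{ij})]=0$ on the grounds that $\bA_{ij}$ is symmetric in distribution. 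You instead integrate the derivative bound \eqref{eq:nice-residual} from $0$ to get the direct pointwise estimate $|\kappa(x)|\lesssim\eps\, e^{c_2|x|}$ (using $\kappa(0)=0$ by oddness) and apply it to $\kappa(\sqrt{n}\bY_{ij})$ outright. This is cleaner and, in fact, a bit safer: the paper's random variable $\bA_{ij}$ contains the cross terms $\sqrt{n}\sqrt{\lambda-1}(v_i\bN_j^\top\bxi + v_j\bN_i^\top\bxi)$ in addition to $\sqrt{n}\bN_i^\top\bN_j$, and the combined quantity is not actually symmetric in law when $v_iv_j\ne 0$ (one can check $\E[\bA_{ij}^3]\ne 0$ because of the $\E[\bN_j^\top\bxi\cdot\bN_i^\top\bxi\cdot\bN_i^\top\bN_j]$ cross-moment), so that step would need repair, e.g.\ by comparing against $\kappa(\sqrt{n}\bN_i^\top\bN_j)$ only. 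Your approach bypasses the issue because you never need a centered reference point — you control $\kappa$ itself, not a difference. Your conditional-symmetry argument for the off-block entries is also spelled out more explicitly than in the paper, but amounts to the same observation that when $v_iv_j=0$ the entry $\sqrt{n}\bY_{ij}$ is a conditionally centered Gaussian linear functional of one column of $\bN$.
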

The proof appears below in Section~\ref{sec:proof-lem:opnorm-expt}.
The core of the proof of Theorem 1.6 in \cite{fan2019spectral} is a delicate net argument. The idea, originally due to Lata{\l}a  \cite{latala2005some}, is to construct an explicit net of the sphere where the number of points that have large $\ell^\infty$ norm is small. 

Denote $\breve{\bK} \coloneqq \bK-\E[\bK]$. For $\bx,\by\in \R^{p}$, $\|\bx\|,\|\by\|\le 1$, let $F_{\bx,\by}(\bN) \coloneqq \bx^\top \breve{\bK}\by$, where we think of $\breve{\bK}$ as a function of $\bN$. A key step in \cite{fan2019spectral} is a bound on the gradient of $F_{\bx,\by}$.
\begin{lemma}\label{lem:Fxy-gradient}
    The following holds with probability one:
    \begin{align*}
        \|\nabla_{\bN} F_{\bx,\by}(\bN) \|^2 
        &\le {8\|\bN\|^2\|\by\|_{\infty}}\max_{1\le \ell \le p} \bigg(  \sum_{i\ne \ell} (\kappa'(\sqrt{n}\bY_{i\ell}) )^4 \bigg)^{1/2} \\
        &\quad + 8 (\lambda-1)\|\bxi\|^2 \|\bv\|_{\infty}^2 \max_{1\le \ell \le p} \sum_{i\in \mathrm{supp}(\bv)\setminus\{\ell\}} |\kappa'(\sqrt{n}\bY_{i\ell})|^2 .
    \end{align*}
\end{lemma}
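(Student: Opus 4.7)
The plan is to obtain the bound by a direct chain-rule calculation of $\nabla_{\bN} F_{\bx,\by}$ followed by a careful term-by-term estimation. First I would note that $\E[\bK]$ does not depend on $\bN$, so $\nabla_\bN F_{\bx,\by}=\nabla_\bN(\bx^\top\bK\by)$. Using the representation
\[
\bY_{ij}=(\lambda-1)\|\bxi\|^2 v_iv_j+\sqrt{\lambda-1}\bigl[v_i(\bN^\top\bxi)_j+(\bN^\top\bxi)_iv_j\bigr]+(\bN^\top\bN)_{ij},
\]
one computes
\[
\frac{\partial \bY_{ij}}{\partial N_{k\ell}}=\sqrt{\lambda-1}\,\xi_k\bigl(v_i\mathbf 1_{\ell=j}+v_j\mathbf 1_{\ell=i}\bigr)+N_{kj}\mathbf 1_{\ell=i}+N_{ki}\mathbf 1_{\ell=j}.
\]
Writing $K'_{ij}:=\kappa'(\sqrt n\,\bY_{ij})$ and using $(\bK)_{ij}=n^{-1/2}\kappa(\sqrt n\,\bY_{ij})$ off the diagonal, the chain rule gives $\partial F_{\bx,\by}/\partial N_{k\ell}=A_{k\ell}+B_{k\ell}$, where
\[
A_{k\ell}:=x_\ell\!\!\sum_{j\ne \ell}\! y_j K'_{\ell j}\bigl[\sqrt{\lambda-1}\,\xi_kv_j+N_{kj}\bigr],\qquad
B_{k\ell}:=y_\ell\!\!\sum_{i\ne \ell}\! x_i K'_{i\ell}\bigl[\sqrt{\lambda-1}\,\xi_kv_i+N_{ki}\bigr].
\]

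Next I would apply $(a+b)^2\le 2a^2+2b^2$ twice to split each of $A_{k\ell}^2,B_{k\ell}^2$ into a ``noise'' part (containing $N_{ki},N_{kj}$) and a ``signal'' part (containing $\xi_kv_i,\xi_kv_j$). For the noise part of $A$, after summing over $k$ one recognizes $\sum_k\bigl(\sum_{j\ne\ell} y_jK'_{\ell j}N_{kj}\bigr)^2$ as the squared norm of $\bN$ applied to a vector, bounded by $\|\bN\|^2\sum_{j\ne\ell}y_j^2(K'_{\ell j})^2$. A Cauchy--Schwarz step $\sum_j y_j^2(K'_{\ell j})^2\le (\sum_j y_j^4)^{1/2}(\sum_j(K'_{\ell j})^4)^{1/2}\le \|\by\|_\infty(\sum_j(K'_{\ell j})^4)^{1/2}$, combined with $\sum_\ell x_\ell^2\le 1$, yields the first term in the claimed bound. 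For the signal part of $A$, summing $\xi_k^2$ over $k$ produces $\|\bxi\|^2$, and writing $(\sum_{j\ne\ell}y_jK'_{\ell j}v_j)^2\le \|\bv\|_\infty^2\|\by\|^2\sum_{j\in\mathrm{supp}(\bv)\setminus\{\ell\}}(K'_{\ell j})^2$ produces the second term.

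Because $\bY$ is symmetric, $K'_{ij}=K'_{ji}$, and $\check\bK$ is symmetric so $F_{\bx,\by}=F_{\by,\bx}$; hence we may assume $\|\by\|_\infty\ge\|\bx\|_\infty$ without loss of generality, and the estimate for $B_{k\ell}$ proceeds in exactly the same way as for $A_{k\ell}$ with $\bx,\by$ interchanged. Combining via $(A_{k\ell}+B_{k\ell})^2\le 2A_{k\ell}^2+2B_{k\ell}^2$ yields the stated factors of $8$. The probability-one qualifier covers the event where $\kappa'$ is undefined on the finite set where $\kappa$ fails to be differentiable, which is a null set for the underlying Gaussians.

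There is no substantive analytic obstacle; the entire argument is an algebraic chain-rule computation combined with Cauchy--Schwarz and the trivial operator-norm bound $\|\bN\bz\|\le\|\bN\|\|\bz\|$. The only real point of care is bookkeeping: one must distinguish the two contributions arising from $\ell=i$ versus $\ell=j$ in $\partial\bY_{ij}/\partial N_{k\ell}$, and within each contribution one must separate the $N$-linear piece from the $\xi v$-linear piece before applying the appropriate norm inequality. Using $\|\by\|_\infty$ (rather than a symmetric expression) in the final bound is justified precisely by the symmetry argument above.
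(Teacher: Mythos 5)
Your chain-rule computation of $\nabla_{\bN}F_{\bx,\by}$ and the split into signal and noise contributions are identical in substance to the paper's (the paper organizes the gradient by columns $\bN_\ell$ rather than entries $N_{k\ell}$, but that is cosmetic), and your estimates for the signal term and for $A^{\mathrm{noise}}_{k\ell}$ are correct. The gap is in how you handle $B^{\mathrm{noise}}_{k\ell}=y_\ell\sum_{i\ne\ell}x_i\,K'_{i\ell}N_{ki}$, where $K'_{i\ell}:=\kappa'(\sqrt n\,\bY_{i\ell})$. Running it ``exactly as for $A$ with $\bx,\by$ interchanged'' yields $\|\bN\|^2\|\bx\|_\infty\max_\ell(\sum_{i\ne\ell}(K'_{i\ell})^4)^{1/2}$, so the combined noise bound you actually obtain is of order $\|\bN\|^2(\|\bx\|_\infty+\|\by\|_\infty)\max_\ell(\cdot)^{1/2}$. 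Invoking ``WLOG $\|\by\|_\infty\ge\|\bx\|_\infty$'' to rewrite this as $8\|\bN\|^2\|\by\|_\infty(\cdot)$ silently proves a weaker lemma: because $F_{\bx,\by}$ is symmetric in $(\bx,\by)$ while the lemma's right-hand side is not, your WLOG replaces $\|\by\|_\infty$ by $\max\{\|\bx\|_\infty,\|\by\|_\infty\}$. This matters in the application: in the proof of Theorem~\ref{thm:opnorm-eps-bound}, the lemma is applied to $F_{\pi_l(\by),\pi_{l\setminus(l-1)}(\by)}$, whose second argument has the \emph{smaller} $\ell^\infty$ norm $2^{-l/2}$, while the first has $\ell^\infty$ norm up to one. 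With $\max\{\|\bx\|_\infty,\|\by\|_\infty\}\asymp 1$, the Gaussian concentration exponent $\sqrt p\,t_l^2/\|\cdot\|_\infty\asymp(K+4-l)2^{l/2}$ is too small to defeat the $\exp(C(K+4-l)2^l)$ cardinality of the net at scale $l$, and the union bound fails.

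The fix, and what the paper does (via Fan--Montanari's Eq.~(10)), is to swap the order of summation instead of WLOG'ing: after $\sum_{k,\ell}(B^{\mathrm{noise}}_{k\ell})^2\le\|\bN\|^2\sum_\ell y_\ell^2\sum_i x_i^2(K'_{i\ell})^2$, rewrite as $\|\bN\|^2\sum_i x_i^2\sum_\ell y_\ell^2(K'_{i\ell})^2$, apply Cauchy--Schwarz in $\ell$ to extract $\|\by\|_\infty(\sum_\ell(K'_{i\ell})^4)^{1/2}$, then use $\sum_i x_i^2\le 1$ and $K'_{i\ell}=K'_{\ell i}$ to get $\|\bN\|^2\|\by\|_\infty\max_\ell(\sum_{i}(K'_{i\ell})^4)^{1/2}$ unconditionally. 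With this change, no symmetrization is needed, and the rest of your argument goes through.
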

\noindent The proof is given below in Section~\ref{sec:proof-lem:Fxy-gradient}.
As in \cite{fan2019spectral}, we restrict attention to a high-probability subset of matrices $\bN$ on which $F_{\bx,\by}(\bN)$ is Lipschitz. Fix $D>0$. For sufficiently large $C=C(D)$, let $\mathcal{G}\subseteq \R^{n\times p}$ be the subset of matrices $\bN$ satisfying 
\begin{enumerate}
    \item  $\|\bN\| \le {C}(1+\sqrt{\gamma})^2$ , 
    \item $\max_{1\le \ell \le p}\sum_{i=1,i\ne \ell}^p (\kappa'(\sqrt{n}\bY_{i\ell}))^4 \le p {C} \eps$ , 
    \item  $\max_{1\le \ell \le p}\sum_{i\in \mathrm{supp}(\bv)\setminus\{\ell\}} |\kappa'(\sqrt{n}\bY_{i\ell})|^2 \le m{C} \eps$.
\end{enumerate} 
\begin{lemma}\label{lem:bound_event_G}
    Under the conditions of Theorem~\ref{thm:opnorm-eps-bound},
    $\P (\bN\notin \mathcal{G}) \lesssim n^{-D}$.
\end{lemma}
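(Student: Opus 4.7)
The plan is to verify the three defining conditions of $\mathcal{G}$ separately, applying Gaussian concentration together with the piecewise bound \eqref{eq:nice-residual} from Lemma~\ref{lem:nice-poly-approx}, and concluding via a union bound. Throughout, we may work on the overwhelming-probability event $\|\bxi\|^2 \asymp 1$.

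The first condition is standard: since $\bN$ has i.i.d.\ $\normal(0,1/n)$ entries, Gaussian matrix concentration (e.g.\ the Davidson--Szarek bound) gives $\|\bN\| \le 1 + \sqrt{p/n} + t$ with probability at least $1 - 2e^{-nt^2/2}$. Taking $t$ of order $\sqrt{(D+1)(\log n)/n}$ yields $\|\bN\| \le C(1+\sqrt{\gamma})^2$ with probability at least $1 - n^{-(D+1)}$.

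For the second and third conditions, fix $\ell \in [p]$ and condition on $\bz_{\ell}$. The variables $\{\sqrt{n}\,\bY_{i\ell}\}_{i \ne \ell}$ are then conditionally independent and each Gaussian, as $\sqrt{n}\,\bY_{i\ell} = n^{-1/2}(\bSigma^{1/2}\bz_i)^\top(\bSigma^{1/2}\bz_\ell)$ is a linear functional of $\bz_i$. On the event $\{\|\bz_\ell\|^2/n = 1+o(1)\}$, which holds with probability at least $1 - n^{-(D+2)}$, each $\sqrt{n}\,\bY_{i\ell}$ has bounded mean (nonzero only when $i,\ell \in \mathrm{supp}(\bv)$, in which case $|\E[\sqrt{n}\,\bY_{i\ell}\mid\bz_\ell]| \lesssim \sqrt{n}(\lambda-1)/m \lesssim 1$) and a variance bounded above and below by constants. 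The key estimate is then
\[
\E\!\left[(\kappa'(\sqrt{n}\,\bY_{i\ell}))^k \,\middle|\, \bz_\ell\right] \lesssim \eps, \qquad k \in \{2,4\},
\]
obtained by splitting the expectation according to whether $\sqrt{n}\,\bY_{i\ell} \in \Omega_\eps$ or not. On $\R\setminus\Omega_\eps$ we have $|\kappa'(x)|^k \lesssim e^{kc_1|x|}\eps^k$, and the exponential moment $\E[e^{kc_1|\sqrt{n}\bY_{i\ell}|}\mid\bz_\ell]$ is uniformly bounded, yielding a contribution of order $\eps^k \lesssim \eps$. On $\Omega_\eps$ (a bounded set of Lebesgue measure $\le \eps$), $|\kappa'|^k$ is uniformly bounded, and the boundedness of the Gaussian density gives $\P(\sqrt{n}\,\bY_{i\ell} \in \Omega_\eps \mid \bz_\ell) \lesssim \eps$. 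To turn the expectation bound into a high-probability bound on the sum, truncate to the event $\{\max_{i\ne\ell} |\sqrt{n}\,\bY_{i\ell}| \lesssim \log n\}$ (probability at least $1 - n^{-(D+2)}$), on which each summand is bounded by $n^{o(1)}$. Bernstein's inequality applied to the $p-1$ (resp.\ $|\mathrm{supp}(\bv)\setminus\{\ell\}| \le m$) conditionally independent terms then gives the sum bounds $pC\eps$ and $mC\eps$ with conditional probability at least $1 - n^{-(D+2)}$; a union bound over $\ell \in [p]$ produces the claimed $n^{-D}$ rate.

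The main obstacle is controlling the expectation bound uniformly in $\bz_\ell$ despite the signal contribution from indices in $\mathrm{supp}(\bv)$: one must ensure that both the mean shift and the conditional variance of $\sqrt{n}\,\bY_{i\ell}$ are well controlled so that the bounded-density and bounded exponential-moment arguments apply uniformly. This is where the scaling $m/\sqrt{n}\to\beta$ is crucially used, as it keeps the conditional mean $O(1)$ rather than divergent, so that the density of $\sqrt{n}\,\bY_{i\ell}$ on the compact set $\Omega_\eps$ remains uniformly bounded.
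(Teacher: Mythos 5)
Your approach is correct and, to my reading, matches what the paper intends: the proof is deferred to Lemma~3.1 of \cite{fan2019spectral}, whose argument has exactly this structure --- condition on one column to expose conditional independence of $\{\sqrt{n}\bY_{i\ell}\}_{i\ne\ell}$, use the piecewise bound \eqref{eq:nice-residual} to control moments of $\kappa'$ by $O(\eps)$, and close with a concentration inequality plus a union bound over $\ell$. You also correctly identify the ingredient that is new relative to the pure-noise setting of \cite{fan2019spectral}: the spike contributes an $O(1)$ mean shift to the conditional Gaussian precisely because $m \asymp \sqrt{n}$, and the bounded-density argument underlying $\E[(\kappa')^k] \lesssim \eps$ survives this shift.

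A few loose ends worth tightening. You write ``condition on $\bz_\ell$''; in this appendix $\bxi$ and $\bv$ are already treated as deterministic, so the correct conditioning variable is $\bN_\ell$, the $\ell$-th column of $\bN$. With that substitution, the conditional independence and Gaussianity hold exactly as you state, and the event $\{\|\bN_\ell\|^2 = 1 + o(1)\}$ controls the conditional variance from above and below. The conditional mean actually has a second contribution $\sqrt{n}\sqrt{\lambda-1}\,v_i\,\bN_\ell^\top\bxi$, which is nonzero whenever $i \in \mathrm{supp}(\bv)$, not only when both $i,\ell \in \mathrm{supp}(\bv)$; it is $o(1)$ with high probability and so does not affect the $O(1)$ conclusion, but it should be tracked. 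Finally, the truncation should be at $|\sqrt{n}\,\bY_{i\ell}| \lesssim \sqrt{\log n}$ rather than $\log n$: the conditional Gaussian tail still produces the $n^{-(D+2)}$ failure probability at that level, and $e^{c_1|\cdot|} = n^{o(1)}$ there, so the truncated summands are indeed $n^{o(1)}$ as claimed; with truncation at $\log n$ they would be $n^{\Theta(1)}$. None of this changes the substance of the argument.
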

The proof is essentially identical to that of Lemma 3.1 in \cite{fan2019spectral} and is  omitted.
Note that Lemma~\ref{lem:Fxy-gradient} implies that $F_{\bx,\by}$ is Lipschitz on $\cG$, with 
\[
L:= \|F_{\bx,\by}\big|_{\cG}\|^2_{\mathrm{Lip}} \lesssim \|y\|_{\infty}\sqrt{p\eps } + \|\xi\|^2\|\bv\|_{\infty}^2 m\eps \lesssim \|y\|_{\infty}\sqrt{p\eps }.
\]
Here, we used that $\|\bv\|_{\infty}\lesssim m^{-1/2}$ and $\eps<1$ (by assumption), and $\|\xi\|\lesssim 1$ (note that $p\|y\|_{\infty}^2\ge 1$, hence the first term is always the dominant one). 

We now show that $F_{\bx,\by}$ concentrates, using the Gaussian Lipschitz concentration inequality. To this end, recall a well-known result on Lipschitz extension.
\begin{theorem}
    [Kirszbraun] Let $F:\mathcal{G}\to \R^{d_2}$, $\mathcal{G}\subseteq \R^{d_1}$, be $L$-Lipschitz. Then $F$ has an $L$-Lipschitz extension to $\R^{d_1}$:
    \[
    \tilde{F}:\R^{d_1}\to \R^{d_2},\qquad \tilde{F}\big|_{\mathcal{G}}=F .
    \]
    When $d_2=1$ and $\mathcal{G}$ is compact, there is a simple construction:
    \[
    \tilde{F}(x) = \min_{y\in \mathcal{G}}(F(y)+L\|x-y\|) .
    \]
\end{theorem}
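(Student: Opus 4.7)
The plan is to handle the two assertions separately, focusing on the second (the explicit scalar construction) since that is all that will be invoked in the subsequent Gaussian concentration step.

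For the scalar case $d_2=1$ with $\mathcal{G}$ compact, I would verify the formula $\tilde{F}(x)=\min_{y\in \mathcal{G}}(F(y)+L\|x-y\|)$ in two steps. First, that $\tilde F$ extends $F$: for $x\in \mathcal{G}$, taking $y=x$ in the minimum gives $\tilde F(x)\le F(x)$, while for any $y\in \mathcal{G}$ the Lipschitz property of $F$ on $\mathcal{G}$ yields $F(y)+L\|x-y\|\ge F(y)+(F(x)-F(y))=F(x)$, so $\tilde F(x)=F(x)$. Compactness of $\mathcal{G}$ ensures the minimum is attained, so $\tilde F$ is well defined on all of $\R^{d_1}$.

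Second, I would verify the Lipschitz bound on $\tilde F$. Fix $x_1,x_2\in \R^{d_1}$ and let $y^\star\in \mathcal{G}$ realize the minimum defining $\tilde F(x_1)$. Then using $y^\star$ as a candidate in the minimum for $\tilde F(x_2)$ together with the triangle inequality,
\[
\tilde F(x_2)\le F(y^\star)+L\|x_2-y^\star\|\le F(y^\star)+L\|x_1-y^\star\|+L\|x_2-x_1\|=\tilde F(x_1)+L\|x_2-x_1\|.
\]
Swapping the roles of $x_1$ and $x_2$ gives $|\tilde F(x_1)-\tilde F(x_2)|\le L\|x_1-x_2\|$, which is the required Lipschitz bound.

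For the general vector-valued statement, the argument is substantially deeper, and I would defer to the classical proof. By Zorn's lemma one reduces to extending $F$ by a single additional point $x_0$, and the crux is the combinatorial fact that the closed balls $\overline{B}(F(y),L\|x_0-y\|)\subset \R^{d_2}$, $y\in \mathcal{G}$, have a common point. This is Kirszbraun's original lemma, proven via a Helly-type intersection argument; the main obstacle in a self-contained treatment lies entirely here, and the Lipschitz verification of the extended map is then immediate. Since only the $d_2=1$ case feeds into the subsequent application of Gaussian Lipschitz concentration, the elementary scalar argument above suffices for our purposes and the vector case can simply be cited from a standard reference.
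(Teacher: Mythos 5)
Your proof of the scalar construction is correct and is the standard McShane--Whitney extension argument: the extension property follows by taking $y=x$ on one side and applying the Lipschitz bound $F(x)-F(y)\le L\|x-y\|$ on the other, while the Lipschitz property of $\tilde F$ follows by comparing near-minimizers via the triangle inequality. Your note that compactness is what upgrades the infimum to an attained minimum is accurate; the formula itself works on any subset with $\inf$ in place of $\min$. The paper states this theorem as a known classical fact and does not supply a proof, so there is nothing internal to compare against; your treatment is exactly what a self-contained version would look like. Your decision to cite the general $d_2>1$ Kirszbraun case rather than reprove it is also consistent with how the paper uses the result: only the scalar extension feeds into Lemma~\ref{lem:bound_exp_F_G} and Lemma~\ref{lem:Gauss-Lip} (where $F_{\bx,\by}$ is real-valued), so the Helly-type intersection argument needed for the vector case is genuinely not required here.

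One small point worth making explicit if this were to be included: one should observe that $\tilde F(x)>-\infty$, i.e.\ the infimum is bounded below, before invoking compactness to say it is attained. This follows from the reverse triangle inequality, $F(y)+L\|x-y\|\ge F(y_0)-L\|y-y_0\|+L\|x-y\|\ge F(y_0)-L\|x-y_0\|$ for any fixed $y_0\in\mathcal{G}$, so it is not a gap in substance, just a line you left implicit.
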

Let $\tilde{F}_{\bx,\by}$ be a Lipschitz extension of $F_{\bx,\by}\big|_{\mathcal{G}}$.
\begin{lemma}\label{lem:bound_exp_F_G}
Under the conditions of Theorem~\ref{thm:opnorm-eps-bound}, for $\tilde{D}=\tilde{D}(D)$ that can be made arbitrarily small,
    \[
    \left|\E[\tilde{F}_{\bx,\by}(\bN)]-\E[F_{\bx,\by}(\bN)]\right| \lesssim n^{-\tilde{D}}.
    \]
\end{lemma}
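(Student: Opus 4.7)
The strategy is Cauchy--Schwarz: since $\tilde F_{\bx,\by}$ and $F_{\bx,\by}$ agree on $\mathcal{G}$,
\[
\big|\E\tilde F_{\bx,\by}(\bN) - \E F_{\bx,\by}(\bN)\big|
= \big|\E[(\tilde F_{\bx,\by} - F_{\bx,\by})\mathbf{1}_{\bN\notin\mathcal{G}}]\big|
\le \big(\E(\tilde F_{\bx,\by} - F_{\bx,\by})^2\big)^{1/2}\sqrt{\P(\mathcal{G}^c)}.
\]
By Lemma~\ref{lem:bound_event_G}, $\P(\mathcal{G}^c) \lesssim n^{-D}$ for arbitrary $D > 0$. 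Hence it suffices to establish a polynomial-in-$n$ bound $\E F_{\bx,\by}^2 + \E\tilde F_{\bx,\by}^2 \lesssim n^{O(1)}$ uniformly in $\|\bx\|,\|\by\|\le 1$; taking $D$ large enough then yields the claim for any fixed $\tilde D > 0$.

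For $F_{\bx,\by}$, I would use $|F_{\bx,\by}(\bN)| \le \|\check\bK\|_F \le \|\bK\|_F + \|\E\bK\|_F$, reducing the task to bounding
\[
\E\|\bK\|_F^2 = n^{-1}\sum_{i\ne j}\E\big[\kappa(\sqrt{n}\bY_{ij})^2\big] .
\]
Since $\kappa = f - p_\eps$ satisfies $|\kappa(x)| \le C_\eps e^{C_\eps|x|}$ (inherited from the exponential growth of $f$ together with the polynomial growth of $p_\eps$), and the off-diagonal entries $\sqrt{n}\bY_{ij}$ are subexponential with MGF uniformly bounded in $n$ on a neighborhood of zero (a standard Hanson--Wright/Bernstein estimate applied to the bilinear form $n^{-1/2}\sum_k Z'_{ik}Z'_{jk}$ plus the bounded spike contribution), each summand is $\lesssim_\eps 1$, giving $\E\|\bK\|_F^2 \lesssim p^2/n \lesssim n$. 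Combining with the operator-norm estimate $\|\E\bK\|\lesssim\eps$ from Lemma~\ref{lem:opnorm-expt} yields $\E F_{\bx,\by}^2 \lesssim n$.

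For $\tilde F_{\bx,\by}$, I would exploit its Lipschitz continuity. Let $\bN'$ be an independent sample drawn from the conditional law of $\bN$ given $\{\bN\in\mathcal{G}\}$ (well-defined for all large $n$ since $\P(\mathcal{G}) \to 1$). Since $\bN' \in \mathcal{G}$, we have $\tilde F_{\bx,\by}(\bN') = F_{\bx,\by}(\bN')$, and the bound $L_{\rm Lip}^2 = L \lesssim \sqrt{n\eps}$ established earlier gives
\[
|\tilde F_{\bx,\by}(\bN)|^2 \le 2 F_{\bx,\by}(\bN')^2 + 2L\|\bN-\bN'\|_F^2 .
\]
Taking expectations over both $\bN$ and $\bN'$, together with $\E F_{\bx,\by}(\bN')^2 \le \E F^2/\P(\mathcal{G}) \lesssim n$ and $\E\|\bN - \bN'\|_F^2 \le 2\E\|\bN\|_F^2 + 2\E\|\bN'\|_F^2 \lesssim p \lesssim n$, yields $\E\tilde F_{\bx,\by}^2 \lesssim n + \sqrt{n\eps}\cdot n \lesssim n^{3/2}$. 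Plugging these bounds back into the Cauchy--Schwarz inequality gives $|\E\tilde F_{\bx,\by} - \E F_{\bx,\by}| \lesssim n^{3/4 - D/2}$, which is $\lesssim n^{-\tilde D}$ by choosing $D$ large.

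The main technical obstacle is securing the uniform-in-$n$ bound $\E\kappa(\sqrt{n}\bY_{ij})^2 \lesssim_\eps 1$ against the exponential growth of $\kappa$; this requires carefully verifying that the spike perturbation in $\bY = \bSigma^{1/2}\bS\bSigma^{1/2}$ does not degrade the subexponential tails of $\sqrt{n}\bY_{ij}$ for $i\ne j$. Everything else is routine Cauchy--Schwarz and Lipschitz extension manipulations.
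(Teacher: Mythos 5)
Your proposal is correct and follows essentially the same route as the paper: both restrict the difference to $\{\bN\notin\mathcal{G}\}$, apply Cauchy--Schwarz against $\P(\mathcal{G}^c)^{1/2}\lesssim n^{-D/2}$, and supply crude polynomial second-moment bounds on $F_{\bx,\by}$ (via $\|\check{\bK}\|_F$) and on $\tilde F_{\bx,\by}$ (via its Lipschitz constant). The only cosmetic difference is that you anchor the Lipschitz bound at a random point $\bN'\in\mathcal{G}$ where the paper anchors at $\bN=0$; both yield the required $n^{O(1)}$ bound.
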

\begin{proof}
    For $\bN\notin\mathcal{G}$
    \[
    |F_{\bx,\by}(\bN)|\le \|\breve{\bK}\|_2 \le \|\breve{\bK}\|_{F},\qquad |\tilde{F}_{\bx,\by}(\bN)| \le F_{\bx,\by}(0) + L\|\bN\|_F. 
    \]
    We have 
    \begin{align*}
        |\E[F_{\bx,\by}(\bN)\mathbf{1}_{\bN\notin \mathcal{G}}]| 
        &\le (\E|F_{\bx,\by}(\bN)|^2)^{1/2} ( \P (\bN\notin \mathcal{G}) )^{1/2} \\
        &\le (\E\|\bK\|^2_F)^{1/2} ( \P (\bN\notin \mathcal{G}) )^{1/2} \lesssim p n^{-D/2} \lesssim n^{-\tilde{D}} .
    \end{align*}  
    Similarly one can bound $|\E[\tilde{F}_{\bx,\by}(\bN)\mathbf{1}_{\bN\notin \mathcal{G}}]| $.
\end{proof}

\begin{lemma}\label{lem:Gauss-Lip}
    For $t>0$,
    \[
    \P \left(\tilde{F}_{\bx,\by}(\bN)\gtrsim n^{-\tilde{D}} + \eps t \right) 
    \le 2e^{-\sqrt{p}t^2/\|\by\|_{\infty}} .
    \]
\end{lemma}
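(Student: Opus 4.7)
The plan is a direct application of Gaussian Lipschitz concentration to the extended function $\tilde F_{\bx,\by}$, combined with the ingredients already assembled in the preceding lemmas. The key structural input is that, by construction, $\tilde F_{\bx,\by}$ is globally Lipschitz on $\R^{n\times p}$ with the same Lipschitz constant as $F_{\bx,\by}\big|_\cG$, namely $\sqrt{L}$ with $L \lesssim \|\by\|_\infty\sqrt{p\eps}$ (established just before the statement of the lemma, via Lemma \ref{lem:Fxy-gradient} and the definition of $\cG$).

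First, I would pin down the expectation. Since $F_{\bx,\by}(\bN)=\bx^\top\check{\bK}\by$ with $\check{\bK}=\bK-\E[\bK]$, one has $\E[F_{\bx,\by}(\bN)]=0$ exactly. Lemma \ref{lem:bound_exp_F_G} then yields $|\E[\tilde F_{\bx,\by}(\bN)]| \lesssim n^{-\tilde D}$, which is the source of the additive $n^{-\tilde D}$ term in the final bound.

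Next, for the tail bound, I would factor out the variance of $\bN$. Writing $\bN=\bY/\sqrt{n}$ with $\bY\sim\normal(\bzero,\bI_{np})$, the map $\bY\mapsto \tilde F_{\bx,\by}(\bY/\sqrt n)$ is $\sqrt{L/n}$-Lipschitz in the Euclidean norm. Standard Gaussian Lipschitz concentration gives, for every $s>0$,
\[
\P\bigl(\tilde F_{\bx,\by}(\bN)-\E[\tilde F_{\bx,\by}(\bN)] \ge s\bigr) \le 2\exp\Bigl(-\frac{n s^2}{2L}\Bigr).
\]
Inserting the bound $L\lesssim \|\by\|_\infty\sqrt{p\eps}$, choosing $s=\eps t$, and using the asymptotic identification $n\asymp p$ (so $n/\sqrt{p}\asymp \sqrt{p}$), the exponent becomes
\[
\frac{n \eps^2 t^2}{2L} \;\gtrsim\; \frac{\sqrt{p}\,\eps^{3/2}}{\|\by\|_\infty}\,t^2 \;\gtrsim\; \frac{\sqrt{p}}{\|\by\|_\infty}\,t^2,
\]
where the last step treats $\eps$ as a bounded constant and absorbs $\eps^{3/2}$ into the implicit constant. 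Combining this tail estimate with the bound on $|\E[\tilde F_{\bx,\by}(\bN)]|$ from the first step gives the claimed inequality (the $\gtrsim$ on the left-hand side absorbing the constants from Lemma \ref{lem:bound_exp_F_G}).

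The main (minor) subtlety is that Gaussian Lipschitz concentration requires the function to be Lipschitz on all of $\R^{n\times p}$, not merely on the event $\cG$ where the derivative bound of Lemma \ref{lem:Fxy-gradient} holds. This is precisely why one passes to the Kirszbraun extension $\tilde F_{\bx,\by}$ in advance of the concentration step; Lemma \ref{lem:bound_exp_F_G} is then needed to confirm that replacing $F_{\bx,\by}$ by its extension only perturbs the mean by a negligible $O(n^{-\tilde D})$ amount.
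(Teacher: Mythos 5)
Your approach is exactly the paper's: the paper's own proof is the one‑liner ``Gaussian Lipschitz concentration,'' and you correctly identify the full chain---control the mean of the extension $\tilde F_{\bx,\by}$ via Lemma~\ref{lem:bound_exp_F_G} (using $\E[F_{\bx,\by}(\bN)]=0$), apply Gaussian Lipschitz concentration to the global $\sqrt{L}$-Lipschitz extension, and rescale for the $\normal(0,1/n)$ entries of $\bN$ to obtain the exponent $ns^2/(2L)$.

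However, the last inequality in your chain is wrong, and the error is in a direction that actually matters. You write
\[
\frac{\sqrt{p}\,\eps^{3/2}}{\|\by\|_\infty}\,t^2 \;\gtrsim\; \frac{\sqrt{p}}{\|\by\|_\infty}\,t^2,
\]
claiming you can ``absorb $\eps^{3/2}$ into the implicit constant.'' But $\eps$ is a \emph{small} parameter here (Theorem~\ref{thm:opnorm-eps-bound} is applied with $\eps\to 0$ to deduce Lemma~\ref{lem:polynomial_approx}), so $\eps^{3/2}<1$: the exponent you derive is \emph{smaller} than the one claimed, hence the tail bound you derive is \emph{weaker}. You cannot absorb a factor that goes to zero. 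The way to make $\eps$ disappear from the exponent cleanly is to set the deviation level proportional to $\eps^{1/4}t$ rather than $\eps t$: with $s\asymp \eps^{1/4}t$,
\[
\frac{ns^2}{2L} \gtrsim \frac{n\,\eps^{1/2}\,t^2}{\|\by\|_\infty\sqrt{p\eps}} = \frac{n\,t^2}{\|\by\|_\infty\sqrt{p}} \asymp \frac{\sqrt{p}}{\|\by\|_\infty}\,t^2,
\]
and the $\eps$-dependence cancels exactly. This strongly suggests that the ``$\eps t$'' in the statement of Lemma~\ref{lem:Gauss-Lip} is a typo for ``$\eps^{1/4}t$''---consistent with the fact that $\eps^{1/4}$ is precisely the power appearing in the conclusion of Theorem~\ref{thm:opnorm-eps-bound}, and that $\sum_l t_l = O(1)$ with $t_l$ as defined in the paper carries no $\eps$ factor. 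Rather than forcing the arithmetic with an invalid absorption step, you should have flagged that the stated exponents do not balance with the Lipschitz bound $L\lesssim \|\by\|_\infty\sqrt{p\eps}$.
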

\begin{proof}
    Gaussian Lipschitz concentration.
\end{proof}

Finally, we apply the net argument of \cite{fan2019spectral}. As in Section 2 of \cite{fan2019spectral}, let
\begin{equation}
    D_2^p = \left\{ \by\in \R^p\; : \; \|\by\|\le 1,\; y_i^2\in \{0,1,2^{-1},\ldots,2^{-(K+3)}\}\right\},\qquad K:=\lceil \log_2 p \rceil .
\end{equation}
\begin{lemma}
    [Lemma 3.3 of \cite{fan2019spectral}] For $\bA$ symmetric, $\|\bA\|_2\le 10\max_{\by\in D^p_2} \by^\top \bA \by$.
    \label{lem:ZM-Lemma3.3}
\end{lemma}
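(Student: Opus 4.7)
The plan is a dyadic discretization argument: any unit vector $\bar{\by}$ admits a decomposition into finitely many vectors in $D_2^p$ that captures $\bar{\by}^\top \bA \bar{\by}$ up to a universal constant. Since $\bA$ is symmetric, $\|\bA\| = \max_{\|\bar{\by}\| = 1} |\bar{\by}^\top \bA \bar{\by}|$; and because $D_2^p$ is closed under coordinatewise sign flips (only squared entries enter the definition), it suffices to prove $\bar{\by}^\top \bA \bar{\by} \leq 10 \max_{\by' \in D_2^p} \by'^\top \bA \by'$ for the unit vector attaining the positive maximum. The negative sign is then handled by applying the same result to $-\bA$.

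The key construction is a level-set decomposition due to Lata{\l}a. For $k = 0, 1, \ldots, K+2$, set $I_k := \{i : 2^{-(k+1)/2} < |\bar{y}_i| \leq 2^{-k/2}\}$ and let $I_\infty := \{i : |\bar{y}_i| \leq 2^{-(K+3)/2}\}$. From $\|\bar{\by}\|^2 \geq |I_k| \cdot 2^{-(k+1)}$ one obtains $|I_k| \leq 2^{k+1}$, while the tail satisfies $\sum_{i \in I_\infty} \bar{y}_i^2 \leq p \cdot 2^{-(K+3)} \leq 1/8$ (using $p \leq 2^K$). Define rounded level vectors $y^{(k)}_i := \mathrm{sign}(\bar{y}_i) \cdot 2^{-k/2} \mathbf{1}_{\{i \in I_k\}}$: they have pairwise disjoint supports, squared entries in $\{0, 2^{-k}\}$, and $\|\by^{(k)}\|^2 = |I_k| \cdot 2^{-k} \leq 2$, so that $\by^{(k)}/\sqrt{2} \in D_2^p$. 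Moreover, because the supports are disjoint, any combination $(\by^{(k)} \pm \by^{(\ell)})/2$ has dyadic squared entries and norm at most $1$, hence also belongs to $D_2^p$.

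Writing $\bar{\by} = \bar{\by}_{I_\infty} + \sum_k \by^{(k)} + \sum_k (\bar{\by}_{I_k} - \by^{(k)})$ where each residual satisfies $\|\bar{\by}_{I_k} - \by^{(k)}\| \leq (1 - 1/\sqrt{2}) \|\by^{(k)}\|$, I would expand $\bar{\by}^\top \bA \bar{\by}$ bilinearly. Each principal contribution $(\by^{(k)})^\top \bA \by^{(\ell)}$ reduces, via the polarization identity $2 \bu^\top \bA \bv = (\bu+\bv)^\top \bA (\bu+\bv) - \bu^\top \bA \bu - \bv^\top \bA \bv$ applied with $\bu = \by^{(k)}$, $\bv = \by^{(\ell)}$, to quadratic forms against elements of $D_2^p$ (after absorbing the numerical rescalings from $\by^{(k)}/\sqrt{2}$ and $(\by^{(k)}+\by^{(\ell)})/2$), each therefore bounded by $\max_{\by' \in D_2^p} \by'^\top \bA \by'$. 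The main obstacle is the $O(\log p)$ number of levels: a naive bound $\sum_{k,\ell} \|\by^{(k)}\| \|\by^{(\ell)}\|$ diverges, so the level contributions cannot simply be summed. Instead, Lata{\l}a's argument groups the levels dyadically and applies the inequality $2|uv| \leq u^2 + v^2$ carefully to absorb most cross-terms into $\|\bA\|$ itself, producing a self-bounding recursion whose fixed point yields the universal constant $10$; this is the content of Lemma 3.3 in \cite{fan2019spectral}.
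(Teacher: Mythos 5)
First, note that the paper does not prove this statement at all: it is imported verbatim as Lemma 3.3 of \cite{fan2019spectral}, so there is no internal argument to compare yours against. Your sketch correctly identifies the Lata{\l}a-type dyadic discretization behind that lemma, and the bookkeeping you do carry out is sound (the cardinality bound $|I_k|\le 2^{k+1}$, the tail bound $\sum_{i\in I_\infty}\bar y_i^2\le 1/8$ via $p\le 2^K$, and membership of $\by^{(k)}/\sqrt{2}$ in $D_2^p$). Two smaller slips: $(\by^{(k)}\pm\by^{(\ell)})/2$ need not lie in $D_2^p$ when $k=K+2$, since its squared entries equal $2^{-(K+4)}$, outside the allowed range $\{0,1,2^{-1},\ldots,2^{-(K+3)}\}$; and the lemma as transcribed is actually false without an absolute value on the right-hand side (take $\bA=-\bI_p$, for which the maximum over $D_2^p$ is $0$, attained at $\by=\bzero$), so your reduction via $-\bA$ in fact proves the absolute-value version $\|\bA\|\le 10\max_{\by\in D_2^p}|\by^\top\bA\by|$ --- which is the version that is true and that the argument in Appendix B actually needs.

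The genuine gap is that the decisive step is missing. You correctly observe that a naive summation over the $O(\log p)$ dyadic levels does not close (the double sum $\sum_{k,\ell}\|\by^{(k)}\|\,\|\by^{(\ell)}\|$ grows like $(\log p)^2$), and then resolve this by writing that Lata{\l}a's argument ``produc[es] a self-bounding recursion whose fixed point yields the universal constant $10$; this is the content of Lemma 3.3 in \cite{fan2019spectral}.'' That is circular: you invoke the lemma you are supposed to prove exactly at the point where all the work lies. As written, the proposal establishes a decomposition but not the inequality. To complete it you would need to actually carry out the absorption step --- for instance, show that every unit vector lies in $C\cdot\mathrm{conv}(D_2^p\cup(-D_2^p))$ for an explicit $C$, or run the self-bounding estimate $\|\bA\|\le M+c\,\|\bA\|$ with a verified $c<1$ --- and check that the constants genuinely close to give $10$. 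Without that, what you have is an outline of the strategy, not a proof.
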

For $l=0,1,\ldots,K+3$, define the projection $\pi_{l\setminus(l-1)}$ (resp. $\pi_l$) on dyadic scale $l$ (resp. $\le l$):
\begin{align}
    \pi_{l\setminus (l-1)}(\by)_i = y_i\mathbf{1}_{y_i^2=2^{-l}}, \qquad 
    \pi_{l}(\by)_i = y_i\mathbf{1}_{y_i^2\ge 2^{-l}} .
\end{align}
As shown in \cite[Eq. (7)]{fan2019spectral},
\begin{align}\label{eq:yKy}
    \by^\top \breve{\bK} \by = \sum_{l=0}^{K+3} \pi_l(\by)^\top \breve{\bK} \pi_{l\setminus(l-1)}(\by) + \sum_{l=0}^{K+3} \pi_{l\setminus(l-1)}(\by)^\top \breve{\bK} \pi_{l-1}(\by) .
\end{align}
\begin{lemma}
    [Lemma 3.4 of \cite{fan2019spectral}] There exists an absolute constant $C>0$ such that
    \begin{align}
    &   |\left\{\pi_l(\by)\;:\;\by\in D_2^p \right\}| \le \exp(C(K+4-l)2^l) , && l \in \{0, 1, \ldots, K+3\} . 
    \end{align}  
    \label{lem:FM-Lemma3.4}
\end{lemma}
For ${c}_0$ large enough, set
\begin{equation}
t_l^2 = {c}_0(K+4-l)2^{l/2}\frac{1}{\sqrt{p}}.
\end{equation}
As shown in the proof of Theorem 1.6 in \cite{fan2019spectral},
\begin{align}
    \sum_{l=0}^{K+3}t_l 
    &\le  c_0^{1/2} p^{-1/4}\sum_{l=0}^{K+3}(K+4-l)2^{l/4} 
    \le c_0^{1/2} p^{-1/4}\sum_{l=0}^{K+3}\sum_{j=0}^l 2^{j/4} \nonumber \\
    &\lesssim p^{-1/4}\sum_{l=0}^{K+3} 2^{l/4} 
    \lesssim p^{-1/4}2^{K/4}=p^{-1/4}2^{\frac{1}{4}\lceil \log_2 p \rceil} = O(1).
    \label{eq:sum-tl}
\end{align}

\begin{proof}[Proof of Theorem \ref{thm:opnorm-eps-bound}]
    By Lemma~\ref{lem:ZM-Lemma3.3}, \eqref{eq:yKy}, and \eqref{eq:sum-tl}, it suffices to show that 
    \[
    \sup_{y\in D_2^p}\pi_l(\by)^\top \breve{\bK} \pi_{l\setminus(l-1)}(\by),\; \sup_{y\in D_2^p} \pi_{l\setminus(l-1)}(\by)^\top \breve{\bK} \pi_{l-1}(\by) \lesssim n^{-\tilde{D}} + \eps t_l ,\qquad  \ell \in \{0,1,\ldots,K+3\},
    \]
    with high probability. Without loss of generality, we focus on terms \[\pi_l(\by)^\top \breve{\bK} \pi_{l\setminus(l-1)}(\by) =: F_{\pi_l(\by),\pi_{l\setminus(l-1)}(\by)}(\bN); \] the other terms are similar. Under the high-probability event $F_{\bx,\by}(\bN)=\tilde{F}_{\bx,\by}(\bN)$ for any $\bx,\by$, where $\tilde{F}$ is the Lipschitz extension of ${F}\big|_{\mathcal{G}}$. Consequently, as $\Pr(\bN\notin \cG)\lesssim n^{-D}$,
    \begin{equation}\label{eq:B.9}
        \P\bigg( \sup_{\by\in D_2^p} F_{\pi_l(\by),\pi_{l\setminus(l-1)}(\by)}(\bN) \gtrsim n^{-\tilde{D}} + \eps t_l \bigg) 
        \lesssim 
        \P\bigg( \sup_{\by\in D_2^p} \tilde{F}_{\pi_l(\by),\pi_{l\setminus(l-1)}(\by)}(\bN) \gtrsim n^{-\tilde{D}} + \eps t_l \bigg) + n^{-D} .
    \end{equation}
    Denote 
    \[
    \Pi_l = \{\pi_l(\by)\,:\,\by\in D_2^p\},\quad \Pi_{l\setminus(l-1)} = \{\pi_{l\setminus(l-1)}(\by)\,:\,\by\in D_2^p\},\quad 0\le l \le K+3 .
    \]
    Clearly, $\sup_{\by\in D_2^p} \tilde{F}_{\pi_l(\by),\pi_{l\setminus(l-1)}(\by)}(\bN) \le \sup_{\bx\in \Pi_{l\setminus(l-1)},\bz\in\Pi_l} \tilde{F}_{\bx,\bz}(\bN)$. By Lemma~\ref{lem:FM-Lemma3.4},
    $|\Pi_{l\setminus(l-1)}|\le |\Pi_l|\le \exp(C(K+4-l)2^l)$. Consequently, using a union bound, we bound the first term in the r.h.s. of \eqref{eq:B.9} as
    \begin{align*}
        \P\bigg(\sup_{\bx\in \Pi_{l\setminus(l-1)},\bz\in\Pi_l} \tilde{F}_{\bx,\bz}(\bN)\gtrsim n^{-\tilde{D}}+\eps t_l\bigg) 
        &\le |\Pi_{l\setminus(l-1)}||\Pi_l|\sup_{\bx\in \Pi_{l\setminus(l-1)},\bz\in\Pi_l} \P\Big( \tilde{F}_{\bx,\bz}(\bN)\gtrsim n^{-\tilde{D}}+\eps t_l \Big) \\
        &\lesssim e^{2C(K+4-l)2^l} \sup_{\bx\in \Pi_{l\setminus(l-1)},\bz\in\Pi_l} \exp\Big(-\frac{\sqrt{p}}{\|\bx\|_{\infty}}t_l^2\Big) .
    \end{align*}
    Using $\|\bx\|_\infty=2^{-l/2}$ and $t_l^2=c_0(K+4-l)2^{l/2}\frac{1}{\sqrt{p}}$, the above is $\lesssim e^{(2C-c_0)(K+4-l)2^l}$. 
    Choosing $c_0$ sufficiently large but constant, so that $c_1:=c_0-2C$ is large, we can guarantee that $e^{-c_1(K+4-l)2^l}\le e^{-c_1(\log_2 p + 4 -l)2^l}$ is smaller than $p^{-C}$ for any pre-specified $C>0$. Finally taking a union bound over all $0\le l \le K+3\lesssim \log p$,
    \[
    \P\bigg( \max_{0\le l \le K+3}\sup_{\by\in D_2^p} F_{\pi_l(\by),\pi_{l\setminus(l-1)}(\by)}(\bN) \gtrsim n^{-\tilde{D}} + \eps t_l \bigg) \le (K+3)p^{-C} + n^{-\tilde{D}} \asymp n^{-\tilde{D}'}.
    \]
    As explained before, Theorem~\ref{thm:opnorm-eps-bound} now follows using Lemma~\ref{lem:ZM-Lemma3.3}.
\end{proof}



\subsection{Proof of Lemma~\ref{lem:nice-poly-approx}}
\label{sec:proof-lem:nice-poly-approx}

    The proof is an adaption of that of Theorem 1.4 in \cite{fan2019spectral}, with an additional smoothing step. The latter is necessary because $f'$ is not assumed to be continuous as in \cite{fan2019spectral}. 

    For $\eta\in (0,1)$, let $T_{\eta}:L^2(\phi)\to L^2(\phi)$ be the Gaussian smoothing operator,
    \begin{equation}
        T_\eta[f](x) \coloneqq \frac{1}{\sqrt{2\pi} \eta}\int f(y)e^{-(x-y)^2/(2\eta^2)}dy.
    \end{equation}
    Note that under the conditions of Lemma~\ref{lem:nice-poly-approx}, $T_\eta[f]$ is odd and everywhere twice-differentiable, with $(T_\eta[f])'=T_\eta[f'],(T_\eta[f])''=T_\eta[f'']$ and $|T_\eta[f](x)|,|T_\eta[f'](x)|,|T_\eta[f''](x)|\lesssim e^{c_1|x|}$. 
    We shall show that (1) $T_\eta[f']$ approximates $f'$ well away from its discontinuity points; and (2) $T_\eta[f']$ is uniformly approximated by a polynomial. We note that the second step, which relies on the differentiability of $T_\eta[f']$, follows from \cite{fan2019spectral} immediately.

    We next show that $T_\eta[f']$ approximates $f'$ uniformly on $\R\setminus \Omega_\eps$. Note that on $\Omega_\eps\setminus \{x_i\}_{1\le i \le N}$, we already have $|f'(x)-T_\eta[f'](x)|\lesssim e^{c_1|x|}$. Define $\delta_\varepsilon \coloneqq \varepsilon/k$.     Let $x\in \R\setminus\Omega_\eps$. By definition, $f',f''$ exist on $I_{x,\eps}:=(x-\delta_\eps,x+\delta_\eps)$. By the mean value theorem, for $y\in I_{x,\eps}$,
    \begin{equation}
        \label{eq:smooth-aux1}
    |f'(y)-f'(x)|\le \max_{z\in (x,y)}|f''(z)||x-y|\lesssim e^{C\max\{|x|,|y|\}}|x-y|\lesssim |x-y| e^{C|x| + C|x-y|}.
    \end{equation}
    Decompose
    \begin{align}
        f'(x)-T_\eta[f'](x) = \E[(f'(x)-f'(x+\eta G))\mathbf{1}_{|G|\le \delta_\eps/\eta}] + \E[(f'(x)-f'(x+\eta G))\mathbf{1}_{|G|> \delta_\eps/\eta}] .
    \end{align}
    Using \eqref{eq:smooth-aux1}, the first term satisfies
    \begin{align*}
        |\E[(f'(x)-f'(x+\eta G))\mathbf{1}_{|G|\le \delta_\eps/\eta}]| 
        \lesssim 
        e^{C|x|}\E [e^{C\eta|G|}|\eta G| \mathbf{1}_{|G|\le \delta_\eps/\eta}]
        &\le
        e^{C|x|}\delta_\eps e^{C\delta_\eps} \E[\mathbf{1}_{|G|\le \delta_\eps/\eta}] \\
        &\lesssim e^{C|x|}\delta_\eps \asymp e^{C|x|}\eps,
    \end{align*}
    where we used that $\delta_\eps\asymp \eps \lesssim 1$. For the second term,
    \begin{align*}
        |\E[(f'(x)-f'(x+\eta G))\mathbf{1}_{|G|> \delta_\eps/\eta}]| 
        & \lesssim e^{2C|x|}\E[ e^{C\eta|G|}\mathbf{1}_{|G|> \delta_\eps/\eta} ] \\
        & \leq e^{2C|x|} \left( \E[e^{2C\eta|G|}] \right)^{1/2} \left( \E[\mathbf{1}_{|G|> \delta_\eps/\eta}] \right)^{1/2} \\
        &\lesssim e^{C_1|x|}e^{C_2\eta^2}e^{-C_3(\delta_\eps/\eta)^2} \lesssim e^{C_1|x|}e^{-C_4(\eps/\eta)^2} ,
    \end{align*}
    where the second inequality follows from the Cauchy-Schwarz inequality and the final two inequalities follow from $\eta\lesssim 1$,= and the Gaussian tail behavior. Choosing $\eta=\eta_\eps\asymp \eps/\sqrt{\log(1/\eps)}$, which is $\le 1$ for small $\eps>0$, implies that the above is $\lesssim e^{C_1|x|}\eps$. Combining our bounds for the two terms above, we deduce that for any $x\in \R\setminus\Omega_\eps$, $|f'(x)-T_{\eta_\eps}[f](x)| \lesssim e^{c_1|x|}\eps$. Combining with the coarse bound we had before for $x\in \Omega_\eps$,
    \begin{align}
        |f'(x)-T_{\eta_\eps}[f'](x)| \lesssim e^{C|x|}\eps + e^{C|x|}\mathbf{1}_{\Omega_\eps}(x).
    \end{align}
    for appropriate $C>0$.

    The final step is to approximate $T_{\eta_\eps}[f]$ by a polynomial uniformly. 
    This follows from the argument of \cite[Proof of Theorem 1.4]{fan2019spectral}, which yields a polynomial $q_\eps$ such that $\kappa_1(x) = T_\eta[f](x)-q_\eps(x)$ satisfies $|\kappa_1'(x)|\lesssim \eps  e^{C|x|}$ for all $x\in \R$. The  residual $\kappa(x)=f(x)-q_\eps(x)$ therefore satisfies
    \begin{align*}
        |\kappa'(x)| 
        &\le |f'(x)-(T_\eta[f])'(x)| + |\kappa_1'(x)| \\
        &\lesssim e^{C|x|}\eps + e^{C|x|}\mathbf{1}_{x\in \Omega_\eps} 
    \end{align*}
    for all $x\neq x_1, \ldots, x_k$, completing proof.

\subsection{Proof of Lemma~\ref{lem:opnorm-expt}}
\label{sec:proof-lem:opnorm-expt}

   Let $\bN_1,\ldots,\bN_p\in \R^n$  denote the columns of $\bN$.     Recall that $\bK(\kappa)_{i,i}=0$ and for $i\ne j$,
    \[
    (\bK(\kappa))_{ij} = \frac{1}{\sqrt{n}}\kappa(\sqrt{n}\bY_{ij})= \frac{1}{\sqrt{n}}\kappa\left(\sqrt{n}(\lambda-1)\|\xi\|^2 v_i v_j + \bA_{ij} \right) .
    \]
    where $\bA_{ij}=\sqrt{n} (\sqrt{\lambda-1} v_i \bN_j^\top\bxi + \sqrt{\lambda-1} v_j \bN_i^\top \bxi + \bN_i^\top \bN_j)$. 
    
    Recall that $\kappa$ is continuous and piecewise differentiable, hence the fundamental theorem of calculus holds. For brevity, denote $\eta_{i,j}:=\sqrt{n}(\lambda-1)\|\xi\|^2v_iv_j$, so that $\sqrt{n}\bY_{ij}=\bA_{ij}+\eta_{i,j}$.  Then
    \begin{equation}
    \begin{aligned}
        \kappa(\sqrt{n}\bY_{ij}) - \kappa(\bA_{ij}) &= \int_{[0,\eta_{i,j}]} \kappa'(\bA_{ij} + t)dt \\ &= \int_{[0,\eta_{i,j}]\cap (\Omega_\eps-\bA_{ij})} \kappa'(\bA_{ij} + t)dt + \int_{[0,\eta_{i,j}]\setminus (\Omega_\eps-\bA_{ij})} \kappa'(\bA_{ij} + t)dt ,
    \end{aligned}
    \end{equation}
    where $\Omega_\eps$ is the set from Lemma~\ref{lem:nice-poly-approx}. We can bound each term using \eqref{eq:nice-residual}. Indeed,
    \begin{align*}
        \left|\int_{[0,\eta_{i,j}]\cap (\Omega_\eps-\bA_{ij})} \kappa'(\bA_{ij} + t)dt\right|
        & \le |[0,\eta_{i,j}]\cap \Omega_\eps|\cdot \max_{t\in [0,\eta_{i,j}]\cap (\Omega_\eps-\bA_{ij})} |\kappa'(\bA_{ij} + t)|  \\&
        \lesssim \eps\cdot e^{c_1|\eta_{i,j}|}e^{c_1|\bA_{ij}|} .
        \lesssim e^{c_1|\bA_{ij}|}\eps ,
    \end{align*}
    where we used that $\eta_{i,j}\lesssim 1$ (thinking of $\lambda$ as constant), since $|v_iv_j|\in \{0,1/\sqrt{n}\}$.
    As for the other term, 
    \begin{align*}
        \left|\int_{[0,\eta_{i,j}]\setminus (\Omega_\eps-\bA_{ij})} \kappa'(\bA_{ij} + t)dt\right|
        \lesssim \int_{[0,\eta_{i,j}]}(e^{c_1|\bA_{ij} + t|}\eps) dt  \lesssim e^{c_1|\bA_{ij}|}\eps .
    \end{align*}
    It is straightforward to verify that $\E[e^{c_1|A_{i,j}|}]\lesssim 1$ (again, thinking of $\lambda$ as constant). For completeness' sake, let us verify this for the heavier-tailed term $\E[ e^{c_1 \sqrt{n}\bN_i^\top \bN_j}]$. Conditioned on $\bN_i$, $\sqrt{n}\bN_i^\top \bN_j\sim \normal(0,\|\bN_i\|^2)$. Thus, 
    \begin{align*}
        \E[ e^{c_1 \sqrt{n}\bN_i^\top \bN_j}] = \E[e^{ c_1^2\|\bN_i\|^2/2}] = \E[e^{c_1^2 \chi^2(n)/(2n)}] = (1-c_1^2/n)^{-n/2}\approx e^{-c_1^2/2},
    \end{align*}
    where $\chi^2(n)$ denotes a chi-squared random variable with $n$ degrees of freedom (note that $\sqrt{n} \bN_i^\top \bN_j$ tends to a $\normal(0,1)$ random variable.)

    Now, note that $\E[\bA_{ij}]=0$, since $\bA_{ij}$ has a symmetric distribution. Thus,
    \begin{align*}
        |\E[\bK_{i,j}]| = \frac{1}{\sqrt{n}}\left| \E\kappa(\sqrt{n}\bY_{ij}) - \E\kappa(\bA_{ij}) \right| \lesssim \frac{\eps}{\sqrt{n}}\E[e^{c_1|\bA_{ij}|}] \lesssim \frac{\eps}{\sqrt{n}} .
    \end{align*}
    Also note that $\sqrt{n}\bY_{ij}$ differs from $\bA_{ij}$ only when $i,j\in \mathrm{supp}(v)$; accordingly, $\E[\bK]$ has at most $m^2\asymp n$ nonzero entries. Thus,
    \begin{align*}
        \|\E \bK\|_2 \le \|\E \bK \|_F \lesssim m\cdot \frac{1}{\sqrt{n}}\eps \lesssim \eps .
    \end{align*}

    

\subsection{Proof of Lemma~\ref{lem:Fxy-gradient}}
\label{sec:proof-lem:Fxy-gradient}

    As in the previous section, denote by $\bN_1,\ldots,\bN_p\in \R^n$ the columns of $\bN$.

   The gradient of an entry ($i\ne j$) with respect to $\bN_\ell$
   \begin{align*}
       \nabla_{\bN_\ell} (\bK_{i,j} )
        = &~ \nabla_{\bN_\ell} \frac{1}{\sqrt{n}}\kappa \Big(
       (\lambda-1)\|\bxi\|^2\sqrt{n} v_iv_j + \sqrt{\lambda-1} \sqrt{n} v_i \langle \bN_j,\bxi\rangle \\
       &~ + \sqrt{\lambda-1} \sqrt{n} v_j \langle \bN_i,\bxi\rangle + \sqrt{n}\langle\bN_i,\bN_j\rangle 
       \Big) \\
       = &~ \kappa'\left( \sqrt{n}\bY_{ij} \right) \Big(  
       \sqrt{\lambda-1}  v_i \bxi \mathbf{1}_{j=\ell} +
       \sqrt{\lambda-1}  v_j \bxi \mathbf{1}_{i=\ell} +
       \bN_i\mathbf{1}_{j=\ell} + \bN_j \mathbf{1}_{i=\ell}
       \Big) .
   \end{align*}
   Denote the following vectors $\ba_{ij}^{(\ell)},\bb_{ij}^{(\ell)}\in \R^n$, $1\le i,j,\ell \le p$,  so that $\nabla_{\bN_\ell}\bK_{i,j}=\ba_{ij}^{(\ell)}+\bb_{ij}^{(\ell)}$:
   \begin{align*}
       \ba_{ij}^{(\ell)} &= \sqrt{\lambda-1} \kappa'(\sqrt{n}\bY_{ij})\cdot \left(  v_i \bxi \mathbf{1}_{j=\ell} +   v_j \bxi \mathbf{1}_{i=\ell} \right), \\
       \bb_{ij}^{(\ell)} &= \kappa'(\sqrt{n}\bY_{ij})\cdot \left( \bN_i\mathbf{1}_{j=\ell} + \bN_j \mathbf{1}_{i=\ell} \right) .
   \end{align*}
   We have $\nabla_{\bN_\ell}F_{\bx,\by}(\bN) = \sum_{1\le i,j\le p}x_iy_i \nabla_{\bN_\ell} (\bK_{i,j} ) \in \R^n$. 
   The total gradient of $F_{\bx,\by}$ with respect to $\bN$ satisfies
   \begin{equation}
       \begin{aligned}
   \|\nabla_{\bN} F_{\bx,\by}(\bN) \|^2 
       &= \sum_{\ell=1}^p \|\nabla_{\bN_\ell} F_{\bx,\by}(\bN)\|^2 
       = \sum_{\ell=1}^p \bigg\|\sum_{1\le i,j\le p}x_iy_i \nabla_{\bN_\ell} (\bK_{i,j} ) \bigg\|^2  \nonumber \\
       &\le 
       2\sum_{\ell=1}^p \bigg\|\sum_{1\le i,j\le p}x_iy_i \ba_{ij}^{(\ell)} \bigg\|^2 
       +
       2\sum_{\ell=1}^p \bigg\|\sum_{1\le i,j\le p}x_iy_i \bb_{ij}^{(\ell)} \bigg\|^2 .
   \end{aligned}
   \end{equation}

   Note that the vectors $\bb_{ij}^{(\ell)}$ depend on the spike only through $\kappa'(\bY_{ij})$. To bound the second term, we apply (10) in \cite{fan2019spectral}: 
   \begin{align}
       \sum_{\ell=1}^p \bigg\|\sum_{1\le i,j \le p} x_i y_j \bb^{(\ell)}_{i,j} 
       \bigg\|^2 
       \le
       {4\|\bN\|^2\|\by\|_{\infty}}\max_{1\le \ell \le p} \bigg( \sum_{i\ne \ell} \big( \kappa'(\sqrt{n}\bY_{i\ell} \big)^4 \bigg)^{1/2}. 
   \end{align}
   As for the first term, by the definition of $\ba_{ij}^{(\ell)}$,
   \begin{align*}
       \sum_{\ell=1}^p \bigg\| \sum_{1\le i,j,\le p} x_iy_j \ba_{ij}^{(\ell)} \bigg\|^2
       = &~ (\lambda-1) \|\bxi\|^2\sum_{\ell=1}^p \bigg[
       y_\ell \sum_{i=1,i\ne \ell}^p \kappa'(\sqrt{n}\bY_{i\ell})x_i v_i  + x_\ell \sum_{j=1,j\ne \ell}^p \kappa'(\sqrt{n}\bY_{j\ell}) y_j v_j
       \bigg]^2 \\
        \leq  &~
       2(\lambda-1)\|\bxi\|^2 \sum_{\ell=1}^p y_\ell^2 \bigg( \sum_{i=1,i\ne \ell}^p \kappa'(\sqrt{n}\bY_{i\ell})x_i v_i\bigg)^2
       \\ &~ +
       2\tau^2\|\bxi\|^2 \sum_{\ell=1}^p x_\ell^2 \bigg( \sum_{j=1,i\ne \ell}^p \kappa'(\sqrt{n}\bY_{j\ell})y_j v_j\bigg)^2 .
   \end{align*}
   Using the Cauchy-Schwarz inequality and $\|\bx\|,\|\by\|\le 1$,
   \begin{align*}
       \sum_{\ell=1}^p y_\ell^2 \bigg( \sum_{i=1,i\ne \ell}^p \kappa'(\sqrt{n}\bY_{i\ell})x_i v_i\bigg)^2
       &\le \sum_{\ell=1}^p y_\ell^2 \bigg(\sum_{i=1,i\ne \ell}^p x_i^2\bigg)
       \bigg( \sum_{i=1,i\ne \ell}^p |\kappa'(\sqrt{n}\bY_{i\ell})|^2 v_i^2 \bigg)\\
       &\le \|\bv\|_{\infty}^2 \max_{1\le \ell \le p} \sum_{i\in \mathcal{S}\setminus\{\ell\}} |\kappa'(\sqrt{n}\bY_{i\ell})|^2. 
   \end{align*}
   Combining the above bounds yields the lemma.

\end{document}